\DeclareMathOperator{\Map}{Map}
\DeclareMathOperator{\Hom}{Hom}
\DeclareMathOperator{\RHom}{RHom}
\DeclareMathOperator{\Isom}{Isom}
\DeclareMathOperator{\Ext}{Ext}
\DeclareMathOperator{\Gal}{Gal}
\DeclareMathOperator{\Bun}{Bun}
\DeclareMathOperator{\Pic}{Pic}
\DeclareMathOperator{\Gr}{Gr}
\DeclareMathOperator{\gl}{\mathfrak{gl}}
\DeclareMathOperator{\pgl}{\mathfrak{pgl}}
\DeclareMathOperator{\psl}{\mathfrak{psl}}
\DeclareMathOperator{\SL}{SL}
\DeclareMathOperator{\GL}{GL}
\DeclareMathOperator{\PGL}{PGL}
\DeclareMathOperator{\PSL}{PSL}
\DeclareMathOperator{\Sp}{Sp}
\DeclareMathOperator{\Aut}{Aut}
\DeclareMathOperator{\an}{an}
\DeclareMathOperator{\pt}{pt}
\DeclareMathOperator{\Char}{char}
\DeclareMathOperator{\End}{End}
\DeclareMathOperator{\Spec}{Spec}
\DeclareMathOperator{\Bl}{Bl}
\DeclareMathOperator{\Sch}{Sch}
\DeclareMathOperator{\QCoh}{QCoh}
\DeclareMathOperator{\Coh}{Coh}
\DeclareMathOperator{\MC}{MC}
\DeclareMathOperator{\Fitt}{Fitt}
\DeclareMathOperator{\leng}{length}
\DeclareMathOperator{\cone}{Cone}
\DeclareMathOperator{\Hilb}{Hilb}
\DeclareMathOperator{\Quot}{Quot}
\DeclareMathOperator{\Spf}{Spf}
\DeclareMathOperator{\Proj}{Proj}
\DeclareMathOperator{\Sing}{Sing}
\DeclareMathOperator{\Perv}{Perv}
\DeclareMathOperator{\IC}{IC}
\DeclareMathOperator{\Loc}{Loc}
\DeclareMathOperator{\Shv}{Shv}
\DeclareMathOperator{\Supp}{Supp}
\DeclareMathOperator{\pr}{pr}
\newcommand*{\invlim}{\varprojlim}                               
\newcommand*{\tensorhat}{\widehat{\otimes}}                      
\newcommand*{\m}{\mathfrak{m}}                                   
\newcommand*{\n}{\mathfrak{n}}                                   
\newcommand*{\p}{\mathfrak{p}}                                   
\newcommand*{\q}{\mathfrak{q}}                                   
\newcommand*{\X}{\mathfrak{X}}                                   
\newcommand*{\Y}{\mathfrak{Y}}                                   
\renewcommand*{\r}{\mathfrak{r}}                                 
\newcommand*{\cA}{\mathcal{A}}
\newcommand*{\cB}{\mathcal{B}}
\newcommand*{\cC}{\mathcal{C}}
\newcommand*{\cM}{\mathcal{M}}
\newcommand*{\bA}{\mathbb{A}}
\newcommand*{\bC}{\mathbb{C}}
\newcommand*{\bN}{\mathbb{N}}
\newcommand*{\bZ}{\mathbb{Z}}
\newcommand*{\bR}{\mathbb{R}}
\newcommand*{\bP}{\mathbb{P}}
\newcommand*{\vv}{\ensuremath{\mathbf{v}}}
\newcommand*{\uu}{\ensuremath{\mathbf{u}}}
\newcommand*{\ww}{\ensuremath{\mathbf{w}}}
\newcommand*{\ee}{\ensuremath{\mathbf{e}}}
\newcommand*{\pp}{\ensuremath{\mathbf{p}}}
\newcommand*{\g}{\ensuremath{\mathfrak{g}}}
\newenvironment{customthm}[1]
  {\renewcommand\theinnercustomthm{#1}\innercustomthm}
  {\endinnercustomthm}
\theoremstyle{plain}
  \newtheorem{theorem}{Theorem}[section]
  \newtheorem{proposition}{Proposition}[section]
  \newtheorem{lemma}{Lemma}[section]
  \newtheorem{corollary}{Corollary}[section]
  \newtheorem{conjecture}{Conjecture}[section]
\theoremstyle{definition}
  \newtheorem{definition}{Definition}[section]
\theoremstyle{remark}
  \newtheorem{example}{Example}[section]
  \newtheorem{remark}{Remark}[section]
\numberwithin{equation}{section}
\title{Quantum Algebra of Chern-Simons Matrix Model and Large $N$ Limit}
\author[1, 2]{Sen Hu\thanks{shu@ustc.edu.cn}}
\author[3]{Si Li\thanks{sili@tsinghua.edu.cn}}
\author[1]{Dongheng Ye\thanks{ydrj163@mail.ustc.edu.cn}}
\author[4]{Yehao Zhou\thanks{yehao.zhou@ipmu.jp}}
\affil[1]{School of Mathematical Sciences, University of Science and Technology of China, Hefei, China}
\affil[2]{Shanghai Institute for Mathematics and Interdisciplinary Sciences, Shanghai, China}
\affil[3]{Department of Mathematical Sciences, Tsinghua University, Beijing, China}
\affil[4]{Kavli Institute for the Physics and Mathematics of the Universe (WPI), The University of Tokyo, Kashiwa, Japan}
\date{}
\begin{document}

\maketitle

\begin{abstract}

In this paper we study the algebra of quantum observables of the Chern-Simons matrix model which was originally proposed by Susskind and Polychronakos to describe electrons in fractional quantum Hall effects. We establish the commutation relations for its generators and study the large $N$ limit of its representation. We show that the large $N$ limit algebra is isomorphic to the uniform in $N$ algebra studied by Costello, which is isomorphic to the deformed double current algebra studied by Guay. Under appropriate scaling limit, we show that the large $N$ limit algebra degenerates to a Lie algebra which admits a surjective map to the affine Lie algebra of $\mathfrak{u}(p)$. We conjecture that the large $N$ limit algebra acts on the large $N$ limit Hilbert space via the aforementioned degeneration limit, and we prove this conjecture in the $p=1$ case by a detailed study of large $N$ limit Hilbert space. This leads to a complete proof of the large $N$ emergence of the $\widehat{\mathfrak{u}}(p)$ model as proposed by Dorey, Tong and Turner in the case $p=1$. This also suggests a rigorous derivation of edge excitation of a fractional quantum Hall droplet. 


\end{abstract}

{\tableofcontents}

\section{Introduction}
Susskind \cite{Susskind} proposed non-commutative Chern-Simons theory as an effective theory to describe fractional quantum Hall effect (FQHE). The theory describes FQHE as an in-compressible quantum fluid under a large magnetic field $B$. The effective fields to describe FQHE are fluctuations of density of electrons. Relabeling of the electrons is the gauge symmetry of the theory. It becomes area preserving diffeomorphism symmetry of the fluid in the macroscopic limit \cite{Tong_2015}.
    
Like any ordinary field theories one needs regularization to make sense of the field theory. Polychronakos \cite{Poly} proposed a matrix model as a regularization of non-commutative Chern-Simons theory which leads to a quantum mechanical system. This matrix model is called Chern-Simons matrix model. The ground state of the theory is related to the Laughlin wave function \cite{Poly,Hellerman-Raamsdonk,Karabali-Sakita}. In \cite{Dorey-Tong-Turner}, a generalized Chern-Simons matrix model with internal degrees of freedom transforming under $U(p)$ was considered. This model becomes the original Chern-Simons matrix model when $p=1$. For the non-Abelian case $p\geq 2$, they found the ground states for the matrix model to describe a class of non-Abelian quantum Hall states with filling fraction $\nu=\frac{p}{k+p}$, where $k$ is the level of the Chern-Simons matrix model. It is also known that this matrix model is closely related to the Calogero-Moser-Sutherland model \cite{Poly,Polychronakos_2006,Karabali_2002}. 

Recently, the large $N$ limit of this quantum matrix model was considered by Dorey, Tong and Turner \cite{ Dorey-Tong-Turner, Dorey_Tong_Turner-Matrix}. They showed that this matrix model is closely related to the $d=1+1$ WZW conformal field theory, which is supposed to be the boundary CFT of the fractional quantum Hall droplet. In particular, they defined the following operators in this matrix model (see Sections \ref{subsec: large N limit algebra} and \ref{subsec: large N limit representation} for details)\footnote{Our notation is different from \cite{Dorey_Tong_Turner-Matrix}. In this paper, we put $\sim$ on top of operators to denote rescaled operators.}:
$$
\begin{aligned}
&\tilde J^a_{b;n}:=\left(\frac{(k+p)N}{p}\right)^{-\frac{n}{2}}({{\lambda}^{\dagger}}^aZ^{n}\lambda_{b}-\frac{1}{p}\delta^a_b\sum_{c=1}^{p}{{\lambda}^{\dagger}}^cZ^{n}\lambda_{c}), \quad n\geq 0\\
&\tilde J^a_{b;n}:=\left(J^a_{b,-n}\right)^{\dagger}, \quad n<0\\
&\tilde{\alpha}_{n}:=\left(\frac{(k+p)N}{p}\right)^{-\frac{n}{2}}\mathrm{Tr}(Z^{n}), \quad n> 0\\
&\tilde{\alpha}_{-n}:=\left(\frac{(k+p)N}{p}\right)^{-\frac{n}{2}}\mathrm{Tr}({Z^{\dagger}}^n), \quad n>0
\end{aligned}
$$  
Here $Z,Z^{\dagger}$ are $N \cross N$ matrix valued operators and $\lambda_a,{\lambda^{\dagger}}^a$ are $N$-vector valued operators of the matrix model, and $k$ is the level. The $J^a_{b;n},\tilde{\alpha}_{n}$ are gauge invariant operators. They proposed the emergence of the Kac-Moody algebra commutation relations in the large $N$ limit. Explicitly, they propose the following convergence:
\begin{align}
 \label{1.1}    &[\tilde J^a_{b;n},\tilde J^c_{d;m}]\rightarrow (\delta^a_d \tilde J^c_{b;n+m}-\delta^c_b\tilde J^a_{d;n+m})+km\delta_{m+n,0}(\delta^c_b\delta^a_d-\frac{1}{p}\delta^a_b\delta^c_d)   \quad \text{when $N\to \infty$} \quad
\end{align}
In \cite{Dorey_Tong_Turner-Matrix} they provided strong evidence of the above large $N$ convergence based on several conjectured formulae, though there is still lack of a complete proof.

\bigskip

Inspired by Dorey, Tong and Turner's work, we study the algebra of quantum observables of the Chern-Simons matrix model and its representations in the large $N$ limit. In order to obtain a better understanding of the commutation relations between the operators $\tilde J,\tilde \alpha$ in the $N\to \infty$ limit, we study the full algebra of gauge invariant observables and its action on the Hilbert spaces of the Chern-Simons matrix model, both at finite $N$ and in the $N\to \infty$ limit. 

The algebra of gauge invariant observables at finite $N$, denoted by $\mathscr O^{(p)}_N$, is isomorphic to the quantization of the Nakajima quiver variety \cite{gan2006almost,losev2012isomorphisms} associated to the ADHM quiver (see the quiver diagram below), with quantization parameter $1$ and deformation parameter $k+p$ (see Section \ref{sec: Quantum Operators} for details).
\begin{center}
\begin{tikzpicture}[x={(2cm,0cm)}, y={(0cm,2cm)}, baseline=0cm]
  \node[draw,circle,fill=white] (Gauge) at (0,0) {$N$};
  \node[draw,rectangle,fill=white] (Framing) at (1,0) {$p$};
  \node (Z) at (-.5,0) {\scriptsize $Z$};
  \node (Zdag) at (-.73,0.02) {\scriptsize $Z^{\dagger}$};
 \draw[->] (Gauge.15) -- (Framing.155) node[midway,above] {\scriptsize $\lambda^{\dagger}$};
 \draw[<-] (Gauge.345) -- (Framing.205) node[midway,below] {\scriptsize $\lambda$};

  \draw[->,looseness=5] (Gauge.210) to[out=210,in=150] (Gauge.150);
  \draw[<-,looseness=6] (Gauge.240) to[out=210,in=150] (Gauge.120);

\end{tikzpicture}
\end{center}
By invariant theory \cite{ginzburg2009lectures}, $\mathscr O^{(p)}_N$ is generated by the following set of operators
\begin{align*}
    e^a_{b;n,m}&:={{\lambda}^{\dagger}}^a\mathrm{Sym}(Z^{n}Z^{\dag m})\lambda_{b},\\
    t_{n,m}&:=\mathrm{Tr}(\mathrm{Sym}(Z^{n}Z^{\dag m})),
\end{align*}
for all $(n,m)\in \mathbb N^2$, see Section \ref{sec: Quantum Operators}. The first main result of this paper is as follows.
\begin{customthm}{A}[{Theorem \ref{Main theorem}}]
The operators $e^a_{b;n,m}$ and $t_{n,m}$ satisfy the explicit basic relations \eqref{trace of e}-\eqref{[t03,tmn]}.
\end{customthm}

We will define a large $N$ limit algebra $\mathscr O^{(p)}_{\infty}$, which captures all relations that are found in Theorem \ref{Main theorem}. We show that the basic commutation relations in Theorem \ref{Main theorem} implies all other commutation relations between $e$ and $t$ operators by induction. Using this set of relations, it is shown in \cite{Gaiotto-Rapcek-Zhou} that $\mathscr O^{(p)}_{\infty}$ is isomorphic to the deformed double current algebra, studied by Guay \cite{guay2007affine} \footnote{The dictionary between notations in this paper and \cite{Gaiotto-Rapcek-Zhou} is as follows: $e^a_{b;m,n}$ is this paper is $\mathsf T_{m,n}(E^a_b)$ there, and $t_{m,n}$ here is $\mathsf t_{m,n}$ there, and $p$ here is $K$ there, and $\mathscr O^{(p)}_{\infty}$ here is $\mathsf A^{(K)}_{\infty}$ there.}. The algebra $\mathscr O^{(p)}_{\infty}$ also show up as the algebra of operators on M2 branes in the $\Omega$-deformed M-theory \cite{Costello,gaiotto2019aspects}.

Define the following rescaled operators
\begin{align*}
&\tilde{J}^a_{b;m,n}:=\left(\frac{(k+p)N}{p}\right)^{-\frac{m+n}{2}}\left(e^a_{b;m,n}-\frac{1}{p}\delta^a_b t_{m,n}\right) \quad,\quad \tilde{t}_{m,n}:=\left(\frac{(k+p)N}{p}\right)^{-\frac{m+n+2\delta_{m,n}}{2}}t_{m,n}.
\end{align*}
Using the generators $\tilde{J}^a_{b;m,n}$ $\tilde{t}_{m,n}$ and relations of the algebra $\mathscr O^{(p)}_{\infty}$, a rescaled algebra $\widetilde{\mathscr{O}}^{(p)}_{\infty}$ can be defined, see Section \ref{subsec scaling limit}. It turns out that the degeneration limit $\widetilde{\mathscr{O}}^{(p)}_{\infty}/(t_{0,0}^{-\frac{1}{2}})$ captures the large $N$ commutation relations of the $\tilde J$ and $\tilde t$ operators. More precisely we have the second main result of this paper as follows.

\begin{customthm}{B}[{Theorem \ref{cor: conj1 implies conj2} in the case $\epsilon_1=1,\epsilon_2=k+p$}]
    The degeneration limit $\widetilde{\mathscr{O}}^{(p)}_{\infty}/(t_{0,0}^{-\frac{1}{2}})$ of the rescaled algebra is isomorphic to the universal enveloping algebra of a certain Lie algebra underlying ${\cal O}({\mathbb C}^{2})\otimes \mathfrak{gl}_{p}$with Lie bracket
\begin{align*}
    [f\otimes A, g\otimes B] =fg\otimes [A, B] +\{f, g\}\otimes  \kappa(A, B)\cdot{\bf{1}}_p,
\end{align*}
where $\{-, -\}$ is the bilinear pairing on $\mathcal O(\mathbb C^2)=\mathbb{C}[z,w]$ such that 
$$\{z^nw^m,z^rw^s\}=\delta_{n-m,s-r}(ns-mr)z^{n+r-1}w^{m+s-1}$$
and $\kappa(-,-)$ is the invariant bilinear form on $\mathfrak{gl}_p$ given by
\begin{align*}
    \kappa(A,B)=\begin{cases}
        \frac{k(k+p)}{p}\mathrm{Tr}(AB), & A\in \mathfrak{sl}_p,\\
        \frac{1}{p}\mathrm{Tr}(AB), & A\in \mathbb{C} {\bf 1}_{p}.
    \end{cases}
\end{align*}
The isomorphism is given by ${\tilde{J}}^{a}_{b;n, m}\mapsto z^{n} w^{m}\otimes J^{a}_{b}$ and $\tilde{t}_{n,m}\mapsto z^{n} w^{m}\otimes {\bf 1}_{p} $. Here $J^a_b\in \mathfrak{sl}_p$ is the trace-free part of the elementary matrix $E^a_b$, i.e. $J^a_b=E^a_b-\frac{1}{p}\delta^a_b {\bf 1}_{p}$, and ${\bf 1}_{p}$ is the identity $p\times p$ matrix. 

Moreover there is a surjective Lie algebra map from ${\cal O}({\mathbb C}^{2})\otimes \mathfrak{gl}_{p}$
to the affine Lie algebra $\widehat{\mathfrak{sl}}(p)_{ k}\oplus \widehat{\mathfrak{gl}}(1)_{\frac{p}{k+p}}$,
where the latter has generators $\tilde J^{a}_{b;n}, \alpha_{m}, (m \ne 0)$, and Lie brackets 
\begin{align*}
    [\tilde J^{a}_{b;n}, \tilde J^{c}_{d;m}] &= \delta^c_b\tilde J^{a}_{d;n+m}-\delta^a_d\tilde J^{c}_{b;n+m} +k n \delta_{n, -m}  \left(\delta^a_d\delta^c_b-\frac{1}{p}\delta^c_d\delta^a_b\right),\\
[\tilde{\alpha}_{n},\tilde{\alpha}_{m}] &= \frac{p}{k+p} n \delta_{n, -m}.
\end{align*}
The map is given by
\begin{align}\label{Sujective Lie map}
    \tilde{J}^a_{b;n,m} \mapsto \tilde J^{a}_{b;n-m},\quad {\tilde{t}}_{n,m} \mapsto \tilde{\alpha}_{n-m},\quad \tilde{t}_{n,n} \mapsto  \frac{1}{n+1} \frac{p}{k+p}.
\end{align}
\end{customthm}

In this sense, we have observed the emergence of a Kac-Moody Lie algebra in the large $N$ scaling limit. 

\bigskip

Our next part of the paper studies the large $N$ asymptotic behavior of representation of the algebra $\mathscr O^{(p)}_N$ on the large $N$ limit Hilbert space of the matrix model. We have the following.


\begin{customthm}{C}[{Theorem \ref{thm: large N limit}}]
   In the Abelian case $p=1$, the generators $\tilde{t}_{m,n}$ converge in the large $N$ limit (in an appropriate sense). The limit $\lim\limits_{N\to\infty} {\tilde{t}_{m,n}}$ are independent of $m-n$, if $m\neq n$, and let
   $${\tilde{\alpha}}_{m-n}:=\lim_{N\to\infty} {\tilde{t}_{m,n}} \quad m\neq n.$$
   Then these operators $\{{\tilde{\alpha}}_{n}, n\neq 0\}$ form a $\widehat{\mathfrak{u}(1)}_{\frac{1}{k+1}}$ current algebra, representing on a bosonic Fock space.
\end{customthm}

For the non-Abelian case $p\geq 2$, we have the following conjecture (Section \ref{sec 4.2.2})

\begin{conjecture}\label{conj3}
 There exists an appropriate energy eigenbasis $\mathcal{B}$ for the Hilbert space.  The operators $\tilde{J}^a_{b;m,n}$ and $\tilde{t}_{m,n}$ converge as operators acting on the large $N$ limit of the Hilbert space. Moreover in the basis $\mathcal{B}$, $\tilde{t}_{n,n}$ has the following asymptotic behaviour in the large $N$ limit
    \begin{align*}
    \tilde{t}_{n,n}=\frac{1}{n+1}\frac{p}{k+p}+O({\sqrt{N}}^{-2})
    \end{align*}
    and the operators $\left \{\lim\limits_{N\to \infty}\tilde{J}^a_{m,n}\right \}$ and $\left \{\lim\limits_{N\to \infty}\tilde{t}_{m,n}, m\neq n\right \}$ only depend on the difference $m-n$.
\end{conjecture}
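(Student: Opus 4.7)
The strategy is to mimic the proof of Theorem C for the Abelian case but to keep track of the non-Abelian data carried by the color indices $a,b$, using the algebraic framework of Theorems A and B. First I would fix an energy eigenbasis $\mathcal B$ of the finite-$N$ Hilbert space: a natural candidate is a $p$-colored Young-diagram-style basis, built inductively by acting on the Laughlin-type ground state by ordered monomials in the gauge-invariant operators $e^a_{b;m,n}$ and $t_{m,n}$, followed by Gram-Schmidt within each graded piece of $t_{0,0}$. These vectors are eigenvectors of the Cartan-type operator $t_{0,0}$ and of the $\mathfrak{sl}_p$ weight, and they are labeled by combinatorial data that admits an evident stable $N\to\infty$ limit.

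Next, using the basic relations of Theorem A, I would derive recursions for the matrix elements of the rescaled operators $\tilde t_{n,n}$ and $\tilde J^a_{b;m,n}$ in $\mathcal B$. The key structural input is Theorem B: after rescaling by powers of $t_{0,0}^{1/2}\sim ((k+p)N/p)^{1/2}$, the relations of $\mathscr O^{(p)}_\infty$ degenerate to those of $\mathcal O(\mathbb C^2)\otimes \mathfrak{gl}_p$, in which the image of $\tilde t_{n,n}$ is the scalar $\frac{1}{n+1}\frac{p}{k+p}$ and the images of $\tilde J^a_{b;m,n}$, $\tilde t_{m,n}$ under the map \eqref{Sujective Lie map} depend only on $m-n$. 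For the diagonal asymptotics I would argue by induction on $n$: the case $n=0$ is immediate from the definition, and for $n\geq 1$ one expresses $\tilde t_{n,n}$ as an iterated commutator of lower-order generators whose leading term is computed by the Poisson bracket from Theorem B, and whose remainder carries an explicit power of $N^{-1}$ coming from the scaling prefactors in the definition of $\tilde t_{m,n}$. For the off-diagonal convergence and the $m-n$ dependence, one would expand brackets of the form $[\tilde t_{m,n},\tilde J^a_{b;r,s}]$, whose leading part matches $\{z^mw^n,z^rw^s\}\otimes J^a_b$ and whose subleading corrections should be uniformly $o(1)$ on each fixed-energy subspace.

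The principal obstacle is establishing uniform-in-$N$ operator bounds on the rescaled generators on each energy eigenspace. Unlike the $p=1$ case, where the Hilbert space is a single bosonic Fock module on which the $\tilde\alpha_n$ have an explicit action, for $p\geq 2$ the Hilbert space is a much richer $\widehat{\mathfrak{sl}}(p)_k\oplus\widehat{\mathfrak{u}}(1)$-module whose structure is precisely what the conjecture aims to pin down. Bootstrapping from the algebraic degeneration of Theorem B to a genuine limit of operator actions requires controlling the quadratic-in-generators remainder terms in the commutators, namely the $O(t_{0,0}^{-1/2})$ corrections to the degenerate Lie bracket, and showing that they have norm $o(1)$ on each fixed-energy subspace. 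This seems to demand either an explicit non-Abelian analogue of the Jack polynomial basis adapted to $\mathfrak{sl}_p$-weights, or a direct large-$N$ factorization property of the matrix model wave functions paralleling the $p=1$ analysis in the preceding theorem.
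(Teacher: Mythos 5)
This statement is labeled a \emph{Conjecture} in the paper, and the paper does not prove it for $p\geq 2$: it is established only in the Abelian case $p=1$ (as a corollary of Theorem \ref{thm: large N limit}, using the explicit power-sum basis \eqref{Basis} and the diagrammatic Wick-contraction analysis), with partial evidence for $p\geq 2$ given in Appendix \ref{App C} (Proposition \ref{Pro C2}, which computes only the \emph{ground-state expectation value} of $t_{n,n}$, and does so \emph{under the assumption} that $t_{n,n}\sim O(\sqrt{N}^{2n+2})$). So there is no proof in the paper to compare against, and the question is whether your proposal closes the gap the authors left open. It does not.

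The gap you flag in your final paragraph is not a technical loose end but the entire content of the conjecture, and your proposed strategy is circular at exactly that point. Theorem \ref{cor: conj1 implies conj2} is a statement about the abstract algebra $\widetilde{\mathscr{O}}^{(p)}_{\infty}/(t_{0,0}^{-\frac{1}{2}})$; the paper is explicit that it only yields Conjectures \ref{conjecture 4.2}--\ref{conjecture 4.3} \emph{conditionally on} Conjecture \ref{conjecture 4.1}, because the $O(t_{0,0}^{-1/2})$ remainders in \eqref{scaled commutators} are polynomials in the rescaled generators, and to conclude they act as $o(1)$ on a fixed-energy subspace one must already know that those generators act with the conjectured $O(1)$ norms — which is the statement being proved. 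Your induction on $n$ for $\tilde t_{n,n}$ has the same problem: expressing $\tilde t_{n+1,n+1}$ as a commutator of lower generators produces quadratic correction terms (e.g. the $e^a_{e;m_1,n_1}e^e_{b;m_2,n_2}$ sums in \eqref{[t21,tmn]}) whose operator norms on $\mathcal H^{(\leq E)}_N$ are not controlled by the inductive hypothesis on diagonal $\tilde t$'s alone. In the $p=1$ proof this is resolved by an explicit identification of $\mathcal H_N$ with $\mathbb C[p_1,\dots,p_N]$ and a term-by-term count of which Wick contractions produce closed loops (factors of $N$); your "Young-diagram-style basis built by ordered monomials plus Gram--Schmidt" is not constructed, and no analogue of the loop-counting estimate is supplied for it. Without that explicit basis and the accompanying combinatorial bound, the proposal remains a plausible program rather than a proof — which is consistent with the authors leaving the $p\geq 2$ case open.
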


If Conjecture \ref{conj3} holds, then together with \textbf{Theorem B}, we will have an emergent $\widehat{\mathfrak{su}}(p)_{k}\oplus \widehat{\mathfrak{u}}(1)_{\frac{p}{k+p}}$ Kac-Moody algebra as well as its representations in the large $N$ quantum matrix model. Moreover for all $p\in \mathbb N_{>0}$, we observe that the central charge of the $\mathfrak{u}(1)$ factor 
\begin{align*}
    c=\lim_{N \to \infty}(n+1)\tilde{t}_{n,n}=\frac{p}{k+p}
\end{align*}
is equal to the filling factor of the corresponding non-Abelian fractional quantum Hall systems \cite{Dorey-Tong-Turner}.

For the $p=1$ case, we justify this claim in Appendix \ref{App C}, where we derive a rescaled Wigner semicircle law (Proposition (\ref{Semi Circle law})) for the ground state wave function of the matrix model. From this semicircle law, we demonstrate that the central charge $c=\frac{1}{k+1}$ of the current algebra $\widehat{\mathfrak{u}}(1)_{\frac{1}{k+1}}$ is equal to the filling factor of the corresponding quantum Hall fluid. For the $p>1$ case, we provide evidence for this claim in Appendix \ref{App C} (see Proposition \ref{Pro C2}).

\subsection*{Organization of the paper}
\hspace{\parindent}In Section \ref{sec: Chern-Simons Matrix Model}, we describe the action of the Chern-Simons matrix model (\ref{Action}) and the gauge symmetry of the model. We point out that this is a Hamiltonian system with constraints.

In Section \ref{sec: Quantum Operators}, we discuss the quantization of the matrix model and give a precise definition of the algebra of quantum observable of the matrix model, which is denoted by $\mathscr{O}_{N}^{(p)}\left(\epsilon_{1}, \epsilon_{2}\right)$. We present a set of generators of the algebra and state our main results for the basic commutation relations for these generators (Theorem \ref{Main theorem}). Using these commutation relations, we show that all commutation relations can be obtained inductively based on the basic commutation relations. Then we consider the large $N$ limit of the commutation relations of the algebra $\mathscr{O}_{N}^{(p)}\left(\epsilon_{1}, \epsilon_{2}\right)$ (Proposition \ref{prop: filtration}). This can be viewed as a weak version for the actual large $N$ limit, i.e. at the level of algebra.

In Section \ref{sec: Hilbert Space}, we consider the representation of the algebra $\mathscr{O}_{N}^{(p)}\left(\epsilon_{1}, \epsilon_{2}\right)$ on the Hilbert space. Restricting to the case $p=1$, we present our final result of the large $N$ limit of the commutation relations in the 
algebra $\mathscr{O}_{N}^{(p=1)}\left(\epsilon_{1}, \epsilon_{2}\right)$, which is Theorem \ref{thm: large N limit} (a strong version of large $N$ limit). Our idea can be summarized as follows. Using a suitable basis (\ref{Basis non re}) of the Hilbert space of the matrix model, which was first proposed in \cite{Hellerman-Raamsdonk}, we show that the operators in $\mathscr{O}_{N}^{(p=1)}\left(\epsilon_{1}, \epsilon_{2}\right)$ becomes differential operators on a polynomials ring. We explicitly compute the order of all generators in $\mathscr{O}_{N}^{(p=1)}\left(\epsilon_{1}, \epsilon_{2}\right)$ in the large $N$ limit, and show that all correction terms have lower order. In the non-Abelian case $p\geq 2$, we state several conjectures on the large $N$ limit, which are Conjectures (\ref{conjecture 4.1}-\ref{conjecture 4.3}).

In Section \ref{sec: Derivation of the main results}, we present the proofs of all our results that are stated in previous sections.

In Appendix \ref{sec: Diagrammatic notations}, we develop a diagrammatic method for performing computations which plays an important role in deriving all these results, especially for Theorem \ref{thm: large N limit}, and also Sections \ref{subsec: Derivation of the main commutation relations}-\ref{subsec: Proof of the large N limit}.

A byproduct of our diagrammatic method is a new diagrammatic proof of the classical Murnagahan-Nakayama rule: a combinatorial rule for the change of basis between the basis of Schur polynomials and the basis of power sum polynomials. The derivation is given in Section \ref{subsec: Derivation of the main commutation relations} and Appendix \ref{App B}. We expect that these diagrammatic methods will be useful for the study of more general matrix models and quantum systems.

In Appendix \ref{App C}, we derive a rescaled Wigner semicircle law for the ground state wave function of the matrix model (Proposition \ref{Semi Circle law}). This derivation relies on the moment method in random matrix theory and Theorem \ref{thm: large N limit}. Consequently, we justify our claim that the central charge of the large $N$ limit algebra is equal to the filling factor of the corresponding quantum Hall fluid.

\section{Chern-Simons Matrix Model}\label{sec: Chern-Simons Matrix Model}
The action of the Chern-Simons matrix model was given in \cite{Poly}:
\begin{align}\label{S}
S = \int dt \frac{B}{2}\mathrm{Tr}\left \{\sum_{m,n=1}^{2}\epsilon_{mn}(\dot{X}_m+i[A_0,X_m])X_n+2\theta A_0-\omega \sum_{m=1}^{2}X_m^2 \right \}+\sum_{a=1}^{p}{\lambda^{\dagger}}^a(i\dot{\lambda}_a-A_0\lambda_a),
\end{align}
This is a gauged matrix model with a gauge group $U(N)$. Here, $X_m, m=1,2$ are two $N \times N$ Hermitian matrices, $p \in \mathbb{Z}$, $\lambda_a, a=1,\cdots,p$ are $N$-vectors, and $A_{0}$ is a $N \times N$ Hermitian matrix representing the time component of the gauge field. The fields $X_{m}, m=1,2$ transform in the adjoint representation of the gauge group $\mathrm{U}(N)$, and the $p$ vectors $\lambda_{a}$ transform in the fundamental representation of the gauge group:
\begin{equation*}
    X_{m}\longrightarrow UX_mU^{\dagger} ,\quad \lambda_a\longrightarrow U\lambda_a,
\end{equation*}
where $U \in \mathrm{U}(N)$. Physically\cite{Susskind}, the system describes $N$ electrons on the plane subjected to an external constant magnetic field $B$. The matrices $X_{m}, m=1,2$ can be interpreted as non-commutative coordinates of the $N$ electrons. The average electron density is $\rho_{0}=\frac{1}{2\pi \theta}$. The gauge group $\mathrm{U}(N)$ can be understood as an area-preserving diffeomorphism symmetry over the non-commutative plane. The Hamiltonian of the gauged matrix model is given by:
\begin{align}
    H=\omega \sum_{m=1}^{2}X^2_{m},
\end{align}
which represents a harmonic potential confining the electrons near the origin of the plane, where $\omega >0$ is the strength of the potential.
Following \cite{Dorey_Tong_Turner-Matrix}, we work with complex variables:
\begin{align}
   \label{Z} Z:=\sqrt{\frac{B}{2}}(X_1+iX_2)\\
   \label{Z+} Z^{\dagger}:=\sqrt{\frac{B}{2}}(X_1-iX_2)
\end{align}
In terms of $Z, Z^{\dagger}$ and modulo some terms that are integrals of total derivatives, the action becomes:    
\begin{equation}\label{Action}
    S=i\int dt \left [ \mathrm{Tr}\left \{Z^{\dagger}(\dot{Z}+i[A_0,Z])-i B\theta A_0+i \omega (Z^{\dagger}Z)\right \}+\sum_{a=1}^{p}{\lambda^{\dagger}}^{a} \left( \dot{\lambda}_a+iA_0\lambda_a\right) \right ]
\end{equation}
The matrix model can be viewed as a Hamiltonian system with constraints which is given by variation with respect to the auxiliary field $A_0$:
\begin{equation}\label{Classical constraints}
    \frac{\delta S}{\delta A_0}=-[Z,Z^{\dagger}]+\epsilon_2{\bf 1}-\sum_{a=1}^{p}\lambda_a{\lambda^{\dagger}}^{a}=0
\end{equation}
Here $\epsilon_2:=B\theta$. We will consider quantization of the matrix model in the next section.

\section{Quantum Operators}\label{sec: Quantum Operators}
\subsection{Operator Algebra}\label{subsec: Operator Algebra}
We now consider quantization of the Chern-Simons matrix model. The basic fields are $N\times N$ matices $Z=(Z^i_j), Z^{\dagger}=(Z^{\dagger i}_j)$, and $N$-vectors $\lambda=(\lambda^i_a), \lambda^{\dagger}=(\lambda^{\dagger a}_i)$. Upon quantization, they obey the canonical quantization conditions:
\begin{align}\label{CQ1}
    [Z^{i}_{j}, {Z^{\dagger}}^{k}_{l}] &= \epsilon_{1} \delta^{i}_{l} \delta^{k}_{j},\qquad 
    [\lambda_{b}^{j}, {\lambda^{\dagger}}^{a}_{i}] = \epsilon_{1} \delta^{j}_{i} \delta^{a}_{b},\qquad [Z^{i}_{j}\  \text{or}\ {Z^{\dagger}}^{i}_{j} ,\lambda_{a}^{k} \ \text{or}\ {\lambda^{\dagger}}^{a}_{k}]=0.
\end{align}
Here $i,j,k\in\{1,\cdots, N\}, \ a,b\in\{1,\cdots, p\}$. $\epsilon_1$ is the quantization parameter.

\begin{definition}
Let $\cA^{(p)}_N(\epsilon_{1})$ be the associative algebra generated by the symbols $\{Z^i_j, Z^{\dagger i}_j, \lambda^i_a, \lambda^{\dagger a}_i \}$ and a central element $\epsilon_1$, subject to the canonical quantization relations \eqref{CQ1}. $\{Z^{\dagger i}_j, \lambda^{\dagger a}_i\}$'s are called creation operators, and $\{Z^i_j, \lambda^i_a\}$ are called annihilation operators. 

\end{definition}

The algebra $\cA^{(p)}_N(\epsilon_{1})$ carries a natural family of $\mathrm{U}(N)$-action. Such action is generated by the quantum moment map $\mu^i_j$: 
\begin{align}\label{eqn: moment map}
    \mu^{i}_{j} :=  (:[Z, Z^{\dagger}]: + \sum_{a=1}^{p} \lambda_{a} {\lambda^{\dagger}}^{a} - \epsilon_{2} {\bf 1})^{i}_{j}=:[Z, Z^{\dagger}]^i_j: + \sum_{a=1}^{p} \lambda_{a}^i {\lambda^{\dagger}}^{a}_j -\epsilon_2 \delta^i_j.
\end{align}
Here $:[Z, Z^{\dagger}]:$ is the normal ordered commutator where all creation operators are placed to the left of all annihilation operators in the product. Explicitly, 
$$
:[Z, Z^{\dagger}]^i_j:=  \sum_k   ({Z^{\dagger}}^k_j Z^i_k-{Z^{\dagger}}^i_k Z^k_j)= \sum_k  (Z^i_k {Z^{\dagger}}^k_j -{Z^{\dagger}}^i_k Z^k_j)-\epsilon_1 N \delta^i_j
$$
i.e., 
$$
:[Z, Z^{\dagger}]:=[Z, Z^{\dagger}]- \epsilon_1 N {\bf 1}. 
$$

\begin{proposition} \label{prop-action} The following commutation relations hold
\begin{align}\label{mu1}
[\mu^{i}_{j},Z^{k}_{l}]&=\epsilon_{1} \delta^{i}_{l}Z^{k}_{j}-\epsilon_{1}\delta^{k}_{j}Z^{i}_{l},\\
\label{mu2}
[\mu^{i}_{j},{Z^{\dagger}}^{k}_{l}]&=\epsilon_{1} \delta^{i}_{l}{Z^{\dagger}}^{k}_{j}-\epsilon_{1}\delta^{k}_{j}{Z^{\dagger}}^{i}_{l},\\
\label{mu3}
[\mu^{i}_{j},\lambda_{a}^{k}]&=-\epsilon_{1} \delta^{k}_{j} \lambda_{a}^{i},\\
\label{mu4}
[\mu^{i}_{j},{\lambda^{\dagger}}^{a}_{k}]&=\epsilon_{1} \delta^{i}_{k} {\lambda^{\dagger}}^{a}_{j}.
\end{align}
\end{proposition}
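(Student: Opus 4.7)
The proof is essentially a direct computation using the Leibniz rule $[AB,C]=A[B,C]+[A,C]B$ together with the canonical commutation relations \eqref{CQ1}. The plan is to first decompose
\[
\mu^i_j = A^i_j + B^i_j - \epsilon_2\,\delta^i_j, \qquad A^i_j := {:}[Z,Z^\dagger]^i_j{:}, \qquad B^i_j := \sum_{a=1}^p \lambda_a^i {\lambda^\dagger}^a_j,
\]
and then exploit the fact (from \eqref{CQ1}) that the $\lambda$'s commute with the $Z$'s. Consequently $B$ contributes nothing to the commutator with $Z^k_l$ or ${Z^\dagger}^k_l$, and $A$ contributes nothing to the commutator with $\lambda_a^k$ or ${\lambda^\dagger}^a_k$. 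The constant term $-\epsilon_2\,\delta^i_j$ is central and may be ignored throughout.

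For \eqref{mu1} and \eqref{mu2} I would write $A^i_j$ in an explicitly normal-ordered form, for instance $A^i_j = \sum_m\bigl(Z^i_m {Z^\dagger}^m_j - {Z^\dagger}^i_m Z^m_j\bigr) - \epsilon_1 N\,\delta^i_j$ (using the identity displayed above \eqref{mu1}), and then apply Leibniz. The scalar shift $-\epsilon_1 N\,\delta^i_j$ is central and drops out. Each of the two remaining terms produces exactly one nontrivial contraction: for $[A^i_j,Z^k_l]$ one gets $Z^i_m[{Z^\dagger}^m_j,Z^k_l] = -\epsilon_1\,\delta^k_j Z^i_l$ from the first term and $[{Z^\dagger}^i_m,Z^k_l]Z^m_j = -\epsilon_1\,\delta^i_l Z^k_j$ from the second (with the minus sign in front), combining into $\epsilon_1\,\delta^i_l Z^k_j-\epsilon_1\,\delta^k_j Z^i_l$ as required. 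The same mechanism yields \eqref{mu2} with $Z^\dagger$ in place of $Z$ and a sign swap from $[Z,Z^\dagger]$ versus $[Z^\dagger,Z]$.

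For \eqref{mu3} and \eqref{mu4} only $B^i_j = \sum_a \lambda_a^i {\lambda^\dagger}^a_j$ contributes. Here $B$ is automatically normal ordered in the convention of the paper (creation operators ${\lambda^\dagger}$ appear on the right after the annihilation $\lambda$ in this matrix element, but the only nontrivial commutator $[\lambda_b^j,{\lambda^\dagger}^a_i]=\epsilon_1\delta^j_i\delta^a_b$ is what we need), and a single application of Leibniz gives $[B^i_j,\lambda_b^k] = \sum_a \lambda_a^i[{\lambda^\dagger}^a_j,\lambda_b^k] = -\epsilon_1\,\delta^k_j\lambda_b^i$ and $[B^i_j,{\lambda^\dagger}^b_k] = \sum_a [\lambda_a^i,{\lambda^\dagger}^b_k]{\lambda^\dagger}^a_j = \epsilon_1\,\delta^i_k {\lambda^\dagger}^b_j$, matching \eqref{mu3}--\eqref{mu4}. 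No genuine obstacle appears; the only thing to watch is index bookkeeping and the sign convention in the normal-ordered commutator, which I would double-check by verifying that $\mu^i_j$ satisfies $[\mu^i_j,\mu^k_l]=\epsilon_1(\delta^i_l\mu^k_j-\delta^k_j\mu^i_l)$ as a consistency check on the formulas once \eqref{mu1}--\eqref{mu4} are in hand.
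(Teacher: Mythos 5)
Your computation is correct and is exactly the "direct computation" the paper invokes without spelling out: split $\mu^i_j$ into the $Z Z^\dagger$ part and the $\lambda\lambda^\dagger$ part, apply the Leibniz rule with the canonical relations \eqref{CQ1}, and note that the central terms drop out. The signs and index contractions all check out, so nothing further is needed.
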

\begin{proof} This follows by a direct computation. 
\end{proof}

As a consequence of this proposition, we find 
\begin{align}\label{moment map commutation rel}
[\mu^i_j, \mu^k_l]=\epsilon_1 (\delta^i_l \mu^k_j- \delta^k_j \mu^i_l)
\end{align}
hold for any $\epsilon_2$. Thus $\mu^{i}_{j}$'s generate the Lie algebra ${\mathfrak{g}}=\mathfrak{u}(N)$ \footnote{More precisely, $\mu^i_j$ satisfies commutation relation \eqref{moment map commutation rel} and the hermitian condition $(\mu^i_j)^{\dagger}=\mu^j_i$, so $\sqrt{-1}\mu^i_j$ generates the $\mathfrak{u}(N)$ Lie algebra}.

\begin{definition}
Let $\cA^{(p)}_N(\epsilon_{1},\epsilon_2)$ denote the algebra obtained by adding a central element $\epsilon_2$ to  $\cA^{(p)}_N(\epsilon_{1})$. Let $J_N$ be the \textbf{left} ideal of $\cA^{(p)}_N(\epsilon_{1},\epsilon_{2})$ generated by $\{\mu^{i}_{j}\}$'s. Define ${\cA^{(p)}_N}(\epsilon_{1},\epsilon_2)^{\mathfrak{g}}$ to be the subalgebra of $\cA^{(p)}_N(\epsilon_{1},\epsilon_2)$ generated by $\mathrm{U}(N)$ invariant elements of $\cA^{(p)}_N(\epsilon_{1},\epsilon_2)$, and let $J_N^{\mathfrak{g}}$ be the linear subspace of $J_N$ spanned by $\mathrm{U}(N)$ invariant elements of $J_N$. 

\end{definition}

Explicitly, we have 
$$
\cA^{(p)}_N(\epsilon_{1},\epsilon_2)^{\mathfrak{g}}=\{\alpha\in \cA^{(p)}_N(\epsilon_{1},\epsilon_2)| [\mu^i_j, \alpha]=0, \forall i,j\}.
$$
Since $[\mu^i_j,-]$ generate an $\mathfrak{u}(N)$-action on $\cA^{(p)}_N(\epsilon_{1},\epsilon_2)$, one can verify that $J_N^{\mathfrak{g}}$ is in fact a two sided ideal of $\cA^{(p)}_N(\epsilon_{1},\epsilon_2)^{\mathfrak{g}}$. 

\begin{definition}
We define the operator algebra $\mathscr{O}^{(p)}_N(\epsilon_{1},\epsilon_{2})$ as the quotient:
$$\mathscr{O}^{(p)}_N(\epsilon_{1},\epsilon_{2}):=\cA^{(p)}_N(\epsilon_{1},\epsilon_2)^{\mathfrak{g}}/J^{\mathfrak{g}}_N.$$
\end{definition}

The algebra $\mathscr{O}^{(p)}_N(\epsilon_{1},\epsilon_{2})$ can be viewed as the algebra of quantum observables of the matrix model. 

\begin{remark}
The finite $N$ algebra $\mathscr O_{N}^{(p)}(\epsilon_1,\epsilon_2)$ is known as the quantization of Nakajima quiver variety \cite{gan2006almost,losev2012isomorphisms} for the ADHM quiver (see the quiver diagram below)
\begin{equation}\label{ADHM quiver}
\begin{tikzpicture}[x={(2cm,0cm)}, y={(0cm,2cm)}, baseline=0cm]
  \node[draw,circle,fill=white] (Gauge) at (0,0) {$N$};
  \node[draw,rectangle,fill=white] (Framing) at (1,0) {$p$};
  \node (Z) at (-.5,0) {\scriptsize $Z$};
  \node (Zdag) at (-.73,0.02) {\scriptsize $Z^{\dagger}$};
 \draw[->] (Gauge.15) -- (Framing.155) node[midway,above] {\scriptsize $\lambda^{\dagger}$};
 \draw[<-] (Gauge.345) -- (Framing.205) node[midway,below] {\scriptsize $\lambda$};

  \draw[->,looseness=5] (Gauge.210) to[out=210,in=150] (Gauge.150);
  \draw[<-,looseness=6] (Gauge.240) to[out=210,in=150] (Gauge.120);

\end{tikzpicture}
\end{equation}
In the language of quantization of Nakajima quiver variety, $\epsilon_1$ is the quantization parameter, and $\epsilon_2$ is the deformation parameter (FI parameter). 
\end{remark}

\subsection{Commutation relations}
We consider the following $\mathrm{U}(N)$-invariant operators as elements of $\mathscr{O}^{(p)}_N(\epsilon_{1},\epsilon_{2})$:
\begin{align}\label{generator e}
e^{a}_{b; n, m} = \frac{1}{\epsilon_{1}}{\lambda^{\dagger}}^{a}\mathrm{Sym}(Z^{n}{Z^{\dagger}}^{m})\lambda_{b} \quad \quad n,m\in \mathbb{Z}_{\geq 0}\\
\label{generator t}t_{n,m} = \frac{1}{\epsilon_{1}}\mathrm{Tr}(\mathrm{Sym}(Z^{n}{Z^{\dagger}}^{m}))  \quad \quad n,m\in \mathbb{Z}_{\geq 0}
\end{align}
where $\mathrm{Sym}$ denote the averaged total symmetrization. They can be defined via the generating functions
\begin{align}
\label{3.10}&\operatorname{Tr} \frac{1}{1-(uZ+vZ^{\dagger})}=\sum_{n,m} \frac{(m+n)!}{n!m!}u^{n}v^{m}\mathrm{Tr}(\mathrm{Sym}(Z^{n}{Z^{\dagger}}^{m}))\\
\label{3.11}&{\lambda^{\dagger}}^{a}\frac{1}{1-(uZ+vZ^{\dagger})}\lambda_{b}=\sum_{n,m} \frac{(m+n)!}{n!m!}u^{n}v^{m}{\lambda^{\dagger}}^{a}\mathrm{Sym}(Z^{n}{Z^{\dagger}}^{m})\lambda_{b}
\end{align}

As we will see, these operators will have a consistent large $N$ limit. One of the main objective of this paper is to understand the commutation relations of these operators in $\mathscr{O}^{(p)}_N(\epsilon_{1},\epsilon_{2})$. The results are summarized as follows:
\begin{theorem}\label{Main theorem}
Let $\epsilon_3=\epsilon_2-p\epsilon_1$. Then we have the following relations in the algebra $\mathscr{O}^{(p)}_N(\epsilon_{1},\epsilon_{2})$ \footnote{We thank Davide Gaiotto for commuting the commutation relations \eqref{[eab10,ecdmn]} and \eqref{[t30,eabmn]} in the case $m=0$ to us, which he found by numerical method. The derivation of these commutation relations using Calogero representations is presented in \cite{Gaiotto-Rapcek-Zhou}.}
\begin{equation}\label{trace of e}
    \sum_{a=1}^{p}e^a_{a;n,m}=\epsilon_3 t_{n,m}
\end{equation}
\begin{align}\label{3.23}
    [e^{a}_{b;0,0},e^{c}_{d;n,m}]=\delta^{c}_{b} e^{a}_{d;n,m}-\delta^{a}_{d} e^{c}_{b;n,m}
\end{align}
\begin{align}
\label{3.24}
[t_{2,0},e^{a}_{b;n,m}]&=2me^{a}_{b;n+1,m-1}\\
\label{3.25}
[t_{1,1},e^{a}_{b;n,m}]&=(m-n)e^{a}_{b;n,m}\\
\label{3.26}
[t_{0,2},e^{a}_{b;n,m}]&=-2ne^{a}_{b;n-1,m+1}
\end{align}
 \begin{align}
     \nonumber&[e^a_{b;1,0},e^c_{d;m,n}]=\delta^c_b e^a_{d;m+1,n}-\delta^a_d e^c_{b;m+1,n}+\frac{n\epsilon_2}{2}\{\delta^c_b e^a_{d;m,n-1}+\delta^a_d e^c_{b;m,n-1}\}\\
     \nonumber&-\epsilon_1\frac{1}{(1+m+n)\binom{m+n}{m}}\sum_{m_1+m_2=m}\sum_{n_1+n_2=n-1}(1+m_1+n_1)\binom{m_1+n_1}{m_1}\binom{m_2+n_2}{m_2}\\
     \nonumber&\cross \{(\delta^c_b e^a_{e;m_1,n_1}e^e_{d;m_2,n_2}+\delta^a_d e^c_{e;m_2,n_2}e^e_{b;m_1,n_1})-(e^a_{d;m_1,n_1}e^c_{b;m_2,n_2}+e^a_{d;m_2,n_2}e^c_{b;m_1,n_1})\}\\
     \label{[eab10,ecdmn]}&-n\epsilon_{1}\delta^c_d e^a_{b;m,n-1}
    \end{align}
    \begin{align}
        \nonumber&[t_{3,0},e^a_{b;m,n}]=3ne^a_{b;m+2,n-1}\\
        \nonumber&+\epsilon_1\frac{1}{(m+n+1)\binom{m+n}{m}}\sum_{m_1+m_2=m}\sum_{n_1+n_2=n}(1+m_1+n_1)\binom{m_1+n_1}{m_1}(3m_2+3)\binom{m_2+1+n_2-2}{m_2+1}\\
    \nonumber&\cross \{e^a_{e;m_1,n_1}e^e_{b;m_2+1,n_2-2}-e^a_{e;m_2+1,n_2-2}e^e_{b;m_1,n_1}\}\\
    \nonumber&-\frac{3}{2}\epsilon_1\epsilon_2\frac{1}{\binom{m+n}{m}}\sum_{m_1+m_2=m}\sum_{n_1+n_2=n-3}(1+m_1+n_1)(1+m_2+n_2)\binom{m_1+n_1}{m_1}\binom{m_2+n_2}{m_2}e^a_{e;m_1,n_1}e^e_{b;m_2,n_2}\\
       \label{[t30,eabmn]} &+\epsilon_2^2\frac{n(n-1)(n-2)}{4}e^a_{b;m,n-3}
    \end{align}
 
    \begin{align}
        \nonumber&[t_{2,1},e^a_{b;m,n}]=(2n-m)e^a_{b;m+1,n}\\
        \nonumber&+\epsilon_1\frac{1}{(m+n+1)\binom{m+n}{m}}\sum_{m_1+m_2=m}\sum_{n_1+n_2=n}(1+m_1+n_1)\binom{m_1+n_1}{m_1}(-2m_1+n_2)\binom{m_2+n_2-1}{m_2}\\
    \nonumber&\cross \{e^a_{e;m_1,n_1}e^e_{b;m_2,n_2-1}-e^a_{e;m_2,n_2-1}e^e_{b;m_1,n_1}\}\\
    \nonumber&+\frac{3}{2}\epsilon_1\epsilon_2\frac{1}{\binom{m+n}{m}}\sum_{m_1+m_2=m-1}\sum_{n_1+n_2=n-2}(1+m_1+n_1)(1+m_2+n_2)\binom{m_1+n_1}{m_1}\binom{m_2+n_2}{m_2}e^a_{e;m_1,n_1}e^e_{b;m_2,n_2}\\
       \label{[t21,eabmn]} &-\epsilon_2^2\frac{mn(n-1)}{4}e^a_{b;m-1,n-2}
    \end{align}

    \begin{align}
        \nonumber&[t_{1,2},e^a_{b;m,n}]=(n-2m)e^a_{b;m,n+1}\\
        \nonumber&+\epsilon_1\frac{1}{(m+n+1)\binom{m+n}{m}}\sum_{m_1+m_2=m}\sum_{n_1+n_2=n}(1+m_1+n_1)\binom{m_1+n_1}{m_1}(-2n_2+m_1)\binom{m_2-1+n_2}{m_2-1}\\
    \nonumber&\cross \{e^a_{e;m_1,n_1}e^e_{b;m_2-1,n_2}-e^a_{e;m_2-1,n_2}e^e_{b;m_1,n_1}\}\\
    \nonumber&-\frac{3}{2}\epsilon_1\epsilon_2\frac{1}{\binom{m+n}{m}}\sum_{m_1+m_2=m-2}\sum_{n_1+n_2=n-1}(1+m_1+n_1)(1+m_2+n_2)\binom{m_1+n_1}{m_1}\binom{m_2+n_2}{m_2}e^a_{e;m_1,n_1}e^e_{b;m_2,n_2}\\
     \label{[t12,eabmn]}   &+\epsilon_2^2\frac{m(m-1)n}{4}e^a_{b;m-2,n-1}
    \end{align}

\begin{align}
        \nonumber&[t_{3,0},t_{m,n}]=3nt_{m+2,n-1}\\
        \nonumber&-\frac{3}{2}\epsilon_1\frac{1}{\binom{m+n}{m}}\sum_{m_1+m_2=m}\sum_{n_1+n_2=n-3}(1+m_1+n_1)(1+m_2+n_2)\binom{m_1+n_1}{m_1}\binom{m_2+n_2}{m_2}e^a_{e;m_1,n_1}e^e_{a;m_2,n_2}\\
        \nonumber&-\frac{3}{2}\epsilon_1^2\epsilon_3\frac{1}{\binom{m+n}{m}}\sum_{m_1+m_2=m}\sum_{n_1+n_2=n-3}(1+m_1+n_1)(1+m_2+n_2)\binom{m_1+n_1}{m_1}\binom{m_2+n_2}{m_2}t_{m_1,n_1}t_{m_2,n_2}\\
    \label{[t30,tmn]}&+(\epsilon_1^2+\epsilon_2\epsilon_3)\frac{n(n-1)(n-2)}{4}t_{m,n-3}
    \end{align}
\begin{align}
    \nonumber&[t_{2,1},t_{m,n}]=(2n-m)t_{m+1,n}\\
    \nonumber&+\frac{3}{2}\epsilon_1\frac{1}{\binom{m+n}{m}}\sum_{m_1+m_2=m-1}\sum_{n_1+n_2=n-2}(1+m_1+n_1)(1+m_2+n_2)\binom{m_1+n_1}{m_1}\binom{m_2+n_2}{m_2}e^a_{e;m_1,n_1}e^e_{a;m_2,n_2}\\
    \nonumber&+\frac{3}{2}\epsilon_1^2\epsilon_3\frac{1}{\binom{m+n}{m}}\sum_{m_1+m_2=m-1}\sum_{n_1+n_2=n-2}(1+m_1+n_1)(1+m_2+n_2)\binom{m_1+n_1}{m_1}\binom{m_2+n_2}{m_2}t_{m_1,n_1}t_{m_2,n_2}\\
    \label{[t21,tmn]}&-(\epsilon_1^2+\epsilon_2\epsilon_3)\frac{mn(n-1)}{4}t_{m-1,n-2}
\end{align}
\begin{align}
    \nonumber&[t_{1,2},t_{m,n}]=(n-2m)t_{m,n+1}\\
    \nonumber&-\frac{3}{2}\epsilon_1\frac{1}{\binom{m+n}{m}}\sum_{m_1+m_2=m-2}\sum_{n_1+n_2=n-1}(1+m_1+n_1)(1+m_2+n_2)\binom{m_1+n_1}{m_1}\binom{m_2+n_2}{m_2}e^a_{e;m_1,n_1}e^e_{a;m_2,n_2}\\
    \nonumber&-\frac{3}{2}\epsilon_1^2\epsilon_3\frac{1}{\binom{m+n}{m}}\sum_{m_1+m_2=m-2}\sum_{n_1+n_2=n-1}(1+m_1+n_1)(1+m_2+n_2)\binom{m_1+n_1}{m_1}\binom{m_2+n_2}{m_2}t_{m_1,n_1}t_{m_2,n_2}\\
    \label{[t12,tmn]}&+(\epsilon_1^2+\epsilon_2\epsilon_3)\frac{m(m-1)n}{4}t_{m-2,n-1}
\end{align}
\begin{align}
        \nonumber&[t_{0,3},t_{m,n}]=-3mt_{m-1,n+2}\\
        \nonumber&+\frac{3}{2}\epsilon_1\frac{1}{\binom{m+n}{m}}\sum_{m_1+m_2=m-3}\sum_{n_1+n_2=n}(1+m_1+n_1)(1+m_2+n_2)\binom{m_1+n_1}{m_1}\binom{m_2+n_2}{m_2}e^a_{e;m_1,n_1}e^e_{a;m_2,n_2}\\
        \nonumber&+\frac{3}{2}\epsilon_1^2\epsilon_3\frac{1}{\binom{m+n}{m}}\sum_{m_1+m_2=m-3}\sum_{n_1+n_2=n}(1+m_1+n_1)(1+m_2+n_2)\binom{m_1+n_1}{m_1}\binom{m_2+n_2}{m_2}t_{m_1,n_1}t_{m_2,n_2}\\
        \label{[t03,tmn]}&-(\epsilon_1^2+\epsilon_2\epsilon_3)\frac{m(m-1)(m-2)}{4}t_{m-3,n}
    \end{align}
    
\end{theorem}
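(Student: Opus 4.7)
The plan is to derive each relation in Theorem \ref{Main theorem} by explicit manipulation of the canonical commutation relations \eqref{CQ1} inside $\cA^{(p)}_N(\epsilon_1,\epsilon_2)$, working modulo the left ideal $J_N$ (which, restricted to $\mathfrak{g}$-invariants, is the two-sided ideal we are quotienting by). The diagrammatic calculus of Appendix \ref{sec: Diagrammatic notations} will be used to organize the combinatorics of the symmetrizer $\mathrm{Sym}(Z^n Z^{\dagger m})$, and the generating-function identities \eqref{3.10}--\eqref{3.11} will package the $e$ and $t$ operators when convenient.

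The trace identity \eqref{trace of e} reduces to the moment-map constraint. Since $\lambda^{\dagger a}$ commutes with $Z$ and $Z^\dagger$, one can pull $\lambda^{\dagger a}_i$ past $\mathrm{Sym}(Z^n Z^{\dagger m})$, leaving $\sum_a \lambda^{\dagger a}_i \lambda_a^j = \sum_a \lambda_a^j \lambda^{\dagger a}_i - p\epsilon_1 \delta^j_i$; because $\mu^j_i\equiv 0$ in the quotient, $\sum_a \lambda_a^j \lambda^{\dagger a}_i \equiv \epsilon_2\delta^j_i - {:}[Z,Z^\dagger]^j_i{:}$, and the residual trace $\mathrm{Tr}(\mathrm{Sym}(Z^n Z^{\dagger m}){:}[Z,Z^\dagger]{:})$ is manipulated via cyclicity $\mathrm{Tr}(X[Z,Z^\dagger])=\mathrm{Tr}([X,Z]Z^\dagger)$ and absorbs cleanly. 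Collecting coefficients yields precisely $\epsilon_3=\epsilon_2-p\epsilon_1$. The $\mathfrak{gl}_p$ action \eqref{3.23} is then immediate from the $\lambda,\lambda^\dagger$ commutators. The quadratic relations \eqref{3.24}--\eqref{3.26} follow from the calculation $[\mathrm{Tr}(Z^2), Z^{\dagger i}_j] = 2\epsilon_1 Z^i_j$ (and its mixed analogues); inside $\mathrm{Sym}(Z^n Z^{\dagger m})$ the sum over the $m$ positions of $Z^\dagger$ collapses via the binomial identity $m\binom{n+m}{n}=(n+1)\binom{n+m}{n+1}$ to give $2\epsilon_1 m\,\mathrm{Sym}(Z^{n+1}Z^{\dagger m-1})$ with no higher-order corrections.

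The cubic relations \eqref{[eab10,ecdmn]}--\eqref{[t03,tmn]} form the main technical content. For $[e^a_{b;1,0}, e^c_{d;m,n}]$, I would expand the commutator as a sum of Wick contractions between the two factors: a $\lambda_b$--$\lambda^{\dagger c}$ contraction supplies the $\delta^c_b$ term; a $\lambda^{\dagger a}$--$\lambda_d$ contraction supplies the $\delta^a_d$ term; a $Z$--$Z^\dagger$ contraction with one of the $n$ internal $Z^\dagger$'s, combined with invoking $\sum_a\lambda_a\lambda^{\dagger a}\equiv\epsilon_2\mathbf 1-{:}[Z,Z^\dagger]{:}$, produces the linear $n\epsilon_2$ piece; and the simultaneous double contraction of a $\lambda$-pair together with a $Z$--$Z^\dagger$ pair produces the bilinear $e\cdot e$ corrections. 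The analysis for $[t_{3,0},-]$, $[t_{2,1},-]$, $[t_{1,2},-]$ on $e$ or $t$ is parallel: these three-letter operators allow up to two internal $Z$--$Z^\dagger$ contractions, which generate the linear $\epsilon_2^2$ terms and the quadratic $\epsilon_1$ bilinears in the stated relations. In the $[t,t]$ cases the trace identity \eqref{trace of e} then converts the $\sum_a e^a_a \cdot e^b_b$ contributions into the $\epsilon_1^2\epsilon_3\, t\cdot t$ terms written in \eqref{[t30,tmn]}--\eqref{[t03,tmn]}.

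The main obstacle will be verifying that the sum over insertion positions inside $\mathrm{Sym}(Z^n Z^{\dagger m})$, weighted by the Wick factors, reproduces precisely the rational coefficient $\tfrac{(1+m_1+n_1)\binom{m_1+n_1}{m_1}\binom{m_2+n_2}{m_2}}{(1+m+n)\binom{m+n}{m}}$ that appears uniformly in the cubic quadratic pieces. The diagrammatic calculus of Appendix \ref{sec: Diagrammatic notations} is designed exactly for this: each Wick pattern becomes a labelled graph, the symmetrizer becomes a uniform average over orderings of the internal $Z,Z^\dagger$ vertices, and the identity should reduce to a finite collection of binomial manipulations. I expect the cross-cancellations in the $[t,t]$ relations between the $e\cdot e$ and $t\cdot t$ pieces, both descending from the same Wick pattern through \eqref{trace of e}, to require the most careful bookkeeping.
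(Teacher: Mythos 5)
Your overall strategy---work in $\cA^{(p)}_N(\epsilon_1,\epsilon_2)$ modulo the moment map, track where each structural piece of the answer comes from (the $\delta$-terms from $\lambda$-contractions, the $\epsilon_2$-terms from replacing $[Z,Z^\dagger]$ by $\epsilon_2-\lambda_e\lambda^{\dagger e}$, the bilinear $\epsilon_1$-terms from double contractions)---is sound, and your treatment of \eqref{trace of e}--\eqref{3.26} is essentially complete: the binomial collapse $m\binom{n+m}{n}=(n+1)\binom{n+m}{n+1}$ is exactly the right mechanism for \eqref{3.24}--\eqref{3.26}, and the trace identity does reduce to showing $\mathrm{Tr}(\mathrm{Sym}(Z^nZ^{\dagger m}):[Z,Z^{\dagger}]:)=0$ (which in the paper is Proposition \ref{Prop-Sym-Relation}; note it vanishes exactly rather than merely ``absorbing'', and the quantum ordering is handled via the ordered commutator together with Lemma \ref{[uZ+vZ,uZ+vZ]=0}).

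The genuine gap is in the cubic relations \eqref{[eab10,ecdmn]}--\eqref{[t03,tmn]}: you correctly identify that the entire difficulty is producing the rational coefficients $\frac{(1+m_1+n_1)\binom{m_1+n_1}{m_1}\binom{m_2+n_2}{m_2}}{(1+m+n)\binom{m+n}{m}}$, but you then only assert that the sum over insertion positions inside $\mathrm{Sym}(Z^nZ^{\dagger m})$ ``should reduce to a finite collection of binomial manipulations'' without supplying the identity that does it. Since these coefficients \emph{are} the content of the theorem, the proof is not complete as written, and a direct position-by-position Wick bookkeeping is substantially harder than it looks: a single contraction splits the symmetrizer into an ordered product of two partial symmetrizers separated by an insertion, and re-expressing that in terms of $\mathrm{Sym}(Z^{m_1}Z^{\dagger n_1})\cdot\mathrm{Sym}(Z^{m_2}Z^{\dagger n_2})$ is precisely the nontrivial step. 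The paper circumvents this entirely with the generating function $F(u,v)=(1-(uZ+vZ^{\dagger}))^{-1}$: the key identities are $\partial_u F=FZF$, $[Z,F]=vF[Z,Z^{\dagger}]F$, and above all $F^{l+1}=(1+\tfrac{u\partial_u+v\partial_v}{l})F^{l}$, so that every product $F^{l_1}\lambda_e\lambda^{\dagger e}F^{l_2}$ appearing after a contraction has a closed-form $u^mv^n$-coefficient, which is exactly where the factors $(1+m_i+n_i)\binom{m_i+n_i}{m_i}$ come from. The paper also derives $[t_{2,1},-]$, $[t_{1,2},-]$, $[t_{0,3},-]$ from the single computation of $[t_{3,0},-]$ by applying the derivation $\mathrm{adj}_{t_{0,2}}+2u\partial_v$ (which annihilates $\lambda^{\dagger a}F^l\lambda_b$), rather than repeating the contraction analysis three more times as your plan would require. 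To close your argument you would either need to prove the analogous combinatorial lemma for split symmetrizers directly, or adopt the generating-function packaging.
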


The derivation of all the above commutation relations will be given in Section 4. Another way to prove these commutation relations is by Calogero representation based on geometric quantization \cite{Gaiotto-Rapcek-Zhou, Hu-Li-Ye-Zhou}.   A formula for the general commutators 
    $$[e^a_{b;n,m},e^c_{d;r,s}]$$ can be derived inductively from the basic commutation relations (\ref{trace of e})-(\ref{[t03,tmn]}). This will be explained in Corollary \ref{corollary induction} with the help of a filtration structure.


\subsection{Large \texorpdfstring{$N$}{N} limit algebra}\label{subsec: large N limit algebra}

We would like to understand the large $N$ limit of the algebra $\mathscr O_{N}^{(p)}(\epsilon_1,\epsilon_2)$. One might have the first instinct that there might be an inverse limit construction such as $\underset{\substack{\longleftarrow\\ N}}{\lim} \mathscr O_{N}^{(p)}(\epsilon_1,\epsilon_2)$. However, it seems that there is no natural transition map $\mathscr O_{N+1}^{(p)}(\epsilon_1,\epsilon_2)\twoheadrightarrow \mathscr O_{N}^{(p)}(\epsilon_1,\epsilon_2)$ when $\epsilon_2\neq 0$, even at the classical level ($\epsilon_1=0$). The obstruction comes from the moment map equation \eqref{eqn: moment map}, since there is no natural way to embed $N\times N$ matrices into $(N+1)\times (N+1)$ matrices while preserving the $\mathrm{U}(N)$ action, and such that the identity matrix ${\bf 1}_N$ is mapped to identity matrix ${\bf 1}_{N+1}$ \footnote{If $\epsilon_2=0$, then we can simply embed the $N\times N$ matrices into the first $N$ rows and first $N$ columns of $(N+1)\times (N+1)$ matrices, and embed $\mathrm{U}(N)$ into $\mathrm{U}(N+1)$ naturally, this induces a surjective map $\mathscr O_{N+1}^{(p)}(0,0)\twoheadrightarrow \mathscr O_{N}^{(p)}(0,0)$.}.


Nevertheless, we are rescued by the peculiar fact that all the relations \eqref{trace of e}-\eqref{[t03,tmn]} are independent of $N$. This enables us to define an algebra by hand, which is independent of $N$ and captures all the relations \eqref{trace of e}-\eqref{[t03,tmn]}.
\begin{definition}\label{def: large N limit algebra}
We define the large $N$ limit algebra $\mathscr O_{\infty}^{(p)}(\epsilon_1,\epsilon_2)$ to be the $\mathbb C[\epsilon_1,\epsilon_2]$-algebra generated by ${e}^{a}_{b; n, m}$ and $t_{n,m}$ for all $(n,m)\in \mathbb N^2$ and $1\le a,b\le p$, subject to relations \eqref{trace of e}-\eqref{[t03,tmn]}. We also introduce the generators
\begin{align*}
    J^{a}_{b;n,m}:={e}^{a}_{b; n, m} - \frac{\epsilon_{3}}{p} \delta^{a}_{b} t_{n, m}.
\end{align*}
These $J$ operators are related to the $\tilde J$ operators in the introduction by a rescaling, see \eqref{rescaling of J and t} below.
\end{definition}

By definition, the large $N$ limit algebra $\mathscr O_{\infty}^{(p)}(\epsilon_1,\epsilon_2)$ admits a surjective algebra map 
$$\mathscr O_{\infty}^{(p)}(\epsilon_1,\epsilon_2)\twoheadrightarrow \mathscr O_{N}^{(p)}(\epsilon_1,\epsilon_2)
$$ 
for all $N\in \mathbb N_{>0}$.

\begin{remark}
In the work of Costello \cite{Costello}, a uniform in $N$ algebra, denoted by $\mathscr O_{\bullet}^{(p)}(\epsilon_1,\epsilon_2)$, was defined as the $\mathbb C[\epsilon_1,\epsilon_2]$-subalgebra of the product $\prod_{N\in \mathbb N} \mathscr O_{N}^{(p)}(\epsilon_1,\epsilon_2)$ generated by elements
\begin{align*}
    \dot{e}^a_{b;n,m}:=\left(e^a_{b;n,m}\in \mathscr O_{N}^{(p)}(\epsilon_1,\epsilon_2)\right)_{N=1}^{\infty},\quad \dot{t}_{n,m}:=\left(t_{n,m}\in \mathscr O_{N}^{(p)}(\epsilon_1,\epsilon_2)\right)_{N=1}^{\infty}.
\end{align*}
It was conjectured by Costello in \cite{Costello} and proven in \cite{Gaiotto-Rapcek-Zhou} that $\mathscr O_{\bullet}^{(p)}(\epsilon_1,\epsilon_2)$ is isomorphic to the deformed double current algebra studied by Guay \cite{guay2007affine}. It follows from definition that there exists a surjective algebra homomorphism $\mathscr O_{\infty}^{(p)}(\epsilon_1,\epsilon_2)\twoheadrightarrow \mathscr O_{\bullet}^{(p)}(\epsilon_1,\epsilon_2)$, mapping ${e}^a_{b;n,m}$ to $\dot{e}^a_{b;n,m}$ and ${t}_{n,m}$ to $\dot{t}_{n,m}$. The proof of Lemma \ref{lem: PBW basis} below implies that this map is actually injective, thus our large $N$ limit algebra $\mathscr O_{\infty}^{(p)}(\epsilon_1,\epsilon_2)$ is isomorphic to the uniform in $N$ algebra $\mathscr O_{\bullet}^{(p)}(\epsilon_1,\epsilon_2)$. An explicit isomorphism between $\mathscr O_{\infty}^{(p)}(\epsilon_1,\epsilon_2)$ and Guay's deformed double current algebra is presented in \cite{Gaiotto-Rapcek-Zhou}.\\
\end{remark}

\begin{remark}
It will be useful to rewrite \eqref{[eab10,ecdmn]} in terms of $J$ generators introduced above. 
\begin{equation}\label{[Jab10,Jcdmn]}
\begin{split}
&[J^a_{b;1,0},J^c_{d;m,n}]=\delta^c_b J^a_{d;m+1,n}-\delta^a_d J^c_{b;m+1,n}+n\{\delta^a_d\delta^c_b-\frac{1}{p}\delta^c_d\delta^a_b\}\frac{\epsilon_2\epsilon_3}{p}t_{m,n-1}\\
&+\frac{n\epsilon_2}{2}\{\delta^c_b J^a_{d;m,n-1}+\delta^a_d J^c_{b;m,n-1}\}-\frac{n}{p}\{\epsilon_2\delta^c_d J^a_{b;m,n-1}+\epsilon_2\delta^a_bJ^c_{d;m,n-1}\}\\
&-\epsilon_1\frac{1}{(1+m+n)\binom{m+n}{m}}\sum_{m_1+m_2=m}\sum_{n_1+n_2=n-1}(1+m_1+n_1)\binom{m_1+n_1}{m_1}\binom{m_2+n_2}{m_2}\times\\
&\{(\delta^c_b J^a_{e;m_1,n_1}J^e_{d;m_2,n_2}+\delta^a_d J^c_{e;m_2,n_2}J^e_{b;m_1,n_1})-(J^a_{d;m_1,n_1}J^c_{b;m_2,n_2}+J^a_{d;m_2,n_2}J^c_{b;m_1,n_1})\}.
\end{split}
\end{equation}
Equality (\ref{[Jab10,Jcdmn]}) can also be derived using the generating function. This will be shown by (\ref{[Jab10,Jcd]}).
\end{remark}

Motivated by \cite{Dorey_Tong_Turner-Matrix}, we conjecture that the action of the generators $J^{a}_{b; n, m}$ and $t_{m,n}$ (in the algebra $\mathscr{O}^{(p)}_N(\epsilon_{1},\epsilon_{2})$) on the finite $N$ Hilbert space of the matrix model has the asymptotic behavior as $O(\sqrt{N}^{m+n})$ and $O(\sqrt{N}^{m+n+2\delta_{m,n}})$ respectively in the large $N$ limit (Conjecture (\ref{conjecture 4.1})). Further details regarding this point will be discussed in Section \ref{sec 4.2.2}. We will show that this conjecture is true for the Abelian case $p=1$ as a corollary of Theorem (\ref{thm: large N limit}).

Motivated by these rescaling, it turns out that in the algebra $\mathscr O^{(p)}_{\infty}(\epsilon_1,\epsilon_2)$, a certain scaling limit can be defined (Section (\ref{subsec scaling limit})). We will show that a Kac-Moody algebra emerges in the large $N$ limit as a scaling limit of a modified algebra $\widetilde{\mathscr O}_{\infty}^{(p)}(\epsilon_1,\epsilon_2)$ (Theorem (\ref{cor: conj1 implies conj2})).   

It will be useful to define the following degree on the generators: $$\mathrm{deg}(J^{a}_{b; n,m}) = n + m,\quad  \mathrm{deg}(t_{n, m}) = n + m + 2,\quad \deg \epsilon_1=\deg\epsilon_2=0.$$ It induces a grading on the free algebra $$\mathscr O^{(p)}_{\mathrm{free}}:=\mathbb C\langle\epsilon_1,\epsilon_2, J^a_{b;n,m},t_{n,m}:(n,m)\in \mathbb N^2,1\le a,b\le p\rangle$$ by letting the degree for a monomial in $\{J^a_{b,m,n}, t_{m,n}\}$
be the sum of degrees of its components. The grading on $\mathscr O^{(p)}_{\mathrm{free}}$ induces a filtration
$$F_0\mathscr O^{(p)}_{\mathrm{free}}\subset F_1\mathscr O^{(p)}_{\mathrm{free}}\subset F_2\mathscr O^{(p)}_{\mathrm{free}}\subset \cdots\subset \mathscr O^{(p)}_{\mathrm{free}},\quad F_i\mathscr O^{(p)}_{\mathrm{free}}:=\bigoplus_{j=0}^i\mathscr O^{(p)}_{\mathrm{free}}[j],$$
where $\mathscr O^{(p)}_{\mathrm{free}}[j]$ is the homogeneous degree $j$ component of $\mathscr O^{(p)}_{\mathrm{free}}$. Since $\mathscr O_{\infty}^{(p)}(\epsilon_1,\epsilon_2)$ is a quotient of $\mathscr O^{(p)}_{\mathrm{free}}$, it inherits the filtration from $F_{\bullet}\mathscr O^{(p)}_{\mathrm{free}}$ by
\begin{align*}
    F_i\mathscr{O}^{(p)}_{\infty}(\epsilon_{1},\epsilon_{2}):=\text{image of }F_i\mathscr O^{(p)}_{\mathrm{free}}.
\end{align*}
The algebra $\mathscr{O}^{(p)}_{\infty}(\epsilon_{1},\epsilon_{2})$ is not graded with respect to the above degree assignment, but it has the following crucial property.
\begin{proposition}\label{prop: filtration}
Commutators in $\mathscr{O}^{(p)}_{\infty}(\epsilon_{1},\epsilon_{2})$ can be written as
\begin{align}\label{6.1}
  [J^{a}_{b;n,m}, J^{c}_{d;r,s}]\equiv & \delta^{c}_{b}J^{a}_{d;n+r,m+s}-\delta^{a}_{d}J^{c}_{b;n+r,m+s}+ \{\delta^a_d\delta^c_b-\frac{1}{p}\delta^c_d\delta^a_b\} (ns-mr) \frac{\epsilon_{2} \epsilon_{3}}{p} t_{n + r -1, m+ s-1}\\\nonumber
  &\pmod{F_{n+m+r+s-1}\mathscr{O}^{(p)}_{\infty}(\epsilon_{1},\epsilon_{2})} ,\\
  \label{6.2}
  [t_{n, m}, J^{a}_{b;r,s}] \equiv & (ns - mr) J^{a}_{b;n+r-1,m+s-1} \pmod{F_{n+m+r+s-3}\mathscr{O}^{(p)}_{\infty}(\epsilon_{1},\epsilon_{2})} ,\\
  \label{6.3}
  [t_{n, m}, t_{r, s}] \equiv & (ns - mr) t_{n + r -1, m + s -1} \pmod{F_{n+m+r+s-1}\mathscr{O}^{(p)}_{\infty}(\epsilon_{1},\epsilon_{2})} .
\end{align}
\end{proposition}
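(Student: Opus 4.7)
The plan is to prove Proposition \ref{prop: filtration} by first verifying it directly for all commutators explicitly computed in Theorem \ref{Main theorem}, and then propagating it to arbitrary indices by induction using the Jacobi identity.

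First I would carry out the direct verification. For each of the relations \eqref{3.23}--\eqref{3.26}, \eqref{[eab10,ecdmn]}--\eqref{[t12,eabmn]}, and \eqref{[t30,tmn]}--\eqref{[t03,tmn]}, I would read off the top-degree contribution and compare it, via the dictionary $e^{a}_{b;n,m} = J^{a}_{b;n,m} + \frac{\epsilon_{3}}{p}\delta^{a}_{b} t_{n,m}$, against the right-hand sides of \eqref{6.1}--\eqref{6.3}. A straightforward degree count shows two things: the leading terms of each relation match the proposed formula exactly, and every correction term—whether a quadratic $J\!\cdot\! J$ or $t\!\cdot\! t$ piece weighted by $\epsilon_{1}$, a linear term weighted by $\epsilon_{1}\epsilon_{2}$, or a single generator weighted by $\epsilon_{2}^{2}$—lies in filtration degree strictly below the cutoff claimed by the proposition. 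This handles every commutator in which at least one operand belongs to the ``atomic'' set $\{J^{a}_{b;0,0},\, J^{a}_{b;1,0},\, J^{a}_{b;0,1},\, t_{n,m}:n+m\le 3\}$.

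For general indices I would run a double induction, outer on the total degree $n+m+r+s$ and inner on the first index of the larger operand. The outer step uses that a ``top'' operator $J^{a}_{b;N,0}$ can be generated from operators of strictly smaller total degree: the leading part of $[J^{a}_{x;1,0}, J^{x}_{b;N-1,0}]$ contains $J^{a}_{b;N,0}$ by \eqref{[Jab10,Jcdmn]} with $s=0$, and analogously for $t_{N,0}$. The inner step uses the shift relations
\[
J^{a}_{b;n,m} = -\tfrac{1}{2(n+1)}\bigl[t_{0,2},\, J^{a}_{b;n+1,m-1}\bigr]\quad (m\ge 1),\qquad t_{n,m} = -\tfrac{1}{2(n+1)}\bigl[t_{0,2},\, t_{n+1,m-1}\bigr]\quad (m\ge 1),
\]
coming from \eqref{3.26} and its $t$-analogue, to build $J^{a}_{b;n,m}$ for $n+m=N$ successively from $J^{a}_{b;N,0}$. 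Applying Jacobi, $[[B,C], X] = [B,[C,X]] - [C,[B,X]]$, then reduces any $[J^{a}_{b;n,m}, X]$ or $[t_{n,m}, X]$ modulo the appropriate filtration to commutators already handled, and a routine combinatorial check verifies that the leading contributions reassemble into the right-hand sides of \eqref{6.1}--\eqref{6.3}.

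The main obstacle is confirming that the formulas on the right-hand sides of \eqref{6.1}--\eqref{6.3} are self-consistent under Jacobi, i.e.\ that applying Jacobi to the predicted leading commutators produces no unexpected top-degree contributions. The cleanest way to see this is to recognize that these leading formulas are exactly the structure constants of the Lie algebra on $\mathcal{O}(\mathbb{C}^{2})\otimes \mathfrak{gl}_{p}$ appearing in \textbf{Theorem B}, with bracket
\[
\bigl\{z^{n}w^{m}\otimes A,\ z^{r}w^{s}\otimes B\bigr\} = z^{n+r}w^{m+s}\otimes [A,B] + (ns-mr)\,z^{n+r-1}w^{m+s-1}\otimes \kappa(A,B)\cdot{\bf 1}_{p},
\]
for which Jacobi is a direct algebraic check. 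Once Jacobi is known at the symbol level, the inductive step only needs the degree accounting from the direct verification; the strict inequalities for the correction terms in \eqref{[eab10,ecdmn]}--\eqref{[t03,tmn]} guarantee that nothing of unexpected top degree is generated when Jacobi is applied to atomic commutators.
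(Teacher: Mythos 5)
Your proposal is correct and follows essentially the same route as the paper: verify \eqref{6.1}--\eqref{6.3} directly for the commutators computed explicitly in Theorem \ref{Main theorem}, then propagate to all indices by induction via the Jacobi identity together with exact ladder identities (the paper uses the adjoint actions of $t_{2,1}$ and $t_{0,2}$; your $[J^{a}_{x;1,0},J^{x}_{b;N-1,0}]=p\,J^{a}_{b;N,0}$ is an equivalent raising step), with the degree bookkeeping showing all correction terms sit strictly below the stated filtration level. One small caveat: the ``analogous'' raising step for $t_{N,0}$ cannot be the trace of the $J$-contraction (there the central coefficient $(ns-mr)=1\cdot 0-0\cdot(N-1)=0$ and $J^{x}_{x;\bullet}=0$), so for \eqref{6.3} you must fall back on $[t_{2,1},t_{N-1,0}]=-(N-1)t_{N,0}$ exactly as the paper does --- a cosmetic fix, not a gap.
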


\begin{proof}

According to relations (\ref{[Jab10,Jcdmn]}), (\ref{[t30,eabmn]})-(\ref{[t12,eabmn]}) and (\ref{[t30,tmn]})-(\ref{[t03,tmn]}), the equation \ref{6.1} is true for $(n,m) = (1,0)$ and all $(r, s)$, hence also true for $(n,m) = (0,1)$ and all $(r, s)$.  \ref{6.2} and \ref{6.3} are true for
$m+n \le 3$ and all $(r,s)$. 

Next we proceed by induction on $n+m$. According to what we have discussed, \ref{6.1} is true for
$n+m \le 1$ and all $(r,s)$. Assume that \ref{6.1} is true for
$n+m \le q$ and all $(r,s)$, then
\begin{align*}
    &[J^a_{b;q+1,0},J^c_{d;r,s}]=-\frac{1}{q}[[t_{2,1},J^a_{b;q,0}],J^c_{d;r,s}]=-\frac{1}{q}([t_{2,1},[J^a_{b;q,0},J^c_{d;r,s}]]-[J^a_{b;q,0},[t_{2,1},J^c_{d;r,s}]])\\
    &=-\frac{1}{q}[t_{2,1},\delta^c_bJ^a_{d;q+r,s}-\delta^a_dJ^c_{b;q+r,s}+\{\delta^a_d\delta^c_b-\frac{1}{p}\delta^c_d\delta^a_b\}sq\frac{\epsilon_2\epsilon_3 }{p}t_{q+r-1,s-1}+\text{terms in }F_{q+r+s-1}\mathscr{O}^{(p)}_{\infty}(\epsilon_{1},\epsilon_{2})]\\
    &~ +\frac{1}{q}[J^a_{b;q,0},(2s-r)J^c_{d;r+1,s}+\text{terms in }F_{r+s}\mathscr{O}^{(p)}_{\infty}(\epsilon_{1},\epsilon_{2})]\\
    &= \delta^c_b J^a_{d;q+1+r,s}-\delta^a_d J^c_{b;q+1+r,s}+\{\delta^a_b\delta^c_b-\frac{1}{p}\delta^c_d\delta^a_b\}(q+1)s\frac{\epsilon_2\epsilon_3 }{p}t_{q+r,s-1}+\text{terms in }F_{q+r+s}\mathscr{O}^{(p)}_{\infty}(\epsilon_{1},\epsilon_{2}),
\end{align*}
so \ref{6.1} is true for $(n,m) = (q+1,0)$ and all $(r,s)$. Using the adjoint action of $t_{0,2}$, we see that \ref{6.1} is true
for $n+m \le q + 1$ and all $(r,s)$. By induction on $s$, we see that \ref{6.1} is true for all $n, m, r, s$.

According to what we have discussed, \ref{6.2} is true for
$n+m \le 3$ and all $(r,s)$. Assume that \ref{6.2} is true for
$n+m \le q$ and all $(r,s)$, then
\begin{align*}
    &[t_{q+1,0},J^a_{b;r,s}]=-\frac{1}{q}[[t_{2,1},t_{q,0}],J^a_{b;r,s}]=-\frac{1}{q}([t_{2,1},[t_{q,0},J^a_{b;r,s}]]-[t_{q,0},[t_{2,1},J^a_{b;r,s}]])\\
    &=-\frac{1}{q}[t_{2,1},qsJ^a_{b;q+r-1,s-1}+\text{terms in }F_{q+r+s-3}\mathscr{O}^{(p)}_{\infty}(\epsilon_{1},\epsilon_{2})]+\frac{1}{q}[t_{q,0},(2s-r)J^a_{b;r+1,s}\\
    &~+\text{terms in }F_{r+s}\mathscr{O}^{(p)}_{\infty}(\epsilon_{1},\epsilon_{2})]\\
    &= (q+1)sJ^a_{b;q+r,s-1}+\text{terms in }F_{q+r+s-2}\mathscr{O}^{(p)}_{\infty}(\epsilon_{1},\epsilon_{2}),
\end{align*}
so \ref{6.2} is true for $(n,m) = (q+1,0)$ and all $(r,s)$. Using the adjoint action of $t_{0,2}$, we see that \ref{6.2} is true
for $n+m \le q + 1$ and all $(r,s)$. By induction on $q$, we see that \ref{6.2} is true for all $n, m, r, s$.

According to what we have discussed, \ref{6.3} is true for
$n+m \le 3$ and all $(r,s)$. Assume that \ref{6.3} is true for
$n+m \le q$ and all $(r,s)$, then
\begin{align*}
    &[t_{q+1,0},t_{r,s}]=-\frac{1}{q}[[t_{2,1},t_{q,0}],t_{r,s}]=-\frac{1}{q}([t_{2,1},[t_{q,0},t_{r,s}]]-[t_{q,0},[t_{2,1},t_{r,s}]])\\
    &=-\frac{1}{q}[t_{2,1},qst_{q+r-1,s-1}+\text{terms in }F_{q+r+s-1}\mathscr{O}^{(p)}_{\infty}(\epsilon_{1},\epsilon_{2})]+\frac{1}{q}[t_{q,0},(2s-r)t_{r+1,s}\\
    &~+\text{terms in }F_{r+s+2}\mathscr{O}^{(p)}_{\infty}(\epsilon_{1},\epsilon_{2})]\\
    &= (q+1)st_{q+r,s-1}+\text{terms in }F_{q+r+s}\mathscr{O}^{(p)}_{\infty}(\epsilon_{1},\epsilon_{2}),
\end{align*}
so \ref{6.3} is true for $(n,m) = (q+1,0)$ and all $(r,s)$. Using the adjoint action of $t_{0,2}$, we see that \ref{6.3} is true
for $n+m \le q + 1$ and all $(r,s)$. By induction on $q$, we see that \ref{6.3} is true for all $n, m, r, s$.

\end{proof}

\begin{corollary} \label{corollary induction}
    A formula for the general commutators 
    $$[e^a_{b;n,m},e^c_{d;r,s}], \quad [t_{n,m},e^c_{d;r,s}], \quad [t_{n,m},t_{r,s}]$$ can be derived inductively from the basic commutation relations (\ref{trace of e})-(\ref{[t03,tmn]}).
\end{corollary}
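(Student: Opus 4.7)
The plan is to proceed by strong induction on the filtration degree of the first argument, using the convention $\deg(e^a_{b;n,m}) := n+m$ and $\deg(t_{n,m}) := n+m+2$ from Section \ref{subsec: large N limit algebra}. The seeds of the induction are the generators whose commutators against every $e^c_{d;r,s}$ and $t_{r,s}$ are tabulated in Theorem \ref{Main theorem}: namely $e^a_{b;0,0}, e^a_{b;1,0}$ via (\ref{3.23}) and (\ref{[eab10,ecdmn]}), and $t_{2,0}, t_{1,1}, t_{0,2}, t_{3,0}, t_{2,1}, t_{1,2}, t_{0,3}$ via (\ref{3.24})--(\ref{3.26}) and (\ref{[t30,eabmn]})--(\ref{[t03,tmn]}). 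The remaining low-degree generators are folded into the seed list by short identities: $e^a_{b;0,1} = -\tfrac{1}{2}[t_{0,2}, e^a_{b;1,0}]$, read off from (\ref{3.26}), reduces commutators with $e^a_{b;0,1}$ to commutators with seeds via one Jacobi step, and the trace identity $\epsilon_3 t_{n,m} = \sum_a e^a_{a;n,m}$ of (\ref{trace of e}) reduces $[t_{n,m}, X]$ for $n+m \leq 1$ to commutators of the form $[e^a_{a;n,m}, X]$, which are seed base cases.

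For the inductive step, fix a non-seed generator $A$ with $d := \deg(A) \geq 2$, and suppose $[A', X]$ is known for every $X$ whenever $\deg(A') < d$. The key observation is that (\ref{[t30,eabmn]})--(\ref{[t03,tmn]}) can be rearranged to produce a \emph{lifting}
$$A \;=\; \tfrac{1}{c}\,[B, C] \;+\; R,$$
where $c \neq 0$, $B$ is a seed, $\deg(C) < d$, and $R$ is a polynomial whose factors all have degree strictly less than $d$. For instance, if $A = e^a_{b;n,m}$ with $n \geq 2$, applying (\ref{[t30,eabmn]}) with arguments $(m', n') = (n-2, m+1)$ yields $e^a_{b;n,m} = \tfrac{1}{3(m+1)}[t_{3,0}, e^a_{b;n-2,m+1}] + R$; the cases $(n,m) = (1, m\geq 1)$ and $(0, m \geq 2)$ are handled analogously by (\ref{[t21,eabmn]}) and (\ref{[t12,eabmn]}) respectively, and $t_{n,m}$ with $n+m \geq 2$ is lifted similarly by one of (\ref{[t30,tmn]})--(\ref{[t03,tmn]}). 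A short case analysis on $(n,m)$ shows that at least one such lifting with nonzero denominator is always available, and the essential control $\deg(R) < d$ is exactly the content of Proposition \ref{prop: filtration}.

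Given the lifting, decompose $[A, X] = \tfrac{1}{c}\,[[B, C], X] + [R, X]$. Expand the first piece by Jacobi as $[B, [C, X]] - [C, [B, X]]$; each of the four resulting brackets is either a seed base case (whenever $B$ appears on the outside) or falls under the induction hypothesis (whenever $C$ appears on the outside, since $\deg(C) < d$). Expand the second piece by the Leibniz rule $[PQ, X] = P[Q, X] + [P, X]Q$ applied to each monomial of $R$; every factor has degree strictly less than $d$, so every commutator that arises is already covered by the induction hypothesis. The only nontrivial bookkeeping task is verifying that the tails really do lie in strictly lower filtration degree, which is precisely the filtration control provided by Proposition \ref{prop: filtration}; no further input beyond Theorem \ref{Main theorem} and the Jacobi/Leibniz identities is required.
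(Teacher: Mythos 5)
Your strategy is essentially the paper's: the published proof is a one-line pointer back to the proof of Proposition \ref{prop: filtration}, which performs exactly the induction you describe --- express a higher generator as a commutator of a tabulated degree-three $t$ against a lower-degree generator, expand by Jacobi, reduce the commutators with the polynomial tails by Leibniz, and use the filtration control of Proposition \ref{prop: filtration} to guarantee termination. The only cosmetic difference is that the paper raises indices exclusively through $\mathrm{ad}_{t_{2,1}}$ and $\mathrm{ad}_{t_{0,2}}$, whereas you allow any of $t_{3,0},t_{2,1},t_{1,2}$ as the lifting operator; this changes nothing of substance.

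There is, however, one inaccuracy in your base-case bookkeeping that you should repair. You list $t_{2,0},t_{1,1},t_{0,2}$ among the seeds ``whose commutators against every $e^c_{d;r,s}$ \emph{and} $t_{r,s}$ are tabulated in Theorem \ref{Main theorem}'' --- but \eqref{3.24}--\eqref{3.26} only give their brackets with the $e$'s; the brackets $[t_{2,0},t_{r,s}]$, $[t_{1,1},t_{r,s}]$, $[t_{0,2},t_{r,s}]$ appear nowhere in \eqref{trace of e}--\eqref{[t03,tmn]}. Relatedly, your reduction of $[t_{n,m},X]$ for $n+m\le 1$ via \eqref{trace of e} only yields $\epsilon_3[t_{n,m},X]=\sum_a[e^a_{a;n,m},X]$; extracting $[t_{n,m},X]$ from this requires cancelling $\epsilon_3=\epsilon_2-p\epsilon_1$, which is not invertible in $\mathbb C[\epsilon_1,\epsilon_2]$ and is only known to act without torsion on $\mathscr O^{(p)}_{\infty}(\epsilon_1,\epsilon_2)$ after Lemma \ref{lem: PBW basis}. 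To make the argument self-contained you should either (i) add the low-degree brackets $[t_{n,m},t_{r,s}]$ with $n+m\le 2$ to the list of identities verified directly in $\mathscr O^{(p)}_N(\epsilon_1,\epsilon_2)$ (they follow from short Wick computations, exactly as the degree-three relations do), or (ii) explicitly invoke the $\epsilon_3$-torsion-freeness. I note that the paper's own proof of Proposition \ref{prop: filtration} asserts the $m+n\le 3$ base cases for \eqref{6.3} without addressing $m+n\le 2$ either, so this is a shared blind spot rather than a divergence from the paper's method; but since you spelled the base cases out explicitly, the overstatement becomes visible and should be fixed.
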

\begin{proof}
    From the proof of the above proposition. The lower degree terms in (\ref{6.1})-(\ref{6.3}) can be obtained inductively.
\end{proof}

\begin{remark}
    Although, in principle, a formula for the general commutator $[e^a_{b;n,m},e^c_{d;r,s}]$ can be derived inductively from the basic commutation relations (\ref{trace of e})-(\ref{[t03,tmn]}), we do not have a closed formula in general.
\end{remark}

\begin{remark}\label{rmk: PBW generators}
In view of Proposition \ref{prop: filtration}, we see that $\mathscr{O}^{(p)}_{\infty}(\epsilon_{1},\epsilon_{2})$ admits PBW-type generators (as $\mathbb C[\epsilon_1,\epsilon_2]$-module). More precisely, let us define a set
\begin{align}
    \mathfrak G(\mathscr{O}^{(p)}_{\infty}):=\{J^a_{b;n,m},t_{n,m}\:|(n,m)\in \mathbb N^2\}
\end{align}
and fix a total order $\preceq$ on the set $\mathfrak G(\mathscr{O}^{(p)}_{\infty})$. Then define the set of ordered monomials
\begin{align}
    \mathfrak B(\mathscr{O}^{(p)}_{\infty}):=\{1\}\cup \{\mathcal O_1\cdots\mathcal O_n\:|\: n\in \mathbb N_{>0},\mathcal O_1\preceq\cdots \preceq\mathcal O_n\in \mathfrak G(\mathscr{O}^{(p)}_{\infty})\}.
\end{align}
Then we claim that the natural $\mathbb C[\epsilon_1,\epsilon_2]$-module map from the free $\mathbb C[\epsilon_1,\epsilon_2]$-module with basis $\mathfrak B(\mathscr{O}^{(p)}_{\infty})$ to $\mathscr{O}^{(p)}_{\infty}(\epsilon_{1},\epsilon_{2})$ is surjective. This can be shown inductively for each $F_i\mathscr{O}^{(p)}_{\infty}(\epsilon_{1},\epsilon_{2})$. For $F_0\mathscr{O}^{(p)}_{\infty}(\epsilon_{1},\epsilon_{2})$, it is generated as a $\mathbb C[\epsilon_1,\epsilon_2]$-module by monomials in $J^a_{b;0,0}$, which is a quotient of universal enveloping algebra $U(\mathfrak{sl}_p)$, thus $F_0\mathscr{O}^{(p)}_{\infty}(\epsilon_{1},\epsilon_{2})$ is spanned by elements in $\mathfrak B(\mathscr{O}^{(p)}_{\infty})$ by the usual PBW theorem for the universal enveloping algebra of Lie algebra. Assume that $F_i\mathscr{O}^{(p)}_{\infty}(\epsilon_{1},\epsilon_{2})$ is generated by elements in $\mathfrak B(\mathscr{O}^{(p)}_{\infty})$, then every monomial in $F_{i+1}\mathscr{O}^{(p)}_{\infty}(\epsilon_{1},\epsilon_{2})$ can be rearranged into non-decreasing order with respect to the total order $\preceq$, at the cost of adding extra terms in $F_i\mathscr{O}^{(p)}_{\infty}(\epsilon_{1},\epsilon_{2})$ (by Proposition \ref{prop: filtration}). Thus $F_{i+1}\mathscr{O}^{(p)}_{\infty}(\epsilon_{1},\epsilon_{2})$ is also generated by elements in $\mathfrak B(\mathscr{O}^{(p)}_{\infty})$.
\end{remark}

\begin{lemma}\label{lem: PBW basis}
The $\mathbb C[\epsilon_1,\epsilon_2]$-module map $\mathbb C[\epsilon_1,\epsilon_2]\cdot\mathfrak B(\mathscr{O}^{(p)}_{\infty})\to \mathscr{O}^{(p)}_{\infty}(\epsilon_{1},\epsilon_{2})$ is an isomorphism.
\end{lemma}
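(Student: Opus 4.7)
The plan is to establish injectivity, since surjectivity is already shown in Remark \ref{rmk: PBW generators}. I would proceed in three steps: reduce to the associated graded via the filtration, pass to finite $N$ via the canonical projections, and then invoke classical invariant theory for the ADHM quiver at large $N$.

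First, by Proposition \ref{prop: filtration}, every commutator among the generators $J^{a}_{b;n,m}$ and $t_{n,m}$ strictly decreases the filtration degree, so the associated graded $\mathrm{gr}_F \mathscr{O}^{(p)}_{\infty}(\epsilon_{1},\epsilon_{2})$ is a commutative $\bC[\epsilon_{1},\epsilon_{2}]$-algebra. Under the symbol map $\mathrm{gr}_F$, the ordered monomials in $\mathfrak B(\mathscr{O}^{(p)}_{\infty})$ become ordinary commutative monomials in the top-degree parts of the generators, and the claim reduces to the linear independence of these monomials in $\mathrm{gr}_F \mathscr{O}^{(p)}_{\infty}$ over $\bC[\epsilon_{1},\epsilon_{2}]$. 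By specialization on the parameters, it is enough to verify linear independence after setting $(\epsilon_{1},\epsilon_{2})=(0,c)$ for each $c\in\bC^{\times}$.

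Second, since the defining relations \eqref{trace of e}--\eqref{[t03,tmn]} of $\mathscr{O}^{(p)}_{\infty}$ all hold in each $\mathscr{O}^{(p)}_{N}$, one obtains canonical surjections $\mathscr{O}^{(p)}_{\infty}(\epsilon_{1},\epsilon_{2})\twoheadrightarrow \mathscr{O}^{(p)}_{N}(\epsilon_{1},\epsilon_{2})$ compatible with $F_{\bullet}$. At $\epsilon_{1}=0$, the algebra $\mathscr{O}^{(p)}_{N}(0,c)$ is the classical coordinate ring $\mathcal{O}(\cM_{N}^{(p)})$ of the ADHM Nakajima quiver variety \cite{gan2006almost}, and the classical symbols of our generators become the standard $\mathrm{GL}_N$-invariants $\mathrm{Tr}(\mathrm{Sym}(Z^{n}{Z^{\dagger}}^{m}))$ and ${\lambda^{\dagger}}^{a}\mathrm{Sym}(Z^{n}{Z^{\dagger}}^{m})\lambda_{b}$, whose trace-free combinations give the $J$-symbols.

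Third, one invokes the first fundamental theorem of invariant theory for the ADHM representation: for $N$ sufficiently large relative to a fixed degree bound $d$, the only relation among our chosen generators in filtration degree $\le d$ is the universal trace relation $\sum_{a} J^{a}_{a;0,0}=0$ from \eqref{trace of e}; all further relations are Cayley--Hamilton-type and only appear in degree strictly greater than $N$. Thus any nonzero finite linear combination of ordered monomials has nonzero image in $\mathcal{O}(\cM_{N}^{(p)})$ for $N$ large enough, contradicting the hypothesis that it already vanished in $\mathscr{O}^{(p)}_{\infty}$. This forces every coefficient $c_{M}$ to be zero, completing the proof.

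The main obstacle is Step 3: establishing the free-in-bounded-degree property for the specific symmetrized invariants used here, as opposed to some other generating set for $\mathcal{O}(\cM_{N}^{(p)})$. One needs to rule out the possibility that averaging via $\mathrm{Sym}$ introduces unexpected low-degree relations beyond the trace relation, which should follow from the usual polarization and symmetrization arguments together with the PBW decomposition of the invariant ring in large rank. An alternative strategy, which sidesteps the invariant-theoretic bookkeeping, is to exhibit an explicit faithful representation of $\mathscr{O}^{(p)}_{\infty}$ on a suitable inverse or direct limit of Calogero--Moser-type representations, on which the ordered monomials act as manifestly linearly independent operators.
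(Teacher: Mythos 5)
Your overall strategy---reduce to the classical specialization $\epsilon_{1}=0$, push down to the finite-$N$ algebras, and invoke a ``no relations in bounded degree for large $N$'' statement from invariant theory---is the same route the paper takes, but as written the proposal has three genuine gaps. First, Step 1 is incorrect: by Proposition \ref{prop: filtration} the leading term of $[J^{a}_{b;n,m},J^{c}_{d;r,s}]$ is $\delta^{c}_{b}J^{a}_{d;n+r,m+s}-\delta^{a}_{d}J^{c}_{b;n+r,m+s}+(\cdots)t_{n+r-1,m+s-1}$, which has filtration degree exactly $n+m+r+s$, equal to that of the product; so commutators do \emph{not} strictly drop the degree, $\mathrm{gr}_{F}$ is not commutative (it is the enveloping algebra of a graded Lie-type object), and the reduction to ``linear independence of commutative monomials'' does not go through with this filtration. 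Second, the specialization step is not justified: if $\sum_{M}a_{M}(\epsilon)\mathcal O_{M}=0$ and the images at $(\epsilon_{1},\epsilon_{2})=(0,c)$ are independent for all $c$, you only conclude $\epsilon_{1}\mid a_{M}$; to divide out $\epsilon_{1}$ and iterate you need $\mathscr{O}^{(p)}_{\infty}$ (or its image in $\prod_{N}\mathscr O^{(p)}_{N}$) to be $\epsilon_{1}$-torsion-free, which is not known a priori for an algebra presented by generators and relations. The paper handles exactly this point by first mapping into $\prod_{N}\mathscr O^{(p)}_{N}(\epsilon_{1},\epsilon_{2})$ and using that each $\mathscr O^{(p)}_{N}$ is flat over $\bC[\epsilon_{1}]$ (\cite[Lemma 15.0.1]{Costello}), then running the divisibility argument $\ker(\mathrm{pr})\subset\bigcap_{k}\epsilon_{1}^{k}\,\bC[\epsilon_{1},\epsilon_{2}]\cdot\mathfrak B(\mathscr{O}^{(p)}_{\infty})=0$.

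Third, and most importantly, the input you call ``the first fundamental theorem for the ADHM representation'' is not what you need: the FFT produces \emph{generators} of the invariant ring, whereas the claim that a nonzero ordered monomial combination of degree $\le d$ survives in $\mathscr O^{(p)}_{N}(0,\epsilon_{2})$ once $N\ge d$ is a statement about the \emph{absence of relations} among the specific symmetrized invariants, which is precisely \cite[Proposition 15.0.2]{Costello}. You correctly flag this as the main obstacle, but the proof you sketch does not supply it, and ``polarization and symmetrization arguments'' do not obviously rule out low-degree relations introduced by the $\mathrm{Sym}$ averaging. Since this is the entire content of the lemma beyond formal bookkeeping, the proposal as it stands does not constitute a proof; either cite the Costello result explicitly (as the paper does) or carry out the faithful-representation alternative you mention at the end.
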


\begin{proof}
According to Remark \ref{rmk: PBW generators}, it remains to show that $\mathbb C[\epsilon_1,\epsilon_2]\cdot\mathfrak B(\mathscr{O}^{(p)}_{\infty})\to \mathscr{O}^{(p)}_{\infty}(\epsilon_{1},\epsilon_{2})$ is injective. Consider the $\mathbb C[\epsilon_1,\epsilon_2]$-algebra map $\mathscr O_{\infty}^{(p)}(\epsilon_1,\epsilon_2)\to \prod_{N\in\mathbb N_{>0}} \mathscr O_{N}^{(p)}(\epsilon_1,\epsilon_2)$, then it is enough to show that the composition 
\begin{align*}
    \mathrm{pr}:\mathbb C[\epsilon_1,\epsilon_2]\cdot\mathfrak B(\mathscr{O}^{(p)}_{\infty})\to \mathscr O_{\infty}^{(p)}(\epsilon_1,\epsilon_2)\to \prod_{N\in\mathbb N_{>0}} \mathscr O_{N}^{(p)}(\epsilon_1,\epsilon_2)
\end{align*}
is injective. We claim that modulo $\epsilon_1$, $p$ is injective, i.e.
\begin{align*}
    \overline{\mathrm{pr}}:\mathbb C[\epsilon_2]\cdot\mathfrak B(\mathscr{O}^{(p)}_{\infty})\to \mathscr O_{\infty}^{(p)}(0,\epsilon_2)\to \prod_{N\in\mathbb N_{>0}} \mathscr O_{N}^{(p)}(0,\epsilon_2)
\end{align*}
is injective. This claim follows from \cite[Proposition 15.0.2]{Costello}. In fact, let $\overline{\mathrm{pr}}_N$ be the restriction of $\overline{\mathrm{pr}}$ to the $N$-th component in the infinite product, i.e. $\mathscr O_{N}^{(p)}(0,\epsilon_2)$, then $\overline{\mathrm{pr}}_N$ maps our generator $e^a_{b;n,m}$ to $\mathrm{Tr}_{\mathbb C^N}(IE_{ba}JB_1^nB_2^m)$ in the notation of \emph{loc. cit.} \footnote{The notations for the gauge and framing dimensions of \cite{Costello} are related to our notations as follows. Our $p$ is $N$ in the \emph{loc. cit.}, and our $N$ is $K$ in the \emph{loc. cit.}, and we do not have odd dimension in the framing node, this means setting $M=0$ in the \emph{loc. cit.}.}. Now assume that $f\in \mathbb C[\epsilon_2]\cdot\mathfrak B(\mathscr{O}^{(p)}_{\infty})$ such that $\overline{\mathrm{pr}}(f)=0$, schematically written as 
\begin{align*}
    f=\sum_{I}a_I\mathcal O_I,\quad a_I\in \mathbb C[\epsilon_2],\quad \mathcal O_I=\mathcal O_{i_1}\cdots\mathcal O_{i_r},\quad \mathcal O_{i_1}\preceq\cdots\preceq\mathcal O_{i_r},
\end{align*}
where we sum over finitely many index sets $I=(i_1,\cdots,i_r)$. Let $N$ be $\max_{I}|I|$, in other word, $N$ is the degree of $f$ as a polynomial of element in $\mathfrak{G}(\mathscr{O}^{(p)}_{\infty})$. Since $\overline{\mathrm{pr}}_N(f)=0$, Proposition 15.0.2 of \emph{loc. cit.} asserts that all coefficients $a_I$ are zero, thus $f=0$.

Next, we leverage the injectivity result from $\overline{\mathrm{pr}}$ to $\mathrm{pr}$. Suppose that $g\in \mathbb C[\epsilon_1,\epsilon_2]\cdot\mathfrak B(\mathscr{O}^{(p)}_{\infty})$ and such that $\mathrm{pr}(g)=0$, then the injectivity of $\overline{\mathrm{pr}}$ implies that $g$ is divisible by $\epsilon_1$, i.e. $\exists g_1\in \mathbb C[\epsilon_1,\epsilon_2]\cdot\mathfrak B(\mathscr{O}^{(p)}_{\infty})$ such that $g=\epsilon_1g_1$. We claim that $\mathrm{pr}(g_1)$ is also zero. In fact, let $\mathrm{pr}_N$ be the projection of $\mathrm{pr}$ to the $N$-th component $\mathscr O_{N}^{(p)}(\epsilon_1,\epsilon_2)$, since $\epsilon_1\mathrm{pr}_N(g_1)=0$, and $\mathscr O_{N}^{(p)}(\epsilon_1,\epsilon_2)$ is flat over $\mathbb C[\epsilon_1]$ according to \cite[Lemma 15.0.1]{Costello}, we conclude that $\mathrm{pr}_N(g_1)=0$. This holds for arbitrary $N\in \mathbb N_{>0}$, thus $\mathrm{pr}(g_1)=0$. We can keep applying the above argument, and we see that 
\begin{align*}
    \ker(\mathrm{pr})\subset \bigcap_{k}\epsilon_1^k \mathbb C[\epsilon_1,\epsilon_2]\cdot\mathfrak B(\mathscr{O}^{(p)}_{\infty})=0,
\end{align*}
in other words $\mathrm{pr}$ is injective. This concludes the proof.
\end{proof}

\subsection{Scaling limit}\label{subsec scaling limit}
In this subsection, we would like to understand the commutation relations in a certain scaling limit. Namely, later we will conjecture that the rescaled operators \eqref{eqn: rescaled operators} converge in the $N\to \infty$ limit as operators acting on the Hilbert space of the Chern-Simons matrix model (Conjecture \ref{conjecture 4.1}). In this paper, we only prove Conjecture \ref{conjecture 4.1} in the Abelian case $p=1$. Nevertheless, we can still understand the algebra $\mathscr O^{(p)}_{\infty}(\epsilon_1,\epsilon_2)$ in the scaling limit \eqref{eqn: rescaled operators}, which is independent from the Hilbert space of the matrix model.

The first thing we notice is that $t_{0,0}=N$ in the scaling limit \eqref{eqn: rescaled operators}, so we may replace $N$ by $t_{0,0}$. Since $t_{0, 0}$ is central in $\mathscr{O}^{(p)}_{\infty}(\epsilon_{1},\epsilon_{2})$, we may regard $\mathscr{O}^{(p)}_{\infty}(\epsilon_{1},\epsilon_{2})$ as a $\mathbb C[\epsilon_{1}, \epsilon_{2}, t_{0, 0}]$-algebra. We add the inverse square roots $t^{-\frac{1}{2}}_{0,0}$ and $\epsilon_2^{-\frac{1}{2}}$ to the algebra $\mathscr{O}^{(p)}_{\infty}(\epsilon_{1},\epsilon_{2})$, and consider the $\mathbb C[\epsilon_{1}, \epsilon_{2}^{\pm \frac{1}{2}}, t^{- \frac{1}{2}}_{0,0}]$-subalgebra $$\widetilde{\mathscr{O}}^{(p)}_{\infty}(\epsilon_{1},\epsilon_{2}) \subset \mathscr{O}^{(p)}_{\infty}(\epsilon_{1},\epsilon_{2})[t^{-\frac{1}{2}}_{0,0},\epsilon_2^{-\frac{1}{2}}]$$ generated by
\begin{align}\label{rescaling of J and t}
    \tilde{J}^{a}_{b;n, m} = \left(\frac{pt_{0, 0}}{\epsilon_2}\right)^{- \frac{n+m}{2}} J^{a}_{b;n, m},\quad \tilde{t}_{n, m} = \left(\frac{pt_{0, 0}}{\epsilon_2}\right)^{- \frac{n+m}{2}-\delta_{n,m}} t_{n, m}.
\end{align}
Working with the rescaled operators \eqref{rescaling of J and t}, taking the $N\to \infty$ limit is equivalent to modulo $t_{0,0}^{-\frac{1}{2}}$.

\begin{theorem}\label{cor: conj1 implies conj2}
$\widetilde{\mathscr{O}}^{(p)}_{\infty}(\epsilon_{1},\epsilon_{2})$ is a free $\mathbb C[\epsilon_{1},  \epsilon_{2}^{\pm \frac{1}{2}}, t^{- \frac{1}{2}}_{0,0}]$-module, and 
$\widetilde{\mathscr{O}}^{(p)}_{\infty}(\epsilon_{1},\epsilon_{2})/(t^{-\frac{1}{2}}_{0,0})$ is isomorphic to the universal enveloping algebra of the Lie algebra 
$ {\cal O}({\mathbb C}^{2})\otimes \mathfrak{gl}_{p} $ with Lie bracket
\begin{align}\label{Lie bracket}
    [f\otimes A, g\otimes B] =fg\otimes [A, B] +\{f, g\}\otimes  \kappa(A, B)\cdot{\bf{1}}_p
    ,
\end{align}
where $\{-, -\}$ is the bilinear pairing on $\mathcal O(\mathbb C^2)=\mathbb{C}[z,w]$ such that 
$$\{z^nw^m,z^rw^s\}=\delta_{n-m,s-r}(ns-mr)z^{n+r-1}w^{m+s-1}$$
and $\kappa(-,-)$ is the invariant bilinear form on $\mathfrak{gl}_p$ given by
\begin{align}
    \kappa(A,B)=\begin{cases}
        \frac{\epsilon_2\epsilon_3}{p}\mathrm{Tr}(AB), & A\in \mathfrak{sl}_p,\\
        \frac{1}{p}\mathrm{Tr}(AB), & A\in \mathbb{C} {\bf 1}_{p}.
    \end{cases}
\end{align}
The isomorphism is given by ${\tilde{J}}^{a}_{b;n, m}\mapsto z^{n} w^{m}\otimes J^{a}_{b}$ and $\tilde{t}_{n,m}\mapsto z^{n} w^{m}\otimes {\bf 1}_{p} $. Here $J^a_b\in \mathfrak{sl}_p$ is the trace-free part of the elementary matrix $E^a_b$, i.e. $J^a_b=E^a_b-\frac{1}{p}\delta^a_b {\bf 1}_{p}$, and ${\bf 1}_{p}$ is the identity $p\times p$ matrix. 

Moreover there is a surjective Lie algebra map from ${\cal O}({\mathbb C}^{2})\otimes \mathfrak{gl}_{p}$
to the affine Lie algebra $\widehat{\mathfrak{sl}}(p)_{ \epsilon_{3}}\oplus \widehat{\mathfrak{gl}}(1)_{\frac{p}{\epsilon_2}}$,
where the latter has generators $\tilde J^{a}_{b;n}, \alpha_{m}, (m \ne 0)$, and Lie brackets 
\begin{align}
    [\tilde J^{a}_{b;n}, \tilde J^{c}_{d;m}] &= \delta^c_b\tilde J^{a}_{d;n+m}-\delta^a_d\tilde J^{c}_{b;n+m} + \epsilon_{3} n \delta_{n, -m}  \left(\delta^a_d\delta^c_b-\frac{1}{p}\delta^c_d\delta^a_b\right),\\
    [\alpha_{n}, \alpha_{m}] &= \frac{p}{\epsilon_2} n \delta_{n, -m} .
\end{align}
The map is given by
\begin{align}
    \tilde{J}^a_{b;n,m} \mapsto \tilde J^{a}_{b;n-m},\quad {\tilde{t}}_{n,m} \mapsto \alpha_{n-m},\quad \tilde{t}_{n,n} \mapsto  \frac{1}{n+1} \frac{p}{\epsilon_2}.
\end{align}
\end{theorem}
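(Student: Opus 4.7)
My plan is to prove the three claims — freeness of $\widetilde{\mathscr O}^{(p)}_{\infty}(\epsilon_1,\epsilon_2)$, the identification of its degeneration with a universal enveloping algebra, and the surjection to the affine Lie algebra — in that order, building on the PBW basis from Lemma \ref{lem: PBW basis} and the filtered commutation relations of Proposition \ref{prop: filtration}. The central mechanism is that the filtration drop in Proposition \ref{prop: filtration} translates, after rescaling by $\rho:=pt_{0,0}/\epsilon_2$, into a $\rho$-exponent drop of at least $\tfrac12$, so every correction in \eqref{6.1}–\eqref{6.3} carries a factor divisible by $t_{0,0}^{-1/2}$ and dies in the quotient.

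For freeness, I transport the PBW basis of Lemma \ref{lem: PBW basis} through the rescaling. Note $\tilde t_{0,0}=\epsilon_2/p$ is already scalar, and for $(n,m)\neq(0,0)$ the identifications $J^a_{b;n,m}=\rho^{(n+m)/2}\tilde J^a_{b;n,m}$ and $t_{n,m}=\rho^{(n+m)/2+\delta_{n,m}}\tilde t_{n,m}$ are unit rescalings in $\mathbb C[\epsilon_2^{\pm 1/2},t_{0,0}^{\pm 1/2}]$. Ordered monomials in $\{\tilde J^a_{b;n,m},\tilde t_{n,m}\colon (n,m)\neq(0,0)\}$ are therefore $\mathbb C[\epsilon_1,\epsilon_2^{\pm 1/2},t_{0,0}^{-1/2}]$-linearly independent inside $\mathscr O^{(p)}_{\infty}[t_{0,0}^{-1/2},\epsilon_2^{-1/2}]$. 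A direct inspection of the rescaled forms of \eqref{6.1}–\eqref{6.3} shows that every bracket coefficient lies in $\mathbb C[\epsilon_1,\epsilon_2^{\pm 1/2},t_{0,0}^{-1/2}]$, so the subalgebra is genuinely defined over the prescribed base ring; spanning follows by induction on filtration exactly as in Remark \ref{rmk: PBW generators}.

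For the degeneration limit I rescale \eqref{6.1}–\eqref{6.3} by $\rho^{-(n+m+r+s)/2}$ and extract the terms surviving mod $t_{0,0}^{-1/2}$. In \eqref{6.1}, the top-degree operator terms rescale cleanly to $\delta^c_b\tilde J^a_{d;n+r,m+s}-\delta^a_d\tilde J^c_{b;n+r,m+s}$, while the coefficient of $t_{n+r-1,m+s-1}$ acquires an extra $\rho$-exponent $-1+\delta_{n+r-1,m+s-1}$; this is non-negative precisely when $n+r=m+s$, reproducing the Kronecker $\delta_{n+r,m+s}$ in $\{z^nw^m,z^rw^s\}$. Every monomial of total degree $d<n+m+r+s$ has $\rho$-weight at most $d/2$, so all $F_{n+m+r+s-1}$-corrections rescale to a coefficient of $\rho$-weight $\leq -\tfrac12$ and vanish in the quotient. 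Hence
\begin{align*}
[\tilde J^a_{b;n,m},\tilde J^c_{d;r,s}]\equiv{}&\delta^c_b\tilde J^a_{d;n+r,m+s}-\delta^a_d\tilde J^c_{b;n+r,m+s}\\
&+\delta_{n+r,m+s}(ns-mr)\tfrac{\epsilon_2\epsilon_3}{p}\bigl(\delta^c_b\delta^a_d-\tfrac{1}{p}\delta^a_b\delta^c_d\bigr)\tilde t_{n+r-1,m+s-1}
\end{align*}
modulo $t_{0,0}^{-1/2}$; analogous computations from \eqref{6.2}, \eqref{6.3} yield $[\tilde t_{n,m},\tilde J^a_{b;r,s}]\equiv 0$ (its leading term picks up a $\rho^{-1-\delta_{n,m}}$ factor) and $[\tilde t_{n,m},\tilde t_{r,s}]\equiv\delta_{n+r,m+s}(ns-mr)\tilde t_{n+r-1,m+s-1}$. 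Matching with the prescribed bracket on $\mathcal O(\mathbb C^2)\otimes\mathfrak{gl}_p$ — using $[J^a_b,J^c_d]=\delta^c_b J^a_d-\delta^a_d J^c_b$, $\kappa(J^a_b,J^c_d)=\tfrac{\epsilon_2\epsilon_3}{p}(\delta^c_b\delta^a_d-\tfrac{1}{p}\delta^a_b\delta^c_d)$, $\kappa(\mathbf 1_p,\mathbf 1_p)=1$, $\kappa(J^a_b,\mathbf 1_p)=0$ — gives a $\mathbb C[\epsilon_1,\epsilon_2^{\pm 1/2}]$-algebra surjection $U(\mathcal O(\mathbb C^2)\otimes\mathfrak{gl}_p)\twoheadrightarrow\widetilde{\mathscr O}^{(p)}_{\infty}/(t_{0,0}^{-1/2})$, which is bijective on PBW bases and therefore an isomorphism.

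For the surjective Lie map to $\widehat{\mathfrak{sl}}(p)_{\epsilon_3}\oplus\widehat{\mathfrak{gl}}(1)_{p/\epsilon_2}$, I define the map on generators as stated and check bracket-preservation directly. The arithmetic identity $(ns-mr)/(n+r)=n-m$ whenever $n+r=m+s$, combined with $\tilde t_{n+r-1,m+s-1}\mapsto \tfrac{p}{(n+r)\epsilon_2}$, converts the central term $\tfrac{\epsilon_2\epsilon_3}{p}(ns-mr)(\delta^c_b\delta^a_d-\tfrac{1}{p}\delta^a_b\delta^c_d)$ into $\epsilon_3(n-m)(\delta^a_d\delta^c_b-\tfrac{1}{p}\delta^c_d\delta^a_b)$, reproducing the $\widehat{\mathfrak{sl}}(p)_{\epsilon_3}$ central term exactly; the same arithmetic turns $[\tilde t,\tilde t]$ into the $\widehat{\mathfrak{gl}}(1)_{p/\epsilon_2}$ central term, and the mixed bracket vanishes on both sides because $\kappa(J^a_b,\mathbf 1_p)=0$. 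Surjectivity is immediate since the image contains all $\tilde J^a_{b;k}$ and $\alpha_k$. The main technical obstacle I anticipate is the systematic verification in Step 3 that every $F_{n+m+r+s-1}$-correction arising from \eqref{6.1}–\eqref{6.3} rescales to a coefficient of $\rho$-weight at most $-\tfrac12$ and hence lies in the ideal $(t_{0,0}^{-1/2})$; this reduces to the elementary inequality that a monomial of degree $d$ carries $\rho$-weight at most $d/2$, with the gap propagating linearly in each induction step of Proposition \ref{prop: filtration}.
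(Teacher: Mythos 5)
Your proposal is correct and follows essentially the same route as the paper: it uses the filtered commutators of Proposition \ref{prop: filtration} to show that all correction terms acquire a factor of $t_{0,0}^{-\frac{1}{2}}$ after rescaling, derives freeness from Lemma \ref{lem: PBW basis} via localization/torsion-freeness over $\mathbb C[t_{0,0}^{-\frac{1}{2}}]$, and matches PBW bases to upgrade the surjection from $U(\mathcal O(\mathbb C^2)\otimes\mathfrak{gl}_p)$ to an isomorphism. Your explicit $\rho$-weight bookkeeping and the verification of the central terms in the affine surjection merely spell out what the paper summarizes schematically in \eqref{scaled commutators} and dismisses as a direct computation.
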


\begin{proof}
Proposition \ref{prop: filtration} implies that the commutators are schematically of the form
\begin{equation}\label{scaled commutators}
\begin{split}
[A_{n,m},B_{p,q}]=&[A,B]_{n+p,m+q}+\delta_{n+p,m+q}\kappa(A,B)(nq-mp)\tilde t_{n+p-1,m+q-1}\\
&+t_{0,0}^{-\frac{1}{2}}\cdot (\text{Polynomial in }t_{0,0}^{-\frac{1}{2}},\tilde J,\tilde t).
\end{split}
\end{equation}
where $A,B$ are $\tilde J$ or $\tilde t$ and we regard $\tilde t$ as the diagonal $\mathfrak{gl}_1$ part of $\mathfrak{gl}_p$. From \eqref{scaled commutators} we conclude that
\begin{enumerate}
    \item $\widetilde{\mathscr{O}}^{(p)}_{\infty}(\epsilon_{1},\epsilon_{2})$ is spanned as an $\mathbb C[\epsilon_{1}, \epsilon_{2}^{\pm \frac{1}{2}}, t^{- \frac{1}{2}}_{0,0}]$-module by monomials in $\tilde J$ and $\tilde t$.
    \item $\widetilde{\mathscr{O}}^{(p)}_{\infty}(\epsilon_{1},\epsilon_{2})/(t_{0,0}^{-\frac{1}{2}})$ is a quotient of the universal enveloping algebra of $\mathcal O(\mathbb C^2)\otimes \mathfrak{gl}_p$ with the Lie brackets \eqref{Lie bracket}.
\end{enumerate}
To show the freeness of $\widetilde{\mathscr{O}}^{(p)}_{\infty}(\epsilon_{1},\epsilon_{2})$ over the base ring $\mathbb C[\epsilon_{1}, \epsilon_{2}^{\pm \frac{1}{2}}, t^{- \frac{1}{2}}_{0,0}]$, let us consider the set of generators
\begin{align}
    \mathfrak G(\widetilde{\mathscr{O}}^{(p)}_{\infty}):=\{\tilde J^a_{b;n,m},\tilde t_{r,s}\:|\: (a,b)\in \{1,\cdots,p\}^2\setminus (p,p),(n,m)\in \mathbb N^2,(r,s)\in\mathbb N^2\setminus (0,0)\},
\end{align}
and fix a total order $\preceq$ on the set $\mathfrak G(\widetilde{\mathscr{O}}^{(p)}_{\infty})$, then define the set of ordered monomials
\begin{align}
    \mathfrak B(\widetilde{\mathscr{O}}^{(p)}_{\infty}):=\{1\}\cup \{\mathcal O_1\cdots\mathcal O_n\:|\: n\in \mathbb N_{>0},\mathcal O_1\preceq\cdots \preceq\mathcal O_n\in \mathfrak G(\widetilde{\mathscr{O}}^{(p)}_{\infty})\}.
\end{align}
The natural $\mathbb C[\epsilon_{1}, \epsilon_{2}^{\pm \frac{1}{2}}, t^{- \frac{1}{2}}_{0,0}]$-module map from the free $\mathbb C[\epsilon_1,\epsilon_2^{\pm \frac{1}{2}},t^{- \frac{1}{2}}_{0,0}]$-module with basis $\mathfrak B(\widetilde{\mathscr{O}}^{(p)}_{\infty})$ to $\widetilde{\mathscr{O}}^{(p)}_{\infty}(\epsilon_{1},\epsilon_{2})$ is surjective, by the same argument as the Remark \ref{rmk: PBW generators}. It remains to show that the natural map $\mathbb C[\epsilon_1,\epsilon_2^{\pm \frac{1}{2}},t^{- \frac{1}{2}}_{0,0}]\cdot\mathfrak B(\widetilde{\mathscr{O}}^{(p)}_{\infty})\to \widetilde{\mathscr{O}}^{(p)}_{\infty}(\epsilon_{1},\epsilon_{2})$ is injective. According to Lemma \ref{lem: PBW basis}, the above map is injective after localization to $\mathbb C[\epsilon_1,\epsilon_2^{\pm \frac{1}{2}},t^{\pm\frac{1}{2}}_{0,0}]$, thus it is injective before localization, because $\mathbb C[\epsilon_1,\epsilon_2^{\pm \frac{1}{2}},t^{- \frac{1}{2}}_{0,0}]\cdot\mathfrak B(\widetilde{\mathscr{O}}^{(p)}_{\infty})$ is a free module over the ring $\mathbb C[t^{-\frac{1}{2}}_{0,0}]$. Therefore $\widetilde{\mathscr{O}}^{(p)}_{\infty}(\epsilon_{1},\epsilon_{2})$ is a free $\mathbb C[\epsilon_1,\epsilon_2^{\pm \frac{1}{2}},t^{- \frac{1}{2}}_{0,0}]$-module with basis $\mathfrak B(\widetilde{\mathscr{O}}^{(p)}_{\infty})$.

Since the image of $\mathfrak B(\widetilde{\mathscr{O}}^{(p)}_{\infty})$ in the universal enveloping algebra $U({\cal O}({\mathbb C}^{2}) \otimes \mathfrak{gl}_{p})$ forms a basis by the usual PBW theorem for the Lie algebra, we see that the surjective map $U({\cal O}({\mathbb C}^{2}) \otimes \mathfrak{gl}_{p})\twoheadrightarrow \widetilde{\mathscr{O}}^{(p)}_{\infty}(\epsilon_{1},\epsilon_{2})/(t_{0,0}^{-\frac{1}{2}})$ is an isomorphism.

Finally, the surjective map to $\widehat{\mathfrak{sl}}(p)_{ \epsilon_{3}}\oplus \widehat{\mathfrak{gl}}(1)_{\frac{p}{\epsilon_2}}$ follows by a direct computation.
\end{proof}

\section{Hilbert Space}\label{sec: Hilbert Space}

\subsection{Fock space and Physical space}

Now we consider the representation of quantum operators of Chern-Simons matrix model with matters. 

Let ${\cal F}$ denote the Fock space generated from the vacuum $\ket{0}$ by applying creation operators $Z^{\dagger}$ and $\lambda^{\dagger}$
to $\ket{0}$. We consider the representation of $\cA_N^{(p)}(\epsilon_1, \epsilon_2)$ at level $k$, which means the central element $\epsilon_2$ takes the value
$$
\epsilon_2= k+p. 
$$
The $\mathfrak{u}(N)$ generators then become 
\begin{align}\label{mu}
    \mu^{i}_{j}=:[Z, Z^{\dagger}]:^{i}_{j} + \sum_{a=1}^{p} \lambda_{a}^{i} {\lambda^{\dagger}}^{a}_{j} - (k +  p) \delta^{i}_{j}.
\end{align}
We define the physical states $\ket{\mathrm{Phys}}$ to be those elements of $\cal F$ subject
to the Gauss law constraints: 
\begin{equation}\label{QMM}
\begin{aligned}
    (:[Z, Z^{\dagger}]: + \sum_{a=1}^{p} \lambda_{a} {\lambda^{\dagger}}^{a} - (k+p) {\bf 1}) \ket{\mathrm{Phys}}=0.
\end{aligned}
\end{equation}
The condition (\ref{QMM}) is the gauge invariance condition of the system, also known as the anomaly free condition, where the gauge group is $\mathrm{U}(N)$.



The $\mathrm{U}(N)$ gauge invariant condition \eqref{QMM} can be separated into $\mathrm{U}(1)$ and $\mathrm{SU}(N)$ parts.
The $\mathrm{U}(1)$ part is by taking the trace of $\mu^{i}_{j}$, which gives
\begin{align*}
\sum_{a=1}^{p}\sum_{i=1}^{N} \lambda^{i}_{a}{\lambda^{\dagger}}^{a}_{i}\ket{\mathrm{Phys}} = (k + p)N\ket{\mathrm{Phys}}.
\end{align*}
This is equivalent to
\begin{align*}
\sum_{a=1}^{p}\sum_{i=1}^{N} {\lambda^{\dagger}}^{a}_{i}\lambda^{i}_{a} \ket{\mathrm{Phys}}= kN\ket{\mathrm{Phys}},
\end{align*}
which says that the number of $\lambda^{\dagger}$'s in a physical state is $kN$.

The second condition is the $\mathrm{SU}(N)$ invariance. In a physical state, we have a certain number of ${Z^{\dagger}}^{i}_{j}$ and ${\lambda^{\dagger}}^{i}$ acting on $\ket{0}$. Typical $\mathrm{SU}(N)$ invariant tensors consist of ${Z^{\dagger}}^{i}_{j}$'s and ${\lambda^{\dagger}}^{i}$'s of the following forms:
\begin{align*}
    C(\{a\},\{n\})=\epsilon^{i_{1}i_{2}\cdots i_{N}}({\lambda^{\dagger}}^{a_{1}}{Z^{\dagger}}^{n_{1}})_{i_{1}}({\lambda^{\dagger}}^{a_{2}}{Z^{\dagger}}^{n_{2}})_{i_{2}}\cdots ({\lambda^{\dagger}}^{a_{N}}{Z^{\dagger}}^{n_{N}})_{i_{N}},
\end{align*}
and
\begin{align*}
    \mathrm{Tr}({Z^{\dagger}}^{n}).
\end{align*}
Here $\epsilon^{i_{1}i_{2}\cdots i_{N}}$ is the Levi-Civita symbol.
The general form of the physical states are:
\begin{align}\label{3.8}
    [\mathrm{Tr}({Z^{\dagger}}^{l_{1}})]^{m_{1}}[\mathrm{Tr}({Z^{\dagger}}^{l_{2}})]^{m_{2}}\cdots [\mathrm{Tr}({Z^{\dagger}}^{l_{s}})]^{m_{s}}C(\{a\},\{n\})C(\{a'\},\{n'\})C(\{a''\},\{n''\})\cdots C(\{a^{k}\},\{n^{k}\})\ket{0}
\end{align}
where the operators $C(\{a\},\{n\})$ appear $k$ times to ensure the number of $\lambda^{\dagger}$'s is $kN$.

The Hamiltonian of the matrix model (\ref{Action}) is 
\begin{equation}\label{Hamiltonian}
    H=\omega \mathrm{Tr}(Z^{\dagger}Z)
\end{equation}
which is proportional to the number operator of $Z^{\dagger}$ i.e. $\mathrm{Tr}(Z^{\dagger}Z)$.  

\begin{remark}
The ground states of the matrix model were described by Dorey, Tong and Turner in \cite{Dorey-Tong-Turner}. We recap their result here. The ground states are those of the form \eqref{3.8} which contain minimal number of $Z^{\dagger}$. With these considerations, one can find:
\begin{enumerate}
\item For $p=1$, there is a unique ground state:
\begin{equation*}
    \ket{\mathrm{ground}}=[\epsilon^{i_{1}i_{2}\cdots i_{N}}({\lambda^{\dagger}}{Z^{\dagger}}^{0})_{i_{1}}({\lambda^{\dagger}}{Z^{\dagger}}^{1})_{i_{2}}\cdots ({\lambda^{\dagger}}{Z^{\dagger}}^{N-1})_{i_{N}}]^{k}\ket{0}.
\end{equation*}
\item For $p>1$, write $N=mp+q$ where $q< p$, and we define operators:
\begin{equation*}
    B(n)_{i_{1}i_{2}\cdots i_{p}}=\epsilon_{a_{1}a_{2}\cdots a_{p}}({\lambda^{\dagger}}^{a_{1}}{Z^{\dagger}}^{n})_{i_{1}}({\lambda^{\dagger}}^{a_{2}}{Z^{\dagger}}^{n})_{i_{2}}\cdots ({\lambda^{\dagger}}^{a_{p}}{Z^{\dagger}}^{n})_{i_{p}}.
\end{equation*}
Then the ground states are:
\begin{equation}\label{3.12}
   \begin{aligned}  \ket{\mathrm{ground};a^{r}_{s}}=\prod_{r=1}^{k}[\epsilon^{i_{1}i_{2}\cdots i_{N}}B(0)_{i_{1}\cdots i_{p}}B(1)_{i_{p+1}\cdots i_{2p}}\cdots B(m-1)_{i_{N-p-q+1}\cdots i_{N-q}}\\
     ({\lambda^{\dagger}}^{a^{r}_{1}}{Z^{\dagger}}^{m})_{i_{N-q+1}}({\lambda^{\dagger}}^{a^{r}_{2}}{Z^{\dagger}}^{m})_{i_{N-q+2}}\cdots ({\lambda^{\dagger}}^{a^{r}_{q}}{Z^{\dagger}}^{m})_{i_{N}}]\ket{0}
\end{aligned}
\end{equation}
\end{enumerate}
In the case $p>1$, \eqref{3.12} can be viewed as a $\mathrm{SU}(p)$ tensor with indices $a^{r}_{s}$ ($r=1,\cdots ,k ; s=1,\cdots ,q$) taking values in $\{1,2,\cdots,p\}$. One can read off the symmetric properties of these indices from \eqref{3.12}. The span of such tensors is an irreducible representation of $\mathrm{SU}(p)$, whose Young diagram is the following

\tikzset{every picture/.style={line width=0.75pt}} 
\begin{center}
\begin{tikzpicture}[x=0.75pt,y=0.75pt,yscale=-0.7,xscale=0.7]

\draw  [draw opacity=0] (250,40) -- (371.2,40) -- (371.2,120.5) -- (250,120.5) -- cycle ; \draw   (250,40) -- (250,120.5)(270,40) -- (270,120.5)(290,40) -- (290,120.5)(310,40) -- (310,120.5)(330,40) -- (330,120.5)(350,40) -- (350,120.5)(370,40) -- (370,120.5) ; \draw   (250,40) -- (371.2,40)(250,60) -- (371.2,60)(250,80) -- (371.2,80)(250,100) -- (371.2,100)(250,120) -- (371.2,120) ; \draw    ;
\draw   (241.1,40.85) .. controls (236.43,40.85) and (234.1,43.18) .. (234.1,47.85) -- (234.1,70.1) .. controls (234.1,76.77) and (231.77,80.1) .. (227.1,80.1) .. controls (231.77,80.1) and (234.1,83.43) .. (234.1,90.1)(234.1,87.1) -- (234.1,112.35) .. controls (234.1,117.02) and (236.43,119.35) .. (241.1,119.35) ;
\draw   (369.1,36.35) .. controls (369.12,31.68) and (366.8,29.34) .. (362.13,29.32) -- (320.13,29.14) .. controls (313.46,29.11) and (310.14,26.77) .. (310.16,22.1) .. controls (310.14,26.77) and (306.8,29.09) .. (300.13,29.06)(303.13,29.07) -- (258.13,28.89) .. controls (253.46,28.87) and (251.12,31.19) .. (251.1,35.86) ;

\draw (211.6,69.4) node [anchor=north west][inner sep=0.75pt]    {$q$};
\draw (306.6,4.9) node [anchor=north west][inner sep=0.75pt]    {$k$};

\end{tikzpicture}
\end{center}
The ground states of the matrix model have interesting connections to fractional quantum Hall wave functions. See the early works of Susskind and Polychronakos \cite{Susskind, Poly}, and see also \cite{Dorey-Tong-Turner}.
\end{remark}

\subsection{Large \texorpdfstring{$N$}{N} Limit}\label{subsec: large N limit representation}
In this section, we state the main result of the large $N$ limit of the representation of $\mathscr{O}^{(p=1)}_N(\epsilon_{1}=1,\epsilon_{2}=k+1)$. In the case $p\geq 2$, we state several conjectures on the large $N$ limit.

\subsubsection{Large \texorpdfstring{$N$}{N} limit for \texorpdfstring{$p=1$}{p=1}}

\begin{proposition}[\cite{Hellerman-Raamsdonk}]
\label{Prop Basis} 

The elements:
\begin{equation}\label{Basis}
\ket{c_{1},c_{2},\cdots,c_{N}}:=\prod_{i=1}^{N}\left(\frac{1}{\sqrt{N}}\mathrm{Tr}({Z^{\dagger}}^{i})\right)^{c_{i}}\ket{\mathrm{ground}}, c_{i}\geq 0
\end{equation}
form a basis for the Hilbert space $\mathcal{H}_{phy}$ spanned by physical states.
\end{proposition}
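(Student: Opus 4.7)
The plan is to identify $\mathcal{H}_{phy}$ with the ring of symmetric polynomials in $N$ variables, under which the operators $\frac{1}{\sqrt{N}}\mathrm{Tr}((Z^\dagger)^i)$ correspond to multiplication by the power-sum $p_i$ and the ground state corresponds to $1$. First, the canonical commutation relations \eqref{CQ1} imply that all creation operators $Z^{\dagger i}_j$ and $\lambda^\dagger_i$ commute among themselves, so the creation-operator subalgebra of $\cA^{(1)}_N(\epsilon_1)$ is a polynomial ring in $N^2+N$ variables, and its action on the vacuum $\ket{0}$ is injective. In this picture the $\mathrm{U}(1)$ Gauss-law constraint fixes the total $\lambda^\dagger$-degree at $kN$, while the $\mathrm{SU}(N)$ constraint demands $\mathrm{SL}(N)$-invariance. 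By the first fundamental theorem of classical invariant theory for $\mathrm{SL}(N)$ acting on the adjoint (yielding $Z^\dagger$) and the anti-fundamental (yielding $\lambda^\dagger$), the ring of invariants is generated by the traces $\mathrm{Tr}((Z^\dagger)^n)$ and by the Levi-Civita contractions $C(n_1,\dots,n_N)=\epsilon^{i_1\cdots i_N}(\lambda^\dagger (Z^\dagger)^{n_1})_{i_1}\cdots(\lambda^\dagger (Z^\dagger)^{n_N})_{i_N}$. Since each $C$ has $\lambda^\dagger$-degree exactly $N$, a physical state is a polynomial in traces times a product of exactly $k$ determinants $C$.

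Next I would use Jacobi's bialternant identity to reduce every determinant $C(n_1<\cdots<n_N)$ to the ground-state determinant $C(0,1,\dots,N-1)$ times a Schur polynomial. Inside the commutative polynomial subalgebra generated by the $Z^\dagger$ entries, we may introduce formal eigenvalues $z_1,\dots,z_N$, recovering the actual traces as the power sums $p_i=\sum_j z_j^i$ via Newton's identities. Then the classical identity $\det(z_i^{n_j})=s_\mu(z_1,\dots,z_N)\det(z_i^{j-1})$, with $\mu_j=n_{N+1-j}-(N-j)$, lifts to an operator statement expressing $C(n_1,\dots,n_N)$ as the Schur polynomial $s_\mu$, written in the power-sum basis as a polynomial in $\mathrm{Tr}((Z^\dagger)^i)$, acting on $C(0,1,\dots,N-1)$. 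Taking the product of $k$ such factors and using that the product of Schur polynomials is again symmetric, every physical state equals $P(z_1,\dots,z_N)\,\ket{\mathrm{ground}}$ for some symmetric polynomial $P$ in $N$ variables.

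Finally I would invoke Newton's theorem: the ring of symmetric polynomials in $N$ variables is freely generated as a polynomial ring by the power sums $p_1,\dots,p_N$, which under our identification are the operators $\mathrm{Tr}((Z^\dagger)^i)$ for $1\le i\le N$. Hence every physical state can be uniquely written as a polynomial in $\mathrm{Tr}((Z^\dagger)^1),\dots,\mathrm{Tr}((Z^\dagger)^N)$ applied to $\ket{\mathrm{ground}}$, yielding both spanning and linear independence of the basis \eqref{Basis} after absorbing the $\tfrac{1}{\sqrt N}$ normalisations. The main obstacle is the third step: making rigorous the passage from operators to commuting eigenvalues $z_1,\dots,z_N$. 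The cleanest route is to observe that Jacobi's identity and Newton's relations are \emph{polynomial identities} in the symmetric functions of $z$'s, hence intrinsic to the commutative polynomial subalgebra generated by the $Z^\dagger$ entries, so no genuine diagonalisation is required; linear independence then reduces to the algebraic independence of $p_1,\dots,p_N$ in $\mathbb C[z_1,\dots,z_N]^{S_N}$ together with the injectivity of $\mathbb C[Z^\dagger,\lambda^\dagger]\to\cal F$ noted in the first step.
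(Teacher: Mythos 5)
Your argument is correct in substance, but it reaches the key reduction by a different route than the paper. Both proofs share the same skeleton: the Gauss-law constraints plus the first fundamental theorem of invariant theory reduce physical states to polynomials in $\mathrm{Tr}({Z^{\dagger}}^{n})$ times products of $k$ determinants $C(n_1,\dots,n_N)$, and linear independence comes down to the algebraic independence of the power sums $p_1,\dots,p_N$ together with injectivity of the creation-operator polynomial ring acting on the vacuum. Where you diverge is in how each $C(n_1,\dots,n_N)$ gets rewritten as a polynomial in traces times $C(0,1,\dots,N-1)$. The paper first proves the tensor identity (\ref{ti C}), the Pieri-type rule $C(n_1,\dots,n_N)\,\mathrm{Tr}({Z^{\dagger}}^{n})=\sum_j C(\dots,n_j+n,\dots)$, diagrammatically from the antisymmetrizer recursion (\ref{Recursive of A}) and the relation (\ref{bubble}), valid for arbitrary matrices, and only then identifies it with the Murnaghan--Nakayama rule by restricting to diagonal $Z^{\dagger}$ in Appendix \ref{App B}; iterating (\ref{ti C}) produces the reduction to the ground-state determinant. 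You instead invoke the bialternant identity to write $C(n_1,\dots,n_N)=s_\mu(p)\cdot C(0,\dots,N-1)$ in a single step. Your route is shorter and closer to the original Hellerman--Raamsdonk argument; the paper's route is longer but yields its advertised byproduct, namely a diagrammatic proof of the Murnaghan--Nakayama rule.

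One point in your write-up deserves tightening: the justification for lifting the bialternant identity from eigenvalues to an operator identity. The claim $C(n_1,\dots,n_N)=s_\mu(\mathrm{Tr}(Z^{\dagger}),\mathrm{Tr}({Z^{\dagger}}^{2}),\dots)\,C(0,\dots,N-1)$ is not ``intrinsic to symmetric functions,'' since the $C$'s depend on $\lambda^{\dagger}$ and on the full matrix, not only on its spectrum. The clean argument is that both sides are polynomials in the entries of $Z^{\dagger}$ and $\lambda^{\dagger}$; writing $Z^{\dagger}=PDP^{-1}$ and $w=\lambda^{\dagger}P$ shows each side equals $\det(P^{-1})\bigl(\prod_j w_j\bigr)$ times the corresponding alternant in the eigenvalues, so the two sides agree on the Zariski-dense set of diagonalizable matrices and hence agree identically. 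With that substitution your proof is complete.
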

\begin{remark}
    The basis (\ref{Basis})  is a rescaling of the basis found in \cite{Hellerman-Raamsdonk}. In this paper, we will present an alternative proof of Proposition (\ref{Prop Basis}) based on diagrammatic calculus in Section (\ref{subsec: Proof of the large N limit}) and Appendix (\ref{App B}). 
\end{remark}

For simplicity, we will study the infinite-dimensional representation $\mathcal{H}_N$ linearly spanned by the above elements. $\mathcal{H}_N$ is a dense subspace of $\mathcal{H}_{phy}$. Explicitly, let us introduce variables $p_{i}, 1\leq i \leq N$, and we identify the states $\ket{c_{1},c_{2},\cdots,c_{N}}$ with the monomial $p_{1}^{c_{1}}p_{2}^{c_{2}}\cdots p_{N}^{c_{N}}$. With these identifications, we have an isomorphism of linear spaces 
$$
\mathcal{H}_N\cong \mathbb{C}[p_{1},p_{2},\cdots,p_{N}].$$ 

Hence, $\mathcal{H}_N$ is linearly isomorphic to a polynomial ring in $N$ variables $\{p_{1},p_{2},\cdots,p_{N}\}$. We define the operators $\hat{p_{i}}$ and $\frac{\partial}{\partial p_{i}}$ as operators on the polynomial ring $\mathbb{C}[p_{1},p_{2},\cdots,p_{N}]$, representing multiplication by $p_{i}$ and taking partial derivatives with respect to $p_{i}$, respectively. Equivalently, under the isomorphism $\mathcal{H}_N\cong \mathbb{C}[p_{1},p_{2},\cdots,p_{N}]$, we have the following actions:
\begin{align}
\hat{p_{i}}\ket{c_{1},c_{2},\cdots,c_{N}}:=\ket{\cdots,c_{i}+1,\cdots}\\
\frac{\partial}{\partial p_{i}}\ket{c_{1},c_{2},\cdots,c_{N}}:=c_{i}\ket{\cdots,c_{i}-1,\cdots}
\end{align}
where $\ket{c_{1},c_{2},\cdots,c_{N}}=0$ if $c_{i}<0$ for some $i$. In the following, we will omit the "hat" in $\hat{p_{i}}$ to not distinguish the variables $p_{i}$ and multiplication by $p_{i}$, i.e $p_{i}=\hat{p_{i}}$.\\

The basis $\eqref{Basis}$ is an energy eigenbasis for the Hamiltonian (\ref{Hamiltonian}). For the purpose of this paper, we simply take $\omega=1$. The Hamiltonian (\ref{Hamiltonian}) is simply the number operator for $Z^{\dagger}$. Under the linear isomorphism $\mathcal{H}\cong \mathbb{C}[p_{1},p_{2},\cdots,p_{N}]$, the monomial $p_{i_1}p_{i_2}\cdots p_{i_{r}}$ has energy $\sum_{j=1}^{r}i_{j}$.

To state our theorem on the large $N$ limit of the representation, let us define the energy truncation 
\begin{equation}\label{Hilbert-truncate}
\text{${\mathcal H}^{(\leq E)}_N$= the span of energy eigenstates with energy $\leq E$.}
\end{equation}
Clearly, ${\mathcal H}^{(\leq E)}_N$ is a finite dimensional subspace of $\mathcal H_N$ and they will be naturally isomorphic
\begin{align}\label{sequence H_N}
    {\mathcal H}^{(\leq E)}_N={\mathcal H}^{(\leq E)}_{N+1}={\mathcal H}^{(\leq E)}_{N+2}=\cdots 
\end{align}

for $N$ large enough (in fact, for $N\geq E$). The sequence stabilized as $N \in \infty$. We define the large $N$ limit Hilbert space $\mathcal{H}^{\leq E}$, for each energy cutoff $E$ as the direct limit of the sequence (\ref{sequence H_N}):
\begin{align}\label{direct limit of H_N}
    \mathcal{H}^{\leq E}:=\lim_{\longrightarrow}\mathcal{H}^{\leq E}_{N}
\end{align}
The operator $t_{m,n}$ shifts the energy by $n-m$, hence defines 
$$
t_{m,n}: {\mathcal H}^{(\leq E)}_N\to {\mathcal H}^{(\leq E+n-m)}_N
$$
for every $N$ and $E$. 

\begin{theorem}\label{thm: large N limit}
 Under the isomorphism $\mathcal{H}_N\cong \mathbb{C}[p_{1},p_{2},\cdots,p_{N}]$, the operators $t_{m,n}$ are represented as differential operators. In the $N \rightarrow \infty$ limit, we have the following asymptotic expansions:
\begin{align}
   \label{tmn N}&t_{m,n}=\sqrt{N}^{m+n}(k+1)^{m}p_{n-m}+O(\sqrt{N}^{m+n-2})  \quad \text{if} \quad m < n  \\
   \label{tnn N}&t_{n,n}=\frac{1}{n+1}(k+1)^{n}\sqrt{N}^{2n+2}+
  O(\sqrt{N}^{2n})  \quad \text{if} \quad m=n\\
  \label{tnm N}&t_{n,m}=\sqrt{N}^{n+m}(k+1)^{n-1}(n-m)\frac{\partial}{\partial p_{n-m}}+O(\sqrt{N}^{m+n-2}) \quad \text{if} \quad m < n
\end{align}

\begin{equation}\label{t21tmn N}
    [t_{2,1},t_{m,n}]=(2n-m)t_{m+1,n}+\begin{cases}
    O(\sqrt{N}^{m+n-1}), & \text{if $m+1\neq n$}\\
    O(\sqrt{N}^{m+n+1}), & \text{if $m+1 = n$}
    \end{cases}
\end{equation}
\begin{equation}\label{t12tmn N}
    [t_{1,2},t_{m,n}]=(n-2m)t_{m,n+1}+\begin{cases}
    O(\sqrt{N}^{m+n-1}), & \text{if $m \neq n+1 $}\\
    O(\sqrt{N}^{m+n+1}), & \text{if $m = n+1 $}
    \end{cases}
\end{equation}
Here the asymptotic behavior is understood as for any fixed and sufficiently large energy truncation. To be precise, for any fixed $E>m+n$, the linear operator 
$$
t_{m,n}: {\mathcal H}^{(\leq E)}_N\to {\mathcal H}^{(\leq E+n-m)}_N
$$
will have the asymptotic behavor \eqref{tmn N} when $N$ is sufficiently large (so ${\mathcal H}^{(\leq E)}_N$ will be stable as well). The interpretation for other relations is similar. 

\begin{remark}
    In the following, whenever we talk about large $N$ limit, it is understood in the sense of Theorem (\ref{thm: large N limit}). 
\end{remark}

\end{theorem}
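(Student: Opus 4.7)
The plan is to convert every $t_{m,n}$ into an explicit differential operator on $\mathbb{C}[p_1,\ldots,p_N]$ via the basis identification of Proposition~\ref{Prop Basis}, and then read off its leading $N$-behaviour. The starting point is the moment-map constraint~\eqref{QMM}, which on physical states yields the identity $[Z,Z^{\dagger}]=(N+k+1)\mathbf{1}-\lambda\lambda^{\dagger}$. Using this repeatedly, every $Z$ appearing in $\mathrm{Sym}(Z^m Z^{\dagger n})$ can be commuted past the $Z^{\dagger}$'s that build up a basis vector $\ket{c_1,\ldots,c_N}=\prod_i (N^{-1/2}\mathrm{Tr}(Z^{\dagger i}))^{c_i}\ket{\mathrm{ground}}$, at the cost of either (a) a scalar of order $N$, or (b) a rank-one insertion $\lambda\lambda^{\dagger}$ that splits or merges trace cycles. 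The resulting expansion is naturally organised as a sum over ``contraction diagrams'' in the language of Appendix~\ref{sec: Diagrammatic notations}: the trace cycle carrying $t_{m,n}$ and the cycles $\mathrm{Tr}(Z^{\dagger i})$ from the basis vector are drawn as oriented loops, and the commutator events become edges between them.

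Next I would assign an $N$-weight to each diagram. Each surviving closed trace loop contributes a factor of $N$; each $\lambda\lambda^{\dagger}$ contraction contributes $N$ only after using the physical-state identity $\sum_a\lambda^{\dagger a}\lambda_a\ket{\mathrm{Phys}}=kN\ket{\mathrm{Phys}}$; and each multiplicative $p_{n-m}$ (resp.\ derivative $\partial/\partial p_{n-m}$) arising from fusing (resp.\ opening) a trace carries a factor of $N^{-1/2}$ from the basis normalisation. Identifying the dominant diagram---the one with the maximal number of closed loops and a surviving power-sum factor $p_{n-m}$ or $\partial/\partial p_{n-m}$---should reproduce the leading terms \eqref{tmn N}, \eqref{tnn N}, \eqref{tnm N}, with the coefficients $(k+1)^m$ and $(k+1)^{n-1}$ arising from collecting the $(N+k+1)$ and $\lambda\lambda^{\dagger}$ contributions along the dominant contraction chain, and the factor $\tfrac{1}{n+1}$ in the diagonal case coming from the combinatorial average built into $\mathrm{Sym}$. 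A parity argument (each non-dominant diagram either kills a closed loop or introduces an extra $\lambda\lambda^{\dagger}$ insertion, each dropping the $N$-weight by one, with the sum only hitting even shifts because of the $Z \leftrightarrow Z^{\dagger}$ symmetry) then yields the sharp $O(\sqrt{N}^{m+n-2})$ remainder. The commutator bounds \eqref{t21tmn N}--\eqref{t12tmn N} follow by composing the leading differential-operator expressions: the leading commutator is itself a differential operator of the claimed shape, evaluating to $(2n-m)\,t_{m+1,n}$ or $(n-2m)\,t_{m,n+1}$, and the resonant enhancement in the case $m+1=n$ (resp.\ $m=n+1$) comes from the fact that the commutator then produces a diagonal $t_{n,n}$, whose leading behaviour scales like $\sqrt{N}^{2n+2}$ rather than $\sqrt{N}^{2n}$.

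The main obstacle will be the precise combinatorial bookkeeping needed to pin down the coefficients $(k+1)^m$, $(k+1)^{n-1}$ and $\tfrac{1}{n+1}$ together with the sharp remainder: none of them come from a single diagram; they emerge only after summing over all contraction patterns permitted by $\mathrm{Sym}(Z^m Z^{\dagger n})$, with the physical-state identity and the normal-ordering relation $:\![Z,Z^{\dagger}]\!:=[Z,Z^{\dagger}]-N\mathbf{1}$ both entering nontrivially. The diagrammatic framework of Appendix~\ref{sec: Diagrammatic notations} is designed precisely to mechanise this accounting, by attaching multiplicative weights to vertices and edges and reducing the large-$N$ bound to a purely combinatorial count of loops; I expect it to be indispensable both for isolating the leading term and for establishing the uniform subleading bound, especially where the symmetrisation mixes contractions of different diagrammatic types.
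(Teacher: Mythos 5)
There is a genuine gap, on two fronts.

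First, your plan is to produce an explicit differential-operator expression for \emph{every} $t_{m,n}$ by a direct diagrammatic/Wick-contraction analysis and then read off the leading $N$-weight. The paper does not do this, and for good reason: the sum over contraction patterns of $\mathrm{Sym}(Z^mZ^{\dagger n})$ against a general basis vector is exactly the combinatorial bookkeeping you flag as ``the main obstacle,'' and you give no mechanism for carrying it out for general $(m,n)$ (in particular no argument for why the weights assemble into $(k+1)^m$, $(k+1)^{n-1}$, $\tfrac{1}{n+1}$, nor for the even-parity structure of the remainder beyond an asserted $Z\leftrightarrow Z^{\dagger}$ symmetry). The paper sidesteps this entirely: it computes only $t_{1,0}$, $t_{0,1}$, $t_{2,1}$, $t_{1,2}$ diagrammatically (equations \eqref{t1,0}, \eqref{t21}, \eqref{t12 eq2}), and then obtains all other $t_{m,n}$ by induction on $m+n$ from $t_{m,n}=\tfrac{1}{2n-m+1}[t_{2,1},t_{m-1,n}]+\cdots$ and $t_{n,m}=\tfrac{1}{2n-m+1}[t_{n,m-1},t_{1,2}]+\cdots$, where the correction terms are controlled by the \emph{exact} algebraic relations \eqref{[t21,tmn]}--\eqref{[t12,tmn]} of Theorem \ref{Main theorem}. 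Your proposal, as written, leaves the entire inductive engine out and replaces it with a computation you do not perform.

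Second, your derivation of the commutator estimates \eqref{t21tmn N}--\eqref{t12tmn N} ``by composing the leading differential-operator expressions'' does not work. Writing $t_{2,1}=A+a$ and $t_{m,n}=B+b$ with $A,B$ the leading parts and $a,b$ the remainders, the cross term $[A,b]$ is not controlled by the product of orders alone: $A$ contains $(k+1)\sqrt{N}^{3}\,\partial/\partial p_{1}$, so if $b=O(\sqrt{N}^{m+n-2})$ happens to involve $p_{1}$ the commutator $[A,b]$ can be as large as $O(\sqrt{N}^{m+n+1})$, which swamps the claimed $O(\sqrt{N}^{m+n-1})$ bound in the non-resonant case. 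Knowing only the leading term and the order of the remainder of each operator is therefore insufficient; one needs either much finer structural information about the remainders or, as the paper does, the closed-form identities \eqref{[t21,tmn]} and \eqref{[t12,tmn]}, which express $[t_{2,1},t_{m,n}]$ exactly in terms of generators whose asymptotics are already known at the inductive step. Without importing those exact relations (or an equivalent substitute), your argument for \eqref{t21tmn N}--\eqref{t12tmn N} fails, and with it the induction that the estimates are meant to feed.
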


We will prove the above theorem in the next section using an inductive argument. The key to the proof is the explicit representation of $t_{2,1}$ and $t_{1,2}$ in the basis (\ref{Basis}), which will be computed using diagrammatic calculus in the next section.
\\

\leftline{\textbf{Rescaling and emergent current algebra}}
The matrix model describes a quantum hall droplet with radius $R\sim\sqrt{2N\theta}$ \cite{Poly}. Recall $k+p=k+1=\epsilon_2=B\theta $. Following \cite{Dorey_Tong_Turner-Matrix}, we consider the following rescaled operators:
\begin{align}\label{rescaling}
  \tilde{t}_{m,n}:=((k+1)N)^{-\frac{m+n+2\delta_{m,n}}{2}}t_{m,n}
\end{align}

The basis (\ref{Basis}) is also rescaled:
\begin{equation}\label{Basis_rescaled}
\widetilde{{\ket{c_{1},c_{2},\cdots,c_{N}}}}:=\prod_{i=1}^{N}\left(\frac{1}{\sqrt{N(k+1)}}\mathrm{Tr}({Z^{\dagger}}^{i})\right)^{c_{i}}\ket{\mathrm{ground}}, \quad c_{i}\geq 0
\end{equation}
We define the rescaled differential operators:
\begin{align*}
\tilde{p_{i}}\widetilde{\ket{c_{1},c_{2},\cdots,c_{N}}}:=\widetilde{\ket{\cdots,c_{i}+1,\cdots}}\\
\frac{\partial}{\partial \tilde{p_{i}}}\widetilde{\ket{c_{1},c_{2},\cdots,c_{N}}}:=c_{i}\widetilde{\ket{\cdots,c_{i}-1,\cdots}}
\end{align*}

\begin{remark}
    We will justify the choice of this rescaling in Appendix \ref{App C}, where we demonstrate that the radius of the quantum Hall droplet is indeed $R=\sqrt{2N(k+1)}$.
\end{remark}

With the identification $\mathcal{H}\cong \mathbb{C}[\tilde{p_{1}},\tilde{p_{2}},\cdots,\tilde{p_{N}}]$, from (\ref{tmn N})-(\ref{tnm N}), we have the following large $N$ limit (in the sense of Theorem \ref{thm: large N limit}) of the rescaled operators $\tilde{t}_{n,m}$:

\begin{align}
&\tilde{t}_{m,n}=\tilde{p}_{n-m}+O(\sqrt{N}^{-2})  \quad if \quad m < n  \\
   \label{tilde tnn}&\tilde{t}_{n,n}=\frac{1}{k+1}\frac{1}{n+1}+
  O(\sqrt{N}^{-2})\\
  &\tilde{t}_{n,m}=\frac{1}{k+1}(n-m)\frac{\partial}{\partial \tilde{p}_{n-m}}+O(\sqrt{N}^{-2}) \quad if \quad m < n
\end{align}
 In the large $N$ limit, $\tilde{t}_{n,m}$ and $\tilde{t}_{n',m'}$ have the same leading term if $n-m=n'-m'\neq 0$. We define $\tilde{\alpha}_{-n}, n>0$ to be the leading term of $\tilde{t}_{m,m+n}$ and $\tilde{\alpha}_{n}, n>0$ to be the leading term of $\tilde{t}_{m+n,m}$. 
 A $\widehat{\mathfrak{u}}(1)_{\frac{1}{k+1}}$ current algebra $span\{\tilde{\alpha}_{n},c\}$ emerges in the large $N$ limit. The generators $\{\tilde{\alpha}_{n},c\} $ satisfy:
$$[\tilde{\alpha}_{n},\tilde{\alpha}_{m}]=cn\delta_{n,-m}$$
where the central charge is
\begin{align}\label{central}
    c=\lim_{N\rightarrow \infty}(n+1)\tilde{t}_{n,n}=\frac{1}{k+1}
\end{align}

The large $N$ limit representation of the generators are 
\begin{align*}
    &\tilde{\alpha}_{n}=\frac{1}{k+1}n\frac{\partial}{\partial \tilde{p}_{n}}  \quad\quad n>0\\
     &\tilde{\alpha}_{-n}={\tilde{p}}_{n}  \quad\quad\quad\quad\quad n>0
\end{align*}  
They act on the infinite dimensional bosonic Fock space $\mathbb{C}[{\tilde{p}}_{1},{\tilde{p}}_{2},{\tilde{p}}_{3},{\tilde{p}}_{4},\cdots]$.

We will justify the physical meaning of (\ref{central}) in Appendix \ref{App C}. In Appendix \ref{App C}, we will derive a rescaled Wigner semicircle law for the ground state wave function of the matrix model (Proposition \ref{Semi Circle law}). This derivation relies on the moment method in random matrix theory and the second equality of (\ref{central}), namely $\lim_{N\rightarrow \infty}(n+1)\tilde{t}_{n,n}=\frac{1}{k+1}$. Consequently, we justify our claim that the central charge of the large $N$ limit algebra is equal to the filling factor of the corresponding quantum Hall fluid.

\begin{remark}
    The key points of the emergent current algebra is summerized below:
    \begin{itemize}
  \item Let $N$ be a large number and $E$ be an energy level such that $N \gg E$ (More precise meaning of $N \gg E$ will be discussed in certain details in Section \ref{subsec: Proof of the large N limit} and Appendix \ref{App B}). The dimension of the energy eigen-states with energy $E'<E$ is independent of $N$.  
  \item There exist a choice of (properly rescaled) basis $\mathcal{B}$ (\ref{Basis_rescaled}) for the Hilbert space, and a rescaling (the scaling depends on $N$) of the operators $t_{m,n}$  i.e. ${\tilde{t}}_{m,n}$, such that given an arbitrary energy cutoff $E> m+n$, the restriction of ${{\tilde{t}}_{m,n}}$ on ${\mathcal H}^{(\leq E)}_N$ has a well defined limit as $N\rightarrow \infty$, which is independent of $E$, as long as $E>m+n$. The $N\rightarrow \infty$ limit of the operators ${{\tilde{t}}_{m,n}}:{\mathcal H}^{(\leq E)}_N \to {\mathcal H}^{(\leq E+n-m)}_N$ form a certain (cutoff) algebra representing on the cutoff Hilbert space $\mathcal{H}_{\leq E}$ (\ref{direct limit of H_N}).
  \item Since $E$ can be taken arbitrarily large, the cutoff $E$ and $N$ can be both send to $\infty$ in such a way that $E \ll N$. The cutoff algebra representing on a cutoff Hilbert space will tend to an infinite representation of a certain algebra. We emphasize that the leading terms of the operators ${{\tilde{t}}_{m,n}}:{\mathcal H}^{(\leq E)}_N \to {\mathcal H}^{(\leq E+n-m)}_N$ are independent of $E$ and $N$ in the large $N$ limit is a key point here. We simply call these the large $N$ limit algebra and the large $N$ limit representation of the algebra. In the $p=1$ case discussed above the large $N$ limit algebra is a $\mathfrak{u}(1)$ current algebra, and the large $N$ limit representation of the algebra is a highest weight representation of the $\mathfrak{u}(1)$ current algebra.
\end{itemize}
\end{remark}

\subsubsection{Large \texorpdfstring{$N$}{N} limit for \texorpdfstring{$p\geq 2$}{p>=2}}\label{sec 4.2.2}
Let $\epsilon_1=1$. Recall $\epsilon_2=k+p$ and $\epsilon_3=\epsilon_2-p\epsilon_1=k$. We define the rescaled operators
\begin{align}\label{eqn: rescaled operators}
\tilde{J}^a_{b;m,n}:={\left(\frac{k+p}{p}N\right)}^{-\frac{m+n}{2}}J^{a}_{b;m,n} \quad \quad \tilde{t}_{m,n}:={\left(\frac{k+p}{p}N\right)}^{-\frac{m+n+2\delta_{m,n}}{2}}t_{m,n}
\end{align}

Similar to the $p=1$ case, ${\mathcal H}^{(\leq E)}_N$ is a finite dimensional\footnote{Recall that ${\mathcal H}^{(\leq E)}_N$ is generated by $U(N)$-invariant tensors consisting of $Z^{\dagger}$ and $\lambda^{\dagger}$. The number of $Z^{\dagger}$ is bounded above by $E$. Therefore, the dimension of ${\mathcal H}^{(\leq E)}_N$ is finite.} subspace of $\mathcal H_N$ and they will be naturally isomorphic
$$
{\mathcal H}^{(\leq E)}_N={\mathcal H}^{(\leq E)}_{N+p}={\mathcal H}^{(\leq E)}_{N+2p}={\mathcal H}^{(\leq E)}_{N+3p}=\cdots 
$$
for $N$ large enough. Hence, it is possible that for general $p\geq 2$, the large $N$ limit also make sense (in the sense of Theorem \ref{thm: large N limit}). We have the following conjectures.
\begin{conjecture}\label{conjecture 4.1}
    There exists an appropriate energy eigenbasis $\mathcal{B}$ for the Hilbert space.  The operators $\tilde{J}^a_{b;m,n}$ and $\tilde{t}_{m,n}$ converge as operators acting on the large $N$ limit of the Hilbert space. Moreover in the basis $\mathcal{B}$, $\tilde{t}_{n,n}$ has the following asymptotic behaviour in the large $N$ limit
    \begin{align} \label{tnn large N}
    \tilde{t}_{n,n}=\frac{1}{n+1}\frac{p}{k+p}+O({\sqrt{N}}^{-2})
    \end{align}
    and the leading terms of $\{\tilde{J}^a_{m,n}\}$ and $\{\tilde{t}_{m,n}, m\neq n\}$ only depend on the difference $m-n$.
\end{conjecture}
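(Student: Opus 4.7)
The plan is to adapt the strategy used for Theorem~\ref{thm: large N limit} in the $p=1$ case. First, I would construct an appropriate energy eigenbasis $\mathcal B$ for $\mathcal H^{(p)}_N$ by extending the basis \eqref{Basis}. The $SU(p)$ ground-state multiplet described in \eqref{3.12} provides an ``internal'' factor $V^{SU(p)}_N$, and bulk excitations by products $\prod_i (\mathrm{Tr}(Z^{\dagger i}))^{c_i}$ provide an ``external'' factor isomorphic to a polynomial ring $\mathbb C[p_1, p_2, \ldots]$. For sufficiently large $N$ compared with an energy cutoff $E$, these two factors should decouple, so that $\mathcal H^{(\leq E)}_N$ stabilizes (as already noted just before the conjecture) and takes the form $V^{SU(p)}_\infty \otimes \mathbb C[p_1, p_2, \ldots]_{\leq E}$, after suitable rescaling of the generators $\mathrm{Tr}(Z^{\dagger i})$ by $\bigl(\tfrac{(k+p)N}{p}\bigr)^{-i/2}$.

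Second, I would compute the action of the low-degree rescaled generators $\tilde t_{m,n}$ with $m+n\le 3$ and $\tilde J^a_{b;m,n}$ with $m+n\le 1$ on this basis using the diagrammatic calculus of Appendix~\ref{sec: Diagrammatic notations}. The operators $\tilde t_{m,n}$ are $\mathrm U(N)$-invariant and in particular commute with the internal $SU(p)$ action, so their leading action should only involve the external (bulk-trace) factor. I expect this to reduce essentially to the $p=1$ computation, up to replacing the global factor $\frac{1}{k+1}$ by the filling factor $\frac{p}{k+p}$ --- this would directly produce \eqref{tnn large N}. The operators $\tilde J^a_{b;0,0}$ act as $\mathfrak{sl}(p)$ generators on the internal factor $V^{SU(p)}_\infty$, and $\tilde J^a_{b;1,0}$, $\tilde J^a_{b;0,1}$ can be computed directly by evaluating the relevant contractions of $\lambda$, $\lambda^\dagger$ and $Z, Z^\dagger$ lines; one would verify the claimed $O((\sqrt N)^{m+n})$ bound and that their leading coefficients are functions of $m-n$ only.

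Third, I would use Proposition~\ref{prop: filtration} and Corollary~\ref{corollary induction} to bootstrap. Once the leading behavior of the low-degree generators is established, the leading behavior of all $\tilde J^a_{b;m,n}$ and $\tilde t_{m,n}$ follows by induction on $m+n$, using the scaled commutation relations derived in Theorem~\ref{cor: conj1 implies conj2}. More precisely, the degenerate Lie-bracket structure \eqref{Lie bracket} already forces the leading terms to assemble into a representation of $\mathcal O(\mathbb C^2)\otimes \mathfrak{gl}_p$ modulo $(t_{0,0}^{-1/2})$, and the surjection onto $\widehat{\mathfrak{sl}}(p)_k\oplus\widehat{\mathfrak{gl}}(1)_{p/(k+p)}$ translates the statement ``leading terms depend only on $m-n$'' into the statement that this representation factors through the affine quotient. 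The remaining task is to verify this factorization at the level of matrix elements against the basis $\mathcal B$.

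The main obstacle I anticipate lies in controlling the $\tilde J^a_{b;m,n}$ on the internal factor $V^{SU(p)}_\infty$. Unlike the $p=1$ case, where $\lambda^\dagger$ and $\lambda$ contribute only a simple overall factor, for $p\ge 2$ the $\lambda$--$\lambda^\dagger$ contractions genuinely mix ground-state multiplet indices with bulk trace excitations, and one must show that these mixed contractions admit the correct $O((\sqrt N)^{m+n})$ bound with leading term depending only on $m-n$. This requires extending the Wigner-type combinatorics of Appendix~\ref{App C} to diagrams with both adjoint ($Z, Z^\dagger$) and framing ($\lambda, \lambda^\dagger$) legs. A related subtlety is the choice of $\mathcal B$ itself: one must arrange that subleading matrix elements of $\tilde J$ and $\tilde t$ remain bounded uniformly in the cutoff $E$ as $E$ and $N$ are both sent to infinity with $E\ll N$, since naive basis choices on $V^{SU(p)}_N$ need not stabilize compatibly with the bulk-excitation factor.
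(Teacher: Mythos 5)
The statement you are trying to prove is stated in the paper as Conjecture~\ref{conjecture 4.1}, and the paper itself does \emph{not} prove it for $p\geq 2$: it is established only in the Abelian case $p=1$, as a corollary of Theorem~\ref{thm: large N limit}, and the authors explicitly defer the non-Abelian case to future work (``we believe that it is crucial to perform computations using an appropriate basis, which we leave for future research''). Your proposal is a reasonable strategy sketch, but it is not a proof, and the places where it stops are exactly the open points. First, the decomposition $\mathcal H^{(\leq E)}_N \cong V^{SU(p)}_\infty \otimes \mathbb C[p_1,p_2,\ldots]_{\leq E}$ is asserted, not established. For $p=1$ the analogous statement is Proposition~\ref{Prop Basis}, and its proof rests on the Murnaghan--Nakayama-type reduction \eqref{ti C} which rewrites every baryonic factor $C(n_1,\ldots,n_N)$ as a polynomial in traces times the single ground-state baryon. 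For $p\geq 2$ the excited states involve $C(\{a\},\{n\})$ with nontrivial $SU(p)$ index content that need not reduce to (ground-state multiplet)$\times$(traces); no analogue of Proposition~\ref{Prop Basis} is available, and without it the basis $\mathcal B$, the claimed stabilization of $\mathcal H^{(\leq E)}_N$ as a tensor product, and the statement that $\tilde t_{m,n}$ acts only on the ``external'' factor are all unproven.

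Second, the bootstrap in your third step is circular as written. Proposition~\ref{prop: filtration} controls commutators modulo lower \emph{filtration} degree in the abstract algebra; to convert that into lower order in $\sqrt N$ as operators on the Hilbert space one must already know that every generator acts with the conjectured scaling $O(\sqrt N^{\,m+n})$, resp.\ $O(\sqrt N^{\,m+n+2\delta_{m,n}})$ --- which is the content of the conjecture itself (this is exactly why the paper's Proposition following Conjecture~\ref{conjecture 4.2} is phrased as ``Conjecture~\ref{conjecture 4.1} \emph{implies} Conjecture~\ref{conjecture 4.2}''). In the $p=1$ proof the induction is anchored by unconditional explicit computations of $t_{1,0}$, $t_{2,1}$, $t_{1,2}$ in the basis \eqref{Basis}; your plan defers the corresponding base cases ($\tilde J^a_{b;1,0}$, $\tilde J^a_{b;0,1}$ and the low-degree $\tilde t$'s on the $p\geq 2$ Hilbert space) to ``one would verify,'' and you yourself identify the $\lambda$--$\lambda^\dagger$ contractions mixing the internal multiplet with trace excitations as the unresolved obstacle. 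The strongest unconditional evidence the paper offers for $p\geq 2$ is Proposition~\ref{Pro C2}, which computes the ground-state expectation value of $t_{n,n}$ --- and even that is conditional on the assumed scaling. Until the basis is constructed and the base-case contractions are carried out, there is no proof here.
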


\begin{conjecture}\label{conjecture 4.2}
    We have the following convergence as operators acting on the Hilbert spaces in the large $N$ limit
        \begin{align*}
  [\tilde{J}^{a}_{b;n,m}, \tilde{J}^{c}_{d;r,s}]\rightarrow& \delta^{c}_{b}\tilde{J}^{a}_{d;n+r,m+s}-\delta^{a}_{d}\tilde{J}^{c}_{b;n+r,m+s}+ k\delta_{n+r,m+s}\{\delta^a_d\delta^c_b-\frac{1}{p}\delta^c_d\delta^a_b\} (ns-mr) \frac{k+p}{p} \tilde{t}_{n + r -1, m+ s-1}\\
  [\tilde{t}_{n, m}, \tilde{J}^{a}_{b;r,s}] \rightarrow& 0\\
  [\tilde{t}_{n, m}, \tilde{t}_{r, s}] \rightarrow&\delta_{n+r,m+s} (ns - mr) \tilde{t}_{n + r -1, m + s -1} 
\end{align*}
  
\end{conjecture}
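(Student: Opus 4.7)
The plan is to derive Conjecture \ref{conjecture 4.2} from Conjecture \ref{conjecture 4.1} together with Proposition \ref{prop: filtration} and the scaling discussion of Section \ref{subsec scaling limit}. Since $\mathscr O^{(p)}_{\infty}(\epsilon_1,\epsilon_2)$ surjects onto every $\mathscr O^{(p)}_N(\epsilon_1,\epsilon_2)$, the congruences \eqref{6.1}--\eqref{6.3} descend to the algebra of operators on $\mathcal H_N$, so all three commutators in the conjecture can be analyzed purely by tracking powers of $N$ under the rescaling \eqref{eqn: rescaled operators}.

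Concretely, I would first take $[J^a_{b;n,m}, J^c_{d;r,s}]$ as in \eqref{6.1}, multiply by the overall rescaling factor $\left(\tfrac{(k+p)N}{p}\right)^{-(n+m+r+s)/2}$, and translate every $J$ and $t$ on the right into $\tilde J$ and $\tilde t$ using \eqref{rescaling of J and t}. The first two leading pieces yield $\delta^c_b\tilde J^a_{d;n+r,m+s}-\delta^a_d\tilde J^c_{b;n+r,m+s}$ with trivial prefactor. The central piece $\tfrac{\epsilon_2\epsilon_3}{p}(ns-mr)\,t_{n+r-1,m+s-1}$ carries an excess factor of $\left(\tfrac{(k+p)N}{p}\right)^{-1+\delta_{n+r,m+s}}$, which equals $1$ precisely when $n+r=m+s$ (producing the Kronecker delta in the conjecture) and otherwise sends the off-diagonal central contribution to $0$. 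Next I would expand the remainder $R\in F_{n+m+r+s-1}\mathscr O^{(p)}_{\infty}$ in the PBW basis of Lemma \ref{lem: PBW basis}; since the mismatch between the degree grading and the scaling grading is controlled by the number of off-diagonal $\tilde t$ factors, each ordered monomial of degree at most $n+m+r+s-1$ becomes a bounded monomial in $\tilde J$'s and $\tilde t$'s multiplied by a prefactor of order at most $\left(\tfrac{(k+p)N}{p}\right)^{-1/2}$. Conjecture \ref{conjecture 4.1} then guarantees that each rescaled generator acts as a uniformly bounded operator on the cutoff $\mathcal H_N^{\le E}$, so the rescaled remainder vanishes in the large-$N$ operator limit, and only the leading term survives. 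The second and third lines of the conjecture follow identically from \eqref{6.2} and \eqref{6.3}; note that for $[\tilde t,\tilde J]$ even the leading term of \eqref{6.2} gains an excess $N^{-1}$ upon rescaling, consistent with the ``$\to 0$'' statement in the conjecture.

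The main obstacle of this program is not the scaling/filtration bookkeeping above, but establishing Conjecture \ref{conjecture 4.1} itself for $p\ge 2$. The $p=1$ proof (Theorem \ref{thm: large N limit}) relies on the explicit basis \eqref{Basis}, the polynomial ring identification $\mathcal H_N\cong\mathbb C[p_1,\dots,p_N]$, and the diagrammatic calculus of Appendix \ref{sec: Diagrammatic notations} to read off the leading $N$-asymptotics of $t_{m,n}$ as a first order differential operator. For $p\ge 2$ the physical states \eqref{3.8} involve nontrivial $\mathrm{SU}(p)$ tensors $C(\{a\},\{n\})$, no natural analogue of \eqref{Basis} is available, and even the right energy eigenbasis in which to state asymptotic expansions of $\tilde J^a_{b;n,m}$ and $\tilde t_{n,m}$ is unclear. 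Constructing such a basis and extending the diagrammatic arguments of Section \ref{subsec: Proof of the large N limit} to accommodate the extra $\lambda^{\dagger}$ legs would be the crux; once those asymptotic expansions are in hand, Conjecture \ref{conjecture 4.2} would follow by the algebraic reduction sketched above.
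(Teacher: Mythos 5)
Your proposal takes essentially the same route as the paper: the paper only establishes Conjecture \ref{conjecture 4.2} conditionally, via the one-line proposition ``Conjecture \ref{conjecture 4.1} implies Conjecture \ref{conjecture 4.2},'' whose proof is exactly the reduction you describe — apply Proposition \ref{prop: filtration}, rescale, and observe that the lower-filtration terms (and the off-diagonal central term, and the leading term of $[\tilde t,\tilde J]$) acquire negative powers of $N$ and vanish, with convergence of the rescaled generators supplied by Conjecture \ref{conjecture 4.1}. Your more explicit power-counting of the central term's $\delta_{n+r,m+s}$ and your honest identification of Conjecture \ref{conjecture 4.1} for $p\ge 2$ as the real open obstacle both match the paper's treatment.
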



\begin{proposition}
    Conjecture \ref{conjecture 4.1} implies Conjecture \ref{conjecture 4.2}. 
\end{proposition}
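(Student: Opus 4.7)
The strategy is to combine the filtration statement of Proposition \ref{prop: filtration}, which holds in $\mathscr{O}^{(p)}_\infty(\epsilon_1,\epsilon_2)$ and hence also in its quotient $\mathscr{O}^{(p)}_N(\epsilon_1,\epsilon_2)$, with the specific form of the rescaling (\ref{eqn: rescaled operators}). The key observation is that the rescaling converts the filtration degree into powers of $\sqrt{N}$: if a monomial in the generators lies in $F_d\mathscr{O}^{(p)}_N$, then after expressing it as a product of rescaled $\tilde J$'s and $\tilde t$'s it acquires an overall factor of $\bigl(\tfrac{(k+p)N}{p}\bigr)^{(d-D)/2}$, where $D$ is the expected bidegree of the commutator in question.

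Concretely, to prove the first relation I would expand $[J^a_{b;n,m},J^c_{d;r,s}]$ via (\ref{6.1}) and multiply both sides by $\bigl(\tfrac{(k+p)N}{p}\bigr)^{-(n+m+r+s)/2}$. The two leading pieces on the right become $\delta^c_b\tilde J^a_{d;n+r,m+s}-\delta^a_d\tilde J^c_{b;n+r,m+s}$ with $N$-independent coefficients. The $t_{n+r-1,m+s-1}$ contribution splits into two sub-cases: when $n+r\ne m+s$, the non-diagonal rescaling of $t$ produces an extra $\bigl(\tfrac{(k+p)N}{p}\bigr)^{-1}$ which forces the term to vanish in the limit; when $n+r=m+s$, the diagonal exponent in the rescaling of $t_{n+r-1,n+r-1}$ matches the overall factor exactly, and the term survives as $\{\delta^a_d\delta^c_b-\tfrac{1}{p}\delta^c_d\delta^a_b\}(ns-mr)\tfrac{(k+p)k}{p}\tilde t_{n+r-1,n+r-1}$. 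Substituting the asymptotic value $\tilde t_{n+r-1,n+r-1}\to \tfrac{1}{n+r}\tfrac{p}{k+p}$ from (\ref{tnn large N}) reproduces precisely the Kac-Moody central term stated in Conjecture \ref{conjecture 4.2}.

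The remaining corrections live in $F_{D-1}\mathscr{O}^{(p)}_N$ with $D=n+m+r+s$. Using the spanning result of Remark \ref{rmk: PBW generators}, each such correction is a finite $\mathbb C[\epsilon_1,\epsilon_2]$-combination of ordered monomials in $\{J^a_{b;n,m},t_{n,m}\}$, with coefficients independent of $N$. Re-expressing every monomial in terms of the rescaled generators leaves behind an overall factor of $\bigl(\tfrac{(k+p)N}{p}\bigr)^{-1/2}$ or smaller. By Conjecture \ref{conjecture 4.1}, each $\tilde J^a_{b;m,n}$ and $\tilde t_{m,n}$ has a well-defined limit as an operator on the finite-dimensional energy cutoff $\mathcal H^{\le E}_N$ (which stabilizes for $N$ sufficiently large), so any fixed polynomial in the rescaled generators is uniformly bounded in $N$; the $N^{-1/2}$ suppression from the filtration then annihilates every correction in the limit. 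The commutators $[\tilde t,\tilde J]$ and $[\tilde t,\tilde t]$ are handled by the same mechanism applied to (\ref{6.2}) and (\ref{6.3}): the heavier rescaling of the $t$-generators (carrying the extra $\delta_{n,m}$ in the exponent) makes even the nominal leading term of (\ref{6.2}) vanish in the limit, and makes the leading term of (\ref{6.3}) survive only when $n+r=m+s$, matching Conjecture \ref{conjecture 4.2} on the nose.

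The principal obstacle is the uniform boundedness estimate underlying the last step: one must verify that the PBW-type monomials appearing in the $F_{D-1}$ remainder act on each $\mathcal H^{\le E}_N$ with operator norms bounded independently of $N$. Only finitely many such monomials appear for fixed bidegree, since their total degree is capped by $D-1$ and the structure constants of $\mathscr{O}^{(p)}_\infty$ are $N$-independent; Conjecture \ref{conjecture 4.1} then supplies the operator-norm bound on each factor, and the convergence of the full commutator follows by combining these bounds with the explicit $N$-scaling. Making this boundedness uniform across energy cutoffs (as $E$ itself is allowed to tend to infinity) is where the most care is required, but for the statement of Conjecture \ref{conjecture 4.2}, which concerns convergence on a fixed $\mathcal H^{\le E}$, the argument as sketched suffices.
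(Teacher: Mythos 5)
Your proof is correct and takes essentially the same approach as the paper, whose entire argument is a two-line appeal to Proposition \ref{prop: filtration} together with the observation that, after the rescaling \eqref{eqn: rescaled operators}, the lower-filtration-degree terms in \eqref{6.1}--\eqref{6.3} are suppressed by at least a factor of $N^{-1/2}$ and hence vanish in the limit, exactly as you work out in detail. The only cosmetic slip is that substituting $\tilde t_{n+r-1,n+r-1}\to\frac{1}{n+r}\frac{p}{k+p}$ is not needed for Conjecture \ref{conjecture 4.2}, which retains $\tilde t_{n+r-1,m+s-1}$ explicitly; that substitution is what produces Conjecture \ref{conjecture 4.3}.
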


\begin{proof}
    This follows from Proposition \ref{prop: filtration}. After rescaling, the lower degree terms in \ref{6.1}-\ref{6.3} converge to zero. 
\end{proof}
Provided that Conjecture \ref{conjecture 4.1} is true, we define 
\begin{align*}
    &\tilde{J}^a_{b;m-n}:=\lim_{N\rightarrow \infty}\tilde{J}^a_{b;m,n}\\
    &\tilde{\alpha}_{m-n}:=\lim_{N\rightarrow \infty}\tilde{t}_{m,n} \quad\text{for}\quad m\neq n
\end{align*}
From Theorem (\ref{cor: conj1 implies conj2}), Conjecture \ref{conjecture 4.1} implies the following 
\begin{conjecture}\label{conjecture 4.3}
   We have the following convergence as operators acting on the Hilbert spaces in the large $N$ limit
    \begin{align*}
    [\tilde{J}^{a}_{b;n}, \tilde{J}^{c}_{d;m}] &\rightarrow \delta^c_b\tilde{J}^{a}_{d;n+m}-\delta^a_d\tilde{J}^{c}_{b;n+m} + k n \delta_{n, -m}  \left(\delta^a_d\delta^c_b-\frac{1}{p}\delta^c_d\delta^a_b\right),\\
    [\tilde{\alpha}_{n}, \tilde{\alpha}_{m}] &\rightarrow \frac{p}{k+p} n \delta_{n, -m}.
\end{align*}
\end{conjecture}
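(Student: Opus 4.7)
The plan is to deduce Conjecture \ref{conjecture 4.3} as an algebraic specialization of Conjecture \ref{conjecture 4.2} (which follows from Conjecture \ref{conjecture 4.1} by the preceding proposition) combined with the diagonal asymptotic \eqref{tnn large N} for $\tilde t_{n,n}$. Under Conjecture \ref{conjecture 4.1} the limits
\begin{equation*}
\tilde J^a_{b;\ell} := \lim_{N\to\infty}\tilde J^a_{b;m,n}\quad(\ell = m-n),\qquad \tilde{\alpha}_\ell := \lim_{N\to\infty}\tilde t_{m,n}\quad(\ell = m-n\neq 0)
\end{equation*}
are well-defined independently of the representative $(m,n)$. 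The task then reduces to passing to the $N\to\infty$ limit in the scaled commutators of Conjecture \ref{conjecture 4.2} and collapsing the right-hand side to affine form.

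First I would analyze the $[\tilde J,\tilde J]$ bracket in Conjecture \ref{conjecture 4.2}. The linear terms $\delta^c_b \tilde J^a_{d;n+r,m+s}-\delta^a_d \tilde J^c_{b;n+r,m+s}$ converge to $\delta^c_b \tilde J^a_{d;\ell+\ell'}-\delta^a_d \tilde J^c_{b;\ell+\ell'}$ with $\ell=n-m$ and $\ell'=r-s$, by the ``depends only on $m-n$'' clause of Conjecture \ref{conjecture 4.1}. The central term carries a factor $\delta_{n+r,m+s}$ which forces $\ell+\ell'=0$; on this locus $\tilde t_{n+r-1,m+s-1}$ has equal indices and its limit is $\frac{1}{n+r}\cdot\frac{p}{k+p}$ by \eqref{tnn large N}.

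Next I would perform the numerical simplification. Writing $n=m+\ell$ and $s=r+\ell$ (a parametrization of the locus $\ell'=-\ell$), a direct expansion gives
\begin{equation*}
ns - mr = (m+\ell)(r+\ell) - mr = \ell\,(m+r+\ell) = \ell\,(n+r).
\end{equation*}
Thus the central coefficient in Conjecture \ref{conjecture 4.2} becomes $k(ns-mr)\cdot\frac{k+p}{p}\cdot\frac{1}{n+r}\cdot\frac{p}{k+p} = k\ell$, exactly matching the affine $\widehat{\mathfrak{sl}}(p)_k\oplus\widehat{\mathfrak{gl}}(1)_{p/(k+p)}$ coefficient in Conjecture \ref{conjecture 4.3}. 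The identical substitution applied to the $[\tilde t,\tilde t]$ bracket of Conjecture \ref{conjecture 4.2} gives $(ns-mr)\cdot\frac{1}{n+r}\cdot\frac{p}{k+p} = \ell\cdot\frac{p}{k+p}$, which is the Heisenberg relation $[\tilde\alpha_\ell,\tilde\alpha_{-\ell}] = \frac{p}{k+p}\ell$. The $\delta_{n,-m}$ factor in Conjecture \ref{conjecture 4.3} is precisely the constraint $\ell'=-\ell$ used above, and the cases $\ell+\ell'\neq 0$ give vanishing central terms as expected.

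The hard part is not in this algebraic manipulation, which is essentially a substitution into the already-available Conjecture \ref{conjecture 4.2} once \eqref{tnn large N} is known, and which is morally the same computation that underlies the surjection onto $\widehat{\mathfrak{sl}}(p)_{\epsilon_3}\oplus\widehat{\mathfrak{gl}}(1)_{p/\epsilon_2}$ in Theorem \ref{cor: conj1 implies conj2}. The genuine obstacle is Conjecture \ref{conjecture 4.1} itself for $p\geq 2$: one needs an appropriate energy eigenbasis $\mathcal{B}$ on each stabilized truncation ${\mathcal H}^{(\leq E)}_N$, a proof that every $\tilde J^a_{b;m,n}$ and $\tilde t_{m,n}$ admits a well-defined $N\to\infty$ limit depending only on $m-n$, together with the specific diagonal asymptotic \eqref{tnn large N}. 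The $p=1$ case is handled by Theorem \ref{thm: large N limit} via explicit diagrammatic calculus in the basis \eqref{Basis}; extending this to $p\geq 2$ would require a genuinely new combinatorial input, since a closed form for the physical state basis analogous to \eqref{Basis} is not currently known.
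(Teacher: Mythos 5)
Your proposal is correct and follows essentially the same route as the paper: the paper derives Conjecture \ref{conjecture 4.3} from Conjecture \ref{conjecture 4.1} via the surjection in Theorem \ref{cor: conj1 implies conj2} (whose final step is exactly the "direct computation" you spell out, namely $ns-mr=\ell(n+r)$ on the locus $n+r=m+s$ cancelling against $\tilde t_{n+r-1,n+r-1}\to\frac{1}{n+r}\frac{p}{k+p}$), and you correctly locate the genuine difficulty in Conjecture \ref{conjecture 4.1} itself for $p\geq 2$, with the $p=1$ case supplied by Theorem \ref{thm: large N limit}. Your write-up is in fact more explicit than the paper's one-line citation of Theorem \ref{cor: conj1 implies conj2}, but the content is the same.
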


To summarize, Conjecture \ref{conjecture 4.1} implies Conjecture \ref{conjecture 4.2} and Conjecture \ref{conjecture 4.3}. This is essentially Theorem (\ref{cor: conj1 implies conj2}) which says that Kac-Moody algebra is a quotient of the large $N$ limit of a rescaling alegbra.  If Conjecture \ref{conjecture 4.1} is true. We have an emergent $\widehat{\mathfrak{sl}}(p)_{k}\oplus \widehat{\mathfrak{gl}}(1)_{\frac{p}{k+p}}$ Kac-Moody algebra as well as its representation in the large $N$ quantum theory. 
As a corollary of Theorem \ref{thm: large N limit}, Conjecture \ref{conjecture 4.1}-\ref{conjecture 4.3} are true in the Abelian case $p=1$. 


We expect that these are the Fourier modes of boundary excitation of the quantum Hall droplet.

\section{Derivation of the main results}\label{sec: Derivation of the main results}
In this section we will derive the commutation relations (\ref{3.24})-(\ref{3.26}), (\ref{[eab10,ecdmn]}) ,(\ref{[t30,eabmn]})-(\ref{[t12,eabmn]}), and (\ref{[t30,tmn]})-(\ref{[t03,tmn]}). Let us start with some basic relations in the algebra $\mathscr{O}^{(p)}_N(\epsilon_{1},\epsilon_{2})$.
\subsection{Basic relations for the algebra \texorpdfstring{$\mathscr{O}^{(p)}_N(\epsilon_{1},\epsilon_{2})$}{O}}

Given two matrix valued operators $A=\left(A_{j}^{i}\right), B=\left(B_{j}^{i}\right)$, we define their \textit{ordered commutator} by
$$
\widetilde{ [A, B]}_j^ i:=A_{k}^{i} B_{j}^{k}-A_{j}^{k} B_{k}^{i}
$$
For example,
$$
:[Z, Z^{\dagger}]:=-\widetilde{[Z^{\dagger}, Z]}=[Z, Z^{\dagger}]-\epsilon_1 N {\bf 1}
$$
One property of ordered commutator is that
$$
\operatorname{Tr}\widetilde{[A, B]}=0 \text {. }
$$
\begin{lemma}\label{[uZ+vZ,uZ+vZ]=0}
 Matrix elements of $uZ+vZ^{\dagger}$ commute with each other, i.e.
 $$
[(uZ +v Z^{\dagger})^i_j,(u Z +v Z^{\dagger})_{m}^{k}]=0 .
$$
\end{lemma}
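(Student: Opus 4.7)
The plan is to compute the commutator directly by expanding bilinearly and applying the canonical quantization relations \eqref{CQ1}. Since $u,v$ are commuting scalars, we have
\begin{align*}
[(uZ+vZ^{\dagger})^i_j,(uZ+vZ^{\dagger})^k_m]
&= u^2[Z^i_j,Z^k_m] + v^2[{Z^{\dagger}}^i_j,{Z^{\dagger}}^k_m] \\
&\quad + uv[Z^i_j,{Z^{\dagger}}^k_m] + uv[{Z^{\dagger}}^i_j,Z^k_m].
\end{align*}
The first two terms vanish because, in the algebra $\cA^{(p)}_N(\epsilon_1)$, the entries of $Z$ commute among themselves and the entries of $Z^{\dagger}$ commute among themselves (the only nontrivial $Z$-$Z^{\dagger}$ bracket in \eqref{CQ1} is between a $Z$-entry and a $Z^{\dagger}$-entry).

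First I would apply \eqref{CQ1} to the mixed terms: $[Z^i_j,{Z^{\dagger}}^k_m]=\epsilon_1\delta^i_m\delta^k_j$ and $[{Z^{\dagger}}^i_j,Z^k_m]=-[Z^k_m,{Z^{\dagger}}^i_j]=-\epsilon_1\delta^k_j\delta^i_m$. The key observation is that the Kronecker factor $\delta^i_m\delta^k_j$ is manifestly symmetric under the exchange $(i,j)\leftrightarrow (k,m)$, so the two mixed commutators cancel exactly, yielding $0$.

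There is no real obstacle here; the content is simply that $uZ+vZ^{\dagger}$ is an abelian linear combination of canonically conjugate oscillators. I would note the following conceptual point for later use: the reason this works is that the symplectic form encoded in \eqref{CQ1} becomes degenerate when restricted to any single linear combination $uZ+vZ^{\dagger}$, so one gets a commuting family, which explains why the symmetrized expressions $\mathrm{Sym}(Z^n{Z^{\dagger}}^m)$ defined via the generating function $\operatorname{Tr}\frac{1}{1-(uZ+vZ^{\dagger})}$ in \eqref{3.10}--\eqref{3.11} are well-defined without ordering ambiguities.
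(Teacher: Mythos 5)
Your proof is correct and is exactly the "direct computation" the paper invokes: bilinear expansion, vanishing of the pure $Z$-$Z$ and $Z^{\dagger}$-$Z^{\dagger}$ brackets, and cancellation of the two mixed terms $uv\epsilon_1\delta^i_m\delta^k_j$ and $-uv\epsilon_1\delta^i_m\delta^k_j$ by the symmetry of the Kronecker factor. Nothing to add.
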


\begin{proof}
Direct computation.
\end{proof}

As a consequence, we have the following 
\begin{proposition}\label{Prop-Sym-Relation}
$$\operatorname{Tr}\left(\frac{1}{1-(uZ+vZ^{\dagger})}:\left[Z, Z^{\dagger}\right]:\right)=0.$$
Equivalently in homogeneous components
$$
\mathrm{Tr}(\mathrm{Sym}(Z^{n} {Z^{\dagger}}^m):[Z, Z^{\dagger}]:)=0, \qquad \forall m,n.
$$
\end{proposition}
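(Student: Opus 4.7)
The plan is to reduce the identity to a trace involving the ordered commutator $\widetilde{[W,Z]}$, where $W:=uZ+vZ^\dagger$, so that Lemma \ref{[uZ+vZ,uZ+vZ]=0} can be applied directly. The key algebraic observation is
\[
\widetilde{[W,Z]} \;=\; u\,\widetilde{[Z,Z]} + v\,\widetilde{[Z^\dagger,Z]} \;=\; -v\,:[Z,Z^\dagger]:,
\]
where $\widetilde{[Z,Z]}=0$ because the entries of $Z$ commute pairwise, and $\widetilde{[Z^\dagger,Z]}=-:[Z,Z^\dagger]:$ is exactly the identity stated just above the proposition. Thus it suffices to prove
\[
\mathrm{Tr}\!\Bigl(\tfrac{1}{1-W}\,\widetilde{[W,Z]}\Bigr) \;=\; 0
\]
as a formal power series in $u,v$; extracting the coefficient of $u^n v^{m+1}$ then yields the homogeneous form $\mathrm{Tr}(\mathrm{Sym}(Z^n Z^{\dagger m}):[Z,Z^\dagger]:)=0$, using that multiplication by $v$ is injective on $\mathbb{C}[[u,v]]$.

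To verify this trace identity, expand
\[
\mathrm{Tr}\!\Bigl(\tfrac{1}{1-W}\widetilde{[W,Z]}\Bigr) = \sum_{i,j,k}\Bigl(\tfrac{1}{1-W}\Bigr)^{\!i}_{\,j}\bigl(W^j_k Z^k_i - W^k_i Z^j_k\bigr).
\]
By Lemma \ref{[uZ+vZ,uZ+vZ]=0}, the entries of $W$ pairwise commute, so the entries of $(1-W)^{-1}=\sum_{n\ge 0}W^n$ pairwise commute among themselves and with those of $W$. In the first sum, one contracts the shared index $j$ to form the matrix product $(1-W)^{-1}W$, realizing that sum as $\mathrm{Tr}\bigl((1-W)^{-1}W\cdot Z\bigr)$. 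In the second sum the shared index is $i$; the same commutativity permits contracting it into $W(1-W)^{-1}$, giving $\mathrm{Tr}\bigl(W(1-W)^{-1}\cdot Z\bigr)$. Since $(1-W)^{-1}W = W(1-W)^{-1}$ entry-wise, the two traces agree and the difference vanishes.

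The only subtlety is operator ordering: one has the freedom to reorder entries of $W$ among themselves and with entries of $(1-W)^{-1}$, but never with entries of $Z$. The computation above is arranged precisely so that every index contraction needed to identify the two sums involves only $W$ and $(1-W)^{-1}$, with $Z$ sitting to the right of both factors throughout, thereby sidestepping the only potential obstacle. No deeper difficulty is anticipated.
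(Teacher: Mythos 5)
Your proof is correct and follows essentially the same route as the paper: both reduce the statement to the identity $\widetilde{[uZ+vZ^{\dagger},Z]}=-v\,{:}[Z,Z^{\dagger}]{:}$ and then use Lemma \ref{[uZ+vZ,uZ+vZ]=0} to see that the resolvent can be moved past $uZ+vZ^{\dagger}$ inside the trace. The only difference is presentational: the paper invokes $\operatorname{Tr}\widetilde{[A,B]}=0$ together with a Leibniz-type property of the ordered commutator, whereas you verify the same cancellation directly by index bookkeeping.
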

\begin{proof}
 Using the previous lemma
 $$
 \begin{aligned}
\widetilde{[uZ+vZ^{\dagger},\frac{1}{1-(uZ+vZ^{\dagger})}uZ]}=\frac{1}{1-(uZ+vZ^{\dagger})}\widetilde{[uZ+vZ^{\dagger},uZ]}=-uv\frac{1}{1-(uZ+vZ^{\dagger})}:[Z,Z^{\dagger}]:
\end{aligned}
$$
The proposition follows by taking the trace of both sides.
\end{proof}

\begin{corollary} As elements in $\mathscr{O}_{N}^{(p)}\left(\epsilon_{1}, \epsilon_{2}\right)$, we have

$$
\operatorname{Tr}\left(\mathrm{Sym}\left(Z^{n} {Z^{\dagger}}^m\right) \lambda_{a} {{{\lambda^{\dagger}}}}^a\right)=\epsilon_{2} \operatorname{Tr} \mathrm{Sym}\left(Z^{n} {Z^{\dagger}}^m\right).
$$
\end{corollary}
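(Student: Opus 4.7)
The plan is to use the defining relation of the quotient algebra $\mathscr{O}_N^{(p)}(\epsilon_1,\epsilon_2)$, namely that the moment map $\mu^i_j$ is set to zero, together with the preceding Proposition \ref{Prop-Sym-Relation}. Rearranging the definition \eqref{eqn: moment map}, we have, as an identity in $\mathcal{A}^{(p)}_N(\epsilon_1,\epsilon_2)$,
\begin{equation*}
\sum_{a=1}^{p}\lambda_a^i \lambda^{\dagger a}_j \;=\; \mu^i_j \;-\; :[Z,Z^{\dagger}]:^i_j \;+\; \epsilon_2\,\delta^i_j .
\end{equation*}
The idea is simply to contract this identity with $\mathrm{Sym}(Z^n Z^{\dagger m})^j_i$ and take the trace.

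After substitution, the left-hand side becomes the desired $\mathrm{Tr}(\mathrm{Sym}(Z^n Z^{\dagger m})\lambda_a \lambda^{\dagger a})$, while the right-hand side splits into three pieces. The $\epsilon_2\delta^i_j$ piece immediately produces $\epsilon_2 \mathrm{Tr}\,\mathrm{Sym}(Z^n Z^{\dagger m})$, which is the target. The $:[Z,Z^{\dagger}]:$ piece vanishes by Proposition \ref{Prop-Sym-Relation} applied to the homogeneous component of degree $(n,m)$. It remains to handle the $\mu$-piece, namely $\sum_{i,j}\mathrm{Sym}(Z^n Z^{\dagger m})^i_j\,\mu^j_i$.

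The subtlety is that $J_N$ is defined as a \textbf{left} ideal, so a priori one would need $\mu^j_i$ on the left. However, the whole expression $\mathrm{Tr}(\mathrm{Sym}(Z^n Z^{\dagger m})\lambda_a \lambda^{\dagger a})$ is $\mathrm{U}(N)$-invariant, and so are the $\epsilon_2\delta^i_j$ and $:[Z,Z^\dagger]:$ contributions; hence the remaining $\mu$-contribution is automatically $\mathrm{U}(N)$-invariant, i.e.\ it lies in $J_N^{\mathfrak{g}}$, which \emph{is} a two-sided ideal of $\mathcal{A}^{(p)}_N(\epsilon_1,\epsilon_2)^{\mathfrak{g}}$ and is quotiented out in forming $\mathscr{O}_N^{(p)}(\epsilon_1,\epsilon_2)$. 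Thus this term vanishes in the quotient, and the identity follows.

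There is no real obstacle here beyond this bookkeeping point about left versus two-sided ideals; the main input is Proposition \ref{Prop-Sym-Relation}, and the moment map relation does the rest.
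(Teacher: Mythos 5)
Your proof is correct and follows essentially the same route as the paper: substitute the moment map relation, kill the $:[Z,Z^{\dagger}]:$ term with Proposition \ref{Prop-Sym-Relation}, and discard the $\mu$-term in the quotient. One small remark: the worry about left versus two-sided ideals is not really needed here, since $\mathrm{Sym}(Z^{n}{Z^{\dagger}}^{m})^{i}_{j}\,\mu^{j}_{i}$ already has the generators $\mu^{j}_{i}$ on the right of algebra elements and hence lies in the \textbf{left} ideal $J_N$ by definition, while its $\mathrm{U}(N)$-invariance places it in $J_N^{\mathfrak{g}}$ exactly as you argue.
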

\begin{proof}
By definition of the algebra $\mathscr{O}_{N}^{(p)}\left(\epsilon_{1}, \epsilon_{2}\right)$, $\mathrm{Tr}\left(\mathrm{Sym}\left(Z^{n} {Z^{\dagger}}^m\right) \mu\right)=0$ in $\mathscr{O}_{N}^{(p)}\left(\epsilon_{1}, \epsilon_{2}\right)$, where $\mu$ is the moment map (Gauss law constraints) in \eqref{mu}. The equality then follows from Proposition \ref{Prop-Sym-Relation}.
\end{proof}

Since $$\operatorname{Tr}\left(\mathrm{Sym}\left(Z^{n} {Z^{\dagger}}^m\right) \lambda_{a} {{{\lambda^{\dagger}}}}^a\right)
={{{\lambda^{\dagger}}}}^a\mathrm{Sym}\left(Z^{n} {Z^{\dagger}}^m\right) \lambda_{a}+p \epsilon_{1} \operatorname{Tr} \mathrm{Sym}\left(Z^{n} {Z^{\dagger}}^m\right),
$$
we derive the following relation from the above corollary.
\begin{proposition}
     \begin{equation}e_{a ; n, m}^{a}=\left(\epsilon_{2}-p \epsilon_{1}\right) t_{n, m}
     \end{equation}
\end{proposition}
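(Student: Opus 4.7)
The proof is a short computation that just bridges the preceding corollary to the claimed identity using only the canonical quantization relations and the definitions of the generators $e^a_{b;n,m}$ and $t_{n,m}$. The strategy is to reorder the operators $\lambda_a$ and ${\lambda^\dagger}^a$ inside the trace appearing on the left-hand side of the corollary, so that the generator $e^a_{a;n,m}$ emerges naturally, with $t_{n,m}$ appearing as the normal-ordering correction.

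Concretely, the plan is as follows. Start from the identity furnished by the previous corollary,
\[
\operatorname{Tr}\bigl(\mathrm{Sym}(Z^{n}{Z^{\dagger}}^{m})\lambda_{a}{\lambda^{\dagger}}^{a}\bigr)=\epsilon_{2}\operatorname{Tr}\mathrm{Sym}(Z^{n}{Z^{\dagger}}^{m}).
\]
Expand the trace with explicit indices: the left-hand side equals $\bigl(\mathrm{Sym}(Z^{n}{Z^{\dagger}}^{m})\bigr)^{i}_{j}\,\lambda^{j}_{a}\,{\lambda^{\dagger}}^{a}_{i}$. Now apply the canonical quantization relation $[\lambda^{j}_{a},{\lambda^{\dagger}}^{b}_{i}]=\epsilon_{1}\delta^{j}_{i}\delta^{b}_{a}$ to move $\lambda^{j}_{a}$ past ${\lambda^{\dagger}}^{a}_{i}$. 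The commutator contracts $a$ with itself, contributing a factor of $p$ from $\delta^{a}_{a}$ and a factor of $\delta^{j}_{i}$ that closes the remaining matrix indices into a trace. Since $\mathrm{Sym}(Z^{n}{Z^{\dagger}}^{m})$ and ${\lambda^{\dagger}}^{a}$ commute with each other (the $Z,Z^{\dagger}$ commute with the $\lambda,\lambda^{\dagger}$ by \eqref{CQ1}), this rearrangement yields exactly
\[
\operatorname{Tr}\bigl(\mathrm{Sym}(Z^{n}{Z^{\dagger}}^{m})\lambda_{a}{\lambda^{\dagger}}^{a}\bigr)
={\lambda^{\dagger}}^{a}\mathrm{Sym}(Z^{n}{Z^{\dagger}}^{m})\lambda_{a}+p\,\epsilon_{1}\operatorname{Tr}\mathrm{Sym}(Z^{n}{Z^{\dagger}}^{m}),
\]
which is the identity already noted right before the proposition statement.

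To finish, substitute the definitions of the generators. By \eqref{generator e} and \eqref{generator t}, summed over $a$, one has $\epsilon_{1}e^{a}_{a;n,m}={\lambda^{\dagger}}^{a}\mathrm{Sym}(Z^{n}{Z^{\dagger}}^{m})\lambda_{a}$ and $\epsilon_{1}t_{n,m}=\operatorname{Tr}\mathrm{Sym}(Z^{n}{Z^{\dagger}}^{m})$. Combining with the corollary gives
\[
\epsilon_{1}\epsilon_{2}\,t_{n,m}=\epsilon_{1}e^{a}_{a;n,m}+p\,\epsilon_{1}^{2}\,t_{n,m},
\]
and dividing by $\epsilon_{1}$ yields the desired equality $e^{a}_{a;n,m}=(\epsilon_{2}-p\epsilon_{1})\,t_{n,m}$. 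There is no genuine obstacle here; the only subtle point is being careful about which terms sit inside the normal ordering and which do not, since the real content of the proposition is packaged into the preceding Proposition \ref{Prop-Sym-Relation} (where Lemma \ref{[uZ+vZ,uZ+vZ]=0} was used to kill the commutator $\operatorname{Tr}(\mathrm{Sym}(Z^{n}{Z^{\dagger}}^{m}):\![Z,Z^{\dagger}]\!:)$). Modulo that input, the present proposition is essentially a bookkeeping step recording the effect of the Wick reordering of the $\lambda,\lambda^{\dagger}$ pair.
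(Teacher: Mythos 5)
Your proposal is correct and follows the paper's own route exactly: the paper derives the proposition by combining the corollary $\operatorname{Tr}\bigl(\mathrm{Sym}(Z^{n}{Z^{\dagger}}^{m})\lambda_{a}{\lambda^{\dagger}}^{a}\bigr)=\epsilon_{2}\operatorname{Tr}\mathrm{Sym}(Z^{n}{Z^{\dagger}}^{m})$ with the Wick-reordering identity $\operatorname{Tr}\bigl(\mathrm{Sym}(Z^{n}{Z^{\dagger}}^{m})\lambda_{a}{\lambda^{\dagger}}^{a}\bigr)={\lambda^{\dagger}}^{a}\mathrm{Sym}(Z^{n}{Z^{\dagger}}^{m})\lambda_{a}+p\epsilon_{1}\operatorname{Tr}\mathrm{Sym}(Z^{n}{Z^{\dagger}}^{m})$ and then dividing by $\epsilon_{1}$. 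Your index computation producing the $p\epsilon_{1}$ correction via $\delta^{a}_{a}=p$ is the same bookkeeping the paper leaves implicit, so there is nothing to add.
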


We conclude this subsection with some useful identities in $\mathscr{O}_{N}^{(p)}\left(\epsilon_{1}, \epsilon_{2}\right)$:
\begin{proposition}\label{prop 53}
For any two matrix-valued operators (in the adjoint representation of $\mathrm{U}(N)$) $A=\left(A_{j}^{i}\right)$, $B=\left(B_{j}^{i}\right)$ in the algebra $\mathscr{O}_{N}^{(p)}\left(\epsilon_{1}, \epsilon_{2}\right)$, we have
\begin{align}
\label{52}& \operatorname{Tr}\left(A\left[Z, Z^{\dagger}\right] B\right)=\operatorname{Tr}\left(A\left(\epsilon_{2}-\lambda_{a} {{{\lambda^{\dagger}}}}^a\right) B\right)+\epsilon_{1} \operatorname{Tr} A \operatorname{Tr} B \\
& {{{\lambda^{\dagger}}}}^{a} A\left[Z, Z^{\dagger}\right] B \lambda_{b}={{{\lambda^{\dagger}}}}^{a} A\left(\epsilon_{2}-\lambda_{e} {{{\lambda^{\dagger}}}}^{e}\right) B \lambda_{b}
\end{align}
\end{proposition}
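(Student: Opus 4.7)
The plan is to reduce both identities to direct substitutions using the quantum moment-map constraint in $\mathscr{O}^{(p)}_N(\epsilon_1,\epsilon_2)$, but with careful bookkeeping of the normal-ordering anomaly. The two basic ingredients are: (i) summing the canonical commutation relation $[Z^i_k, Z^{\dagger k}_j] = \epsilon_1 \delta^i_j$ over $k$ yields the matrix identity $[Z,Z^\dagger] = :[Z,Z^\dagger]: + \epsilon_1 N\,\mathbf{1}$ in $\cA^{(p)}_N$; and (ii) the moment-map relation $\mu^i_j = 0$ in $\mathscr{O}^{(p)}_N$ gives the matrix identity $:[Z,Z^\dagger]: = \epsilon_2\,\mathbf{1} - \sum_a \lambda_a \lambda^{\dagger a}$ modulo the ideal $J^{\mathfrak{g}}_N$.

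For the first identity, I would substitute the combined replacement into $A[Z,Z^\dagger]B$ and take the trace to obtain
\[
\mathrm{Tr}(A[Z,Z^\dagger]B) = \mathrm{Tr}(A(\epsilon_2 - \lambda\lambda^\dagger)B) + \epsilon_1 N\,\mathrm{Tr}(AB) + \mathrm{Tr}(A\mu B).
\]
The decisive step is to reduce $\mathrm{Tr}(A\mu B)$ in $\mathscr{O}^{(p)}_N$. Since $\mu^j_k$ vanishes only as a \emph{left}-ideal generator, one must first commute it through the $A^i_j$ factor using the adjoint commutation relation (inherited from Proposition 3.1)
\[
[\mu^j_k, A^i_j] = \epsilon_1 A^i_k - \epsilon_1 \delta^i_k A^j_j\quad(\text{no sum}),
\]
and then summing over $j$ produces $\sum_j [\mu^j_k, A^i_j] = \epsilon_1 N\,A^i_k - \epsilon_1 \delta^i_k\,\mathrm{Tr}(A)$. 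Consequently $\mathrm{Tr}(A\mu B) \equiv -\epsilon_1 N\,\mathrm{Tr}(AB) + \epsilon_1\,\mathrm{Tr}(A)\,\mathrm{Tr}(B) \pmod{J^{\mathfrak{g}}_N}$, and the two $\epsilon_1 N\,\mathrm{Tr}(AB)$ terms cancel to leave exactly the stated $\epsilon_1\,\mathrm{Tr}(A)\,\mathrm{Tr}(B)$ correction.

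For the second identity, the same strategy applies but with the outer $\lambda^{\dagger a}$ and $\lambda_b$ replacing the trace. The residual $\lambda^{\dagger a} A \mu B \lambda_b$ is reduced by pushing $\mu$ leftward past $B$, $A$, and $\lambda^{\dagger a}$ using the commutators of Proposition 3.1, in particular $[\mu^\beta_\gamma, \lambda^{\dagger a}_\alpha] = \epsilon_1 \delta^\beta_\alpha \lambda^{\dagger a}_\gamma$, and the resulting commutator contributions should precisely cancel the $\epsilon_1 N\,\lambda^{\dagger a} A B \lambda_b$ normal-ordering shift, yielding the identity with no correction. The main obstacle is the combinatorial tracking of these commutator contributions: at each step one must keep careful account of how the index structure of $\mu^j_k$ interacts with the adjoint-valued $A, B$ and the fundamental/antifundamental $\lambda, \lambda^\dagger$, and the essential point is that the $\epsilon_1 N$ anomaly is exactly compensated in both cases. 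Once this cancellation is made explicit, both identities follow by direct substitution.
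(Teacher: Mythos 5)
Your overall strategy coincides with the paper's: substitute $[Z,Z^{\dagger}]=\mu+\epsilon_{2}\mathbf{1}-\sum_{a}\lambda_{a}\lambda^{\dagger a}+\epsilon_{1}N\mathbf{1}$ and evaluate the residual $\operatorname{Tr}(A\mu B)$ (resp.\ $\lambda^{\dagger a}A\mu B\lambda_{b}$) using the adjoint commutation relations of $\mu$; your formula $\operatorname{Tr}(A\mu B)\equiv\epsilon_{1}\operatorname{Tr}A\operatorname{Tr}B-\epsilon_{1}N\operatorname{Tr}(AB)$ agrees with the paper's, and the advertised cancellation of the $\epsilon_{1}N$ terms is correct.

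There is, however, one step that does not follow as written, and your stated rationale for it points in the wrong direction. You argue that because $\mu$ generates a left ideal one should commute it leftward through $A$; but the left ideal $J_{N}$ consists of elements of the form $x\cdot\mu^{i}_{j}$, i.e.\ with $\mu$ standing on the \emph{right}, so the expressions that are manifestly killed in $\mathscr{O}^{(p)}_{N}=\cA^{(p)}_{N}(\epsilon_{1},\epsilon_{2})^{\mathfrak g}/J^{\mathfrak g}_{N}$ are gauge-invariant expressions \emph{ending} in $\mu$. After your leftward commutation the discarded term is $\mu^{j}_{k}A^{i}_{j}B^{k}_{i}=\operatorname{Tr}(\mu BA)$, with $\mu$ stranded on the far left; this lies in the \emph{right} ideal generated by $\mu$, and its membership in $J^{\mathfrak g}_{N}$ is not automatic. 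It does hold, because $\operatorname{Tr}(\mu C)=\operatorname{Tr}(C\mu)$ for any adjoint-valued $C$ (the net commutator $\sum_{j,k}[\mu^{j}_{k},C^{k}_{j}]=\epsilon_{1}(N\operatorname{Tr}C-N\operatorname{Tr}C)=0$), but that extra observation is needed and is absent from your argument; the same issue recurs in the second identity, where pushing $\mu$ leftward past $A$ and $\lambda^{\dagger a}$ again leaves a term beginning with $\mu$. The paper avoids this entirely by commuting $\mu$ \emph{rightward} past $B$ (resp.\ past $B\lambda_{b}$), so that the residual term $\operatorname{Tr}(AB\mu)$ (resp.\ $\lambda^{\dagger a}_{\alpha}A^{\alpha}_{\beta}B^{\gamma}_{\delta}\lambda^{\delta}_{b}\,\mu^{\beta}_{\gamma}$) is manifestly a gauge-invariant element of the left ideal and hence vanishes. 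Either switch the direction of commutation or insert the one-line lemma $\operatorname{Tr}(\mu C)\equiv 0$; with that repair your proof is complete.
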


\begin{proof}
We prove the first equation, which is equivalent to
$$
\operatorname{Tr} A(\mu+\epsilon_1 N) B=\epsilon_{1} \operatorname{Tr} A \operatorname{Tr} B.
$$
Then
\begin{align*}
\operatorname{Tr} \left(A \mu B\right)=A^{i}_{j} \mu^{j}_{k} B_{k}^{i} =A^{i}_{j}\left[\mu_{k}^{j}, B_{i}^{k}\right]=\epsilon_{1} A_{j}^{i}\left(\delta_{i}^{j} B_{k}^{k}-\delta_{k}^{k} B_{i}^{j}\right)  =\epsilon_{1} \operatorname{Tr} A \operatorname{Tr} B-\epsilon_{1} N \operatorname{Tr}\left( A B\right),
\end{align*}
where in the second equation we have used the moment map equation in $\mathscr{O}_{N}^{(p)}\left(\epsilon_{1}, \epsilon_{2}\right) $. The second equation follows from a similar argument. (A diagrammatic approach to this derivation is provided in Appendix \ref{sec: Diagrammatic notations}.)
\end{proof}

\subsection{Derivation of the main commutation relations}\label{subsec: Derivation of the main commutation relations}
It will be advantageous to compute commutators by using the following generating function
\begin{align}
    &F(u, v):=\frac{1}{1-\left(u Z+v Z^{\dagger}\right)}=\sum_{m, n=0}^{\infty} \frac{(m+n) !}{m ! n !} \mathrm{Sym}\left(Z^{m} {Z^{\dagger}}^n\right) u^{m} v^{n}
\end{align}
$F(u,v)$ is understood to be a formal power series in the variables $u,v$. In the following, all expressions are viewed as formal power series in $u,v$ whose coefficients are operators in $\cA^{(p)}_N(\epsilon_{1})$ or $\mathscr O^{(p)}_N(\epsilon_{1},\epsilon_2)$. We denote the tensor product of $\cA^{(p)}_N(\epsilon_{1})$ and $\mathscr O^{(p)}_N(\epsilon_{1},\epsilon_2)$ with the linear space of formal power series in $u,v$ as $\cA^{(p)}_N(\epsilon_{1})\otimes\mathbb{C}[\![u,v]\!]$ and $\mathscr O^{(p)}_N(\epsilon_{1},\epsilon_2)\otimes\mathbb{C}[\![u,v]\!]$ respectively.
\begin{proposition}
The following identities holds in $\cA^{(p)}_N(\epsilon_{1})\otimes\mathbb{C}[\![u,v]\!]$. In the following expressions, $(Z^i_j)$ and $(F^i_j)$ indicate matrix components of $Z$ and $F(u,v)$. $F^{l}$ indicates the $lth$ power of $F$ with matrix multiplication. Here, $F^{l}$ is a short hand notation for $F(u,v)^{l}$.
    \begin{align}\label{partialu F}
        \partial_{u}^{l} F=l! \underbrace{F Z F Z F Z \cdots ZF}_{l\text{ copies of }Z}   
    \end{align}
    \begin{align}\label{partialv F}
        \partial_{v}^{l} F=l! \underbrace{F Z^{\dagger} F Z^{\dagger} F Z^{\dagger} \cdots Z^{\dagger} F}_{l\text{ copies of }Z^{\dagger}}   
    \end{align}
    \begin{align}\label{[Z,F]}
    [Z,F]=vF[Z,Z^{\dagger}]F
    \end{align}
    \begin{align}\label{[Zdagger,F]}
    [Z^{\dagger},F]=-uF[Z,Z^{\dagger}]F
    \end{align}
    \begin{align}\label{[Z^{i}_{j},F]}
[(Z^i_j),(F^k_l)]=\epsilon_1v(F^k_j)(F^i_l)
    \end{align}
    \begin{align}\label{(1+paritial)F^l}
        (1+\frac{u\partial_u+v\partial_v}{l})F^l=F^{l+1}
    \end{align}
    \begin{align}\label{F^l}
        F^{l}=\left(1+\frac{u \partial_ u+v \partial_v}{l-1}\right)\left(1+\frac{u \partial _u+v \partial_v}{l-2}\right) \cdots\left(1+\frac{u \partial _u+v \partial_v}{1}\right) F
    \end{align}
    
\end{proposition}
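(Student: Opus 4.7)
The plan is to establish the seven identities using the fundamental algebraic identity $F(1-X) = (1-X)F = \mathbf 1$, where $X = uZ + vZ^{\dagger}$, together with the canonical commutation relations \eqref{CQ1}. I would organize the derivation so that \eqref{[Z,F]}--\eqref{[Zdagger,F]} come first, since they are the most basic and feed directly into the other identities.

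The first step is \eqref{[Z,F]}: applying $[Z,-]$ to $F(1-X) = \mathbf 1$ gives $[Z,F](1-X) + F[Z,1-X] = 0$, and since $[Z,1-X] = -v[Z,Z^{\dagger}]$ (the $uZ$ piece drops out), multiplying on the right by $F$ yields $[Z,F] = vF[Z,Z^{\dagger}]F$. The derivation of \eqref{[Zdagger,F]} is identical, with $[Z^{\dagger},1-X] = u[Z,Z^{\dagger}]$. For the differentiation formulas \eqref{partialu F} and \eqref{partialv F}, I would first obtain $\partial_u F = FZF$ by applying $\partial_u$ to $F(1-X) = \mathbf 1$ and using $\partial_u X = Z$, then induct on $l$: writing $G_l := F(ZF)^{l-1}$, each of the $l$ copies of $F$ contributes the same term $G_{l+1}$ under $\partial_u F = FZF$, giving $\partial_u G_l = l\cdot G_{l+1}$ and hence $\partial_u^l F = l!\,G_{l+1} = l!\, F(ZF)^l$. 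Replacing $Z$ by $Z^{\dagger}$ throughout yields \eqref{partialv F}.

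For the matrix-element commutator \eqref{[Z^{i}_{j},F]}, the CCR \eqref{CQ1} gives $[Z^i_j,(1-X)^a_b] = -v\epsilon_1\delta^i_b\delta^a_j$. Applying $[Z^i_j,-]$ to $\sum_c F^a_c(1-X)^c_b = \delta^a_b$ and contracting on the right with $F^b_d$ (using $\sum_b(1-X)^c_b F^b_d = \delta^c_d$) isolates $[Z^i_j,F^k_l] = \epsilon_1 v F^k_j F^i_l$ in one line. The only mildly subtle point is keeping track of operator ordering, since $F^k_j$ and $F^i_l$ do not commute; the index pattern inherited from the CCR fixes the order uniquely, and this is the main place where one must be careful.

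The key observation for \eqref{(1+paritial)F^l} is that $X$ commutes with itself, so powers of $F$ collapse cleanly: $F^l = \sum_{n_1,\ldots,n_l\geq 0} X^{n_1+\cdots+n_l} = \sum_{N\geq 0}\binom{N+l-1}{l-1} X^N$. Since $X$ is homogeneous of degree $1$ in $(u,v)$, the Euler operator $u\partial_u + v\partial_v$ acts on $X^N$ by multiplication by $N$, and \eqref{(1+paritial)F^l} reduces to the elementary identity $(1+N/l)\binom{N+l-1}{l-1} = \binom{N+l}{l}$. Finally, \eqref{F^l} follows by iterating \eqref{(1+paritial)F^l} starting from $F^1 = F$. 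No step is genuinely hard; the counting argument behind \eqref{(1+paritial)F^l} is the main conceptual point, and it would be a mistake to try to manipulate $Z$ and $Z^{\dagger}$ separately there rather than exploiting the self-commutativity of $X$.
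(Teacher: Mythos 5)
Your proof is correct, and for most of the identities it follows the same route as the paper: differentiate or commute against the defining relation $F(1-(uZ+vZ^{\dagger}))=\mathbf 1$, use the Leibniz rule, and check the index bookkeeping for \eqref{[Z^{i}_{j},F]} (your ordering $F^k_jF^i_l$ is the right one). The one place you genuinely diverge is \eqref{(1+paritial)F^l}: the paper first computes $(u\partial_u+v\partial_v)F=F(uZ+vZ^{\dagger})F=F^2-F$ and then evaluates $(1+\tfrac{u\partial_u+v\partial_v}{l})F^l$ by a noncommutative telescoping sum $\tfrac{1}{l}\sum_{l_1+l_2=l-1}F^{l_1}(F^2-F)F^{l_2}=F^{l+1}-F^l+F^l$, whereas you expand $F^l=\sum_{N\ge 0}\binom{N+l-1}{l-1}(uZ+vZ^{\dagger})^N$ and reduce the claim to the Euler-operator eigenvalue $N$ together with the binomial identity $(1+N/l)\binom{N+l-1}{l-1}=\binom{N+l}{l}$. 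Both are valid; your counting argument is arguably more transparent (and, as you note, needs only homogeneity of $uZ+vZ^{\dagger}$ in $(u,v)$, not even the self-commutativity of its matrix entries), while the paper's version has the advantage of producing the intermediate identity $(u\partial_u+v\partial_v)F=F^2-F$, which is reused implicitly throughout Section 5 when rewriting $F^l$ in terms of differential operators acting on $F$.
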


\begin{proof}
By taking partial derivative of $u$ on $F$, we have
\begin{align*}
\partial_{u}F(u,v)=&\sum_{n=1}^{\infty}\sum_{n_1+n_2=n-1}(uZ+vZ^{\dagger})^{n_1}Z(uZ+vZ^{\dagger})^{n_2}\\
=&\sum_{n_1=0}^{\infty}\sum_{n_2=0}^{\infty}(uZ+vZ^{\dagger})^{n_1}Z(uZ+vZ^{\dagger})^{n_2}\\
=&FZF
\end{align*}
By taking the repeated partial derivative of $u$ we obtain (\ref{partialu F}). Equality (\ref{partialv F}) can be obtained similarly. Notice that $[Z,-]$, $[Z^{\dagger},-]$, and $[(Z^i_j),-]$ are derivation operators i.e. Leibniz rule is satisfied. Hence (\ref{[Z,F]})-(\ref{[Z^{i}_{j},F]}) follows from similar computations.
 Now we show (\ref{(1+paritial)F^l}). By direct computation we have
\begin{align*}
(u\partial_{u}+v\partial_{v})F=F(uZ+vZ^{\dagger})F=F^2-F
\end{align*}
Here, $F(1-(uZ+vZ^{\dagger}))=1$ is used. Using the above equation, we have
\begin{align*}
    (1+\frac{u\partial_u+v\partial_v}{l})F^l=F^l+\frac{1}{l}\sum_{l_1+l_2=l-1}F^{l_{1}}(F^2-F)F^{l_{2}}=F^{l+1}
\end{align*}
Finally, (\ref{F^l}) follows from (\ref{(1+paritial)F^l}).
\end{proof}

\begin{corollary}
    The coefficient of $u^mv^n$ in $F^l$ is 
    \begin{align*}
        (1+\frac{m+n}{l-1})(1+\frac{m+n}{l-2})\cdots(1+\frac{m+n}{1})\binom{m+n}{m}\mathrm{Sym}(Z^m{Z^{\dagger}}^n)
    \end{align*}
\end{corollary}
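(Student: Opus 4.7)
The plan is to read off the coefficient directly from the identity \eqref{F^l} proven in the preceding proposition, using the fact that $u\partial_u+v\partial_v$ is diagonal on the monomial basis $\{u^mv^n\}$.

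First, I would recall that the defining expansion of $F$ gives
\begin{align*}
    F(u,v)=\sum_{m,n\ge 0}\binom{m+n}{m}\mathrm{Sym}(Z^m{Z^\dagger}^n)\,u^m v^n,
\end{align*}
so the coefficient of $u^mv^n$ in $F$ is $\binom{m+n}{m}\mathrm{Sym}(Z^m{Z^\dagger}^n)$. Next I would observe that the Euler operator $u\partial_u+v\partial_v$ acts on $u^mv^n$ by multiplication by the scalar $m+n$, so each factor $\bigl(1+\tfrac{u\partial_u+v\partial_v}{j}\bigr)$ appearing on the right-hand side of \eqref{F^l} acts on $u^mv^n$ by multiplication by the scalar $\bigl(1+\tfrac{m+n}{j}\bigr)$.

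Then I would apply \eqref{F^l} term by term: since each factor in the product preserves the $u^mv^n$ eigenspace and acts by a scalar, the coefficient of $u^mv^n$ in $F^l$ is simply the coefficient of $u^mv^n$ in $F$ multiplied by the product of these scalars, namely
\begin{align*}
    \Bigl(1+\tfrac{m+n}{l-1}\Bigr)\Bigl(1+\tfrac{m+n}{l-2}\Bigr)\cdots\Bigl(1+\tfrac{m+n}{1}\Bigr)\binom{m+n}{m}\mathrm{Sym}(Z^m{Z^\dagger}^n),
\end{align*}
which is precisely the claimed formula.

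There is no substantive obstacle here; the corollary is a direct consequence of \eqref{F^l} and the diagonal action of the Euler operator on monomials. The only mild care needed is to note that the factors commute as scalar multiplication operators on each monomial $u^mv^n$, so the order in which they are applied is irrelevant, and that \eqref{F^l} is to be understood as an identity in $\mathscr{A}^{(p)}_N(\epsilon_1)\otimes\mathbb C[\![u,v]\!]$, where the scalar action makes sense coefficient-wise.
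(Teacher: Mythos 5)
Your proof is correct and is exactly the argument the paper intends: the corollary is stated without proof immediately after \eqref{F^l}, and the intended derivation is precisely to combine the Taylor expansion of $F$ with the diagonal action of the Euler operator $u\partial_u+v\partial_v$ on monomials. Nothing is missing.
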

We derive the commutation relations \eqref{3.24}-\eqref{[t03,tmn]} using generating functions.
\begin{proposition}
The following equations hold in $\mathscr O^{(p)}_N(\epsilon_{1},\epsilon_2)\otimes\mathbb{C}[\![u,v]\!]$:
\begin{align}
        \label{[t20,Fab]}&[t_{2,0},{\lambda^{\dagger}}^aF\lambda_b]=2v{\lambda^{\dagger}}^aFZF\lambda_b=2v\partial_{u}{\lambda^{\dagger}}^aF\lambda_b\\
        \label{[t02,Fab]}&[t_{0,2},{\lambda^{\dagger}}^aF\lambda_b]=-2u{\lambda^{\dagger}}^aFZ^{\dagger}F\lambda_b=-2u\partial_{v}{\lambda^{\dagger}}^aF\lambda_b\\
        \label{[t11,Fab]}&[t_{1,1},{\lambda^{\dagger}}^aF\lambda_b]=(v\partial_v-u\partial_u){\lambda^{\dagger}}^aF\lambda_b.
\end{align}
\end{proposition}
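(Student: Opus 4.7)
The plan is to use the canonical commutation relations to reduce everything to simple commutators with $Z$ and $Z^\dagger$, then exploit the fact that the commutator $[t_{n,m},-]$ is a derivation together with the power series presentation $F(u,v)=\sum_{n\geq 0}W^n$ where $W:=uZ+vZ^\dagger$.

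First, since $t_{2,0},t_{0,2},t_{1,1}$ are traces of polynomials in $Z,Z^\dagger$ only, they commute with $\lambda_b$ and ${\lambda^\dagger}^a$ by \eqref{CQ1}. Hence for each of the three identities it suffices to compute $[t_{n,m},F]$ and then sandwich between ${\lambda^\dagger}^a$ and $\lambda_b$. Using $[Z^i_j,{Z^\dagger}^k_l]=\epsilon_1\delta^i_l\delta^k_j$, a direct index computation (keeping $t_{n,m}$ normalized by $1/\epsilon_1$) gives
\begin{align*}
\tfrac{1}{\epsilon_1}[\mathrm{Tr}(Z^2),{Z^\dagger}^k_l]=2Z^k_l,\qquad
\tfrac{1}{\epsilon_1}[\mathrm{Tr}({Z^\dagger}^2),Z^k_l]=-2{Z^\dagger}^k_l,
\end{align*}
while for $t_{1,1}=\tfrac{1}{2\epsilon_1}\mathrm{Tr}(ZZ^\dagger+Z^\dagger Z)$ one finds $[t_{1,1},Z^k_l]=-Z^k_l$ and $[t_{1,1},{Z^\dagger}^k_l]={Z^\dagger}^k_l$. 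Applied to $W=uZ+vZ^\dagger$ these give
\begin{align*}
[t_{2,0},W]=2vZ,\qquad [t_{0,2},W]=-2uZ^\dagger,\qquad [t_{1,1},W]=vZ^\dagger-uZ=(v\partial_v-u\partial_u)W.
\end{align*}

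Next, because $[t_{n,m},-]$ is a derivation of the algebra, and because $\partial_uW=Z$, $\partial_vW=Z^\dagger$ make the Eulerian operators $2v\partial_u$, $-2u\partial_v$, $v\partial_v-u\partial_u$ into derivations that agree with the above on $W$, the three derivations $[t_{2,0},-]$, $[t_{0,2},-]$, $[t_{1,1},-]$ coincide with $2v\partial_u$, $-2u\partial_v$, $v\partial_v-u\partial_u$ respectively on $W$ and therefore on every $W^n$ (by Leibniz applied in the same order on both sides). Summing the geometric series in $W$ yields
\begin{align*}
[t_{2,0},F]=2v\partial_uF,\qquad [t_{0,2},F]=-2u\partial_vF,\qquad [t_{1,1},F]=(v\partial_v-u\partial_u)F.
\end{align*}
Combined with $\partial_uF=FZF$ and $\partial_vF=FZ^\dagger F$ from \eqref{partialu F}--\eqref{partialv F}, and the fact that $\partial_u,\partial_v$ pass through ${\lambda^\dagger}^a\cdot\lambda_b$, this gives the three identities \eqref{[t20,Fab]}--\eqref{[t11,Fab]}.

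The only subtle point to check carefully is the compatibility of the $W$-ordering on the two sides of the Leibniz expansion: because $[Z^i_j,W^k_l]\ne 0$ in general (see \eqref{[Z^{i}_{j},F]}), one must verify that inserting $Z$ (or $Z^\dagger$) at a fixed position in the word $W^n$ corresponds exactly to $\partial_u$ (or $\partial_v$) of that position, so that the sums telescope without producing extra commutator corrections. This is straightforward once one notes that both sides are derivations agreeing on the generator $W$, but it is the step where one has to be careful not to conflate $[Z,W]$-type corrections (which genuinely exist inside $F$) with $[t_{n,m},W]$-type corrections (which are what we are actually computing); this bookkeeping is the main technical obstacle, and the rest is formal manipulation.
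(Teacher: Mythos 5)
Your proposal is correct, and it is essentially the elaboration of the "direct computation" that the paper leaves unwritten. The key observation — that $[t_{2,0},-]$, $[t_{0,2},-]$, $[t_{1,1},-]$ and the operators $2v\partial_u$, $-2u\partial_v$, $v\partial_v-u\partial_u$ are all derivations that agree on the single generator $W=uZ+vZ^\dagger$, and that the Leibniz expansions of both on $W^n$ produce \emph{identically ordered} sums $\sum_k W^k(\cdot)W^{n-1-k}$ so that no reordering corrections can arise — is exactly right, and correctly disposes of the "subtle point" you flag (the nonvanishing of $[Z^i_j,F^k_l]$ is irrelevant here because nothing ever needs to be commuted past anything). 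The index computations $[t_{2,0},W]=2vZ$, $[t_{0,2},W]=-2uZ^\dagger$, $[t_{1,1},W]=vZ^\dagger-uZ$ check out, and combining with $\partial_uF=FZF$, $\partial_vF=FZ^\dagger F$ closes the argument.
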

\begin{proof}
By direct computations.
\end{proof}

\begin{proposition}
The following equation holds in $\mathscr O^{(p)}_N(\epsilon_{1},\epsilon_2)\otimes\mathbb{C}[\![u,v]\!]$:
\begin{equation}\label{CR 1}
\begin{split}
& {\left[{\lambda^{\dagger}}^{a} Z \lambda_{b}, {\lambda^{\dagger}}^c\left(1+u \partial_{u}+v \partial_{v}\right) F \lambda_{d}\right]} \\
& =\epsilon_{1} \delta_{b}^{c} \lambda^{\dagger^{a}} \partial_{u} F \lambda_d-\epsilon_{1} \delta_{d}^{a} {\lambda^{\dagger}}^c \partial_{u} F \lambda_{b} \\
&+\epsilon_{1} \epsilon_{2} v \left\{\delta_{b}^{c} \lambda^{\dagger a}\left(1+\frac{u\partial_{u}+v\partial_{v}}{2}\right)\left(1+\frac{u \partial_{u}+v \partial v}{1}\right) F \lambda_{d}\right. \left.+\delta_{d}^{a} {\lambda^{\dagger}}^c\left(1+\frac{u\partial_{u}+v\partial_{v}}{2}\right)\left(1+\frac{u\partial_{u}+v\partial_{v}}{1}\right) F \lambda_{b}\right\} \\
& -\epsilon_{1} v\left\{\delta_{b}^{c} {\lambda^{\dagger}}^a F \lambda_{e} {\lambda^{\dagger}}^{e}(1+u \partial_u+v \partial_v) F \lambda_{d}+\delta^{a}_{d} {\lambda^{\dagger}}^c(1+u \partial_u+v \partial_v) F \lambda_{e} {\lambda^{\dagger}}^{e} F \lambda_{b}\right\}\\
& +\epsilon_{1} v\left\{{\lambda^{\dagger}}^a(1+u\partial_{u}+v \partial_{v}) F \lambda_{d} {\lambda^{\dagger}}^c F \lambda_{b}+{\lambda^{\dagger}}^a F \lambda_{d} {\lambda^{\dagger}}^c(1+u \partial_{u}+v \partial_{v}) F \lambda_{b}\right\} \\
& -2\epsilon_{1}^{2} v \delta_{d}^{c} {\lambda^{\dagger}}^a\left(1+\frac{u \partial_u+v \partial_{v}}{2}\right)\left(1+\frac{u \partial_{u}+v \partial_{v}}{1}\right) F \lambda_{b}.
\end{split}
\end{equation}
\end{proposition}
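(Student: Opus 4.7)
The strategy is a direct computation in $\mathscr O^{(p)}_N(\epsilon_1,\epsilon_2)\otimes\mathbb{C}[\![u,v]\!]$, combining four ingredients: the canonical commutation relations \eqref{CQ1}, the generating-function identities \eqref{partialu F}--\eqref{F^l}, the pairwise commutativity of matrix elements of $F$ (a consequence of Lemma \ref{[uZ+vZ,uZ+vZ]=0}), and the untraced moment-map identity of Proposition \ref{prop 53}. The key initial observation is that $G:=(1+u\partial_u+v\partial_v)F$ equals $F^2$ by \eqref{(1+paritial)F^l} with $l=1$, so the commutator to compute is really $[{\lambda^{\dagger}}^aZ\lambda_b,\,{\lambda^{\dagger}}^cF^2\lambda_d]$; the occurrences of $F^3$ on the right-hand side reappear at the end through \eqref{F^l} with $l=3$.

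First I would write the commutator in indexed form and move $\lambda^j_b$ past ${\lambda^{\dagger c}}_k$ in the first summand (and, symmetrically, $\lambda^l_d$ past ${\lambda^{\dagger a}}_i$ in the reversed summand) via \eqref{CQ1}. This produces two ``cubic'' Kronecker contributions, $\epsilon_1\delta^c_b{\lambda^{\dagger}}^a(ZF^2)\lambda_d-\epsilon_1\delta^a_d{\lambda^{\dagger}}^c(F^2Z)\lambda_b$, together with a single ``quartic'' residual ${\lambda^{\dagger a}}_i{\lambda^{\dagger c}}_k\,[Z^i_j,(F^2)^k_l]\,\lambda^j_b\lambda^l_d$.

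For the cubic pieces I would apply \eqref{[Z,F]} to write $ZF=FZ+vF[Z,Z^\dagger]F$, obtaining $ZF^2=\partial_uF+vF[Z,Z^\dagger]F^2$ and $F^2Z=\partial_uF-vF^2[Z,Z^\dagger]F$ by \eqref{partialu F}. The leading $\partial_uF$ pieces give precisely the first line of the right-hand side. The residual $[Z,Z^\dagger]$ sandwiches between ${\lambda^{\dagger}}$'s and $\lambda$'s are then eliminated using the untraced identity of Proposition \ref{prop 53}, $[Z,Z^\dagger]\mapsto\epsilon_2-\sum_e\lambda_e{\lambda^{\dagger}}^e$; the $\epsilon_2$ parts combine with the overall coefficients into the $\epsilon_1\epsilon_2 v\{\cdots\}$ line (after repackaging $F^3$ via \eqref{F^l}), while the $\lambda_e{\lambda^{\dagger}}^e$ parts give the $-\epsilon_1v\{\cdots\lambda_e{\lambda^{\dagger}}^e\cdots\}$ line.

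For the quartic piece, applying the Leibniz rule to \eqref{[Z^{i}_{j},F]} yields $[Z^i_j,(F^2)^k_l]=\epsilon_1v(F^k_j(F^2)^i_l+(F^2)^k_jF^i_l)$. By Lemma \ref{[uZ+vZ,uZ+vZ]=0} the matrix elements of $F$ and $F^2$ pairwise commute, so in each of the two summands one can freely reorder the $F$'s; matching the result to the natural product form $({\lambda^{\dagger}}^aF^2\lambda_d)({\lambda^{\dagger}}^cF\lambda_b)$ (and analogously for the other summand) requires commuting $\lambda^l_d$ past ${\lambda^{\dagger c}}_k$, which by \eqref{CQ1} costs one factor of $\epsilon_1\delta^c_d{\lambda^{\dagger}}^aF^3\lambda_b$ per summand. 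The two such Kronecker corrections combine to give the distinctive last line $-2\epsilon_1^2v\,\delta^c_d{\lambda^{\dagger}}^aF^3\lambda_b$, while the reordered bulk gives the $+\epsilon_1v\{\cdots\}$ line. The main obstacle is the systematic bookkeeping of the Kronecker deltas, matrix-index positions, and signs at each reordering step; this is most cleanly handled by the diagrammatic calculus developed in Appendix \ref{sec: Diagrammatic notations}, where every substitution becomes a local move on a planar diagram.
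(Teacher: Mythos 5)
Your proposal is correct and follows essentially the same route as the paper: the paper likewise rewrites $(1+u\partial_u+v\partial_v)F=F^2$, computes $[{\lambda^{\dagger}}^aZ\lambda_b,{\lambda^{\dagger}}^cF^2\lambda_d]$ via the canonical relations \eqref{CQ1} to obtain the two cubic Kronecker terms, the quartic $\epsilon_1v$ product terms, and the $-2\epsilon_1^2v\,\delta^c_d$ correction, then expands $ZF^2$ and $F^2Z$ with \eqref{[Z,F]}, eliminates $[Z,Z^{\dagger}]$ by the untraced identity of Proposition \ref{prop 53}, and finally repackages everything with \eqref{partialu F} and \eqref{F^l}. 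The only difference is presentational (you track the index bookkeeping step by step where the paper states the intermediate display \eqref{CR 1.3} directly), so no gap remains.
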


\begin{proof}
We have
\begin{align*}
[{\lambda^{\dagger}}^{a} Z \lambda_{b}, {\lambda^{\dagger}}^cF^2 \lambda_{d}]=&\epsilon_1\delta^c_b{\lambda^{\dagger}}^aZF^2\lambda_d-\epsilon_1\delta^a_d{\lambda^{\dagger}}^cF^2Z\lambda_b\\
&+\epsilon_1 v\{{\lambda_{\dagger}}^aF^2\lambda_d{\lambda^{\dagger}}^cF\lambda_b+{\lambda^{\dagger}}^aF\lambda_d{\lambda^{\dagger}}^cF^2\lambda_b\}\\
&-2\epsilon_1^2 v\delta^c_d{\lambda^{\dagger}}^aF^3\lambda_b.
\end{align*}
The first line in RHS of above equation is equal to 
\begin{align*}
\nonumber&\epsilon_1\delta^c_b{\lambda^{\dagger}}^a\{FZF+v F[Z,Z^{\dagger}]F^2\}\lambda_d-\epsilon_1\delta^a_d{\lambda^{\dagger}}^c\{FZF-v F^2[Z,Z^{\dagger}]F\}\lambda_b\\
\nonumber&=\epsilon_1\delta^c_b{\lambda^{\dagger}}^aFZF\lambda_d-\epsilon_1\delta^a_d{\lambda^{\dagger}}^cFZF\lambda_b\\
\nonumber&-\epsilon_1 v \{\delta^c_b{\lambda^{\dagger}}^aF\lambda_e{\lambda^{\dagger}}^eF^2\lambda_d+\delta^a_d{\lambda^{\dagger}}^cF^2\lambda_e{\lambda^{\dagger}}^eF\lambda_b\}\\
&+\epsilon_1\epsilon_2v\{\delta^c_b{\lambda^{\dagger}}^aF^3\lambda_d+\delta^a_d{\lambda^{\dagger}}^cF^3\lambda_b\}
\end{align*}
Altogether, we have
\begin{equation}\label{CR 1.3}
\begin{split}
&[{\lambda^{\dagger}}^{a} Z \lambda_{b}, {\lambda^{\dagger}}^cF^2 \lambda_{d}]=\epsilon_1\delta^c_b{\lambda^{\dagger}}^aFZF\lambda_d-\epsilon_1\delta^a_d{\lambda^{\dagger}}^cFZF\lambda_b\\
&-\epsilon_1 v \{\delta^c_b{\lambda^{\dagger}}^aF\lambda_e{\lambda^{\dagger}}^eF^2\lambda_d+\delta^a_d{\lambda^{\dagger}}^cF^2\lambda_e{\lambda^{\dagger}}^eF\lambda_b\}\\
&+\epsilon_1\epsilon_2v\{\delta^c_b{\lambda^{\dagger}}^aF^3\lambda_d+\delta^a_d{\lambda^{\dagger}}^cF^3\lambda_b\}\\
&+\epsilon_1 v\{{\lambda^{\dagger}}^aF^2\lambda_d{\lambda^{\dagger}}^cF\lambda_b+{\lambda^{\dagger}}^aF\lambda_d{\lambda^{\dagger}}^cF^2\lambda_b\}\\
&-2\epsilon_1^2 v\delta^c_d{\lambda^{\dagger}}^aF^3\lambda_b.
\end{split}
\end{equation}
Applying Proposition (\ref{partialu F}) and (\ref{F^l}) to (\ref{CR 1.3}), we obtain (\ref{CR 1}).
\end{proof}

\begin{proposition}
The following equation holds in $\mathscr O^{(p)}_N(\epsilon_{1},\epsilon_2)\otimes\mathbb{C}[\![u,v]\!]$:
\begin{equation}\label{t30Fab}
\begin{split}
&[\mathrm{Tr}(Z^3),{\lambda^{\dagger}}^a(1+u\partial_u+v\partial_v)F\lambda_b]\\
&=3\epsilon_1 v {\lambda^{\dagger}}^a{\partial_{u}}^2F\lambda_b\\
&+3\epsilon_1 v^2 \{{\lambda^{\dagger}}^a(1+v\partial_v+u\partial_u)F\lambda_e {\lambda^{\dagger}}^e\partial_u F\lambda_b-{\lambda^{\dagger}}^a\partial_uF\lambda_e {\lambda^{\dagger}}^e (1+v\partial_v+u\partial_u)F\lambda_b\}\\
&-3\epsilon_1\epsilon_2 v^3 \left\{
\setlength{\tabcolsep}{50pt}\renewcommand{\arraystretch}{1.8}\begin{array}{l}
{\lambda^{\dagger}}^a(1+\frac{v\partial_v+u\partial_u}{2})(1+v\partial_v+u\partial_u)F\lambda_e {\lambda^{\dagger}}^e(1+v\partial_v+u\partial_u)F\lambda_b\\
+{\lambda^{\dagger}}^a(1+v\partial_v+u\partial_u)F\lambda_e {\lambda^{\dagger}}^e(1+\frac{v\partial_v+u\partial_u}{2})(1+v\partial_v+u\partial_u)F\lambda_b
\end{array}\right\}\\
&+6\epsilon_1\epsilon_2^2 v^3{\lambda^{\dagger}}^a(1+\frac{u\partial_u+v\partial_v}{4})(1+\frac{u\partial_u+v\partial_v}{3})(1+\frac{u\partial_u+v\partial_v}{2})(1+\frac{u\partial_u+v\partial_v}{1})F\lambda_b
\end{split}
\end{equation}
\end{proposition}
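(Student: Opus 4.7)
The plan is to prove this generating-function identity as an analogue of (\ref{CR 1}), adapted to the case where the left entry of the commutator is the pure trace $\mathrm{Tr}(Z^3)$. Since $\mathrm{Tr}(Z^3)$ is built only from $Z$'s, it commutes with $Z$, $\lambda_a$, and ${\lambda^{\dagger}}^a$, and a direct application of the canonical relations (\ref{CQ1}) yields $[\mathrm{Tr}(Z^3), (Z^{\dagger})^i_j] = 3\epsilon_1 (Z^2)^i_j$. Combined with (\ref{(1+paritial)F^l}), the LHS equals ${\lambda^{\dagger}}^a [\mathrm{Tr}(Z^3), F^2] \lambda_b$, and it remains to compute $[\mathrm{Tr}(Z^3), F^2]$.

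Setting $Y = uZ + vZ^{\dagger}$, I would apply the derivation $[\mathrm{Tr}(Z^3), -]$ to the defining identity $F(1-Y)=1$: this yields $[\mathrm{Tr}(Z^3), F](1-Y) = F[\mathrm{Tr}(Z^3),Y] = 3\epsilon_1 v FZ^2$, and right-multiplication by $F$ gives $[\mathrm{Tr}(Z^3), F] = 3\epsilon_1 v FZ^2 F$. The Leibniz rule then produces
\[
{\lambda^{\dagger}}^a [\mathrm{Tr}(Z^3), F^2] \lambda_b \;=\; 3\epsilon_1 v\, {\lambda^{\dagger}}^a \left( FZ^2 F^2 + F^2 Z^2 F \right) \lambda_b.
\]

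The bulk of the work is to rewrite $FZ^2 F^2 + F^2 Z^2 F$ so that the two $Z$'s alternate with intermediate $F$'s, producing $FZFZF = \tfrac{1}{2}\partial_u^2 F$ plus correction terms. This is done by repeated use of the swap $[Z,F] = vF[Z,Z^{\dagger}]F$ from (\ref{[Z,F]}): each time a $Z$ is pushed past an $F$, one either gains an alternating contribution (which eventually assembles into $\partial_u^2 F$) or a residual factor $vF[Z,Z^{\dagger}]F$. After all the swaps, one obtains a ``zero-insertion'' piece proportional to $\partial_u^2 F$, ``one-insertion'' pieces with a single $F[Z,Z^{\dagger}]F$ factor, and ``two-insertion'' pieces with two such factors. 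Since each remaining $[Z,Z^{\dagger}]$ sits between ${\lambda^{\dagger}}^a$ and $\lambda_b$, I would then apply the second identity of Proposition \ref{prop 53} to replace every $A[Z,Z^{\dagger}]B$ by $\epsilon_2 AB - A\lambda_e {\lambda^{\dagger}}^e B$. The $\lambda_e {\lambda^{\dagger}}^e$ contribution splits the bilinear into a product ${\lambda^{\dagger}}^a(\cdots)F\lambda_e \cdot {\lambda^{\dagger}}^e(\cdots)F\lambda_b$, producing the bilinear-product terms in (\ref{t30Fab}); the $\epsilon_2$ contribution (together with the two-insertion compounding $\epsilon_2^2$) produces the single-bilinear terms in the last two lines. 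Finally, applying (\ref{F^l}) to repackage each $F^l$ into $\left(1+\tfrac{u\partial_u+v\partial_v}{l-1}\right)\cdots\left(1+\tfrac{u\partial_u+v\partial_v}{1}\right)F$ matches the RHS of (\ref{t30Fab}) verbatim.

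The main obstacle is the combinatorial bookkeeping in the preceding paragraph: $FZ^2 F^2 + F^2 Z^2 F$ has several sites at which a $[Z,Z^{\dagger}]$ can be generated, and naive expansion yields many redundant terms. Isolating the clean coefficients $3\epsilon_1 v$, $\pm 3\epsilon_1 v^2$, $-3\epsilon_1\epsilon_2 v^3$, and $+6\epsilon_1\epsilon_2^2 v^3$ requires careful tracking of which swap produces which type of contribution. In particular, the coexistence of a $-3\epsilon_1\epsilon_2 v^3$ bilinear-product term and a $+6\epsilon_1\epsilon_2^2 v^3$ pure single-bilinear term reflects the iterated use of Proposition \ref{prop 53} on the two-insertion pieces, and obtaining the numerical factor $6 = 3!$ correctly is where the diagrammatic techniques developed in Appendix \ref{sec: Diagrammatic notations} are expected to be most helpful.
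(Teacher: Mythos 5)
Your proposal is correct and follows essentially the same route as the paper: the paper likewise reduces the left side to $3\epsilon_1 v\,{\lambda^{\dagger}}^a(FZ^2F^2+F^2Z^2F)\lambda_b$, alternates the $Z$'s via $[Z,F]=vF[Z,Z^{\dagger}]F$ to extract $6\epsilon_1 v\,FZFZF$ plus $[Z,Z^{\dagger}]$-insertion remainders, converts each insertion with Proposition \ref{prop 53}, iterates once more on the residual $\epsilon_2$ piece $FZF^3-F^3ZF$ to produce the $\epsilon_2^2$ term, and repackages powers of $F$ via \eqref{F^l}. Your bookkeeping of where the coefficients $3\epsilon_1 v$, $\pm 3\epsilon_1 v^2$, $-3\epsilon_1\epsilon_2 v^3$, and $6\epsilon_1\epsilon_2^2 v^3$ arise matches the paper's computation.
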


\begin{proof}
Let us compute
\begin{align*}
\nonumber& {\left[\operatorname{Tr}\left(Z^{3}\right), {\lambda^{\dagger}}^a F^2 \lambda_{b}\right]} =3 \epsilon_{1} v {\lambda^{\dagger}}^a\left\{F^{2} Z^{2} F+F Z^{2} F^{2}\right\} \lambda_{b} \\
\nonumber& =3 \epsilon_{1} v {\lambda^{\dagger}}^a\left\{\begin{array}{l}
F Z F Z F+F[F, Z] Z F \\
+F Z F Z F+F Z[Z, F] F
\end{array}\right\} \lambda_{b} \\
\nonumber&=6 \epsilon_{1} v \lambda^{\dagger^{a}} FZFZF\lambda _{b}+3 \epsilon_{1} v^{2} {\lambda^{\dagger}}^{a}\left\{-F^{2}\left[Z, Z^{\dagger}\right] F Z F+F Z F\left[Z, Z^{\dagger}\right] F^{2}\right\} \lambda_{b} \\
\nonumber& =6 \epsilon_{1} v \lambda^{\dagger^{a}} F Z F Z F \lambda_{b}+3 \epsilon_{1} v^{2}\left\{{\lambda^{\dagger}}^{a} F^{2} \lambda_{e} {\lambda^{\dagger}}^{e} F Z F \lambda_{b}-{\lambda^{\dagger}}^{a} F Z F {\lambda}_{e} {\lambda^{\dagger}}^{e} F^{2} \lambda_{b}\right\} \\
& \quad+3 \epsilon_{1} \epsilon_{2} v^{2} {\lambda^{\dagger}}^{a}\left\{F Z F^{3}-F^{3} Z F\right\} \lambda_{b}
\end{align*}
The last line in the above equation is proportional to
\begin{equation}\label{t30Fab .2}
\begin{split}
&{\lambda^{\dagger}}^{a}\left\{F Z F^{3}-F^{3} Z F\right\} \lambda_{b}={\lambda^{\dagger}}^{a}\left\{F[Z,F^2]F\right\} \lambda_{b}\\
&~=v{\lambda^{\dagger}}^{a}\left\{F^3[Z,Z^{\dagger}]F^2\right\} \lambda_{b}+v{\lambda^{\dagger}}^{a}\left\{F^2[Z,Z^{\dagger}]F^3\right\} \lambda_{b}\\
&~=-v{\lambda^{\dagger}}^{a}F^3\lambda_e{\lambda^{\dagger}}^eF^2\lambda_{b}-v{\lambda^{\dagger}}^{a}F^2\lambda_e{\lambda^{\dagger}}^eF^3\lambda_{b}+2\epsilon_2 v{\lambda^{\dagger}}^aF^5\lambda_b,
\end{split}
\end{equation}
which implies that
\begin{equation}\label{t30Fab .3}
\begin{split}
&[\operatorname{Tr}\left(Z^{3}\right), {\lambda^{\dagger}}^a F^2 \lambda_{b}]=6 \epsilon_{1} v {\lambda^{\dagger}}^{a} F Z F Z F \lambda_{b}+3 \epsilon_{1} v^{2}\left\{{\lambda^{\dagger}}^{a} F^{2} \lambda_{e} {\lambda^{\dagger}}^{e} F Z F \lambda_{b}-{\lambda^{\dagger}}^{a} F Z F \lambda_{e} {\lambda^{\dagger}}^{e} F^{2} \lambda_{b}\right\} \\
&\quad-3 \epsilon_{1} \epsilon_{2} v^{3}\left\{{\lambda^{\dagger}}^{a} F^{3} \lambda_{e} {\lambda^{\dagger}}^e F^{2} \lambda_{b}+{\lambda^{\dagger}}^a F^{2} \lambda_{e} {\lambda^{\dagger}}^{e} F^{3} \lambda_{b}\right\}+6 \epsilon_{1} \epsilon_{2}^{2} v^{3} {\lambda^{\dagger}}^a F^{5} \lambda_{b}
\end{split}
\end{equation}
Now apply proposition (\ref{partialu F}) and (\ref{F^l}) to (\ref{t30Fab .3}), and we obtain (\ref{t30Fab}).
\end{proof}

Next we take the $\mathfrak{u}(1)$ part of (\ref{t30Fab}), and it leads us to the following
\begin{corollary}
The following equation holds in $\mathscr O^{(p)}_N(\epsilon_{1},\epsilon_2)\otimes\mathbb{C}[\![u,v]\!]$.
\begin{equation}\label{t30TrF}
\begin{split}
&[\mathrm{Tr}(Z^3),\mathrm{Tr}((1+v\partial_v+u\partial_u)F)]=3\epsilon_1 v \mathrm{Tr}({\partial_u}^2F)\\
&-3\epsilon_1v^3\{{\lambda^{\dagger}}^aF^3\lambda_e{\lambda^{\dagger}}^eF^2\lambda_a+{\lambda^{\dagger}}^aF^2\lambda_e{\lambda^{\dagger}}^eF^3\lambda_a\}\\
&-3\epsilon_1^2\epsilon_3 v^3\{\mathrm{Tr}(F^3)\mathrm{Tr}(F^2)+\mathrm{Tr}(F^2)\mathrm{Tr}(F^3)\}\\
&+6\epsilon_1(\epsilon_1^2+\epsilon_2\epsilon_3)v^3\mathrm{Tr}(F^5).
\end{split}
\end{equation}
\end{corollary}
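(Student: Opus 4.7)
The plan is to obtain the corollary by taking the $\mathfrak{u}(1)$ component of equation \eqref{t30Fab}: set $b=a$, sum over $a\in\{1,\dots,p\}$, and divide the resulting equality by $\epsilon_3$. Since $\mathrm{Tr}(Z^3)$ involves only $Z$, it commutes with every $\lambda_a$ and ${\lambda^\dagger}^a$ by \eqref{CQ1}, so $\sum_a$ can be moved freely inside the commutator. The core tool is the trace identity
\[
\sum_{a=1}^p {\lambda^\dagger}^a G \lambda_a \;=\; \epsilon_3\,\mathrm{Tr}(G),
\]
valid whenever $G$ is a matrix-valued power series in $u,v$ whose coefficients are symmetrized polynomials in $Z,Z^\dagger$; this follows from Proposition \ref{Prop-Sym-Relation} and its corollary, together with the fact that each $F^l$ is expressible in this form via \eqref{F^l}. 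Applied to the LHS with $G=F^2=(1+u\partial_u+v\partial_v)F$ we obtain $\epsilon_3\,[\mathrm{Tr}(Z^3),\mathrm{Tr}(F^2)]$, and applied to the two single-$\lambda$-chain terms on the RHS of \eqref{t30Fab}—the $\partial_u^2F$ piece and the $F^5$ piece—it directly produces, after dividing by $\epsilon_3$, the terms $3\epsilon_1 v\,\mathrm{Tr}(\partial_u^2F)$ and a preliminary $6\epsilon_1\epsilon_2^2v^3\,\mathrm{Tr}(F^5)$.

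The nontrivial work is handling the two families of double-$\lambda$ chains. After $b=a$ is summed, each such expression contains a factor $\sum_e\lambda_e{\lambda^\dagger}^e$ sitting between the chains; in $\mathscr O^{(p)}_N$ this is replaced via the moment-map relation
\[
\sum_e\lambda_e{\lambda^\dagger}^e\;=\;\epsilon_2\,{\bf 1}_N \;-\; :[Z,Z^\dagger]:.
\]
For the $3\epsilon_1 v^2$ chain, the $\epsilon_2$-part is processed via the matrix identity
\[
F^3ZF-FZF^3\;=\;-v\bigl(F^2[Z,Z^\dagger]F^3+F^3[Z,Z^\dagger]F^2\bigr),
\]
which is derived from $[F^2,Z]$ using \eqref{[Z,F]}, followed by another application of the moment-map relation. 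The resulting double chain of shape ${\lambda^\dagger}^a F^3\lambda_e{\lambda^\dagger}^e F^2\lambda_a$ precisely cancels the explicit $-3\epsilon_1\epsilon_2 v^3$ chains of \eqref{t30Fab}, while simultaneously generating additional $\mathrm{Tr}(F^5)$ contributions. The surviving $:[Z,Z^\dagger]:$-portion of the $v^2$ chain is then processed through Proposition \ref{prop 53}, which trades $[Z,Z^\dagger]$ insertions for $\epsilon_2$-traces plus $\epsilon_1\,\mathrm{Tr}\cdot\mathrm{Tr}$ corrections; this yields the product of traces $-3\epsilon_1^2\epsilon_3 v^3(\mathrm{Tr}(F^3)\mathrm{Tr}(F^2)+\mathrm{Tr}(F^2)\mathrm{Tr}(F^3))$ together with the remaining double-chain $-3\epsilon_1 v^3(\cdots)$ that appears in \eqref{t30TrF}.

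The main obstacle is coefficient bookkeeping: the final $\mathrm{Tr}(F^5)$ coefficient gathers contributions from the original $6\epsilon_1\epsilon_2^2v^3$ term, the $\epsilon_2$-reduction of the $v^2$ chain, and the $\epsilon_1 N$ piece of $[Z,Z^\dagger]=:[Z,Z^\dagger]:+\epsilon_1 N$, and these must combine—after dividing by $\epsilon_3=\epsilon_2-p\epsilon_1$—into the clean $6\epsilon_1(\epsilon_1^2+\epsilon_2\epsilon_3)$. The emergence of the combination $\epsilon_1^2+\epsilon_2\epsilon_3$, which also controls the $[t_{3,0},t_{m,n}]$ relation of Theorem \ref{Main theorem}, serves as a strong internal consistency check on the reduction.
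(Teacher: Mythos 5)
Your overall frame matches the paper's: contract $a=b$, use $\sum_a{\lambda^\dagger}^aG\lambda_a=\epsilon_3\,\mathrm{Tr}(G)$ on the single chains, and your accounting of the $\epsilon_2$-part of the $v^2$ double chain is correct --- it does produce $+3\epsilon_1\epsilon_2v^3$ times the symmetric double chains and $-6\epsilon_1\epsilon_2^2\epsilon_3v^3\mathrm{Tr}(F^5)$, which cancel the explicit $v^3$ terms of \eqref{t30Fab .3} \emph{exactly}. But that exact cancellation exposes a gap in the rest of your plan: since the explicit $v^3$ terms are wiped out, the entire second, third and fourth lines of \eqref{t30TrF} must be extracted from the leftover piece $-3\epsilon_1v^2\,{\lambda^\dagger}^a\bigl(F^2[Z,Z^\dagger]FZF-FZF[Z,Z^\dagger]F^2\bigr)\lambda_a$, and the mechanism you cite for this --- Proposition \ref{prop 53} --- does not do the job. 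Its chain identity ${\lambda^\dagger}^aA[Z,Z^\dagger]B\lambda_b={\lambda^\dagger}^aA(\epsilon_2-\lambda_e{\lambda^\dagger}^e)B\lambda_b$ merely reinstates the $\lambda_e{\lambda^\dagger}^e$ you just removed (circular), while its trace identity, the only source of the $\epsilon_1\,\mathrm{Tr}\cdot\mathrm{Tr}$ correction you invoke, applies to $\mathrm{Tr}(A[Z,Z^\dagger]B)$ and not to a ${\lambda^\dagger}\cdots\lambda$ chain. Your bookkeeping of the final $\mathrm{Tr}(F^5)$ coefficient is also internally inconsistent: you list the original $6\epsilon_1\epsilon_2^2v^3$ term and the $\epsilon_2$-reduction of the $v^2$ chain as contributing, but by your own cancellation claim these two sum to zero, so all of $6\epsilon_1(\epsilon_1^2+\epsilon_2\epsilon_3)v^3\mathrm{Tr}(F^5)$ must come from the unprocessed residue.

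The step you are missing is the paper's key move in \eqref{t30TrF .2}: the antisymmetric combination is itself a commutator,
\begin{align*}
{\lambda^\dagger}^aF^2\lambda_e{\lambda^\dagger}^eFZF\lambda_a-{\lambda^\dagger}^aFZF\lambda_e{\lambda^\dagger}^eF^2\lambda_a=[{\lambda^\dagger}^aF^2\lambda_e,\,{\lambda^\dagger}^eFZF\lambda_a],
\end{align*}
and evaluating this commutator with the canonical relations \eqref{CQ1} is precisely what generates the three structures your residue must produce: the $\lambda$--$\lambda^\dagger$ contractions that leave the two loops glued give $\epsilon_1p\,{\lambda^\dagger}^a\{F^3ZF-FZF^3\}\lambda_a$ (whose $\epsilon_1p$ combines with the explicit $-3\epsilon_1\epsilon_2v^3$ chains to form $-3\epsilon_1\epsilon_3v^3$), the contractions that split the chain into two separate closed loops give the product $-\epsilon_1\epsilon_3^2v\{\mathrm{Tr}(F^3)\mathrm{Tr}(F^2)+\mathrm{Tr}(F^2)\mathrm{Tr}(F^3)\}$, and the remaining piece gives $2\epsilon_1^2\epsilon_3v\,\mathrm{Tr}(F^5)$. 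Your route, which keeps the two chains fused throughout, has no identified mechanism for producing a product of two independent traces; until you supply one (e.g.\ by first rewriting ${\lambda^\dagger}^aM\lambda_a$ as $\mathrm{Tr}(M\lambda_a{\lambda^\dagger}^a)-\epsilon_1p\,\mathrm{Tr}(M)$ and then carefully iterating the trace form of Proposition \ref{prop 53}, tracking the extra $\epsilon_1p$ terms needed to assemble $\epsilon_3$), the derivation of lines two through four of \eqref{t30TrF} is not established.
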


\begin{proof}
Contract the indices $a$ and $b$ in (\ref{t30Fab .3}) and apply the proposition ${\lambda^{\dagger}}^{a}F\lambda_{a}=\epsilon_3 \mathrm{Tr}(F)$. We obtain
\begin{equation}\label{t30TrF .1}
\begin{split}
&[\operatorname{Tr}\left(Z^{3}\right), \epsilon_3 \mathrm{Tr}(F^2) ]=6 \epsilon_{1}\epsilon_{3} v \mathrm{Tr}(F Z F Z F) +3 \epsilon_{1} v^{2}\left\{{\lambda^{\dagger}}^{a} F^{2} \lambda_{e} {\lambda^{\dagger}}^{e} F Z F \lambda_{a}-{\lambda^{\dagger}}^{a} F Z F \lambda_{e} {\lambda^{\dagger}}^{e} F^{2} \lambda_{a}\right\} \\
&\quad-3 \epsilon_{1} \epsilon_{2} v^{3}\left\{{\lambda^{\dagger}}^{a} F^{3} \lambda_{e} {\lambda^{\dagger}}^e F^{2} \lambda_{a}+{\lambda^{\dagger}}^a F^{2} \lambda_{e} {\lambda^{\dagger}}^{e} F^{3} \lambda_{a}\right\}+6 \epsilon_{1} \epsilon_{2}^{2}\epsilon_{3} v^{3} \mathrm{Tr}(F^{5}).
\end{split}
\end{equation}
The second term in right hand side of the above equation can be computed as follows
\begin{equation}\label{t30TrF .2}
\begin{split}
&{\lambda^{\dagger}}^{a} F^{2} \lambda_{e} {\lambda^{\dagger}}^{e} F Z F \lambda_{a}-{\lambda^{\dagger}}^{a} F Z F \lambda_{e} {\lambda^{\dagger}}^{e} F^{2} \lambda_{a}=[{\lambda^{\dagger}}^{a} F^{2} \lambda_{e},{\lambda^{\dagger}}^{a} F Z F \lambda_{e}]\\
&=\epsilon_1 p {\lambda^{\dagger}}^a\{F^3ZF-FZF^3\}\lambda_a-\epsilon_1\epsilon_3^2v\{\mathrm{Tr}(F^3)\mathrm{Tr}(F^2)+\mathrm{Tr}(F^2)\mathrm{Tr}(F^3)\}+2\epsilon_1^2\epsilon_3v\mathrm{Tr}(F^5)\\
&=\epsilon_1 p v\{{\lambda^{\dagger}}^{a}F^3\lambda_e{\lambda^{\dagger}}^eF^2\lambda_{b}+{\lambda^{\dagger}}^{a}F^2\lambda_e{\lambda^{\dagger}}^eF^3\lambda_{b}\}-2\epsilon_1\epsilon_2pv{\lambda^{\dagger}}^aF^5\lambda_a\\
&\quad -\epsilon_1\epsilon_3^2v\{\mathrm{Tr}(F^3)\mathrm{Tr}(F^2)+\mathrm{Tr}(F^2)\mathrm{Tr}(F^3)\}+2\epsilon_1^2\epsilon_3v\mathrm{Tr}(F^5).
\end{split}
\end{equation}
From the second line to the third line in \eqref{t30TrF .2}, we use (\ref{t30Fab .2}). Altogether we get (\ref{t30TrF}).
\end{proof}

We now derive other commutators $[t_{2,1},{\lambda^{\dagger}}^aF\lambda_b]$ ,  $[t_{1,2},{\lambda^{\dagger}}^aF\lambda_b]$ and $[t_{0,3},{\lambda^{\dagger}}^aF\lambda_b]$ from (\ref{t30Fab}). Let us first consider $[t_{2,1},{\lambda^{\dagger}}^aF\lambda_b]$. Notice we have relation $6t_{2,1}=[t_{3,0},t_{0,2}]$.\\

Denote the operator $[t_{0,2},-]$ as $\mathrm{adj}_{t_{0,2}}$.
Now we compute 
\begin{equation*}
\begin{split}
&[t_{2,1},{\lambda^{\dagger}}^aF^2\lambda_b]=-\frac{1}{6}[[t_{0,2},t_{3,0}],{\lambda^{\dagger}}^a F^2\lambda_b]\\
&=-\frac{1}{6}\{\mathrm{adj}_{t_{0,2}}[t_{3,0},{\lambda^{\dagger}}^aF^2\lambda_b]-[t_{3,0},\mathrm{adj}_{t_{0,2}}{\lambda^{\dagger}}^aF^2\lambda_b\}\\
&=-\frac{1}{6}\{\mathrm{adj}_{t_{0,2}}+2u\partial_{v}\}[t_{3,0},{\lambda^{\dagger}}^aF^2\lambda_b].
\end{split}
\end{equation*}
Here, we used the Jacobi identity from the first line to the second line. From the second line to the third line, we use $[t_{0,2},{\lambda^{\dagger}}^aF^2\lambda_b]=-2u\partial_v {\lambda^{\dagger}}^aF^2\lambda_b$, which follows from the next proposition.
\begin{proposition}
For any positive integer $l$, we have
    \begin{align}\label{adj+2upartial v}
        \{\mathrm{adj}_{t_{0,2}}+2u\partial_{v}\}{\lambda^{\dagger}}^aF^l\lambda_b=0
    \end{align}
\end{proposition}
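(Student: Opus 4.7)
The plan is to view $\mathrm{adj}_{t_{0,2}} = [t_{0,2},-]$ as a derivation of $\mathscr{O}^{(p)}_N(\epsilon_1,\epsilon_2)\otimes \mathbb{C}[\![u,v]\!]$ that is $\mathbb{C}[\![u,v]\!]$-linear, and to compute its action on the atomic building blocks of ${\lambda^\dagger}^a F^l \lambda_b$. A direct application of the canonical commutation relations \eqref{CQ1} gives
$[t_{0,2}, Z^i_j] = -2{Z^\dagger}^i_j$ and $[t_{0,2}, {Z^\dagger}^i_j] = [t_{0,2}, \lambda^i_a] = [t_{0,2}, {\lambda^\dagger}^a_i] = 0$, so that $[t_{0,2}, uZ + vZ^\dagger] = -2uZ^\dagger$.

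Applying the derivation property to the geometric series $F = \sum_{n\geq 0} (uZ+vZ^\dagger)^n$ and matching the result term by term with $\partial_v(uZ+vZ^\dagger)^n = \sum_{j=0}^{n-1}(uZ+vZ^\dagger)^j Z^\dagger (uZ+vZ^\dagger)^{n-1-j}$ (compare \eqref{partialv F}) yields the key identity $[t_{0,2}, F] = -2u\,\partial_v F$. Since $[t_{0,2},-]$ is a derivation and annihilates $\lambda, \lambda^\dagger$, and since $\partial_v$ is itself a derivation acting trivially on $\lambda, \lambda^\dagger$, we immediately get $[t_{0,2}, F^l] = -2u\,\partial_v(F^l)$ and hence
$$[t_{0,2}, {\lambda^\dagger}^a F^l \lambda_b] = -2u\, {\lambda^\dagger}^a \partial_v(F^l)\lambda_b = -2u\,\partial_v\bigl({\lambda^\dagger}^a F^l \lambda_b\bigr),$$
which is exactly \eqref{adj+2upartial v}.

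There is no real obstacle here; the only subtlety is promoting the $l=1$ identity \eqref{[t02,Fab]} (already established) to arbitrary powers, and this is resolved by the derivation principle above. As a sanity check, one could alternatively argue by induction on $l$ using the recursion \eqref{(1+paritial)F^l}: the induction step then reduces to the elementary commutator identity $[u\partial_u + v\partial_v,\, u\partial_v] = 0$, which allows one to pull $2u\partial_v$ past the operator $1 + \tfrac{u\partial_u + v\partial_v}{l}$. Either route produces the same clean formula, but the direct derivation approach avoids the bookkeeping of the inductive step and generalizes immediately to analogous identities for $\mathrm{adj}_{t_{2,0}}$ and $\mathrm{adj}_{t_{1,1}}$ paired with the differential operators appearing in \eqref{[t20,Fab]}--\eqref{[t11,Fab]}.
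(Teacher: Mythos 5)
Your proof is correct, but your primary route is genuinely different from the paper's. The paper starts from the $l=1$ case \eqref{[t02,Fab]}, invokes the factorization $F^l=\bigl(1+\tfrac{u\partial_u+v\partial_v}{l-1}\bigr)\cdots\bigl(1+\tfrac{u\partial_u+v\partial_v}{1}\bigr)F$ from \eqref{F^l}, and then pulls $\mathrm{adj}_{t_{0,2}}+2u\partial_v$ through all the Euler-operator factors using $[\mathrm{adj}_{t_{0,2}}+2u\partial_v,\,u\partial_u+v\partial_v]=0$; this is essentially the ``sanity check'' you relegate to the end (and your identity $[u\partial_u+v\partial_v,\,u\partial_v]=0$ is exactly the commutation fact the paper needs). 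Your main argument instead works one level lower: from $[t_{0,2},Z^i_j]=-2{Z^\dagger}^i_j$ and $[t_{0,2},Z^\dagger]=[t_{0,2},\lambda]=[t_{0,2},\lambda^\dagger]=0$ you get $[t_{0,2},uZ+vZ^\dagger]=-2uZ^\dagger$, and then the Leibniz rule for both $[t_{0,2},-]$ and $\partial_v$ applied factor-by-factor to $F^l=F\cdots F$ gives $[t_{0,2},F^l]=-2u\,\partial_v(F^l)$ directly, with no need for \eqref{F^l} or \eqref{(1+paritial)F^l}. What your approach buys is that it identifies $\mathrm{adj}_{t_{0,2}}$ with $-2u\partial_v$ as derivations on the whole subalgebra generated by $F$, $\lambda$, $\lambda^\dagger$ over $\mathbb{C}[\![u,v]\!]$, so the vanishing holds for any word in these generators, not just ${\lambda^\dagger}^aF^l\lambda_b$; what the paper's route buys is that it reuses machinery (\eqref{F^l} and the commutation with $u\partial_u+v\partial_v$) that is needed anyway for the subsequent manipulations of $[t_{3,0},{\lambda^\dagger}^aF^2\lambda_b]$. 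Both arguments are complete and correct.
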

\begin{proof}
(\ref{adj+2upartial v}) is true for $l=1$. Now, it is easy to see that the operator $\mathrm{adj}_{t_{0,2}}+2u\partial_{v}$ commute with $u\partial_u+v\partial_v$. This fact, together with (\ref{F^l}), implies that 
\begin{align*}
        \nonumber&\{\mathrm{adj}_{t_{0,2}}+2u\partial_{v}\}{\lambda^{\dagger}}^aF^l\lambda_b\\
        \nonumber&=\{\mathrm{adj}_{t_{0,2}}+2u\partial_{v}\}\left(1+\frac{u \partial_ u+v \partial_v}{l-1}\right)\left(1+\frac{u \partial _u+v \partial_v}{l-2}\right) \cdots\left(1+\frac{u \partial _u+v \partial_v}{1}\right){\lambda^{\dagger}}^aF\lambda_b\\
        &=\left(1+\frac{u \partial_ u+v \partial_v}{l-1}\right)\left(1+\frac{u \partial _u+v \partial_v}{l-2}\right) \cdots\left(1+\frac{u \partial _u+v \partial_v}{1}\right)\{\mathrm{adj}_{t_{0,2}}+2u\partial_{v}\}{\lambda^{\dagger}}^aF\lambda_b=0
\end{align*}
\end{proof}

Another useful property is that $\mathrm{adj}_{t_{0,2}}+2u\partial_{v}$ is a derivation, i.e. the Leibniz rule is satisfied when it acts on a product of operators. These properties allow us to compute $\{\mathrm{adj}_{t_{0,2}}+2u\partial_{v}\}[t_{3,0},{\lambda^{\dagger}}^aF^2\lambda_b]$ from the expression (\ref{t30Fab}). For example, there is a term $\propto 3v\partial_{u}^2{\lambda^{\dagger}}^aF\lambda_b$ on the RHS of (\ref{t30Fab}). We compute the action of $\mathrm{adj}_{t_{0,2}}+2u\partial_{v}$ on such term
\begin{align*}
\{\mathrm{adj}_{t_{0,2}}+2u\partial_{v}\}3v\partial_{u}^2{\lambda^{\dagger}}^aF\lambda_b&=[\mathrm{adj}_{t_{0,2}}+2u\partial_{v},3v\partial_{u}^2]{\lambda^{\dagger}}^aF\lambda_b=[2u\partial_{v},3v\partial_{u}^2]{\lambda^{\dagger}}^aF\lambda_b\\
&=(6u\partial_{u}^2-12v\partial_u\partial_v){\lambda^{\dagger}}^aF\lambda_b
\end{align*}
Another example is
\begin{align*}
\{\mathrm{adj}_{t_{0,2}}+2u\partial_{v}\}3v^2{\lambda^{\dagger}}^aF^2\lambda_{e}{\lambda^{\dagger}}^e\partial_u F\lambda_b&={\lambda^{\dagger}}^aF^2\lambda_{e}[\mathrm{adj}_{t_{0,2}}+2u\partial_{v},3v^2\partial_u]{\lambda^{\dagger}}^eF\lambda_b\\
&={\lambda^{\dagger}}^aF^2\lambda_{e}(12uv\partial_u-6v^2\partial_v){\lambda^{\dagger}}^eF\lambda_b
\end{align*}
Similarly, we obtain
\begin{equation}\label{t21Fab}
\begin{split}
&[t_{2,1},{\lambda^{\dagger}}^aF^2\lambda_b]=(-u\partial_{u}^2+2v\partial_u\partial_v){\lambda^{\dagger}}^aF\lambda_b\\
&+{\lambda^{\dagger}}^aF^2\lambda_{e}(-2uv\partial_u+v^2\partial_v){\lambda^{\dagger}}^eF\lambda_b-{\lambda^{\dagger}}^a(-2uv\partial_u+v^2\partial_v)F\lambda_{e}{\lambda^{\dagger}}^eF^2\lambda_b\\
&+3\epsilon_2 uv^2\{{\lambda^{\dagger}}^aF^3\lambda_{e}{\lambda^{\dagger}}^eF^2\lambda_b+{\lambda^{\dagger}}^aF^2\lambda_{e}{\lambda^{\dagger}}^eF^3\lambda_b\}
    -6\epsilon_2^2 uv^2{\lambda^{\dagger}}^aF^5\lambda_b
\end{split}
\end{equation}
Now we use similar argument to compute 
\begin{align*}
\nonumber[t_{1,2},{\lambda^{\dagger}}^aF\lambda_b]=\frac{1}{4}[[t_{2,1},t_{0,2}],{\lambda^{\dagger}}^aF\lambda_b]=-\frac{1}{4}\{\mathrm{adj}_{t_{0,2}}+2u\partial_v\}[t_{2,1},{\lambda^{\dagger}}^aF\lambda_b],
\end{align*}
and we find 
\begin{equation}\label{t12Fab}
\begin{split}
&[t_{1,2},{\lambda^{\dagger}}^aF^2\lambda_b]=(v\partial_{v}^2-2u\partial_u\partial_v){\lambda^{\dagger}}^aF\lambda_b\\
&+{\lambda^{\dagger}}^aF^2\lambda_{e}(-2uv\partial_v+u^2\partial_u){\lambda^{\dagger}}^eF\lambda_b-{\lambda^{\dagger}}^a(-2uv\partial_v+u^2\partial_u)F\lambda_{e}{\lambda^{\dagger}}^eF^2\lambda_b\\
&-3\epsilon_2 u^2v\{{\lambda^{\dagger}}^aF^3\lambda_{e}{\lambda^{\dagger}}^eF^2\lambda_b+{\lambda^{\dagger}}^aF^2\lambda_{e}{\lambda^{\dagger}}^eF^3\lambda_b\}
+6\epsilon_2^2 u^2v{\lambda^{\dagger}}^aF^5\lambda_b
\end{split}
\end{equation}
Using $t_{0,3}=\frac{1}{2}[t_{1,2},t_{0,2}]$, and apply similar argument, we obtain
\begin{equation}\label{t03Fab}
\begin{split}
&[t_{0,3},{\lambda^{\dagger}}^aF^2\lambda_b]=-3u\partial_v^2{\lambda^{\dagger}}^aF\lambda_b\\
&+{\lambda^{\dagger}}^aF^2\lambda_{e}(3u^2\partial_v){\lambda^{\dagger}}^eF\lambda_b-{\lambda^{\dagger}}^a(3u^2\partial_v)F\lambda_{e}{\lambda^{\dagger}}^eF^2\lambda_b\\
&+3\epsilon_2 u^3\{{\lambda^{\dagger}}^aF^3\lambda_{e}{\lambda^{\dagger}}^eF^2\lambda_b+{\lambda^{\dagger}}^aF^2\lambda_{e}{\lambda^{\dagger}}^eF^3\lambda_b\}
-6\epsilon_2^2 u^3{\lambda^{\dagger}}^aF^5\lambda_b.
\end{split}
\end{equation}

\begin{remark}
    The algebra has symmetry $Z\leftrightarrow Z^{\dagger}$, $\epsilon_1 \leftrightarrow -\epsilon_1$, $\lambda_e{\lambda^{\dagger}}^e \leftrightarrow -\lambda_e{\lambda^{\dagger}}^e$ , $\epsilon_2 \leftrightarrow -\epsilon_2$ and $u \leftrightarrow v$. As a consistency check, one can see that (\ref{t30Fab}) and (\ref{t21Fab}) become (\ref{t03Fab}) and (\ref{t12Fab}) respectively under this symmetry.
\end{remark}

For the $\mathfrak{u}(1)$ part of $[t_{2,1},{\lambda^{\dagger}}^aF\lambda_b]$ ,  $[t_{1,2},{\lambda^{\dagger}}^aF\lambda_b]$ and $[t_{0,3},{\lambda^{\dagger}}^aF\lambda_b]$, we apply the same argument. From (\ref{t30TrF}), we obtain

\begin{proposition}
\begin{equation}\label{[t30,trF]}
\begin{split}
&[t_{3,0},\mathrm{Tr}(F^2)]=3v\partial_u^2 \mathrm{Tr}(F)-3v^3\{{\lambda^{\dagger}}^aF^3\lambda_e{\lambda^{\dagger}}^eF^2\lambda_a+{\lambda^{\dagger}}^aF^2\lambda_e{\lambda^{\dagger}}^eF^3\lambda_a\}\\
&-3\epsilon_1\epsilon_3 v^3\{\mathrm{Tr}(F^3)\mathrm{Tr}(F^2)+\mathrm{Tr}(F^2)\mathrm{Tr}(F^3)\}+6(\epsilon_1^2+\epsilon_2\epsilon_3)v^3\mathrm{Tr}(F^5)
\end{split}
\end{equation}
\begin{equation}\label{[t21,trF]}
\begin{split}
&[t_{2,1},\mathrm{Tr}(F^2)]=(-u\partial_u^2+2v\partial_u\partial_v) \mathrm{Tr}(F)+3uv^2\{{\lambda^{\dagger}}^aF^3\lambda_e{\lambda^{\dagger}}^eF^2\lambda_a+{\lambda^{\dagger}}^aF^2\lambda_e{\lambda^{\dagger}}^eF^3\lambda_a\}\\
&+3\epsilon_1\epsilon_3 uv^2\{\mathrm{Tr}(F^3)\mathrm{Tr}(F^2)+\mathrm{Tr}(F^2)\mathrm{Tr}(F^3)\}-6(\epsilon_1^2+\epsilon_2\epsilon_3)uv^2\mathrm{Tr}(F^5)
\end{split}
\end{equation}
\begin{equation}\label{[t12,trF]}
\begin{split}
&[t_{1,2},\mathrm{Tr}(F^2)]=(v\partial_v^2-2u\partial_u\partial_v) \mathrm{Tr}(F)-3vu^2\{{\lambda^{\dagger}}^aF^3\lambda_e{\lambda^{\dagger}}^eF^2\lambda_a+{\lambda^{\dagger}}^aF^2\lambda_e{\lambda^{\dagger}}^eF^3\lambda_a\}\\
&-3\epsilon_1\epsilon_3 vu^2\{\mathrm{Tr}(F^3)\mathrm{Tr}(F^2)+\mathrm{Tr}(F^2)\mathrm{Tr}(F^3)\}+6(\epsilon_1^2+\epsilon_2\epsilon_3)vu^2\mathrm{Tr}(F^5)
\end{split}
\end{equation}
\begin{equation}\label{[t03,trF]}
\begin{split}
&[t_{0,3},\mathrm{Tr}(F^2)]=-3u\partial_v^2 \mathrm{Tr}(F)+3u^3\{{\lambda^{\dagger}}^aF^3\lambda_e{\lambda^{\dagger}}^eF^2\lambda_a+{\lambda^{\dagger}}^aF^2\lambda_e{\lambda^{\dagger}}^eF^3\lambda_a\}\\
&+3\epsilon_1\epsilon_3 u^3\{\mathrm{Tr}(F^3)\mathrm{Tr}(F^2)+\mathrm{Tr}(F^2)\mathrm{Tr}(F^3)\}-6(\epsilon_1^2+\epsilon_2\epsilon_3)u^3\mathrm{Tr}(F^5)
\end{split}
\end{equation}
\end{proposition}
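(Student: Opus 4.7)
The plan is to prove the four identities by the same recursive commutator argument already used to pass from \eqref{t30Fab} to \eqref{t21Fab}, \eqref{t12Fab}, and \eqref{t03Fab}.

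First, the identity \eqref{[t30,trF]} is essentially the preceding corollary \eqref{t30TrF}: since $\mathrm{Tr}(Z^3)=\epsilon_1 t_{3,0}$ by definition \eqref{generator t} and $(1+u\partial_u+v\partial_v)F=F^2$ by \eqref{(1+paritial)F^l} with $l=1$, dividing \eqref{t30TrF} by $\epsilon_1$ immediately yields \eqref{[t30,trF]}.

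To obtain the remaining three identities, I would exploit the recursions
\begin{equation*}
    6t_{2,1}=[t_{3,0},t_{0,2}],\qquad 4t_{1,2}=[t_{2,1},t_{0,2}],\qquad 2t_{0,3}=[t_{1,2},t_{0,2}],
\end{equation*}
which follow from the trace analogue of \eqref{3.26}, namely $[t_{0,2},t_{n,m}]=-2n\,t_{n-1,m+1}$ (itself obtained by contracting $a=b$ in \eqref{3.26} and using \eqref{trace of e}). Combined with the Jacobi identity and the auxiliary identity $[t_{0,2},\mathrm{Tr}(F^l)]=-2u\partial_v\mathrm{Tr}(F^l)$ (the trace analogue of \eqref{[t02,Fab]}, provable by the same power-series manipulation), one obtains
\begin{equation*}
    [t_{2,1},X]=-\tfrac{1}{6}(\mathrm{adj}_{t_{0,2}}+2u\partial_v)[t_{3,0},X],
\end{equation*}
and analogous expressions with constants $-\tfrac{1}{4}$ and $-\tfrac{1}{2}$ for $[t_{1,2},X]$ and $[t_{0,3},X]$ starting respectively from $[t_{2,1},X]$ and $[t_{1,2},X]$, valid whenever $[t_{0,2},X]=-2u\partial_v X$ and in particular for $X=\mathrm{Tr}(F^2)$.

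The actual computation of each RHS then reduces to applying the derivation $\mathrm{adj}_{t_{0,2}}+2u\partial_v$ to the previously derived formula. The key simplification, already noted in the $\lambda^{\dagger}$--$\lambda$ case via \eqref{adj+2upartial v}, is that this derivation annihilates each $\mathrm{Tr}(F^l)$, each $\lambda^{\dagger a}F^l\lambda_b$, and every product of such operators; hence on any term $c(u,v)\cdot Y$ with $Y$ such a product, only the derivative of the explicit coefficient survives, $(\mathrm{adj}_{t_{0,2}}+2u\partial_v)(c\,Y)=2u(\partial_v c)\,Y$. The main obstacle is careful bookkeeping of the differential-operator pieces: for example, from the term $3v\partial_u^2\mathrm{Tr}(F)$ in \eqref{[t30,trF]} one must correctly track the interaction $\partial_u^2(u\partial_v\mathrm{Tr}(F))=2\partial_u\partial_v\mathrm{Tr}(F)+u\partial_u^2\partial_v\mathrm{Tr}(F)$ to produce the cross-derivative contribution $(-u\partial_u^2+2v\partial_u\partial_v)\mathrm{Tr}(F)$ on the first line of \eqref{[t21,trF]}, with analogous care needed at each subsequent step to correctly assemble the other lines of \eqref{[t21,trF]}, \eqref{[t12,trF]}, and \eqref{[t03,trF]}.
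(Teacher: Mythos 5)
Your proposal is correct and follows essentially the same route as the paper: identity \eqref{[t30,trF]} is \eqref{t30TrF} divided by $\epsilon_1$ (using $\mathrm{Tr}(Z^3)=\epsilon_1 t_{3,0}$ and $(1+u\partial_u+v\partial_v)F=F^2$), and the remaining three are obtained by repeatedly applying the derivation $\mathrm{adj}_{t_{0,2}}+2u\partial_v$ with the prefactors $-\tfrac16,-\tfrac14,-\tfrac12$, exactly as the paper does for the $\lambda^{\dagger}\!-\!\lambda$ versions and then invokes for the trace parts. The key points you identify — that this derivation annihilates $\mathrm{Tr}(F^l)$ and ${\lambda^{\dagger}}^aF^l\lambda_b$ so only coefficients and the explicit differential operators contribute, with the commutators $[2u\partial_v,\,\cdot\,]$ handled carefully — are precisely the content of the paper's argument.
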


Now we obtain all commutation relations (\ref{3.24})-(\ref{3.26}), (\ref{[eab10,ecdmn]}) ,(\ref{[t30,eabmn]})-(\ref{[t12,eabmn]}), and (\ref{[t30,tmn]})-(\ref{[t03,tmn]}) by taking coefficient of $u^mv^n$ in (\ref{[t20,Fab]})-(\ref{[t11,Fab]}), (\ref{CR 1}), (\ref{t30Fab}), (\ref{t21Fab}), (\ref{t12Fab}), and (\ref{[t30,trF]})-(\ref{[t03,trF]}).

Finally, we derive \ref{[Jab10,Jcdmn]}. We decompose ${{\lambda}^{\dagger}}^aF(u,v)\lambda_b$ into $\mathfrak{sl}_{p}$ and $\mathfrak{gl}_{1}$ parts. Introduce
$$J^a_b(u,v):=\frac{1}{\epsilon_1}\left({\lambda}^aF(u,v)\lambda_b-\frac{\epsilon_3}{p}\delta^a_{b}Tr(F(u,v)) \right)\quad, \quad T(u,v):=\frac{1}{\epsilon_1}Tr(F(u,v)).$$
From (\ref{CR 1}), using Lemma \ref{[uZ+vZ,uZ+vZ]=0}, we have
\begin{equation}\label{[Jab10,Jcd]}
    \begin{split}
            &[J^a_{b;1,0},(1+u\partial_u+v\partial_v)J^c_{d}(u,v)]=\\
    &+\delta^c_b\partial_u J^a_d(u,v)-\delta^a_d \partial_u J^c_b(u,v)\\
    &+2v\frac{\epsilon_2\epsilon_3}{p}\{\delta^a_d\delta^c_b-\frac{1}{p}\delta^a_b\delta^c_d\}(1+\frac{u\partial_u+v\partial_v}{2})(1+u\partial_u+v\partial_v)T(u,v)\\
    &+v\epsilon_2\{\delta^c_b(1+\frac{u\partial_u+v\partial_v}{2})(1+u\partial_u+v\partial_v)J^a_d(u,v)+\delta^a_d(1+\frac{u\partial_u+v\partial_v}{2})(1+u\partial_u+v\partial_v)J^c_b(u,v)\}\\
    &-2v\epsilon_2\frac{1}{p}\{\delta^a_b(1+\frac{u\partial_u+v\partial_v}{2})(1+u\partial_u+v\partial_v)J^c_d(u,v)+\delta^c_d(1+\frac{u\partial_u+v\partial_v}{2})(1+u\partial_u+v\partial_v)J^a_b(u,v)\}\\
    & -\epsilon_{1}v\left\{\delta_{b}^{c} J^a_e(u,v)\{(1+u \partial_u+v \partial_v) J^e_d(u,v)\}+ \delta^a_d\{(1+u \partial_u+v \partial_v)J^c_e(u,v)\}J^e_b(u,v)\right\}\\
& +\epsilon_{1}v\left\{\{(1+u\partial_{u}+v \partial_{v})J^a_d(u,v)\}J^c_b(u,v)+J^a_d(u,v) \{(1+u \partial_{u}+v \partial_{v})J^c_b(u,v)\}\right\}. 
    \end{split}
\end{equation}

By taking coefficient of $u^mv^n$ in (\ref{[Jab10,Jcd]}), we obtain (\ref{[Jab10,Jcdmn]}).

\subsection{Proof of the large \texorpdfstring{$N$}{N} limit}\label{subsec: Proof of the large N limit}
\subsubsection*{Representation of the algebra}
In this section, we will apply diagrammatic notations that are introduced in Appendix \ref{sec: Diagrammatic notations}. 
\begin{definition}
We define the totally anti-symmetrization tensor as follows:
\begin{center}
    \tikzset{every picture/.style={line width=0.75pt}} 

\tikzset{every picture/.style={line width=0.75pt}} 


}
\end{equation}
\end{lemma}
\begin{proof}
Let $\left\{e_{i} \mid i=1, \cdots, N\right\}$ be a basis of the vector space $V$. Let $T_{L}$, $T_{R}$ $\in \underbrace{\left(V^{*} \otimes V^{*} \otimes \cdots V^{*}\right)}_{N} \otimes\left(V^{*} \otimes V\right)$ be the tensors of the LHS and RHS of (\ref{5.41}). To show $T_{L}=T_{R}$, it is enough to show that after contracting $\left\{e_{i} \otimes e_{k}^{*} \mid i, k=1,2, \ldots, N\right\}$ with the last 2 components of the tensors, they are equal. We first consider the contraction with $e_{i} \otimes e_{k}^{*}$, for $i \neq k$. In this case, both sides are zero. The left hand side of the diagram is clearly 0 . For the right hand side, using the fact that the tensors $\varepsilon$ and $A$ are both anti-symmetric in their inputs or outputs, the vectors in the basis $\left\{e_{i} \mid i=1, \cdots, N\right\}$ that are propagating in the first $N-1$ vertical lines between $A$ and $\varepsilon$ must be distinct. This forces the propagating vectors in the last input of $\varepsilon$ and the last output of $A$ to be the same. Graphically:

\begin{center}
    \tikzset{every picture/.style={line width=0.75pt}} 

\tikzset{every picture/.style={line width=0.75pt}} 

\tikzset{every picture/.style={line width=0.75pt}} 

\tikzset{every picture/.style={line width=0.75pt}} 

\begin{tikzpicture}[x=0.75pt,y=0.75pt,yscale=-0.75,xscale=0.75]

\draw  [line width=0.75]  (345.54,40.43) -- (485.76,40.43) -- (485.76,61.88) -- (345.54,61.88) -- cycle ;
\draw [line width=0.75]    (354.18,62.86) -- (354.76,131.14) ;
\draw [shift={(354.41,90)}, rotate = 89.52] [color={rgb, 255:red, 0; green, 0; blue, 0 }  ][line width=0.75]    (10.93,-3.29) .. controls (6.95,-1.4) and (3.31,-0.3) .. (0,0) .. controls (3.31,0.3) and (6.95,1.4) .. (10.93,3.29)   ;
\draw [line width=0.75]    (373.7,62.86) -- (374.27,131.14) ;
\draw [shift={(373.93,90)}, rotate = 89.52] [color={rgb, 255:red, 0; green, 0; blue, 0 }  ][line width=0.75]    (10.93,-3.29) .. controls (6.95,-1.4) and (3.31,-0.3) .. (0,0) .. controls (3.31,0.3) and (6.95,1.4) .. (10.93,3.29)   ;
\draw [line width=0.75]    (394.66,62.39) -- (395.23,130.67) ;
\draw [shift={(394.88,89.53)}, rotate = 89.52] [color={rgb, 255:red, 0; green, 0; blue, 0 }  ][line width=0.75]    (10.93,-3.29) .. controls (6.95,-1.4) and (3.31,-0.3) .. (0,0) .. controls (3.31,0.3) and (6.95,1.4) .. (10.93,3.29)   ;
\draw [line width=0.75]    (457.21,62.86) -- (457.79,131.14) ;
\draw [shift={(457.44,90)}, rotate = 89.52] [color={rgb, 255:red, 0; green, 0; blue, 0 }  ][line width=0.75]    (10.93,-3.29) .. controls (6.95,-1.4) and (3.31,-0.3) .. (0,0) .. controls (3.31,0.3) and (6.95,1.4) .. (10.93,3.29)   ;
\draw [line width=0.75]    (476.29,105.21) -- (476.78,131.42) ;
\draw [shift={(476.4,111.32)}, rotate = 88.93] [color={rgb, 255:red, 0; green, 0; blue, 0 }  ][line width=0.75]    (10.93,-3.29) .. controls (6.95,-1.4) and (3.31,-0.3) .. (0,0) .. controls (3.31,0.3) and (6.95,1.4) .. (10.93,3.29)   ;
\draw [line width=0.75]    (476.29,62.59) -- (476.42,99.39) ;
\draw [shift={(476.33,73.99)}, rotate = 89.8] [color={rgb, 255:red, 0; green, 0; blue, 0 }  ][line width=0.75]    (10.93,-3.29) .. controls (6.95,-1.4) and (3.31,-0.3) .. (0,0) .. controls (3.31,0.3) and (6.95,1.4) .. (10.93,3.29)   ;
\draw [line width=0.75]    (476.42,99.39) -- (520.72,61.75) ;
\draw [line width=0.75]    (476.29,105.21) -- (527.41,63.37) ;
\draw    (519.36,61.48) .. controls (518.03,61.95) and (511.29,50.18) .. (511.29,40.43) ;
\draw [shift={(516.44,57.99)}, rotate = 244.81] [color={rgb, 255:red, 0; green, 0; blue, 0 }  ][line width=0.75]    (10.93,-3.29) .. controls (6.95,-1.4) and (3.31,-0.3) .. (0,0) .. controls (3.31,0.3) and (6.95,1.4) .. (10.93,3.29)   ;
\draw    (527.41,63.37) .. controls (537.93,63.84) and (561.6,54.1) .. (564.97,40.43) ;
\draw [shift={(554.59,53.67)}, rotate = 147.86] [color={rgb, 255:red, 0; green, 0; blue, 0 }  ][line width=0.75]    (10.93,-3.29) .. controls (6.95,-1.4) and (3.31,-0.3) .. (0,0) .. controls (3.31,0.3) and (6.95,1.4) .. (10.93,3.29)   ;
\draw   (344.63,131.63) -- (486.85,131.63) -- (486.85,149.8) -- (344.63,149.8) -- cycle ;
\draw [line width=0.75]    (354.54,150.73) -- (355.03,176.94) ;
\draw [shift={(354.65,156.84)}, rotate = 88.93] [color={rgb, 255:red, 0; green, 0; blue, 0 }  ][line width=0.75]    (10.93,-3.29) .. controls (6.95,-1.4) and (3.31,-0.3) .. (0,0) .. controls (3.31,0.3) and (6.95,1.4) .. (10.93,3.29)   ;
\draw [line width=0.75]    (375.04,149.42) -- (375.53,175.63) ;
\draw [shift={(375.16,155.53)}, rotate = 88.93] [color={rgb, 255:red, 0; green, 0; blue, 0 }  ][line width=0.75]    (10.93,-3.29) .. controls (6.95,-1.4) and (3.31,-0.3) .. (0,0) .. controls (3.31,0.3) and (6.95,1.4) .. (10.93,3.29)   ;
\draw [line width=0.75]    (394.45,149.85) -- (394.94,176.07) ;
\draw [shift={(394.56,155.96)}, rotate = 88.93] [color={rgb, 255:red, 0; green, 0; blue, 0 }  ][line width=0.75]    (10.93,-3.29) .. controls (6.95,-1.4) and (3.31,-0.3) .. (0,0) .. controls (3.31,0.3) and (6.95,1.4) .. (10.93,3.29)   ;
\draw [line width=0.75]    (457.98,149.85) -- (458.47,176.07) ;
\draw [shift={(458.09,155.96)}, rotate = 88.93] [color={rgb, 255:red, 0; green, 0; blue, 0 }  ][line width=0.75]    (10.93,-3.29) .. controls (6.95,-1.4) and (3.31,-0.3) .. (0,0) .. controls (3.31,0.3) and (6.95,1.4) .. (10.93,3.29)   ;
\draw [line width=0.75]    (477.43,150.29) -- (477.92,176.51) ;
\draw [shift={(477.55,156.4)}, rotate = 88.93] [color={rgb, 255:red, 0; green, 0; blue, 0 }  ][line width=0.75]    (10.93,-3.29) .. controls (6.95,-1.4) and (3.31,-0.3) .. (0,0) .. controls (3.31,0.3) and (6.95,1.4) .. (10.93,3.29)   ;
\draw   (354.7,183.61) .. controls (354.73,188.28) and (357.07,190.6) .. (361.74,190.58) -- (406.35,190.37) .. controls (413.02,190.34) and (416.36,192.65) .. (416.38,197.32) .. controls (416.36,192.65) and (419.68,190.3) .. (426.35,190.27)(423.35,190.29) -- (470.96,190.06) .. controls (475.63,190.04) and (477.95,187.7) .. (477.92,183.03) ;
\draw   (501.79,30.93) .. controls (501.79,25.94) and (506.03,21.9) .. (511.27,21.9) .. controls (516.51,21.9) and (520.75,25.94) .. (520.75,30.93) .. controls (520.75,35.92) and (516.51,39.96) .. (511.27,39.96) .. controls (506.03,39.96) and (501.79,35.92) .. (501.79,30.93) -- cycle ;
\draw   (555.26,32) .. controls (555.26,27.01) and (559.51,22.97) .. (564.74,22.97) .. controls (569.98,22.97) and (574.22,27.01) .. (574.22,32) .. controls (574.22,36.98) and (569.98,41.02) .. (564.74,41.02) .. controls (559.51,41.02) and (555.26,36.98) .. (555.26,32) -- cycle ;
\draw  [dash pattern={on 4.5pt off 4.5pt}] (344.67,76.71) -- (465.55,76.71) -- (465.55,112.94) -- (344.67,112.94) -- cycle ;
\draw  [dash pattern={on 4.5pt off 4.5pt}]  (343.74,94.49) .. controls (326.17,102.21) and (287.27,100.63) .. (268.84,103.66) ;
\draw [shift={(266.92,104.01)}, rotate = 348.69] [color={rgb, 255:red, 0; green, 0; blue, 0 }  ][line width=0.75]    (10.93,-3.29) .. controls (6.95,-1.4) and (3.31,-0.3) .. (0,0) .. controls (3.31,0.3) and (6.95,1.4) .. (10.93,3.29)   ;

\draw (408.03,43.36) node [anchor=north west][inner sep=0.75pt]  [font=\Large]  {$\epsilon $};
\draw (414.9,90.05) node [anchor=north west][inner sep=0.75pt]  [font=\normalsize]  {$\cdots $};
\draw (408.58,132.52) node [anchor=north west][inner sep=0.75pt]  [font=\normalsize]  {$A$};
\draw (416.05,157.02) node [anchor=north west][inner sep=0.75pt]  [font=\normalsize]  {$\cdots $};
\draw (391.45,198.85) node [anchor=north west][inner sep=0.75pt]  [font=\small]  {$N$ lines};
\draw (554.25,103.02) node [anchor=north west][inner sep=0.75pt]    {$=$};
\draw (597,101.05) node [anchor=north west][inner sep=0.75pt]  [font=\large]  {$\delta _{i,k}$};
\draw (505.61,23.78) node [anchor=north west][inner sep=0.75pt]  [font=\footnotesize]  {$i$};
\draw (558.17,24.36) node [anchor=north west][inner sep=0.75pt]  [font=\footnotesize]  {$k$};
\draw (580,101.96) node [anchor=north west][inner sep=0.75pt]  [font=\large]  {$T$};
\draw (53.5,96.23) node [anchor=north west][inner sep=0.75pt]  [font=\small]  {\begin{tabular}{c} $N-1$ different\\ vectors  propagating \\ in these vertical lines\end{tabular}};


\end{tikzpicture}
\end{center}
So the only nonzero terms are those contraction with $\left\{e_{i} \otimes e_{i}^{*} \mid i=1,2, \ldots, N\right\}$. It remains to show the following equality in the space $V^{*} \otimes V^{*} \otimes \cdots \otimes V^{*}$:
\begin{equation}\label{tensor in proof}
    \hbox{
    
   \tikzset{every picture/.style={line width=0.75pt}} 

\begin{tikzpicture}[x=0.75pt,y=0.75pt,yscale=-0.7,xscale=0.7]

\draw   (186.22,48.58) -- (331.2,48.58) -- (331.2,69.01) -- (186.22,69.01) -- cycle ;
\draw    (195.16,69.53) -- (195.96,178.19) ;
\draw [shift={(195.5,116.86)}, rotate = 89.58] [color={rgb, 255:red, 0; green, 0; blue, 0 }  ][line width=0.75]    (10.93,-3.29) .. controls (6.95,-1.4) and (3.31,-0.3) .. (0,0) .. controls (3.31,0.3) and (6.95,1.4) .. (10.93,3.29)   ;
\draw    (215.34,69.53) -- (215.01,178.19) ;
\draw [shift={(215.19,116.86)}, rotate = 90.17] [color={rgb, 255:red, 0; green, 0; blue, 0 }  ][line width=0.75]    (10.93,-3.29) .. controls (6.95,-1.4) and (3.31,-0.3) .. (0,0) .. controls (3.31,0.3) and (6.95,1.4) .. (10.93,3.29)   ;
\draw    (237,69.08) -- (237.38,178.19) ;
\draw [shift={(237.17,116.64)}, rotate = 89.8] [color={rgb, 255:red, 0; green, 0; blue, 0 }  ][line width=0.75]    (10.93,-3.29) .. controls (6.95,-1.4) and (3.31,-0.3) .. (0,0) .. controls (3.31,0.3) and (6.95,1.4) .. (10.93,3.29)   ;
\draw    (301.68,69.53) -- (302.83,178.19) ;
\draw [shift={(302.18,116.86)}, rotate = 89.4] [color={rgb, 255:red, 0; green, 0; blue, 0 }  ][line width=0.75]    (10.93,-3.29) .. controls (6.95,-1.4) and (3.31,-0.3) .. (0,0) .. controls (3.31,0.3) and (6.95,1.4) .. (10.93,3.29)   ;
\draw    (322.08,69.8) -- (322.71,178.19) ;
\draw [shift={(322.35,117)}, rotate = 89.67] [color={rgb, 255:red, 0; green, 0; blue, 0 }  ][line width=0.75]    (10.93,-3.29) .. controls (6.95,-1.4) and (3.31,-0.3) .. (0,0) .. controls (3.31,0.3) and (6.95,1.4) .. (10.93,3.29)   ;
\draw   (195.28,184.68) .. controls (195.3,189.35) and (197.64,191.67) .. (202.31,191.65) -- (249.02,191.44) .. controls (255.68,191.41) and (259.02,193.73) .. (259.05,198.4) .. controls (259.02,193.73) and (262.34,191.39) .. (269.01,191.36)(266.01,191.37) -- (315.72,191.15) .. controls (320.39,191.13) and (322.71,188.79) .. (322.69,184.12) ;
\draw  [line width=0.75]  (377.54,48.43) -- (517.76,48.43) -- (517.76,69.88) -- (377.54,69.88) -- cycle ;
\draw [line width=0.75]    (386.18,70.86) -- (386.76,139.14) ;
\draw [shift={(386.41,98)}, rotate = 89.52] [color={rgb, 255:red, 0; green, 0; blue, 0 }  ][line width=0.75]    (10.93,-3.29) .. controls (6.95,-1.4) and (3.31,-0.3) .. (0,0) .. controls (3.31,0.3) and (6.95,1.4) .. (10.93,3.29)   ;
\draw [line width=0.75]    (405.7,70.86) -- (406.27,139.14) ;
\draw [shift={(405.93,98)}, rotate = 89.52] [color={rgb, 255:red, 0; green, 0; blue, 0 }  ][line width=0.75]    (10.93,-3.29) .. controls (6.95,-1.4) and (3.31,-0.3) .. (0,0) .. controls (3.31,0.3) and (6.95,1.4) .. (10.93,3.29)   ;
\draw [line width=0.75]    (426.66,70.39) -- (427.23,138.67) ;
\draw [shift={(426.88,97.53)}, rotate = 89.52] [color={rgb, 255:red, 0; green, 0; blue, 0 }  ][line width=0.75]    (10.93,-3.29) .. controls (6.95,-1.4) and (3.31,-0.3) .. (0,0) .. controls (3.31,0.3) and (6.95,1.4) .. (10.93,3.29)   ;
\draw [line width=0.75]    (489.21,70.86) -- (489.79,139.14) ;
\draw [shift={(489.44,98)}, rotate = 89.52] [color={rgb, 255:red, 0; green, 0; blue, 0 }  ][line width=0.75]    (10.93,-3.29) .. controls (6.95,-1.4) and (3.31,-0.3) .. (0,0) .. controls (3.31,0.3) and (6.95,1.4) .. (10.93,3.29)   ;
\draw [line width=0.75]    (508.29,113.21) -- (508.78,139.42) ;
\draw [shift={(508.4,119.32)}, rotate = 88.93] [color={rgb, 255:red, 0; green, 0; blue, 0 }  ][line width=0.75]    (10.93,-3.29) .. controls (6.95,-1.4) and (3.31,-0.3) .. (0,0) .. controls (3.31,0.3) and (6.95,1.4) .. (10.93,3.29)   ;
\draw [line width=0.75]    (508.29,70.59) -- (508.42,107.39) ;
\draw [shift={(508.33,81.99)}, rotate = 89.8] [color={rgb, 255:red, 0; green, 0; blue, 0 }  ][line width=0.75]    (10.93,-3.29) .. controls (6.95,-1.4) and (3.31,-0.3) .. (0,0) .. controls (3.31,0.3) and (6.95,1.4) .. (10.93,3.29)   ;
\draw [line width=0.75]    (508.42,107.39) -- (552.72,69.75) ;
\draw [line width=0.75]    (508.29,113.21) -- (559.41,71.37) ;
\draw    (551.36,69.48) .. controls (550.03,69.95) and (543.29,58.18) .. (543.29,48.43) ;
\draw [shift={(548.44,65.99)}, rotate = 244.81] [color={rgb, 255:red, 0; green, 0; blue, 0 }  ][line width=0.75]    (10.93,-3.29) .. controls (6.95,-1.4) and (3.31,-0.3) .. (0,0) .. controls (3.31,0.3) and (6.95,1.4) .. (10.93,3.29)   ;
\draw    (559.41,71.37) .. controls (569.93,71.84) and (593.6,62.1) .. (596.97,48.43) ;
\draw [shift={(586.59,61.67)}, rotate = 147.86] [color={rgb, 255:red, 0; green, 0; blue, 0 }  ][line width=0.75]    (10.93,-3.29) .. controls (6.95,-1.4) and (3.31,-0.3) .. (0,0) .. controls (3.31,0.3) and (6.95,1.4) .. (10.93,3.29)   ;
\draw   (376.63,139.63) -- (518.85,139.63) -- (518.85,157.8) -- (376.63,157.8) -- cycle ;
\draw [line width=0.75]    (386.54,158.73) -- (386.8,183.2) ;
\draw [shift={(386.59,163.97)}, rotate = 89.39] [color={rgb, 255:red, 0; green, 0; blue, 0 }  ][line width=0.75]    (10.93,-3.29) .. controls (6.95,-1.4) and (3.31,-0.3) .. (0,0) .. controls (3.31,0.3) and (6.95,1.4) .. (10.93,3.29)   ;
\draw [line width=0.75]    (407.04,157.42) -- (407.53,183.63) ;
\draw [shift={(407.16,163.53)}, rotate = 88.93] [color={rgb, 255:red, 0; green, 0; blue, 0 }  ][line width=0.75]    (10.93,-3.29) .. controls (6.95,-1.4) and (3.31,-0.3) .. (0,0) .. controls (3.31,0.3) and (6.95,1.4) .. (10.93,3.29)   ;
\draw [line width=0.75]    (426.45,157.85) -- (426.94,184.07) ;
\draw [shift={(426.56,163.96)}, rotate = 88.93] [color={rgb, 255:red, 0; green, 0; blue, 0 }  ][line width=0.75]    (10.93,-3.29) .. controls (6.95,-1.4) and (3.31,-0.3) .. (0,0) .. controls (3.31,0.3) and (6.95,1.4) .. (10.93,3.29)   ;
\draw [line width=0.75]    (489.98,157.85) -- (490.47,184.07) ;
\draw [shift={(490.09,163.96)}, rotate = 88.93] [color={rgb, 255:red, 0; green, 0; blue, 0 }  ][line width=0.75]    (10.93,-3.29) .. controls (6.95,-1.4) and (3.31,-0.3) .. (0,0) .. controls (3.31,0.3) and (6.95,1.4) .. (10.93,3.29)   ;
\draw [line width=0.75]    (509.43,158.29) -- (509.92,184.51) ;
\draw [shift={(509.55,164.4)}, rotate = 88.93] [color={rgb, 255:red, 0; green, 0; blue, 0 }  ][line width=0.75]    (10.93,-3.29) .. controls (6.95,-1.4) and (3.31,-0.3) .. (0,0) .. controls (3.31,0.3) and (6.95,1.4) .. (10.93,3.29)   ;
\draw   (386.7,191.61) .. controls (386.73,196.28) and (389.07,198.6) .. (393.74,198.58) -- (438.35,198.37) .. controls (445.02,198.34) and (448.36,200.65) .. (448.38,205.32) .. controls (448.36,200.65) and (451.68,198.3) .. (458.35,198.27)(455.35,198.29) -- (502.96,198.06) .. controls (507.63,198.04) and (509.95,195.7) .. (509.92,191.03) ;
\draw   (533.79,38.93) .. controls (533.79,33.94) and (538.03,29.9) .. (543.27,29.9) .. controls (548.51,29.9) and (552.75,33.94) .. (552.75,38.93) .. controls (552.75,43.92) and (548.51,47.96) .. (543.27,47.96) .. controls (538.03,47.96) and (533.79,43.92) .. (533.79,38.93) -- cycle ;
\draw   (587.26,40) .. controls (587.26,35.01) and (591.51,30.97) .. (596.74,30.97) .. controls (601.98,30.97) and (606.22,35.01) .. (606.22,40) .. controls (606.22,44.98) and (601.98,49.02) .. (596.74,49.02) .. controls (591.51,49.02) and (587.26,44.98) .. (587.26,40) -- cycle ;

\draw (252.16,51.76) node [anchor=north west][inner sep=0.75pt]  [font=\Large]  {$\epsilon $};
\draw (255.2,114.86) node [anchor=north west][inner sep=0.75pt]  [font=\normalsize]  {$\cdots $};
\draw (155.61,97.08) node [anchor=north west][inner sep=0.75pt]  [font=\normalsize]  {$\frac{1}{N}$};
\draw (344.48,106.22) node [anchor=north west][inner sep=0.75pt]  [font=\normalsize]  {$=$};
\draw (234.05,198.9) node [anchor=north west][inner sep=0.75pt]  [font=\small]  {$N$ lines};
\draw (440.03,51.76) node [anchor=north west][inner sep=0.75pt]  [font=\Large]  {$\epsilon $};
\draw (446.9,98.05) node [anchor=north west][inner sep=0.75pt]  [font=\normalsize]  {$\cdots $};
\draw (440.58,138.52) node [anchor=north west][inner sep=0.75pt]  [font=\normalsize]  {$A$};
\draw (448.05,165.02) node [anchor=north west][inner sep=0.75pt]  [font=\normalsize]  {$\cdots $};
\draw (423.45,206.85) node [anchor=north west][inner sep=0.75pt]  [font=\small]  {$N$ lines};
\draw (537.61,31.78) node [anchor=north west][inner sep=0.75pt]  [font=\footnotesize]  {$i$};
\draw (592.17,32.36) node [anchor=north west][inner sep=0.75pt]  [font=\footnotesize]  {$i$};

\end{tikzpicture}

 }
\end{equation}
Since both sides are totally anti-symmetric tensors in $\underbrace{\left(V^{*} \otimes V^{*} \otimes \cdots V^{*}\right)}_{N}$, to show this equality (for fixed $i\in\{1,2,\cdots,N\}$), it is enough to show the equality after contracting with $e_{1}\otimes e_{2}\cdots e_{i-1}\otimes  \hat{e_{i}}\otimes e_{i+1} \cdots\otimes e_{N}\otimes e_{i}$. Here, the notation $\hat{e_{i}}$ means $e_{i}$ is missing in the $i$th position. To show this, we substitute the recursive relation (\ref{Recursive of A}) of $A$ into the right-hand side of (\ref{tensor in proof}). Only the first diagram on the right-hand side of (\ref{Recursive of A}) has nonzero contribute when contracting with $e_{1}\otimes e_{2}\cdots e_{i-1}\otimes  \hat{e_{i}}\otimes e_{i+1} \cdots\otimes e_{N}\otimes e_{i}$. Using this, both sides of the equality that we want to show are equal:
\begin{center}
    \tikzset{every picture/.style={line width=0.75pt}} 



 }
\end{equation}
\end{corollary}
\begin{proof}
    Apply the recursive relation (\ref{Recursive of A}) to (\ref{5.41}).
\end{proof}
The Hilbert space $\mathcal{H}$ of the matrix model with level $k$ is generated linearly by the elements which is a product of 
\begin{align}
&t_{n}:=\mathrm{Tr}({Z^{\dagger}}^{n})\\
&C(n_{1},n_{2},\cdots,n_{N}):=\epsilon^{i_{1}i_{2}\cdots i_{N}}(\lambda^{\dagger}{Z^{\dagger}}^{n_{1}})_{i_{1}}(\lambda^{\dagger}{Z^{\dagger}}^{n_{2}})_{i_{2}}\cdots(\lambda^{\dagger}{Z^{\dagger}}^{n_{N}})_{i_{N}}
\end{align}
in which the elements $C(n_{1},n_{2},\cdots,n_{N})$ appear $k$ times.  

These vectors are not linearly independent, and they obey certain algebraic relations. We do not assume any algebraic relations among the components ${\lambda^{\dagger}}^{i}$, ${Z^{\dagger}}^{i}_{j}$. So these algebraic relations are generated only by the relations in the tensor algebra generated by $\epsilon^{i_{1}i_{2}\cdots i_{N}}$ and $\delta^{i}_{j}$. The first kind of relation is generated by (\ref{anti N+1=0}), where we put one $Z^{\dagger}$ on each incoming line in (\ref{anti N+1=0}). By taking trace, we get:
\begin{equation}
    \hbox{

\tikzset{every picture/.style={line width=0.75pt}} 

\begin{tikzpicture}[x=0.75pt,y=0.75pt,yscale=-1,xscale=1]

\draw [line width=0.75]    (251.62,33.87) -- (251.9,52.09) ;
\draw [shift={(251.68,38.38)}, rotate = 89.12] [color={rgb, 255:red, 0; green, 0; blue, 0 }  ][line width=0.75]    (6.56,-1.97) .. controls (4.17,-0.84) and (1.99,-0.18) .. (0,0) .. controls (1.99,0.18) and (4.17,0.84) .. (6.56,1.97)   ;
\draw [line width=0.75]    (263.37,32.8) -- (263.84,52.09) ;
\draw [shift={(263.5,37.85)}, rotate = 88.61] [color={rgb, 255:red, 0; green, 0; blue, 0 }  ][line width=0.75]    (6.56,-1.97) .. controls (4.17,-0.84) and (1.99,-0.18) .. (0,0) .. controls (1.99,0.18) and (4.17,0.84) .. (6.56,1.97)   ;
\draw [line width=0.75]    (276.25,33.07) -- (276.53,52.09) ;
\draw [shift={(276.33,37.98)}, rotate = 89.16] [color={rgb, 255:red, 0; green, 0; blue, 0 }  ][line width=0.75]    (6.56,-1.97) .. controls (4.17,-0.84) and (1.99,-0.18) .. (0,0) .. controls (1.99,0.18) and (4.17,0.84) .. (6.56,1.97)   ;
\draw [line width=0.75]    (313.49,33.87) -- (313.87,52.09) ;
\draw [shift={(313.58,38.38)}, rotate = 88.83] [color={rgb, 255:red, 0; green, 0; blue, 0 }  ][line width=0.75]    (6.56,-1.97) .. controls (4.17,-0.84) and (1.99,-0.18) .. (0,0) .. controls (1.99,0.18) and (4.17,0.84) .. (6.56,1.97)   ;
\draw [line width=0.75]    (324.41,33.87) -- (324.69,52.09) ;
\draw [shift={(324.48,38.38)}, rotate = 89.12] [color={rgb, 255:red, 0; green, 0; blue, 0 }  ][line width=0.75]    (6.56,-1.97) .. controls (4.17,-0.84) and (1.99,-0.18) .. (0,0) .. controls (1.99,0.18) and (4.17,0.84) .. (6.56,1.97)   ;
\draw   (246.71,51.79) -- (332.06,51.79) -- (332.06,64.47) -- (246.71,64.47) -- cycle ;
\draw [line width=0.75]    (252.27,64.68) -- (252.32,86.83) ;
\draw [shift={(252.28,71.16)}, rotate = 89.86] [color={rgb, 255:red, 0; green, 0; blue, 0 }  ][line width=0.75]    (6.56,-1.97) .. controls (4.17,-0.84) and (1.99,-0.18) .. (0,0) .. controls (1.99,0.18) and (4.17,0.84) .. (6.56,1.97)   ;
\draw [line width=0.75]    (264.43,64.93) -- (264.59,84.95) ;
\draw [shift={(264.47,70.34)}, rotate = 89.54] [color={rgb, 255:red, 0; green, 0; blue, 0 }  ][line width=0.75]    (6.56,-1.97) .. controls (4.17,-0.84) and (1.99,-0.18) .. (0,0) .. controls (1.99,0.18) and (4.17,0.84) .. (6.56,1.97)   ;
\draw [line width=0.75]    (276.14,65.21) -- (276.53,84.95) ;
\draw [shift={(276.25,70.49)}, rotate = 88.86] [color={rgb, 255:red, 0; green, 0; blue, 0 }  ][line width=0.75]    (6.56,-1.97) .. controls (4.17,-0.84) and (1.99,-0.18) .. (0,0) .. controls (1.99,0.18) and (4.17,0.84) .. (6.56,1.97)   ;
\draw [line width=0.75]    (314.14,65.5) -- (314.24,84.95) ;
\draw [shift={(314.17,70.63)}, rotate = 89.72] [color={rgb, 255:red, 0; green, 0; blue, 0 }  ][line width=0.75]    (6.56,-1.97) .. controls (4.17,-0.84) and (1.99,-0.18) .. (0,0) .. controls (1.99,0.18) and (4.17,0.84) .. (6.56,1.97)   ;
\draw [line width=0.75]    (325.29,65.5) -- (325.81,84.95) ;
\draw [shift={(325.42,70.63)}, rotate = 88.45] [color={rgb, 255:red, 0; green, 0; blue, 0 }  ][line width=0.75]    (6.56,-1.97) .. controls (4.17,-0.84) and (1.99,-0.18) .. (0,0) .. controls (1.99,0.18) and (4.17,0.84) .. (6.56,1.97)   ;
\draw  [fill={rgb, 255:red, 0; green, 0; blue, 0 }  ,fill opacity=1 ] (249.86,84.41) .. controls (249.86,83.07) and (250.96,81.98) .. (252.32,81.98) .. controls (253.68,81.98) and (254.78,83.07) .. (254.78,84.41) .. controls (254.78,85.74) and (253.68,86.83) .. (252.32,86.83) .. controls (250.96,86.83) and (249.86,85.74) .. (249.86,84.41) -- cycle ;
\draw    (236.22,61.74) .. controls (235.38,89.33) and (247.7,110.5) .. (252.32,86.83) ;
\draw    (236.22,61.74) .. controls (236.5,16.73) and (251.62,22.35) .. (251.62,33.87) ;
\draw    (263.37,32.8) .. controls (263.65,15.92) and (226.42,0.65) .. (227.82,59.32) ;
\draw    (227.82,59.32) .. controls (228.94,110.5) and (261.13,120.68) .. (264.59,86.83) ;
\draw    (276.25,33.07) .. controls (277.09,16.46) and (244.06,-9.26) .. (224.46,28.25) ;
\draw    (224.46,28.25) .. controls (208.78,61.74) and (227.26,119.07) .. (252.74,115.05) ;
\draw    (276.53,84.95) .. controls (276.53,107.01) and (257.5,114.78) .. (252.74,115.05) ;
\draw    (313.49,33.87) .. controls (314.61,15.12) and (331.41,-4.17) .. (342.33,23.96) ;
\draw    (342.33,23.96) .. controls (345.97,41.91) and (347.37,92.01) .. (339.25,109.42) ;
\draw    (339.25,109.42) .. controls (331.69,124.16) and (314.33,104.6) .. (314.24,84.95) ;
\draw    (338.69,58.52) .. controls (339.81,89.06) and (329.45,120.41) .. (325.81,86.83) ;
\draw    (324.41,33.87) .. controls (331.13,-1.49) and (338.97,39.5) .. (338.69,58.52) ;
\draw  [fill={rgb, 255:red, 0; green, 0; blue, 0 }  ,fill opacity=1 ] (262.13,84.95) .. controls (262.13,83.62) and (263.23,82.53) .. (264.59,82.53) .. controls (265.95,82.53) and (267.05,83.62) .. (267.05,84.95) .. controls (267.05,86.29) and (265.95,87.38) .. (264.59,87.38) .. controls (263.23,87.38) and (262.13,86.29) .. (262.13,84.95) -- cycle ;
\draw  [fill={rgb, 255:red, 0; green, 0; blue, 0 }  ,fill opacity=1 ] (274.07,84.95) .. controls (274.07,83.62) and (275.17,82.53) .. (276.53,82.53) .. controls (277.89,82.53) and (279,83.62) .. (279,84.95) .. controls (279,86.29) and (277.89,87.38) .. (276.53,87.38) .. controls (275.17,87.38) and (274.07,86.29) .. (274.07,84.95) -- cycle ;
\draw  [fill={rgb, 255:red, 0; green, 0; blue, 0 }  ,fill opacity=1 ] (311.78,84.95) .. controls (311.78,83.62) and (312.88,82.53) .. (314.24,82.53) .. controls (315.6,82.53) and (316.7,83.62) .. (316.7,84.95) .. controls (316.7,86.29) and (315.6,87.38) .. (314.24,87.38) .. controls (312.88,87.38) and (311.78,86.29) .. (311.78,84.95) -- cycle ;
\draw  [fill={rgb, 255:red, 0; green, 0; blue, 0 }  ,fill opacity=1 ] (323.35,84.95) .. controls (323.35,83.62) and (324.45,82.53) .. (325.81,82.53) .. controls (327.17,82.53) and (328.27,83.62) .. (328.27,84.95) .. controls (328.27,86.29) and (327.17,87.38) .. (325.81,87.38) .. controls (324.45,87.38) and (323.35,86.29) .. (323.35,84.95) -- cycle ;

\draw (282.74,52.20) node [anchor=north west][inner sep=0.75pt]  [font=\small]  {$A$};
\draw (279.59,66.77) node [anchor=north west][inner sep=0.75pt]  [font=\normalsize]  {$\cdots $};
\draw (350.03,48.49) node [anchor=north west][inner sep=0.75pt]    {$=\ 0$};

\end{tikzpicture}

    }
\end{equation}
Here, we use the diagrammatic notation introduced in Appendix (\ref{sec: Diagrammatic notations}). In particular, we denote the generators of the algebra $Z^i_j$, ${Z^{\dagger}}^i_j$, $\lambda_a$, ${\lambda^{\dagger}}^a$ by
\begin{tikzpicture}[decoration={markings, 
    mark= at position 0.30 with {\arrow{stealth}},
    mark= at position 0.88 with {\arrow{stealth}}}
] 
 
\draw [postaction={decorate}] (-0.5,0) -- (0.5,0);\draw[fill=white,line width=1pt](0,0)circle(0.5ex);\end{tikzpicture}, \begin{tikzpicture}[decoration={markings, 
    mark= at position 0.30 with {\arrow{stealth}},
    mark= at position 0.88 with {\arrow{stealth}}}
] 
 
\draw[fill=black,line width=1pt](0,0)circle(0.5ex);\draw [postaction={decorate}] (-0.5,0) -- (0.5,0);\end{tikzpicture},\begin{tikzpicture}[decoration={markings, 
    mark= at position 0.50 with {\arrow{stealth}}}
] \draw [postaction={decorate}] (-0.3,0) -- (0.5,0); \draw (0.8,0) node {$\lambda_a$};\end{tikzpicture}, and \begin{tikzpicture}[decoration={markings, 
    mark= at position 0.50 with {\arrow{stealth}}}
] \draw [postaction={decorate}] (-0.5,0) -- (0.3,0); \draw (-0.8,0) node {${\lambda^{\dagger}}^a$};\end{tikzpicture}. This expression is a sum indexed by all permutations of $N+1$ lines, and each term is a multi-trace term:
\begin{equation}
\propto \mathrm{Tr}({Z^{\dagger}})^{i_{1}}\mathrm{Tr}({Z^{\dagger}}^2)^{i_{2}}\cdots \mathrm{Tr}({Z^{\dagger}}^{N+1})^{i_{N+1}}
\end{equation}
where $(i_{1},i_{2},\cdots,i_{N+1})$ is the cycle type of the corresponding permutation in the antisymmetrization tensor $A$. There are $N!$ single trace terms with the same sign $(-1)^{N}$:
\begin{equation}
    N!(-1)^{N}\mathrm{Tr}({Z^{\dagger}}^{N+1})
\end{equation}
which corresponds to the cycle type $(i_{1}=0,i_{2}=0,\cdots,i_{N+1}=1)$. This shows that $\mathrm{Tr}({Z^{\dagger}}^{N+1})$ can be written as a polynomials in $\mathrm{Tr}({Z^{\dagger}}^{1}),\mathrm{Tr}({Z^{\dagger}}^{2})\cdots \mathrm{Tr}({Z^{\dagger}}^{N})$. By considering a totally antisymmetric tensor with $N+i+1$ inputs and outputs, one can show inductively show that all $\mathrm{Tr}({Z^{\dagger}}^{N+i}),i\geq 1$ can be written as a polynomials in $\mathrm{Tr}({Z^{\dagger}}^{1}),\mathrm{Tr}({Z^{\dagger}}^{2})\cdots \mathrm{Tr}({Z^{\dagger}}^{N})$.

There are also algebraic relations between $\mathrm{Tr}({Z^{\dagger}}^{n})$ and $C(n_{1},n_{2},\cdots,n_{N})$ which is generated by the relation (\ref{bubble}). We illustrate this by the following example: For $N=3$, we have the following relation:
\begin{equation}
C(0,1,2)\mathrm{Tr}({Z^{\dagger}}^{2})=C(2,1,2)+C(0,3,2)+C(0,1,4)=-C(0,2,3)+C(0,1,4)
\end{equation}
This equation comes from the relation (\ref{bubble}). Explicitly, we connect all the lines in the following diagram:

\begin{center}

\tikzset{every picture/.style={line width=0.75pt}} 



\end{center}
In general, we have:
\begin{equation}\label{ti C}
C(n_{1},n_{2},\cdots,n_{N})\mathrm{Tr}({Z^{\dagger}}^{n})=C(n_{1}+n,n_{2},\cdots,n_{N})+C(n_{1},n_{2}+n,\cdots,n_{N})+\cdots+C(n_{1},n_{2},\cdots,n_{N}+n)
\end{equation} 
For the case $0\leq n \leq N$, and $n_{j}=j-1$,  the first $N-n$ terms on the right hand side of (\ref{ti C}) are zero due to the totally anti-symmetric property of $C(n_{1},n_{2},\cdots,n_{N})$. (\ref{ti C}) becomes:
\begin{align}\label{ti C 2}
    C(0,1,\cdots,N-1)\mathrm{Tr}({Z^{\dagger}}^{n})=&C(\cdots,N-n+n,\cdots,N-1)+C(\cdots,N-n,N-(n+1)+n\cdots,N-1) \\ \nonumber
    &+\cdots+C(\cdots,N-n,\cdots,N-1+n)
\end{align}
It turns out that using relations (\ref{ti C}), we can write $C(n_1,n_2,\cdots,n_N)$ into the form:
$$G(\{\mathrm{Tr}({Z^{\dagger}}^n)\})C(0,1,2,\cdots,N-1),$$
where $G(\{\mathrm{Tr}({Z^{\dagger}}^n)\})$ is a polynomial in $\mathrm{Tr}({Z^{\dagger}}^n), n=1,2,\cdots,N$. Moreover, the terms $\mathrm{Tr}({Z^{\dagger}}^n), n=1,2,\cdots,N$ are algebraically independent. We will explain these facts in Appendix \ref{App B}, where we make connection of the terms $C(n_1,n_2,\cdots,n_N)$ and $\mathrm{Tr}({Z^{\dagger}}^n)$ to the Schur polynomials of $N$ variables and to the power sum symmetric polynomials in $N$ variables respectively. In Appendix \ref{App B}, we will show that (\ref{ti C}) is equivalent to the Murnaghan–Nakayama rule, which is a combinatorial rule to write power sum polynomials into Schur polynomials and vice versa. Now we have the following proposition:

\begin{proposition}[Proposition \ref{Prop Basis}]
The elements:
\begin{equation}\label{Basis non re}
\ket{c_{1},c_{2},\cdots,c_{N}}:=\prod_{i=1}^{N}\left(\frac{1}{\sqrt{N}}\mathrm{Tr}({Z^{\dagger}}^{i})\right)^{c_{i}}\ket{\mathrm{ground}}, c_{i}\geq 0
\end{equation}
form a basis for the Hilbert space of physical states. 
\end{proposition}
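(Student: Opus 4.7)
The strategy is to prove spanning and linear independence separately, reducing each question to a known statement about symmetric polynomials in $N$ variables. Concretely, I will show (A) every physical state is a polynomial in the traces $\{\mathrm{Tr}({Z^{\dagger}}^n)\}_{n\ge 1}$ acting on $\ket{\mathrm{ground}}$; (B) only the first $N$ traces $\mathrm{Tr}({Z^{\dagger}}^1),\ldots,\mathrm{Tr}({Z^{\dagger}}^N)$ are needed; (C) the resulting monomials are linearly independent.

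For (A)--(B), start from the general form \eqref{3.8} of a physical state, which for $p=1$ is a polynomial in the traces times a product of exactly $k$ copies of $C(n_1,\ldots,n_N)$ applied to $\ket{0}$. Using relation \eqref{ti C 2} (the recursion derived from \eqref{bubble}) inductively, each $C(n_1,\ldots,n_N)$ can be solved for in terms of $C$'s with strictly smaller maximum index plus traces times $C$'s of smaller maximum; iterating, every $C(n_1,\ldots,n_N)$ becomes $G(\{\mathrm{Tr}({Z^{\dagger}}^n)\}) \cdot C(0,1,\ldots,N-1)$ for some polynomial $G$. Since the trace operators commute with the $C$'s as creation operators on $\ket{0}$, one can slide all of them to the left and collect $[C(0,1,\ldots,N-1)]^k = \ket{\mathrm{ground}}$. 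To reduce the range of $n$, apply the trace version of \eqref{anti N+1=0}: placing ${Z^{\dagger}}^{n+1}$ on each line and taking the overall trace gives a relation expressing $\mathrm{Tr}({Z^{\dagger}}^m)$ for $m>N$ as a polynomial in $\mathrm{Tr}({Z^{\dagger}}^1),\ldots,\mathrm{Tr}({Z^{\dagger}}^N)$.

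For (C), the cleanest route is to identify the subalgebra acting on $\ket{0}$ with symmetric polynomials. The creation operators $Z^{\dagger}$ commute among themselves, so the algebra they generate acts on $\ket{0}$ as a commutative polynomial algebra; after passing to $\mathrm{U}(N)$-invariants, the traces $\mathrm{Tr}({Z^{\dagger}}^n)$ correspond to power sums $p_n(x_1,\ldots,x_N)$ and the tensors $C(n_1,\ldots,n_N)$ correspond to the antisymmetrized monomials $\det(x_i^{n_j})_{i,j=1}^N$. Under this dictionary, $\ket{\mathrm{ground}}$ maps to the $k$-th power of the Vandermonde $V(x)^k$, and the physical-state subspace is identified with the free rank-one module $V(x)^k \cdot \mathbb{C}[x_1,\ldots,x_N]^{\mathrm{S}_N}$ over the ring of symmetric polynomials. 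Since $p_1,\ldots,p_N$ are algebraically independent generators of the latter, the monomials $\prod_{i=1}^N p_i^{c_i}$ form a $\mathbb{C}$-basis, which translates back to the claimed basis of $\mathcal H_{phy}$.

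The main obstacle is making the identification in step (C) precise and verifying that it is compatible with relation \eqref{ti C}. This is exactly where the Murnaghan--Nakayama rule enters: the classical rule describes how multiplication by a power sum $p_n$ expands a Schur function into Schur functions obtained by adding border strips, and \eqref{ti C} (specialised to $n_j=j-1$ as in \eqref{ti C 2}) is its diagrammatic avatar. Establishing this equivalence rigorously---and thereby transferring the linear independence of monomials in $p_1,\ldots,p_N$ back to the Hilbert-space side---is postponed to Appendix \ref{App B}, where the diagrammatic calculus developed in Appendix \ref{sec: Diagrammatic notations} is used to give an independent derivation of the Murnaghan--Nakayama rule. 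Granted that appendix, the proposition follows.
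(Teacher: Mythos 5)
Your proposal follows essentially the same route as the paper: reduce every physical state to a polynomial in $\mathrm{Tr}({Z^{\dagger}}^1),\ldots,\mathrm{Tr}({Z^{\dagger}}^N)$ acting on $\ket{\mathrm{ground}}$ using the relation \eqref{ti C} (equivalently \eqref{bubble}) and the vanishing \eqref{anti N+1=0}, then obtain linear independence by identifying the $C$'s and traces with Schur and power-sum polynomials (the Vandermonde power $V(x)^k$ times $\mathbb{C}[x]^{\mathrm{S}_N}$ picture), with the Murnaghan--Nakayama equivalence deferred to Appendix \ref{App B}. This matches the paper's argument in Section \ref{subsec: Proof of the large N limit} and Appendix \ref{App B}, so no further comparison is needed.
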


The basis (\ref{Basis non re}) is a rescaling of the basis found in \cite{Hellerman-Raamsdonk}. They found this basis by making connection to Schur and power sum symmetric polynomials.  Let $\mathcal H_N$ denote the infinite dimensional representation linearly spanned by the above basis.

We introduce variables $p_{i}, 1\leq i \leq N$, and identify the states $\ket{c_{1},c_{2},\cdots,c_{N}}$ with a monomial $p_{1}^{c_{1}}p_{2}^{c_{2}}\cdots p_{N}^{c_{N}}$ in the variables $p_{i}, 1\leq i \leq N$. With these identifications, we have an isomorphism $\mathcal{H}_N\cong \mathbb{C}[p_{1},p_{2},\cdots,p_{N}].$ We define operators: $\hat{p_{i}}$ and $\frac{\partial}{\partial p_{i}}$.
\begin{align*}
\hat{p_{i}}\ket{c_{1},c_{2},\cdots,c_{N}}:=\ket{\cdots,c_{i}+1,\cdots}\\
\frac{\partial}{\partial p_{i}}\ket{c_{1},c_{2},\cdots,c_{N}}:=c_{i}\ket{\cdots,c_{i}-1,\cdots}
\end{align*}
where $\ket{c_{1},c_{2},\cdots,c_{N}}=0$, if $c_{i}<0$ for any $i$. Under the isomorphism $\mathcal{H}_N\cong \mathbb{C}[p_{1},p_{2},\cdots,p_{N}]$,  we define operators: $\hat{p_{i}}$ and $\frac{\partial}{\partial p_{i}}$. The operators $\hat{p_{i}}$ and $\frac{\partial}{\partial p_{i}}$ are just multiplication by $p_{i}$ and partial derivative of $p_{i}$. So we will omit the 'hat' in $\hat{p_{i}}$ and do not distinguish the variables and operators : $p_{i}=\hat{p_{i}}$.\\

In the following, we will consider the actions of the operators $t_{m,n}$ on the Hilbert space using the basis (\ref{Basis non re}) and identify the leading terms in the large $N$ limit. Let us first consider the simple example: $t_{1,0}=\operatorname{tr}(Z)$. We express
\begin{equation}\label{t10}
t_{1,0}\ket{c_{1},c_{2},\cdots,c_{N}}
\end{equation}
graphically, this is:
\begin{center}

\tikzset{every picture/.style={line width=0.75pt}} 

\begin{tikzpicture}[x=0.75pt,y=0.75pt,yscale=-1,xscale=1]

\draw   (24.33,44.82) .. controls (24.33,43.26) and (25.44,42) .. (26.81,42) .. controls (28.18,42) and (29.29,43.26) .. (29.29,44.82) .. controls (29.29,46.38) and (28.18,47.65) .. (26.81,47.65) .. controls (25.44,47.65) and (24.33,46.38) .. (24.33,44.82) -- cycle ;
\draw  [fill={rgb, 255:red, 0; green, 0; blue, 0 }  ,fill opacity=1 ] (76.61,46.62) .. controls (76.61,45.23) and (77.49,44.11) .. (78.59,44.11) .. controls (79.68,44.11) and (80.57,45.23) .. (80.57,46.62) .. controls (80.57,48.01) and (79.68,49.14) .. (78.59,49.14) .. controls (77.49,49.14) and (76.61,48.01) .. (76.61,46.62) -- cycle ;
\draw    (32.19,44.82) .. controls (57.84,45.3) and (48.74,69.78) .. (23.5,67.89) ;
\draw    (21.84,44.35) .. controls (-8.78,46.24) and (6.53,69.78) .. (23.5,67.89) ;
\draw    (304.5,32.5) -- (304.5,76) ;
\draw    (83.14,47.1) .. controls (101.29,46.93) and (98.82,66.56) .. (74.71,64.64) ;
\draw    (74.05,46.82) .. controls (51.29,47.21) and (58.51,66.56) .. (74.71,64.64) ;
\draw   (329.72,21.92) -- (411.5,21.92) -- (411.5,35.25) -- (329.72,35.25) -- cycle ;
\draw    (334.76,35.59) -- (325,71.5) ;
\draw    (346.14,35.59) -- (341.5,79) ;
\draw    (358.37,35.3) -- (359,81.5) ;
\draw    (394.85,35.84) -- (406,87.25) ;
\draw    (406.35,35.76) -- (427,82.5) ;
\draw  [fill={rgb, 255:red, 0; green, 0; blue, 0 }  ,fill opacity=1 ] (342.23,59.21) .. controls (342.23,58.15) and (342.94,57.3) .. (343.82,57.3) .. controls (344.7,57.3) and (345.41,58.15) .. (345.41,59.21) .. controls (345.41,60.26) and (344.7,61.12) .. (343.82,61.12) .. controls (342.94,61.12) and (342.23,60.26) .. (342.23,59.21) -- cycle ;
\draw  [fill={rgb, 255:red, 0; green, 0; blue, 0 }  ,fill opacity=1 ] (357.21,52.77) .. controls (357.21,51.71) and (357.93,50.86) .. (358.8,50.86) .. controls (359.68,50.86) and (360.39,51.71) .. (360.39,52.77) .. controls (360.39,53.82) and (359.68,54.68) .. (358.8,54.68) .. controls (357.93,54.68) and (357.21,53.82) .. (357.21,52.77) -- cycle ;
\draw  [fill={rgb, 255:red, 0; green, 0; blue, 0 }  ,fill opacity=1 ] (356.96,65.27) .. controls (356.96,64.21) and (357.68,63.36) .. (358.55,63.36) .. controls (359.43,63.36) and (360.14,64.21) .. (360.14,65.27) .. controls (360.14,66.32) and (359.43,67.18) .. (358.55,67.18) .. controls (357.68,67.18) and (356.96,66.32) .. (356.96,65.27) -- cycle ;
\draw  [fill={rgb, 255:red, 0; green, 0; blue, 0 }  ,fill opacity=1 ] (394.5,41.77) .. controls (394.5,40.71) and (395.21,39.86) .. (396.09,39.86) .. controls (396.97,39.86) and (397.68,40.71) .. (397.68,41.77) .. controls (397.68,42.82) and (396.97,43.68) .. (396.09,43.68) .. controls (395.21,43.68) and (394.5,42.82) .. (394.5,41.77) -- cycle ;
\draw  [fill={rgb, 255:red, 0; green, 0; blue, 0 }  ,fill opacity=1 ] (396,49.02) .. controls (396,47.96) and (396.71,47.11) .. (397.59,47.11) .. controls (398.47,47.11) and (399.18,47.96) .. (399.18,49.02) .. controls (399.18,50.07) and (398.47,50.93) .. (397.59,50.93) .. controls (396.71,50.93) and (396,50.07) .. (396,49.02) -- cycle ;
\draw  [fill={rgb, 255:red, 0; green, 0; blue, 0 }  ,fill opacity=1 ] (401.75,74.77) .. controls (401.75,73.71) and (402.46,72.86) .. (403.34,72.86) .. controls (404.22,72.86) and (404.93,73.71) .. (404.93,74.77) .. controls (404.93,75.82) and (404.22,76.68) .. (403.34,76.68) .. controls (402.46,76.68) and (401.75,75.82) .. (401.75,74.77) -- cycle ;
\draw  [fill={rgb, 255:red, 0; green, 0; blue, 0 }  ,fill opacity=1 ] (406.75,40.52) .. controls (406.75,39.46) and (407.46,38.61) .. (408.34,38.61) .. controls (409.22,38.61) and (409.93,39.46) .. (409.93,40.52) .. controls (409.93,41.57) and (409.22,42.43) .. (408.34,42.43) .. controls (407.46,42.43) and (406.75,41.57) .. (406.75,40.52) -- cycle ;
\draw  [fill={rgb, 255:red, 0; green, 0; blue, 0 }  ,fill opacity=1 ] (410.25,48.27) .. controls (410.25,47.21) and (410.96,46.36) .. (411.84,46.36) .. controls (412.72,46.36) and (413.43,47.21) .. (413.43,48.27) .. controls (413.43,49.32) and (412.72,50.18) .. (411.84,50.18) .. controls (410.96,50.18) and (410.25,49.32) .. (410.25,48.27) -- cycle ;
\draw  [fill={rgb, 255:red, 0; green, 0; blue, 0 }  ,fill opacity=1 ] (420.25,70.77) .. controls (420.25,69.71) and (420.96,68.86) .. (421.84,68.86) .. controls (422.72,68.86) and (423.43,69.71) .. (423.43,70.77) .. controls (423.43,71.82) and (422.72,72.68) .. (421.84,72.68) .. controls (420.96,72.68) and (420.25,71.82) .. (420.25,70.77) -- cycle ;
\draw  [fill={rgb, 255:red, 0; green, 0; blue, 0 }  ,fill opacity=1 ] (423,77.77) .. controls (423,76.71) and (423.71,75.86) .. (424.59,75.86) .. controls (425.47,75.86) and (426.18,76.71) .. (426.18,77.77) .. controls (426.18,78.82) and (425.47,79.68) .. (424.59,79.68) .. controls (423.71,79.68) and (423,78.82) .. (423,77.77) -- cycle ;
\draw    (594.5,34) -- (611.5,56) ;
\draw    (596.5,76.5) -- (611.5,56) ;
\draw   (475.92,21.52) -- (557.7,21.52) -- (557.7,34.85) -- (475.92,34.85) -- cycle ;
\draw    (480.96,35.19) -- (471.2,71.1) ;
\draw    (492.34,35.19) -- (487.7,78.6) ;
\draw    (504.57,34.9) -- (505.2,81.1) ;
\draw    (541.05,35.44) -- (552.2,86.85) ;
\draw    (552.55,35.36) -- (573.2,82.1) ;
\draw  [fill={rgb, 255:red, 0; green, 0; blue, 0 }  ,fill opacity=1 ] (488.43,58.81) .. controls (488.43,57.75) and (489.14,56.9) .. (490.02,56.9) .. controls (490.9,56.9) and (491.61,57.75) .. (491.61,58.81) .. controls (491.61,59.86) and (490.9,60.72) .. (490.02,60.72) .. controls (489.14,60.72) and (488.43,59.86) .. (488.43,58.81) -- cycle ;
\draw  [fill={rgb, 255:red, 0; green, 0; blue, 0 }  ,fill opacity=1 ] (503.41,52.37) .. controls (503.41,51.31) and (504.13,50.46) .. (505,50.46) .. controls (505.88,50.46) and (506.59,51.31) .. (506.59,52.37) .. controls (506.59,53.42) and (505.88,54.28) .. (505,54.28) .. controls (504.13,54.28) and (503.41,53.42) .. (503.41,52.37) -- cycle ;
\draw  [fill={rgb, 255:red, 0; green, 0; blue, 0 }  ,fill opacity=1 ] (503.16,64.87) .. controls (503.16,63.81) and (503.88,62.96) .. (504.75,62.96) .. controls (505.63,62.96) and (506.34,63.81) .. (506.34,64.87) .. controls (506.34,65.92) and (505.63,66.78) .. (504.75,66.78) .. controls (503.88,66.78) and (503.16,65.92) .. (503.16,64.87) -- cycle ;
\draw  [fill={rgb, 255:red, 0; green, 0; blue, 0 }  ,fill opacity=1 ] (540.7,41.37) .. controls (540.7,40.31) and (541.41,39.46) .. (542.29,39.46) .. controls (543.17,39.46) and (543.88,40.31) .. (543.88,41.37) .. controls (543.88,42.42) and (543.17,43.28) .. (542.29,43.28) .. controls (541.41,43.28) and (540.7,42.42) .. (540.7,41.37) -- cycle ;
\draw  [fill={rgb, 255:red, 0; green, 0; blue, 0 }  ,fill opacity=1 ] (542.2,48.62) .. controls (542.2,47.56) and (542.91,46.71) .. (543.79,46.71) .. controls (544.67,46.71) and (545.38,47.56) .. (545.38,48.62) .. controls (545.38,49.67) and (544.67,50.53) .. (543.79,50.53) .. controls (542.91,50.53) and (542.2,49.67) .. (542.2,48.62) -- cycle ;
\draw  [fill={rgb, 255:red, 0; green, 0; blue, 0 }  ,fill opacity=1 ] (547.95,74.37) .. controls (547.95,73.31) and (548.66,72.46) .. (549.54,72.46) .. controls (550.42,72.46) and (551.13,73.31) .. (551.13,74.37) .. controls (551.13,75.42) and (550.42,76.28) .. (549.54,76.28) .. controls (548.66,76.28) and (547.95,75.42) .. (547.95,74.37) -- cycle ;
\draw  [fill={rgb, 255:red, 0; green, 0; blue, 0 }  ,fill opacity=1 ] (552.95,40.12) .. controls (552.95,39.06) and (553.66,38.21) .. (554.54,38.21) .. controls (555.42,38.21) and (556.13,39.06) .. (556.13,40.12) .. controls (556.13,41.17) and (555.42,42.03) .. (554.54,42.03) .. controls (553.66,42.03) and (552.95,41.17) .. (552.95,40.12) -- cycle ;
\draw  [fill={rgb, 255:red, 0; green, 0; blue, 0 }  ,fill opacity=1 ] (556.45,47.87) .. controls (556.45,46.81) and (557.16,45.96) .. (558.04,45.96) .. controls (558.92,45.96) and (559.63,46.81) .. (559.63,47.87) .. controls (559.63,48.92) and (558.92,49.78) .. (558.04,49.78) .. controls (557.16,49.78) and (556.45,48.92) .. (556.45,47.87) -- cycle ;
\draw  [fill={rgb, 255:red, 0; green, 0; blue, 0 }  ,fill opacity=1 ] (566.45,70.37) .. controls (566.45,69.31) and (567.16,68.46) .. (568.04,68.46) .. controls (568.92,68.46) and (569.63,69.31) .. (569.63,70.37) .. controls (569.63,71.42) and (568.92,72.28) .. (568.04,72.28) .. controls (567.16,72.28) and (566.45,71.42) .. (566.45,70.37) -- cycle ;
\draw  [fill={rgb, 255:red, 0; green, 0; blue, 0 }  ,fill opacity=1 ] (569.2,77.37) .. controls (569.2,76.31) and (569.91,75.46) .. (570.79,75.46) .. controls (571.67,75.46) and (572.38,76.31) .. (572.38,77.37) .. controls (572.38,78.42) and (571.67,79.28) .. (570.79,79.28) .. controls (569.91,79.28) and (569.2,78.42) .. (569.2,77.37) -- cycle ;
\draw  [fill={rgb, 255:red, 0; green, 0; blue, 0 }  ,fill opacity=1 ] (176.68,46.16) .. controls (176.68,45.11) and (177.39,44.25) .. (178.27,44.25) .. controls (179.15,44.25) and (179.86,45.11) .. (179.86,46.16) .. controls (179.86,47.22) and (179.15,48.07) .. (178.27,48.07) .. controls (177.39,48.07) and (176.68,47.22) .. (176.68,46.16) -- cycle ;
\draw    (197,47.21) .. controls (209.64,49.57) and (208.58,67.18) .. (176.71,64.07) ;
\draw    (175.26,46.53) .. controls (159.29,49.21) and (166.33,62.68) .. (176.71,64.07) ;
\draw  [fill={rgb, 255:red, 0; green, 0; blue, 0 }  ,fill opacity=1 ] (191.39,46.77) .. controls (191.39,45.73) and (192.14,44.89) .. (193.05,44.89) .. controls (193.97,44.89) and (194.71,45.73) .. (194.71,46.77) .. controls (194.71,47.8) and (193.97,48.64) .. (193.05,48.64) .. controls (192.14,48.64) and (191.39,47.8) .. (191.39,46.77) -- cycle ;
\draw    (181.43,46.36) -- (189.86,46.64) ;
\draw   (69.79,71.36) .. controls (69.79,76.03) and (72.12,78.36) .. (76.79,78.36) -- (94.66,78.36) .. controls (101.33,78.36) and (104.66,80.69) .. (104.66,85.36) .. controls (104.66,80.69) and (107.99,78.36) .. (114.66,78.36)(111.66,78.36) -- (132.54,78.36) .. controls (137.21,78.36) and (139.54,76.03) .. (139.54,71.36) ;
\draw   (181.25,72.21) .. controls (181.22,76.88) and (183.53,79.23) .. (188.2,79.27) -- (205.45,79.39) .. controls (212.12,79.44) and (215.43,81.8) .. (215.4,86.46) .. controls (215.43,81.8) and (218.78,79.49) .. (225.45,79.54)(222.45,79.52) -- (242.7,79.66) .. controls (247.37,79.7) and (249.72,77.39) .. (249.75,72.72) ;
\draw  [fill={rgb, 255:red, 0; green, 0; blue, 0 }  ,fill opacity=1 ] (139.75,46.91) .. controls (139.75,45.52) and (140.64,44.39) .. (141.73,44.39) .. controls (142.83,44.39) and (143.71,45.52) .. (143.71,46.91) .. controls (143.71,48.3) and (142.83,49.42) .. (141.73,49.42) .. controls (140.64,49.42) and (139.75,48.3) .. (139.75,46.91) -- cycle ;
\draw    (146.28,47.39) .. controls (164.43,47.21) and (161.96,66.85) .. (137.86,64.93) ;
\draw    (137.19,47.1) .. controls (114.43,47.5) and (121.66,66.85) .. (137.86,64.93) ;
\draw  [fill={rgb, 255:red, 0; green, 0; blue, 0 }  ,fill opacity=1 ] (239.82,45.59) .. controls (239.82,44.54) and (240.53,43.68) .. (241.41,43.68) .. controls (242.29,43.68) and (243,44.54) .. (243,45.59) .. controls (243,46.64) and (242.29,47.5) .. (241.41,47.5) .. controls (240.53,47.5) and (239.82,46.64) .. (239.82,45.59) -- cycle ;
\draw    (260.14,46.64) .. controls (272.79,49) and (271.73,66.61) .. (239.86,63.5) ;
\draw    (238.41,45.96) .. controls (222.43,48.64) and (229.48,62.11) .. (239.86,63.5) ;
\draw  [fill={rgb, 255:red, 0; green, 0; blue, 0 }  ,fill opacity=1 ] (254.54,46.2) .. controls (254.54,45.16) and (255.28,44.32) .. (256.2,44.32) .. controls (257.11,44.32) and (257.86,45.16) .. (257.86,46.2) .. controls (257.86,47.23) and (257.11,48.07) .. (256.2,48.07) .. controls (255.28,48.07) and (254.54,47.23) .. (254.54,46.2) -- cycle ;
\draw    (244.57,45.79) -- (253,46.07) ;
\draw   (308,104) .. controls (308.01,108.67) and (310.34,111) .. (315.01,110.99) -- (443.26,110.76) .. controls (449.93,110.75) and (453.26,113.08) .. (453.27,117.75) .. controls (453.26,113.08) and (456.59,110.74) .. (463.26,110.73)(460.26,110.73) -- (591.51,110.51) .. controls (596.18,110.5) and (598.51,108.16) .. (598.5,103.49) ;

\draw (366.8,23.4) node [anchor=north west][inner sep=0.75pt]  [font=\footnotesize]  {$\epsilon $};
\draw (362.39,56.17) node [anchor=north west][inner sep=0.75pt]  [font=\normalsize]  {$\cdots $};
\draw (309,71.4) node [anchor=north west][inner sep=0.75pt]  [font=\footnotesize]  {$\lambda ^{\dagger }$};
\draw (329,78.9) node [anchor=north west][inner sep=0.75pt]  [font=\footnotesize]  {$\lambda ^{\dagger }$};
\draw (348.5,82.4) node [anchor=north west][inner sep=0.75pt]  [font=\footnotesize]  {$\lambda ^{\dagger }$};
\draw (399,83.9) node [anchor=north west][inner sep=0.75pt]  [font=\footnotesize]  {$\lambda ^{\dagger }$};
\draw (425.5,77.9) node [anchor=north west][inner sep=0.75pt]  [font=\footnotesize]  {$\lambda ^{\dagger }$};
\draw (390.94,49.54) node [anchor=north west][inner sep=0.75pt]    {$\cdot $};
\draw (391.5,52.65) node [anchor=north west][inner sep=0.75pt]    {$\cdot $};
\draw (392.94,57.29) node [anchor=north west][inner sep=0.75pt]    {$\cdot $};
\draw (413.04,45.92) node [anchor=north west][inner sep=0.75pt]    {$\cdot $};
\draw (415.08,49.87) node [anchor=north west][inner sep=0.75pt]    {$\cdot $};
\draw (416.98,53.67) node [anchor=north west][inner sep=0.75pt]    {$\cdot $};
\draw (431,44.2) node [anchor=north west][inner sep=0.75pt]    {$\cdots $};
\draw (513,23) node [anchor=north west][inner sep=0.75pt]  [font=\footnotesize]  {$\epsilon $};
\draw (508.59,55.77) node [anchor=north west][inner sep=0.75pt]  [font=\normalsize]  {$\cdots $};
\draw (455.2,71) node [anchor=north west][inner sep=0.75pt]  [font=\footnotesize]  {$\lambda ^{\dagger }$};
\draw (475.2,78.5) node [anchor=north west][inner sep=0.75pt]  [font=\footnotesize]  {$\lambda ^{\dagger }$};
\draw (494.7,82) node [anchor=north west][inner sep=0.75pt]  [font=\footnotesize]  {$\lambda ^{\dagger }$};
\draw (545.2,83.5) node [anchor=north west][inner sep=0.75pt]  [font=\footnotesize]  {$\lambda ^{\dagger }$};
\draw (571.7,77.5) node [anchor=north west][inner sep=0.75pt]  [font=\footnotesize]  {$\lambda ^{\dagger }$};
\draw (537.14,49.14) node [anchor=north west][inner sep=0.75pt]    {$\cdot $};
\draw (537.7,52.25) node [anchor=north west][inner sep=0.75pt]    {$\cdot $};
\draw (539.14,56.89) node [anchor=north west][inner sep=0.75pt]    {$\cdot $};
\draw (559.24,45.52) node [anchor=north west][inner sep=0.75pt]    {$\cdot $};
\draw (561.28,49.47) node [anchor=north west][inner sep=0.75pt]    {$\cdot $};
\draw (563.18,53.27) node [anchor=north west][inner sep=0.75pt]    {$\cdot $};
\draw (94.5,46.2) node [anchor=north west][inner sep=0.75pt]    {$\cdots $};
\draw (203.71,45.91) node [anchor=north west][inner sep=0.75pt]    {$\cdots $};
\draw (98.43,88.4) node [anchor=north west][inner sep=0.75pt]    {$c_{1}$};
\draw (208.5,90.4) node [anchor=north west][inner sep=0.75pt]    {$c_{2}$};
\draw (271.43,45.63) node [anchor=north west][inner sep=0.75pt]    {$\cdots $};
\draw (425,127.9) node [anchor=north west][inner sep=0.75pt]    {$\ket{\text{ground}}$};

\end{tikzpicture}

\end{center}
We compute this by Wick theorem: the result is a summation of all Wick contractions, which is given diagrammatically by (\ref{WC}). In a Wick contraction, each white dot has to be contracted with some black dot to the right of it. There are several different types of Wick contractions in (\ref{t10}). The leading term in the limit $N\rightarrow \infty$ is the Wick contraction of $\mathrm{Tr}(Z)$ with $\frac{1}{\sqrt{N}}\mathrm{Tr}({Z^{\dagger}})$:
\begin{center}

\tikzset{every picture/.style={line width=0.75pt}} 

\begin{tikzpicture}[x=0.75pt,y=0.75pt,yscale=-1,xscale=1]

\draw   (168.66,36.82) .. controls (168.66,35.26) and (169.77,34) .. (171.14,34) .. controls (172.51,34) and (173.62,35.26) .. (173.62,36.82) .. controls (173.62,38.38) and (172.51,39.65) .. (171.14,39.65) .. controls (169.77,39.65) and (168.66,38.38) .. (168.66,36.82) -- cycle ;
\draw  [fill={rgb, 255:red, 0; green, 0; blue, 0 }  ,fill opacity=1 ] (227.94,35.19) .. controls (227.94,33.81) and (228.83,32.68) .. (229.92,32.68) .. controls (231.02,32.68) and (231.9,33.81) .. (231.9,35.19) .. controls (231.9,36.58) and (231.02,37.71) .. (229.92,37.71) .. controls (228.83,37.71) and (227.94,36.58) .. (227.94,35.19) -- cycle ;
\draw    (176.52,36.82) .. controls (202.18,37.3) and (193.07,61.78) .. (167.83,59.89) ;
\draw [shift={(187.95,54.03)}, rotate = 304.24] [color={rgb, 255:red, 0; green, 0; blue, 0 }  ][line width=0.75]    (6.56,-1.97) .. controls (4.17,-0.84) and (1.99,-0.18) .. (0,0) .. controls (1.99,0.18) and (4.17,0.84) .. (6.56,1.97)   ;
\draw    (166.18,36.35) .. controls (135.56,38.24) and (150.87,61.78) .. (167.83,59.89) ;
\draw    (234.47,35.39) .. controls (258.97,35.87) and (251.07,61.31) .. (226.96,59.39) ;
\draw [shift={(246.24,52.93)}, rotate = 298.77] [color={rgb, 255:red, 0; green, 0; blue, 0 }  ][line width=0.75]    (6.56,-1.97) .. controls (4.17,-0.84) and (1.99,-0.18) .. (0,0) .. controls (1.99,0.18) and (4.17,0.84) .. (6.56,1.97)   ;
\draw    (225.38,35.39) .. controls (196.14,37.31) and (210.76,61.31) .. (226.96,59.39) ;
\draw    (173.33,31.5) .. controls (172.15,29.3) and (172.62,27.62) .. (174.75,26.47) .. controls (176.86,25.72) and (177.59,24.2) .. (176.92,21.93) .. controls (176.55,19.53) and (177.55,18.24) .. (179.92,18.07) .. controls (182.27,18.22) and (183.56,17.22) .. (183.79,15.06) .. controls (184.68,12.75) and (186.25,12.08) .. (188.52,13.04) .. controls (190.49,14.29) and (192.08,14) .. (193.28,12.17) .. controls (195.05,10.42) and (196.7,10.41) .. (198.24,12.13) .. controls (199.81,13.97) and (201.49,14.19) .. (203.26,12.8) .. controls (205.46,11.59) and (207.11,12.01) .. (208.2,14.05) .. controls (209.11,16.11) and (210.68,16.68) .. (212.91,15.75) .. controls (215.22,14.95) and (216.66,15.63) .. (217.25,17.78) .. controls (218,20.11) and (219.45,20.97) .. (221.6,20.36) .. controls (223.93,19.99) and (225.27,21.04) .. (225.6,23.53) .. controls (225.35,25.74) and (226.24,27.06) .. (228.28,27.47) -- (228.33,28) ;
\draw    (261.83,47.5) -- (297.83,47.97) ;
\draw [shift={(299.83,48)}, rotate = 180.75] [color={rgb, 255:red, 0; green, 0; blue, 0 }  ][line width=0.75]    (10.93,-3.29) .. controls (6.95,-1.4) and (3.31,-0.3) .. (0,0) .. controls (3.31,0.3) and (6.95,1.4) .. (10.93,3.29)   ;
\draw    (364.76,36.14) .. controls (390.42,36.61) and (383.57,61.78) .. (358.33,59.89) ;
\draw [shift={(378.08,53.5)}, rotate = 297.05] [color={rgb, 255:red, 0; green, 0; blue, 0 }  ][line width=0.75]    (6.56,-1.97) .. controls (4.17,-0.84) and (1.99,-0.18) .. (0,0) .. controls (1.99,0.18) and (4.17,0.84) .. (6.56,1.97)   ;
\draw    (356.68,36.35) .. controls (326.06,38.24) and (341.37,61.78) .. (358.33,59.89) ;
\draw    (424.97,35.39) .. controls (449.47,35.87) and (441.57,61.31) .. (417.46,59.39) ;
\draw [shift={(436.74,52.93)}, rotate = 298.77] [color={rgb, 255:red, 0; green, 0; blue, 0 }  ][line width=0.75]    (6.56,-1.97) .. controls (4.17,-0.84) and (1.99,-0.18) .. (0,0) .. controls (1.99,0.18) and (4.17,0.84) .. (6.56,1.97)   ;
\draw    (417.62,35.57) .. controls (388.38,37.49) and (401.26,61.31) .. (417.46,59.39) ;
\draw    (356.68,36.35) .. controls (373.9,8.43) and (415.62,12.43) .. (424.97,35.39) ;
\draw [shift={(394.6,16.82)}, rotate = 178.91] [color={rgb, 255:red, 0; green, 0; blue, 0 }  ][line width=0.75]    (6.56,-1.97) .. controls (4.17,-0.84) and (1.99,-0.18) .. (0,0) .. controls (1.99,0.18) and (4.17,0.84) .. (6.56,1.97)   ;
\draw    (364.76,36.14) .. controls (383.9,10.14) and (411.9,25) .. (417.62,35.57) ;
\draw [shift={(386.46,21.97)}, rotate = 357.54] [color={rgb, 255:red, 0; green, 0; blue, 0 }  ][line width=0.75]    (6.56,-1.97) .. controls (4.17,-0.84) and (1.99,-0.18) .. (0,0) .. controls (1.99,0.18) and (4.17,0.84) .. (6.56,1.97)   ;

\draw (232.17,-1.77) node [anchor=north west][inner sep=0.75pt]  [font=\small]  {$\frac{1}{\sqrt{N}}$};
\draw (304.17,27.4) node [anchor=north west][inner sep=0.75pt]  [font=\small]  {$\frac{1}{\sqrt{N}}$};
\draw (443.5,40.73) node [anchor=north west][inner sep=0.75pt]    {$=$};
\draw (463.5,38.07) node [anchor=north west][inner sep=0.75pt]  [font=\small]  {$\sqrt{N}$};

\end{tikzpicture}

\end{center}

The important feature  of such term is that graphically a loop without any black dots is formed. It corresponds to $\mathrm{Tr}(1)=N$ , which dominates in the large $N$ limit. By contrast, the following contraction is a lower order term:

\begin{center}

\tikzset{every picture/.style={line width=0.75pt}} 

\begin{tikzpicture}[x=0.75pt,y=0.75pt,yscale=-1,xscale=1]

\draw   (187.66,51.99) .. controls (187.66,50.43) and (188.77,49.17) .. (190.14,49.17) .. controls (191.51,49.17) and (192.62,50.43) .. (192.62,51.99) .. controls (192.62,53.55) and (191.51,54.82) .. (190.14,54.82) .. controls (188.77,54.82) and (187.66,53.55) .. (187.66,51.99) -- cycle ;
\draw  [fill={rgb, 255:red, 0; green, 0; blue, 0 }  ,fill opacity=1 ] (246.94,50.36) .. controls (246.94,48.97) and (247.83,47.85) .. (248.92,47.85) .. controls (250.02,47.85) and (250.9,48.97) .. (250.9,50.36) .. controls (250.9,51.75) and (250.02,52.88) .. (248.92,52.88) .. controls (247.83,52.88) and (246.94,51.75) .. (246.94,50.36) -- cycle ;
\draw    (195.52,51.99) .. controls (221.18,52.46) and (212.07,76.94) .. (186.83,75.06) ;
\draw [shift={(206.95,69.19)}, rotate = 304.24] [color={rgb, 255:red, 0; green, 0; blue, 0 }  ][line width=0.75]    (6.56,-1.97) .. controls (4.17,-0.84) and (1.99,-0.18) .. (0,0) .. controls (1.99,0.18) and (4.17,0.84) .. (6.56,1.97)   ;
\draw    (185.18,51.52) .. controls (154.56,53.4) and (169.87,76.94) .. (186.83,75.06) ;
\draw    (269.43,51.57) .. controls (293.93,52.05) and (270.07,76.48) .. (245.96,74.56) ;
\draw [shift={(268.3,69.51)}, rotate = 327.01] [color={rgb, 255:red, 0; green, 0; blue, 0 }  ][line width=0.75]    (6.56,-1.97) .. controls (4.17,-0.84) and (1.99,-0.18) .. (0,0) .. controls (1.99,0.18) and (4.17,0.84) .. (6.56,1.97)   ;
\draw    (244.38,50.55) .. controls (215.14,52.47) and (229.76,76.48) .. (245.96,74.56) ;
\draw    (192.33,46.67) .. controls (191.15,44.46) and (191.62,42.79) .. (193.75,41.64) .. controls (195.86,40.89) and (196.59,39.37) .. (195.92,37.1) .. controls (195.55,34.69) and (196.55,33.41) .. (198.92,33.24) .. controls (201.27,33.39) and (202.56,32.38) .. (202.79,30.23) .. controls (203.68,27.92) and (205.25,27.25) .. (207.52,28.21) .. controls (209.49,29.46) and (211.08,29.17) .. (212.28,27.34) .. controls (214.05,25.59) and (215.7,25.58) .. (217.24,27.3) .. controls (218.81,29.13) and (220.49,29.35) .. (222.26,27.96) .. controls (224.46,26.75) and (226.11,27.17) .. (227.2,29.21) .. controls (228.11,31.27) and (229.68,31.84) .. (231.91,30.91) .. controls (234.22,30.11) and (235.66,30.79) .. (236.25,32.94) .. controls (237,35.27) and (238.45,36.13) .. (240.6,35.52) .. controls (242.93,35.15) and (244.27,36.21) .. (244.6,38.69) .. controls (244.35,40.9) and (245.24,42.22) .. (247.28,42.64) -- (247.33,43.17) ;
\draw  [fill={rgb, 255:red, 0; green, 0; blue, 0 }  ,fill opacity=1 ] (262.65,51.22) .. controls (262.65,49.83) and (263.54,48.7) .. (264.64,48.7) .. controls (265.73,48.7) and (266.62,49.83) .. (266.62,51.22) .. controls (266.62,52.61) and (265.73,53.73) .. (264.64,53.73) .. controls (263.54,53.73) and (262.65,52.61) .. (262.65,51.22) -- cycle ;
\draw    (253.29,51) -- (260.29,51.29) ;
\draw    (399.19,51) .. controls (424.85,51.47) and (418,76.63) .. (392.76,74.75) ;
\draw [shift={(412.51,68.35)}, rotate = 297.05] [color={rgb, 255:red, 0; green, 0; blue, 0 }  ][line width=0.75]    (6.56,-1.97) .. controls (4.17,-0.84) and (1.99,-0.18) .. (0,0) .. controls (1.99,0.18) and (4.17,0.84) .. (6.56,1.97)   ;
\draw    (391.1,51.21) .. controls (360.48,53.09) and (375.79,76.63) .. (392.76,74.75) ;
\draw    (480.29,50.43) .. controls (504.78,50.91) and (476,76.17) .. (451.89,74.25) ;
\draw [shift={(475.51,68.98)}, rotate = 329.82] [color={rgb, 255:red, 0; green, 0; blue, 0 }  ][line width=0.75]    (6.56,-1.97) .. controls (4.17,-0.84) and (1.99,-0.18) .. (0,0) .. controls (1.99,0.18) and (4.17,0.84) .. (6.56,1.97)   ;
\draw    (452.05,50.43) .. controls (422.81,52.35) and (435.69,76.17) .. (451.89,74.25) ;
\draw    (391.1,51.21) .. controls (408.33,23.29) and (450.05,27.29) .. (459.4,50.24) ;
\draw [shift={(429.03,31.68)}, rotate = 178.91] [color={rgb, 255:red, 0; green, 0; blue, 0 }  ][line width=0.75]    (6.56,-1.97) .. controls (4.17,-0.84) and (1.99,-0.18) .. (0,0) .. controls (1.99,0.18) and (4.17,0.84) .. (6.56,1.97)   ;
\draw    (399.19,51) .. controls (418.33,25) and (446.33,39.86) .. (452.05,50.43) ;
\draw [shift={(420.89,36.83)}, rotate = 357.54] [color={rgb, 255:red, 0; green, 0; blue, 0 }  ][line width=0.75]    (6.56,-1.97) .. controls (4.17,-0.84) and (1.99,-0.18) .. (0,0) .. controls (1.99,0.18) and (4.17,0.84) .. (6.56,1.97)   ;
\draw    (459.4,50.24) -- (472.57,50.43) ;
\draw  [fill={rgb, 255:red, 0; green, 0; blue, 0 }  ,fill opacity=1 ] (474.37,50.08) .. controls (474.37,48.69) and (475.26,47.56) .. (476.35,47.56) .. controls (477.44,47.56) and (478.33,48.69) .. (478.33,50.08) .. controls (478.33,51.46) and (477.44,52.59) .. (476.35,52.59) .. controls (475.26,52.59) and (474.37,51.46) .. (474.37,50.08) -- cycle ;

\draw (282.83,34.73) node [anchor=north west][inner sep=0.75pt]  [font=\small]  {$\frac{1}{\sqrt{N}^{2}}$};
\draw (321.86,51.97) node [anchor=north west][inner sep=0.75pt]    {$=$};
\draw (340.97,43.93) node [anchor=north west][inner sep=0.75pt]  [font=\small]  {$\frac{1}{\sqrt{N}}$};
\draw (459.84,23.3) node [anchor=north west][inner sep=0.75pt]  [font=\small]  {$\frac{1}{\sqrt{N}}$};

\end{tikzpicture}

\end{center}
The contractions of $t_{1,0}$ with the terms  $C(0,1,2,\cdots,N-1)$ is zero since $0+1+2+\cdots+N-1$ is the lowest energy of the terms $C(n_{1},n_{2},\cdots,n_{N})$, and $t_{1,0}$ lowers the energy by $1$.
Summarising the above discussion, we have:
\begin{equation*}
t_{1,0}\ket{c_{1},c_{2},\cdots,c_{N}}=\sqrt{N}c_{1}\ket{c_{1}-1,c_{2},\cdots,c_{N}}+\frac{1}{\sqrt{N}}\sum_{i= 2}^{N}ic_{i}\ket{\cdots,c_{i-1}+1,c_{i}-1,\cdots}
\end{equation*}
Under the isomorphism $\mathcal{H}_N\cong \mathbb{C}[p_{1},p_{2},\cdots,p_{N}]$, we have:
\begin{equation}\label{t1,0}
t_{1,0}=\sqrt{N}\frac{\partial}{\partial p_{1}}+\frac{1}{\sqrt{N}}\sum_{i=2}^{N}ip_{i-1}\frac{\partial}{\partial p_{i}}
\end{equation}
\subsection{Computation of \texorpdfstring{$t_{2,1}$}{t[2,1]} and \texorpdfstring{$t_{1,2}$}{t[1,2]}}
Fix any energy cutoff $E\geq 3$. Let ${\mathcal H}^{(\leq E)}_N$ denote the truncated Hilbert space \eqref{Hilbert-truncate}, the operator $$
t_{m,n}: {\mathcal H}^{(\leq E)}_N\to {\mathcal H}^{(\leq E+n-m)}_N
$$ is denoted by ${t_{m,n}}_{\leq E}$. Recall \eqref{generator t}: $t_{m,n}=\Tr (Sym(Z^m{Z^{\dagger}}^n))$. We first compute the leading order terms of ${t_{2,1}}_{\leq E}:=t_{2,1}P_{\leq E}$ in the limit $N\rightarrow \infty$ and and subsequently address the lower-order terms. The $t_{2,1}$ contains 2 white dots:
\begin{equation}\label{t21 0}
    \hbox{

\tikzset{every picture/.style={line width=0.75pt}} 

\begin{tikzpicture}[x=0.75pt,y=0.75pt,yscale=-1,xscale=1]

\draw   (269.5,16.83) .. controls (264.83,16.93) and (262.55,19.31) .. (262.65,23.98) -- (262.79,30.73) .. controls (262.93,37.4) and (260.67,40.78) .. (256,40.88) .. controls (260.67,40.78) and (263.07,44.06) .. (263.21,50.73)(263.15,47.73) -- (263.36,57.48) .. controls (263.45,62.15) and (265.83,64.43) .. (270.5,64.33) ;
\draw   (488.5,63.83) .. controls (493.17,63.83) and (495.5,61.5) .. (495.5,56.83) -- (495.5,50.33) .. controls (495.5,43.66) and (497.83,40.33) .. (502.5,40.33) .. controls (497.83,40.33) and (495.5,37) .. (495.5,30.33)(495.5,33.33) -- (495.5,23.83) .. controls (495.5,19.16) and (493.17,16.83) .. (488.5,16.83) ;
\draw   (301.97,30.08) .. controls (301.97,28.52) and (303.08,27.25) .. (304.45,27.25) .. controls (305.83,27.25) and (306.94,28.52) .. (306.94,30.08) .. controls (306.94,31.64) and (305.83,32.9) .. (304.45,32.9) .. controls (303.08,32.9) and (301.97,31.64) .. (301.97,30.08) -- cycle ;
\draw    (325.43,31) .. controls (345.4,32.83) and (327.17,54.48) .. (299.54,54.03) ;
\draw [shift={(321.38,49.46)}, rotate = 330.16] [color={rgb, 255:red, 0; green, 0; blue, 0 }  ][line width=0.75]    (6.56,-1.97) .. controls (4.17,-0.84) and (1.99,-0.18) .. (0,0) .. controls (1.99,0.18) and (4.17,0.84) .. (6.56,1.97)   ;
\draw    (285.07,29.98) .. controls (260.97,30.28) and (278.97,54.88) .. (299.54,54.03) ;
\draw  [fill={rgb, 255:red, 0; green, 0; blue, 0 }  ,fill opacity=1 ] (287.07,29.98) .. controls (287.07,28.59) and (287.96,27.46) .. (289.05,27.46) .. controls (290.15,27.46) and (291.03,28.59) .. (291.03,29.98) .. controls (291.03,31.36) and (290.15,32.49) .. (289.05,32.49) .. controls (287.96,32.49) and (287.07,31.36) .. (287.07,29.98) -- cycle ;
\draw    (292.77,30.08) -- (299.97,30.08) ;
\draw    (308.37,30.28) -- (316.57,30.48) ;
\draw   (364.36,30.55) .. controls (364.36,28.99) and (365.47,27.72) .. (366.84,27.72) .. controls (368.21,27.72) and (369.32,28.99) .. (369.32,30.55) .. controls (369.32,32.11) and (368.21,33.37) .. (366.84,33.37) .. controls (365.47,33.37) and (364.36,32.11) .. (364.36,30.55) -- cycle ;
\draw  [fill={rgb, 255:red, 0; green, 0; blue, 0 }  ,fill opacity=1 ] (379.69,30.85) .. controls (379.69,29.46) and (380.58,28.33) .. (381.68,28.33) .. controls (382.77,28.33) and (383.66,29.46) .. (383.66,30.85) .. controls (383.66,32.24) and (382.77,33.36) .. (381.68,33.36) .. controls (380.58,33.36) and (379.69,32.24) .. (379.69,30.85) -- cycle ;
\draw    (401.29,31.62) .. controls (421.26,33.45) and (405.46,55.05) .. (377.82,54.61) ;
\draw [shift={(399.13,50.04)}, rotate = 328.31] [color={rgb, 255:red, 0; green, 0; blue, 0 }  ][line width=0.75]    (6.56,-1.97) .. controls (4.17,-0.84) and (1.99,-0.18) .. (0,0) .. controls (1.99,0.18) and (4.17,0.84) .. (6.56,1.97)   ;
\draw    (363.36,30.55) .. controls (339.26,30.85) and (357.26,55.45) .. (377.82,54.61) ;
\draw    (371.06,30.65) -- (378.26,30.65) ;
\draw    (385.66,30.85) -- (393.86,31.05) ;
\draw   (442.81,30.08) .. controls (442.81,28.52) and (443.92,27.25) .. (445.29,27.25) .. controls (446.66,27.25) and (447.77,28.52) .. (447.77,30.08) .. controls (447.77,31.64) and (446.66,32.9) .. (445.29,32.9) .. controls (443.92,32.9) and (442.81,31.64) .. (442.81,30.08) -- cycle ;
\draw  [fill={rgb, 255:red, 0; green, 0; blue, 0 }  ,fill opacity=1 ] (474,30.69) .. controls (474,29.31) and (474.88,28.18) .. (475.98,28.18) .. controls (477.07,28.18) and (477.96,29.31) .. (477.96,30.69) .. controls (477.96,32.08) and (477.07,33.21) .. (475.98,33.21) .. controls (474.88,33.21) and (474,32.08) .. (474,30.69) -- cycle ;
\draw    (480.71,31.05) .. controls (500.69,32.88) and (483.17,54.48) .. (455.54,54.03) ;
\draw [shift={(477.22,49.47)}, rotate = 329.75] [color={rgb, 255:red, 0; green, 0; blue, 0 }  ][line width=0.75]    (6.56,-1.97) .. controls (4.17,-0.84) and (1.99,-0.18) .. (0,0) .. controls (1.99,0.18) and (4.17,0.84) .. (6.56,1.97)   ;
\draw    (441.07,29.98) .. controls (416.97,30.28) and (434.97,54.88) .. (455.54,54.03) ;
\draw    (448.77,30.08) -- (455.97,30.08) ;
\draw    (463.37,30.28) -- (471.57,30.48) ;
\draw   (318.83,30.65) .. controls (318.83,29.09) and (319.94,27.82) .. (321.31,27.82) .. controls (322.68,27.82) and (323.79,29.09) .. (323.79,30.65) .. controls (323.79,32.21) and (322.68,33.47) .. (321.31,33.47) .. controls (319.94,33.47) and (318.83,32.21) .. (318.83,30.65) -- cycle ;
\draw   (394.86,31.05) .. controls (394.86,29.49) and (395.97,28.22) .. (397.34,28.22) .. controls (398.71,28.22) and (399.82,29.49) .. (399.82,31.05) .. controls (399.82,32.61) and (398.71,33.87) .. (397.34,33.87) .. controls (395.97,33.87) and (394.86,32.61) .. (394.86,31.05) -- cycle ;
\draw   (456.97,30.08) .. controls (456.97,28.52) and (458.08,27.25) .. (459.45,27.25) .. controls (460.83,27.25) and (461.94,28.52) .. (461.94,30.08) .. controls (461.94,31.64) and (460.83,32.9) .. (459.45,32.9) .. controls (458.08,32.9) and (456.97,31.64) .. (456.97,30.08) -- cycle ;

\draw (189.67,33.4) node [anchor=north west][inner sep=0.75pt]    {$t_{2,1}$};
\draw (217,36.07) node [anchor=north west][inner sep=0.75pt]    {$=$};
\draw (241.17,29.73) node [anchor=north west][inner sep=0.75pt]    {$\frac{1}{3}$};
\draw (336.43,32.35) node [anchor=north west][inner sep=0.75pt]    {$+$};
\draw (413.29,31.78) node [anchor=north west][inner sep=0.75pt]    {$+$};

\end{tikzpicture}

}
\end{equation}
We have the following different ways of Wick contractions:
\begin{enumerate}
    \item One of the white dots is contracted with one of the $\frac{N(N-1)}{2}$ black dots in one of the $k$ $C(0,1,2,\cdots, N-1)$, the other one is contracted with:
    \begin{enumerate}
        \item one $\frac{1}{\sqrt{N}}\mathrm{Tr}(Z^{\dagger})$.
        \item one $\frac{1}{\sqrt{N}^i}\mathrm{Tr}({Z^{\dagger}}^{i})$, $2 \leq i\leq E$. 
    \end{enumerate}
    \item The two white dots are contracted with 
    \begin{enumerate}
        \item two adjacent black dots of one $\frac{1}{\sqrt{N}^2}\mathrm{Tr}({Z^{\dagger}}^{2})$
        \item two adjacent black dots of one $\frac{1}{\sqrt{N}^i}\mathrm{Tr}({{Z^{\dagger}}^i})$, $3 \leq i\leq E$
        \item two non-adjacent black dots of one $\frac{1}{\sqrt{N}^i}\mathrm{Tr}({{Z^{\dagger}}^i})$, $3 \leq i\leq E$
    \end{enumerate}
    \item Two white dots is contracted with two different $\frac{1}{\sqrt{N}^i}\mathrm{Tr}({Z^{\dagger}}^{i})$, $\frac{1}{\sqrt{N}^j}\mathrm{Tr}({Z^{\dagger}}^{j})$.
    \item One of the white dot is contracted with one of the black dot in the expression (\ref{t21 0}) of $t_{2,1}$, after that we get $t_{1,0}$ and we can  apply (\ref{t1,0}).
\end{enumerate}

We will compute all these terms one by one. Graphically, case 1-(a) is :
\begin{center}

\tikzset{every picture/.style={line width=0.75pt}} 

\begin{tikzpicture}[x=0.75pt,y=0.75pt,yscale=-1,xscale=1]

\draw   (287.72,9.69) -- (369.5,9.69) -- (369.5,19.51) -- (287.72,19.51) -- cycle ;
\draw    (292.76,19.85) -- (283,55.75) ;
\draw    (304.14,19.85) -- (299.5,63.25) ;
\draw    (316.37,19.55) -- (317,65.75) ;
\draw    (352.85,20.1) -- (364,71.5) ;
\draw    (364.35,20.02) -- (385,66.75) ;
\draw  [fill={rgb, 255:red, 0; green, 0; blue, 0 }  ,fill opacity=1 ] (300.23,43.46) .. controls (300.23,42.41) and (300.94,41.55) .. (301.82,41.55) .. controls (302.7,41.55) and (303.41,42.41) .. (303.41,43.46) .. controls (303.41,44.52) and (302.7,45.37) .. (301.82,45.37) .. controls (300.94,45.37) and (300.23,44.52) .. (300.23,43.46) -- cycle ;
\draw  [fill={rgb, 255:red, 0; green, 0; blue, 0 }  ,fill opacity=1 ] (315.21,37.02) .. controls (315.21,35.97) and (315.93,35.11) .. (316.8,35.11) .. controls (317.68,35.11) and (318.39,35.97) .. (318.39,37.02) .. controls (318.39,38.08) and (317.68,38.93) .. (316.8,38.93) .. controls (315.93,38.93) and (315.21,38.08) .. (315.21,37.02) -- cycle ;
\draw  [fill={rgb, 255:red, 0; green, 0; blue, 0 }  ,fill opacity=1 ] (314.96,49.52) .. controls (314.96,48.47) and (315.68,47.61) .. (316.55,47.61) .. controls (317.43,47.61) and (318.14,48.47) .. (318.14,49.52) .. controls (318.14,50.58) and (317.43,51.43) .. (316.55,51.43) .. controls (315.68,51.43) and (314.96,50.58) .. (314.96,49.52) -- cycle ;
\draw  [fill={rgb, 255:red, 0; green, 0; blue, 0 }  ,fill opacity=1 ] (352.5,26.02) .. controls (352.5,24.97) and (353.21,24.11) .. (354.09,24.11) .. controls (354.97,24.11) and (355.68,24.97) .. (355.68,26.02) .. controls (355.68,27.08) and (354.97,27.93) .. (354.09,27.93) .. controls (353.21,27.93) and (352.5,27.08) .. (352.5,26.02) -- cycle ;
\draw  [fill={rgb, 255:red, 0; green, 0; blue, 0 }  ,fill opacity=1 ] (354,33.27) .. controls (354,32.22) and (354.71,31.36) .. (355.59,31.36) .. controls (356.47,31.36) and (357.18,32.22) .. (357.18,33.27) .. controls (357.18,34.33) and (356.47,35.18) .. (355.59,35.18) .. controls (354.71,35.18) and (354,34.33) .. (354,33.27) -- cycle ;
\draw  [fill={rgb, 255:red, 0; green, 0; blue, 0 }  ,fill opacity=1 ] (359.75,59.02) .. controls (359.75,57.97) and (360.46,57.11) .. (361.34,57.11) .. controls (362.22,57.11) and (362.93,57.97) .. (362.93,59.02) .. controls (362.93,60.08) and (362.22,60.93) .. (361.34,60.93) .. controls (360.46,60.93) and (359.75,60.08) .. (359.75,59.02) -- cycle ;
\draw  [fill={rgb, 255:red, 0; green, 0; blue, 0 }  ,fill opacity=1 ] (364.75,24.77) .. controls (364.75,23.72) and (365.46,22.86) .. (366.34,22.86) .. controls (367.22,22.86) and (367.93,23.72) .. (367.93,24.77) .. controls (367.93,25.83) and (367.22,26.68) .. (366.34,26.68) .. controls (365.46,26.68) and (364.75,25.83) .. (364.75,24.77) -- cycle ;
\draw  [fill={rgb, 255:red, 0; green, 0; blue, 0 }  ,fill opacity=1 ] (368.25,32.52) .. controls (368.25,31.47) and (368.96,30.61) .. (369.84,30.61) .. controls (370.72,30.61) and (371.43,31.47) .. (371.43,32.52) .. controls (371.43,33.58) and (370.72,34.43) .. (369.84,34.43) .. controls (368.96,34.43) and (368.25,33.58) .. (368.25,32.52) -- cycle ;
\draw  [fill={rgb, 255:red, 0; green, 0; blue, 0 }  ,fill opacity=1 ] (378.25,55.02) .. controls (378.25,53.97) and (378.96,53.11) .. (379.84,53.11) .. controls (380.72,53.11) and (381.43,53.97) .. (381.43,55.02) .. controls (381.43,56.08) and (380.72,56.93) .. (379.84,56.93) .. controls (378.96,56.93) and (378.25,56.08) .. (378.25,55.02) -- cycle ;
\draw  [fill={rgb, 255:red, 0; green, 0; blue, 0 }  ,fill opacity=1 ] (381,62.02) .. controls (381,60.97) and (381.71,60.11) .. (382.59,60.11) .. controls (383.47,60.11) and (384.18,60.97) .. (384.18,62.02) .. controls (384.18,63.08) and (383.47,63.93) .. (382.59,63.93) .. controls (381.71,63.93) and (381,63.08) .. (381,62.02) -- cycle ;
\draw    (338.8,18.65) -- (344.2,56.05) ;
\draw    (301,95.25) .. controls (325.4,85.65) and (328.2,74.85) .. (344.2,56.05) ;
\draw    (344.8,60.85) .. controls (331.8,78.85) and (324.6,91.25) .. (302.6,99.25) ;
\draw    (344.8,60.85) -- (350.6,94.85) ;
\draw  [fill={rgb, 255:red, 0; green, 0; blue, 0 }  ,fill opacity=1 ] (338.03,27.86) .. controls (338.03,26.81) and (338.74,25.95) .. (339.62,25.95) .. controls (340.5,25.95) and (341.21,26.81) .. (341.21,27.86) .. controls (341.21,28.92) and (340.5,29.77) .. (339.62,29.77) .. controls (338.74,29.77) and (338.03,28.92) .. (338.03,27.86) -- cycle ;
\draw  [fill={rgb, 255:red, 0; green, 0; blue, 0 }  ,fill opacity=1 ] (339.91,37.35) .. controls (339.91,36.3) and (340.62,35.44) .. (341.5,35.44) .. controls (342.38,35.44) and (343.09,36.3) .. (343.09,37.35) .. controls (343.09,38.4) and (342.38,39.26) .. (341.5,39.26) .. controls (340.62,39.26) and (339.91,38.4) .. (339.91,37.35) -- cycle ;
\draw  [fill={rgb, 255:red, 0; green, 0; blue, 0 }  ,fill opacity=1 ] (346.11,77.85) .. controls (346.11,76.8) and (346.82,75.94) .. (347.7,75.94) .. controls (348.58,75.94) and (349.29,76.8) .. (349.29,77.85) .. controls (349.29,78.9) and (348.58,79.76) .. (347.7,79.76) .. controls (346.82,79.76) and (346.11,78.9) .. (346.11,77.85) -- cycle ;
\draw    (283,99.5) .. controls (283.5,90.25) and (296,86.5) .. (301,95.25) ;
\draw    (284.25,102) .. controls (287.5,110.25) and (308.25,110.5) .. (302.6,99.25) ;
\draw  [fill={rgb, 255:red, 0; green, 0; blue, 0 }  ,fill opacity=1 ] (290.23,90.46) .. controls (290.23,89.41) and (290.94,88.55) .. (291.82,88.55) .. controls (292.7,88.55) and (293.41,89.41) .. (293.41,90.46) .. controls (293.41,91.52) and (292.7,92.37) .. (291.82,92.37) .. controls (290.94,92.37) and (290.23,91.52) .. (290.23,90.46) -- cycle ;
\draw    (283,99.5) -- (267,109) ;
\draw    (268,111.25) -- (284.25,102) ;
\draw    (253.5,110.25) .. controls (253.18,102.43) and (258.75,96) .. (267,109) ;
\draw    (253.5,110.25) .. controls (254.75,125.5) and (278.75,125.5) .. (268,111.25) ;

\draw (324.8,9.9) node [anchor=north west][inner sep=0.75pt]  [font=\normalsize]  {$\epsilon $};
\draw (318.85,35.34) node [anchor=north west][inner sep=0.75pt]  [font=\normalsize]  {$\cdots $};
\draw (267,55.65) node [anchor=north west][inner sep=0.75pt]  [font=\footnotesize]  {$\lambda ^{\dagger }$};
\draw (287,63.15) node [anchor=north west][inner sep=0.75pt]  [font=\footnotesize]  {$\lambda ^{\dagger }$};
\draw (306.5,66.65) node [anchor=north west][inner sep=0.75pt]  [font=\footnotesize]  {$\lambda ^{\dagger }$};
\draw (357,68.15) node [anchor=north west][inner sep=0.75pt]  [font=\footnotesize]  {$\lambda ^{\dagger }$};
\draw (383.5,62.15) node [anchor=north west][inner sep=0.75pt]  [font=\footnotesize]  {$\lambda ^{\dagger }$};
\draw (350.7,40.3) node [anchor=north west][inner sep=0.75pt]    {$\cdot $};
\draw (352.07,45.13) node [anchor=north west][inner sep=0.75pt]    {$\cdot $};
\draw (371.33,32.57) node [anchor=north west][inner sep=0.75pt]    {$\cdot $};
\draw (372.95,35.9) node [anchor=north west][inner sep=0.75pt]    {$\cdot $};
\draw (375.23,39.92) node [anchor=north west][inner sep=0.75pt]    {$\cdot $};
\draw (350.19,36.3) node [anchor=north west][inner sep=0.75pt]    {$\cdot $};
\draw (346.6,89.35) node [anchor=north west][inner sep=0.75pt]  [font=\footnotesize]  {$\lambda ^{\dagger }$};
\draw (250.24,76.71) node [anchor=north west][inner sep=0.75pt]  [font=\small]  {$\frac{1}{\sqrt{N}}$};

\end{tikzpicture}

\end{center}
All three terms in $t_{2,1}$ (\ref{t21 0}) contribute to the same graph in this case, this graph is topologically the same of $C(0,1,2,\cdots,N-1)$. So effectively case 1-(a) contraction kills one of the $\frac{1}{\sqrt{N}}\mathrm{Tr}(Z^{\dagger})$. All $k\frac{N(N-1)}{2}$ ways of contraction gives the same term. Keeping track of a combinatorial factor 2, case 1-(a) gives rise to:
\begin{equation}\label{case 1-a}
    \frac{1}{\sqrt{N}}k\frac{N(N-1)}{2}\cdot 2 \cdot \frac{\partial}{\partial p_{1}}
\end{equation}
For Case 1-(b), only the contraction with the black dots on the $i-1$ rightmost legs of $C(0,1,2,\cdots,N-(i-1),N-(i-2),\cdots,N-1)$ will give non-zero contribution: 
\begin{center}

\tikzset{every picture/.style={line width=0.75pt}} 

\begin{tikzpicture}[x=0.75pt,y=0.75pt,yscale=-1,xscale=1]

\draw   (307.72,43.19) -- (389.5,43.19) -- (389.5,53.01) -- (307.72,53.01) -- cycle ;
\draw    (312.76,53.35) -- (303,89.25) ;
\draw    (324.14,53.35) -- (319.5,96.75) ;
\draw    (336.37,53.05) -- (337,99.25) ;
\draw    (366.28,53.6) -- (367.8,101.4) ;
\draw    (384.07,53.23) -- (404.71,99.97) ;
\draw  [fill={rgb, 255:red, 0; green, 0; blue, 0 }  ,fill opacity=1 ] (320.23,76.96) .. controls (320.23,75.91) and (320.94,75.05) .. (321.82,75.05) .. controls (322.7,75.05) and (323.41,75.91) .. (323.41,76.96) .. controls (323.41,78.02) and (322.7,78.87) .. (321.82,78.87) .. controls (320.94,78.87) and (320.23,78.02) .. (320.23,76.96) -- cycle ;
\draw  [fill={rgb, 255:red, 0; green, 0; blue, 0 }  ,fill opacity=1 ] (335.21,70.52) .. controls (335.21,69.47) and (335.93,68.61) .. (336.8,68.61) .. controls (337.68,68.61) and (338.39,69.47) .. (338.39,70.52) .. controls (338.39,71.58) and (337.68,72.43) .. (336.8,72.43) .. controls (335.93,72.43) and (335.21,71.58) .. (335.21,70.52) -- cycle ;
\draw  [fill={rgb, 255:red, 0; green, 0; blue, 0 }  ,fill opacity=1 ] (334.96,83.02) .. controls (334.96,81.97) and (335.68,81.11) .. (336.55,81.11) .. controls (337.43,81.11) and (338.14,81.97) .. (338.14,83.02) .. controls (338.14,84.08) and (337.43,84.93) .. (336.55,84.93) .. controls (335.68,84.93) and (334.96,84.08) .. (334.96,83.02) -- cycle ;
\draw  [fill={rgb, 255:red, 0; green, 0; blue, 0 }  ,fill opacity=1 ] (364.79,60.95) .. controls (364.79,59.9) and (365.5,59.04) .. (366.38,59.04) .. controls (367.25,59.04) and (367.96,59.9) .. (367.96,60.95) .. controls (367.96,62.01) and (367.25,62.86) .. (366.38,62.86) .. controls (365.5,62.86) and (364.79,62.01) .. (364.79,60.95) -- cycle ;
\draw  [fill={rgb, 255:red, 0; green, 0; blue, 0 }  ,fill opacity=1 ] (365.43,71.92) .. controls (365.43,70.86) and (366.14,70.01) .. (367.02,70.01) .. controls (367.9,70.01) and (368.61,70.86) .. (368.61,71.92) .. controls (368.61,72.97) and (367.9,73.83) .. (367.02,73.83) .. controls (366.14,73.83) and (365.43,72.97) .. (365.43,71.92) -- cycle ;
\draw  [fill={rgb, 255:red, 0; green, 0; blue, 0 }  ,fill opacity=1 ] (365.75,92.52) .. controls (365.75,91.47) and (366.46,90.61) .. (367.34,90.61) .. controls (368.22,90.61) and (368.93,91.47) .. (368.93,92.52) .. controls (368.93,93.58) and (368.22,94.43) .. (367.34,94.43) .. controls (366.46,94.43) and (365.75,93.58) .. (365.75,92.52) -- cycle ;
\draw  [fill={rgb, 255:red, 0; green, 0; blue, 0 }  ,fill opacity=1 ] (384.46,57.99) .. controls (384.46,56.93) and (385.18,56.08) .. (386.05,56.08) .. controls (386.93,56.08) and (387.64,56.93) .. (387.64,57.99) .. controls (387.64,59.04) and (386.93,59.9) .. (386.05,59.9) .. controls (385.18,59.9) and (384.46,59.04) .. (384.46,57.99) -- cycle ;
\draw  [fill={rgb, 255:red, 0; green, 0; blue, 0 }  ,fill opacity=1 ] (387.96,65.74) .. controls (387.96,64.68) and (388.68,63.83) .. (389.55,63.83) .. controls (390.43,63.83) and (391.14,64.68) .. (391.14,65.74) .. controls (391.14,66.79) and (390.43,67.65) .. (389.55,67.65) .. controls (388.68,67.65) and (387.96,66.79) .. (387.96,65.74) -- cycle ;
\draw  [fill={rgb, 255:red, 0; green, 0; blue, 0 }  ,fill opacity=1 ] (397.96,87.38) .. controls (397.96,86.33) and (398.68,85.47) .. (399.55,85.47) .. controls (400.43,85.47) and (401.14,86.33) .. (401.14,87.38) .. controls (401.14,88.44) and (400.43,89.29) .. (399.55,89.29) .. controls (398.68,89.29) and (397.96,88.44) .. (397.96,87.38) -- cycle ;
\draw  [fill={rgb, 255:red, 0; green, 0; blue, 0 }  ,fill opacity=1 ] (400.71,94.38) .. controls (400.71,93.33) and (401.43,92.47) .. (402.3,92.47) .. controls (403.18,92.47) and (403.89,93.33) .. (403.89,94.38) .. controls (403.89,95.44) and (403.18,96.29) .. (402.3,96.29) .. controls (401.43,96.29) and (400.71,95.44) .. (400.71,94.38) -- cycle ;
\draw    (375.94,52.72) -- (381.34,90.12) ;
\draw    (321,128.75) .. controls (345.4,119.15) and (365.34,108.92) .. (381.34,90.12) ;
\draw    (382.14,93.71) .. controls (369.14,111.71) and (344.6,124.75) .. (322.6,132.75) ;
\draw    (382.14,93.71) -- (386.5,118.75) ;
\draw  [fill={rgb, 255:red, 0; green, 0; blue, 0 }  ,fill opacity=1 ] (375.18,61.93) .. controls (375.18,60.88) and (375.89,60.02) .. (376.77,60.02) .. controls (377.64,60.02) and (378.35,60.88) .. (378.35,61.93) .. controls (378.35,62.99) and (377.64,63.84) .. (376.77,63.84) .. controls (375.89,63.84) and (375.18,62.99) .. (375.18,61.93) -- cycle ;
\draw  [fill={rgb, 255:red, 0; green, 0; blue, 0 }  ,fill opacity=1 ] (377.05,71.42) .. controls (377.05,70.37) and (377.77,69.51) .. (378.64,69.51) .. controls (379.52,69.51) and (380.23,70.37) .. (380.23,71.42) .. controls (380.23,72.48) and (379.52,73.33) .. (378.64,73.33) .. controls (377.77,73.33) and (377.05,72.48) .. (377.05,71.42) -- cycle ;
\draw  [fill={rgb, 255:red, 0; green, 0; blue, 0 }  ,fill opacity=1 ] (383.45,108.8) .. controls (383.45,107.75) and (384.17,106.89) .. (385.04,106.89) .. controls (385.92,106.89) and (386.63,107.75) .. (386.63,108.8) .. controls (386.63,109.86) and (385.92,110.71) .. (385.04,110.71) .. controls (384.17,110.71) and (383.45,109.86) .. (383.45,108.8) -- cycle ;
\draw    (303,133) .. controls (303.5,123.75) and (316,120) .. (321,128.75) ;
\draw    (304.25,135.5) .. controls (307.5,143.75) and (328.25,144) .. (322.6,132.75) ;
\draw  [fill={rgb, 255:red, 0; green, 0; blue, 0 }  ,fill opacity=1 ] (310.23,123.96) .. controls (310.23,122.91) and (310.94,122.05) .. (311.82,122.05) .. controls (312.7,122.05) and (313.41,122.91) .. (313.41,123.96) .. controls (313.41,125.02) and (312.7,125.87) .. (311.82,125.87) .. controls (310.94,125.87) and (310.23,125.02) .. (310.23,123.96) -- cycle ;
\draw    (303,133) -- (287,142.5) ;
\draw    (288,144.75) -- (304.25,135.5) ;
\draw    (272.5,150) .. controls (269.5,139.75) and (278.75,129.5) .. (287,142.5) ;
\draw    (272.5,150) .. controls (273.75,165.25) and (298.75,159) .. (288,144.75) ;
\draw  [fill={rgb, 255:red, 0; green, 0; blue, 0 }  ,fill opacity=1 ] (269.95,144.21) .. controls (269.95,143.16) and (270.66,142.3) .. (271.54,142.3) .. controls (272.41,142.3) and (273.13,143.16) .. (273.13,144.21) .. controls (273.13,145.27) and (272.41,146.12) .. (271.54,146.12) .. controls (270.66,146.12) and (269.95,145.27) .. (269.95,144.21) -- cycle ;
\draw   (369.5,141) .. controls (372.07,144.89) and (375.3,145.56) .. (379.2,142.99) -- (387,137.85) .. controls (392.57,134.18) and (396.63,134.29) .. (399.2,138.19) .. controls (396.63,134.29) and (398.13,130.51) .. (403.7,126.84)(401.2,128.49) -- (411.51,121.7) .. controls (415.4,119.13) and (416.07,115.89) .. (413.5,112) ;
\draw    (368,110.53) -- (368.06,122.76) ;

\draw (344.8,43.4) node [anchor=north west][inner sep=0.75pt]  [font=\normalsize]  {$\epsilon $};
\draw (374.85,74.55) node [anchor=north west][inner sep=0.75pt]  [font=\normalsize]  {$\cdots $};
\draw (287,89.15) node [anchor=north west][inner sep=0.75pt]  [font=\footnotesize]  {$\lambda ^{\dagger }$};
\draw (307,96.65) node [anchor=north west][inner sep=0.75pt]  [font=\footnotesize]  {$\lambda ^{\dagger }$};
\draw (326.5,100.15) node [anchor=north west][inner sep=0.75pt]  [font=\footnotesize]  {$\lambda ^{\dagger }$};
\draw (362.43,121.23) node [anchor=north west][inner sep=0.75pt]  [font=\footnotesize]  {$\lambda ^{\dagger }$};
\draw (403.21,94.51) node [anchor=north west][inner sep=0.75pt]  [font=\footnotesize]  {$\lambda ^{\dagger }$};
\draw (358.56,74.66) node [anchor=north west][inner sep=0.75pt]    {$\cdot $};
\draw (358.5,79.49) node [anchor=north west][inner sep=0.75pt]    {$\cdot $};
\draw (390.76,62.93) node [anchor=north west][inner sep=0.75pt]    {$\cdot $};
\draw (392.95,67.69) node [anchor=north west][inner sep=0.75pt]    {$\cdot $};
\draw (394.98,71.9) node [anchor=north west][inner sep=0.75pt]    {$\cdot $};
\draw (358.19,70.8) node [anchor=north west][inner sep=0.75pt]    {$\cdot $};
\draw (383.24,114.14) node [anchor=north west][inner sep=0.75pt]  [font=\footnotesize]  {$\lambda ^{\dagger }$};
\draw (254.34,105.89) node [anchor=north west][inner sep=0.75pt]  [font=\small]  {$\frac{1}{\sqrt{N}^{i}}$};
\draw (241.57,135.06) node [anchor=north west][inner sep=0.75pt]  [font=\footnotesize]  {$i-1$};
\draw (338.14,69.83) node [anchor=north west][inner sep=0.75pt]    {$\cdots $};
\draw (390.25,136.52) node [anchor=north west][inner sep=0.75pt]  [font=\footnotesize]  {$i-1\ legs$};

\end{tikzpicture}

\end{center}
In the above graph,  we put a "$i-1$" label to indicate there are $i-1$ black dots. Up to same factor, the above contractions give rise to a term: 
\begin{align*}
    &(N-(i-1))C(0,1,2,\cdots,N-(i-1)+i-1,N-(i-2),\cdots,N-1)\\ \nonumber
    &+(N-(i-2))C(0,1,2,\cdots,N-(i-1),N-(i-2)+i-1,\cdots,N-1)\\ \nonumber
    &+\cdots \\ \nonumber
    &+(N-1)C(0,1,2,\cdots,N-(i-1),N-(i-2),\cdots,N-1+i-1)
\end{align*}
We reorganize this as the summation of two terms:
\begin{align}
    \nonumber&NC(0,1,2,\cdots,N-(i-1)+i-1,N-(i-2),\cdots,N-1)\\ 
    \nonumber&+NC(0,1,2,\cdots,N-(i-1),N-(i-2)+i-1,\cdots,N-1)\\ 
    \nonumber&+\cdots \\ 
     \label{1-b eq 1}&+NC(0,1,2,\cdots,N-(i-1),N-(i-2),\cdots,N-1+i-1)
\end{align}
and 
\begin{align}
    \nonumber&-(i-1)C(0,1,2,\cdots,N-(i-1)+i-1,N-(i-2),\cdots,N-1)\\ 
    \nonumber&-(i-2)C(0,1,2,\cdots,N-(i-1),N-(i-2)+i-1,\cdots,N-1)\\ 
    \nonumber&-\cdots \\ 
    \label{1-b eq 2}&-C(0,1,2,\cdots,N-(i-1),N-(i-2),\cdots,N-1+i-1)
\end{align}
From (\ref{ti C 2}), the term (\ref{1-b eq 1}) is equal to:
\begin{equation*}
    N \mathrm{Tr}({Z^{\dagger}}^{i-1})C(0,1,2,\cdots,N-1)
\end{equation*}
The point is that for a fixed $E$, in the limit $N\rightarrow \infty$, (\ref{1-b eq 2}) has order $O(1)=O(N^{0})$. We remark on this point at the end of Appendix \ref{App B}. Keeping track of all combinatorial factors, case 1-(b) gives the following term:

\begin{equation}\label{case 1b}
    \sqrt{N}\cdot k \cdot 2 \cdot \sum_{i=2}^{E} ip_{i-1}\frac{\partial}{\partial p_{i}}+O(\frac{1}{\sqrt{N}})
\end{equation}
\\
For case 2-(a), we have: 
\begin{center}

\tikzset{every picture/.style={line width=0.75pt}} 

\begin{tikzpicture}[x=0.75pt,y=0.75pt,yscale=-1,xscale=1]

\draw    (142.43,72.33) .. controls (162.4,74.16) and (144.17,95.81) .. (116.54,95.37) ;
\draw [shift={(138.38,90.8)}, rotate = 330.16] [color={rgb, 255:red, 0; green, 0; blue, 0 }  ][line width=0.75]    (6.56,-1.97) .. controls (4.17,-0.84) and (1.99,-0.18) .. (0,0) .. controls (1.99,0.18) and (4.17,0.84) .. (6.56,1.97)   ;
\draw    (102.07,71.31) .. controls (77.97,71.61) and (95.97,96.21) .. (116.54,95.37) ;
\draw  [fill={rgb, 255:red, 0; green, 0; blue, 0 }  ,fill opacity=1 ] (103.65,71.46) .. controls (103.65,70.07) and (104.54,68.94) .. (105.64,68.94) .. controls (106.73,68.94) and (107.62,70.07) .. (107.62,71.46) .. controls (107.62,72.85) and (106.73,73.97) .. (105.64,73.97) .. controls (104.54,73.97) and (103.65,72.85) .. (103.65,71.46) -- cycle ;
\draw    (109.77,71.41) -- (116.97,71.41) ;
\draw    (121,71.45) -- (137.91,71.82) ;
\draw  [fill={rgb, 255:red, 0; green, 0; blue, 0 }  ,fill opacity=1 ] (299.44,74.25) .. controls (299.44,72.86) and (300.33,71.74) .. (301.42,71.74) .. controls (302.52,71.74) and (303.41,72.86) .. (303.41,74.25) .. controls (303.41,75.64) and (302.52,76.76) .. (301.42,76.76) .. controls (300.33,76.76) and (299.44,75.64) .. (299.44,74.25) -- cycle ;
\draw    (306.16,74.6) .. controls (326.13,76.43) and (308.62,98.03) .. (280.98,97.59) ;
\draw [shift={(302.67,93.03)}, rotate = 329.75] [color={rgb, 255:red, 0; green, 0; blue, 0 }  ][line width=0.75]    (6.56,-1.97) .. controls (4.17,-0.84) and (1.99,-0.18) .. (0,0) .. controls (1.99,0.18) and (4.17,0.84) .. (6.56,1.97)   ;
\draw    (266.52,73.53) .. controls (242.42,73.83) and (260.42,98.43) .. (280.98,97.59) ;
\draw    (270.89,73.56) -- (284,73.56) ;
\draw    (288.82,73.83) -- (297.02,74.03) ;
\draw    (124,34.67) .. controls (129.67,14.67) and (178.67,30.33) .. (155,47) ;
\draw [shift={(152.54,27.88)}, rotate = 198.11] [color={rgb, 255:red, 0; green, 0; blue, 0 }  ][line width=0.75]    (6.56,-1.97) .. controls (4.17,-0.84) and (1.99,-0.18) .. (0,0) .. controls (1.99,0.18) and (4.17,0.84) .. (6.56,1.97)   ;
\draw    (122,45.33) .. controls (120.33,41.33) and (123,36.67) .. (124,34.67) ;
\draw    (125.18,48.18) .. controls (126.7,55.7) and (142.09,53.09) .. (150.27,49.82) ;
\draw    (116.97,71.41) .. controls (122.27,63.09) and (119,58.73) .. (122,45.33) ;
\draw    (121,71.45) .. controls (126.3,63.14) and (122.18,61.58) .. (125.18,48.18) ;
\draw    (137.91,71.82) .. controls (148.82,62.73) and (144.27,58.55) .. (150.27,49.82) ;
\draw    (142.43,72.33) .. controls (150.97,63.61) and (147.91,60) .. (155,47) ;
\draw  [fill={rgb, 255:red, 0; green, 0; blue, 0 }  ,fill opacity=1 ] (204.52,72.74) .. controls (204.52,71.35) and (205.4,70.22) .. (206.5,70.22) .. controls (207.59,70.22) and (208.48,71.35) .. (208.48,72.74) .. controls (208.48,74.13) and (207.59,75.25) .. (206.5,75.25) .. controls (205.4,75.25) and (204.52,74.13) .. (204.52,72.74) -- cycle ;
\draw    (224.09,73.36) .. controls (244.06,75.19) and (230.72,97.14) .. (203.08,96.7) ;
\draw [shift={(223.84,92.06)}, rotate = 325.73] [color={rgb, 255:red, 0; green, 0; blue, 0 }  ][line width=0.75]    (6.56,-1.97) .. controls (4.17,-0.84) and (1.99,-0.18) .. (0,0) .. controls (1.99,0.18) and (4.17,0.84) .. (6.56,1.97)   ;
\draw    (188.62,72.64) .. controls (164.52,72.94) and (182.52,97.54) .. (203.08,96.7) ;
\draw    (193.36,72.64) -- (203.52,72.74) ;
\draw    (210.92,72.94) -- (219.12,73.14) ;
\draw    (203.09,38.03) .. controls (208.76,18.03) and (256.3,35.06) .. (232.64,51.73) ;
\draw [shift={(231.37,32.27)}, rotate = 201.44] [color={rgb, 255:red, 0; green, 0; blue, 0 }  ][line width=0.75]    (6.56,-1.97) .. controls (4.17,-0.84) and (1.99,-0.18) .. (0,0) .. controls (1.99,0.18) and (4.17,0.84) .. (6.56,1.97)   ;
\draw    (201.55,51.18) .. controls (199.88,47.18) and (202.09,40.03) .. (203.09,38.03) ;
\draw    (204.27,51.55) .. controls (205.79,59.06) and (221.18,56.45) .. (229.36,53.18) ;
\draw    (188.62,72.64) .. controls (193.92,64.32) and (195.55,64.09) .. (201.55,51.18) ;
\draw    (193.36,72.64) .. controls (198.66,64.32) and (198.45,66.09) .. (204.27,52.27) ;
\draw    (219.12,73.14) .. controls (226.45,67) and (223.55,61.18) .. (229.55,52.45) ;
\draw    (224.09,73.36) .. controls (228.27,67.55) and (227,64.45) .. (232.64,51.73) ;
\draw    (277.98,41.59) .. controls (283.65,21.59) and (331.19,38.62) .. (307.53,55.28) ;
\draw [shift={(306.25,35.83)}, rotate = 201.44] [color={rgb, 255:red, 0; green, 0; blue, 0 }  ][line width=0.75]    (6.56,-1.97) .. controls (4.17,-0.84) and (1.99,-0.18) .. (0,0) .. controls (1.99,0.18) and (4.17,0.84) .. (6.56,1.97)   ;
\draw    (276.43,54.74) .. controls (274.77,50.74) and (276.98,43.59) .. (277.98,41.59) ;
\draw    (279.16,55.1) .. controls (280.68,62.62) and (296.07,60.01) .. (304.25,56.74) ;
\draw    (266.52,73.53) .. controls (271.82,65.21) and (270.43,67.65) .. (276.43,54.74) ;
\draw    (270.89,73.56) .. controls (276.19,65.24) and (273.34,70.09) .. (279.16,56.27) ;
\draw    (284,73.56) .. controls (291.34,67.42) and (298.25,65.46) .. (304.25,56.74) ;
\draw    (288.82,73.83) .. controls (293,68.01) and (301.89,68.01) .. (307.53,55.28) ;
\draw    (444.63,76.94) .. controls (464.6,78.77) and (446.37,100.42) .. (418.73,99.98) ;
\draw [shift={(440.58,95.41)}, rotate = 330.16] [color={rgb, 255:red, 0; green, 0; blue, 0 }  ][line width=0.75]    (6.56,-1.97) .. controls (4.17,-0.84) and (1.99,-0.18) .. (0,0) .. controls (1.99,0.18) and (4.17,0.84) .. (6.56,1.97)   ;
\draw    (404.27,75.92) .. controls (380.17,76.22) and (398.17,100.82) .. (418.73,99.98) ;
\draw  [fill={rgb, 255:red, 0; green, 0; blue, 0 }  ,fill opacity=1 ] (406.27,75.92) .. controls (406.27,74.53) and (407.16,73.41) .. (408.25,73.41) .. controls (409.34,73.41) and (410.23,74.53) .. (410.23,75.92) .. controls (410.23,77.31) and (409.34,78.44) .. (408.25,78.44) .. controls (407.16,78.44) and (406.27,77.31) .. (406.27,75.92) -- cycle ;
\draw    (411.97,76.02) -- (419.17,76.02) ;
\draw    (423.2,76.07) -- (440.11,76.43) ;
\draw  [fill={rgb, 255:red, 0; green, 0; blue, 0 }  ,fill opacity=1 ] (601.64,75.86) .. controls (601.64,74.47) and (602.53,73.35) .. (603.62,73.35) .. controls (604.71,73.35) and (605.6,74.47) .. (605.6,75.86) .. controls (605.6,77.25) and (604.71,78.38) .. (603.62,78.38) .. controls (602.53,78.38) and (601.64,77.25) .. (601.64,75.86) -- cycle ;
\draw    (608.36,76.21) .. controls (628.33,78.04) and (610.81,99.64) .. (583.18,99.2) ;
\draw [shift={(604.86,94.64)}, rotate = 329.75] [color={rgb, 255:red, 0; green, 0; blue, 0 }  ][line width=0.75]    (6.56,-1.97) .. controls (4.17,-0.84) and (1.99,-0.18) .. (0,0) .. controls (1.99,0.18) and (4.17,0.84) .. (6.56,1.97)   ;
\draw    (568.71,75.14) .. controls (544.61,75.44) and (562.61,100.04) .. (583.18,99.2) ;
\draw    (573.09,75.17) -- (586.2,75.17) ;
\draw    (591.01,75.44) -- (599.21,75.64) ;
\draw    (426.2,39.28) .. controls (431.86,19.28) and (480.86,34.94) .. (457.2,51.61) ;
\draw [shift={(454.74,32.5)}, rotate = 198.11] [color={rgb, 255:red, 0; green, 0; blue, 0 }  ][line width=0.75]    (6.56,-1.97) .. controls (4.17,-0.84) and (1.99,-0.18) .. (0,0) .. controls (1.99,0.18) and (4.17,0.84) .. (6.56,1.97)   ;
\draw    (424.2,49.94) .. controls (422.53,45.94) and (425.2,41.28) .. (426.2,39.28) ;
\draw    (427.38,52.79) .. controls (428.89,60.31) and (444.29,57.7) .. (452.47,54.43) ;
\draw    (419.17,76.02) .. controls (424.47,67.7) and (449.47,67.82) .. (452.47,54.43) ;
\draw    (423.2,76.07) .. controls (429.71,71.43) and (454.43,69.29) .. (457.2,51.61) ;
\draw    (434.86,66.43) .. controls (433.14,64) and (423.57,65) .. (424.2,49.94) ;
\draw  [fill={rgb, 255:red, 0; green, 0; blue, 0 }  ,fill opacity=1 ] (503.15,75.55) .. controls (503.15,74.16) and (504.04,73.04) .. (505.13,73.04) .. controls (506.23,73.04) and (507.11,74.16) .. (507.11,75.55) .. controls (507.11,76.94) and (506.23,78.06) .. (505.13,78.06) .. controls (504.04,78.06) and (503.15,76.94) .. (503.15,75.55) -- cycle ;
\draw    (522.29,75.97) .. controls (542.26,77.8) and (528.91,99.75) .. (501.28,99.31) ;
\draw [shift={(522.04,94.67)}, rotate = 325.73] [color={rgb, 255:red, 0; green, 0; blue, 0 }  ][line width=0.75]    (6.56,-1.97) .. controls (4.17,-0.84) and (1.99,-0.18) .. (0,0) .. controls (1.99,0.18) and (4.17,0.84) .. (6.56,1.97)   ;
\draw    (486.81,75.25) .. controls (462.71,75.55) and (480.71,100.15) .. (501.28,99.31) ;
\draw    (491.56,75.25) -- (501.71,75.35) ;
\draw    (509.11,75.55) -- (517.31,75.75) ;
\draw    (501.29,40.64) .. controls (506.95,20.64) and (554.5,37.67) .. (530.83,54.34) ;
\draw [shift={(529.56,34.88)}, rotate = 201.44] [color={rgb, 255:red, 0; green, 0; blue, 0 }  ][line width=0.75]    (6.56,-1.97) .. controls (4.17,-0.84) and (1.99,-0.18) .. (0,0) .. controls (1.99,0.18) and (4.17,0.84) .. (6.56,1.97)   ;
\draw    (502.14,55.43) .. controls (499.29,50.14) and (500,45.57) .. (501.29,40.64) ;
\draw    (506,56.43) .. controls (513.14,61) and (516.1,59.84) .. (524.29,56.57) ;
\draw    (486.81,75.25) .. controls (498.14,62.14) and (515.29,67.57) .. (524.29,56.57) ;
\draw    (491.56,75.25) .. controls (496.86,66.93) and (525.02,68.16) .. (530.83,54.34) ;
\draw    (517.31,75.75) .. controls (517,71.14) and (512.71,73.29) .. (511,68.43) ;
\draw    (522.29,75.97) .. controls (520.43,71.57) and (517.43,72.43) .. (513.43,67.29) ;
\draw    (580.18,43.2) .. controls (585.84,23.2) and (633.39,40.23) .. (609.72,56.89) ;
\draw [shift={(608.45,37.44)}, rotate = 201.44] [color={rgb, 255:red, 0; green, 0; blue, 0 }  ][line width=0.75]    (6.56,-1.97) .. controls (4.17,-0.84) and (1.99,-0.18) .. (0,0) .. controls (1.99,0.18) and (4.17,0.84) .. (6.56,1.97)   ;
\draw    (578.63,56.35) .. controls (576.96,52.35) and (579.18,45.2) .. (580.18,43.2) ;
\draw    (581.36,56.71) .. controls (589.14,62.29) and (598.27,61.62) .. (606.45,58.35) ;
\draw    (568.71,75.14) .. controls (573.57,64.14) and (608.14,66.86) .. (606.45,58.35) ;
\draw    (573.09,75.17) .. controls (578.39,66.85) and (616.29,70.57) .. (609.72,56.89) ;
\draw    (582.71,66) .. controls (581.57,63) and (577,65.86) .. (578.63,56.35) ;
\draw    (585.71,64.86) .. controls (584.57,60.86) and (580.43,64) .. (581.36,56.71) ;
\draw    (438.14,72.07) .. controls (438.57,74) and (439.43,74.29) .. (440.11,76.43) ;
\draw    (427.38,52.79) .. controls (426.43,60) and (435.86,63.43) .. (437.29,65.29) ;
\draw    (441.14,70.43) .. controls (441.86,73.43) and (443.57,71.57) .. (444.63,76.94) ;
\draw    (502.14,55.43) .. controls (503.29,58) and (507,62) .. (507.29,63.29) ;
\draw    (506,56.43) .. controls (506.71,56.71) and (509.14,62.14) .. (510.29,62.71) ;
\draw    (585.43,71.43) .. controls (586.43,73.29) and (586.14,72.43) .. (586.2,75.17) ;
\draw    (589,70.71) .. controls (590,72.57) and (591.14,72.43) .. (591.2,75.17) ;
\draw   (80,48.5) .. controls (75.33,48.5) and (73,50.83) .. (73,55.5) -- (73,60.5) .. controls (73,67.17) and (70.67,70.5) .. (66,70.5) .. controls (70.67,70.5) and (73,73.83) .. (73,80.5)(73,77.5) -- (73,85.5) .. controls (73,90.17) and (75.33,92.5) .. (80,92.5) ;
\draw   (390,52) .. controls (385.33,52) and (383,54.33) .. (383,59) -- (383,64) .. controls (383,70.67) and (380.67,74) .. (376,74) .. controls (380.67,74) and (383,77.33) .. (383,84)(383,81) -- (383,89) .. controls (383,93.67) and (385.33,96) .. (390,96) ;
\draw   (323,91) .. controls (327.67,91.11) and (330.05,88.83) .. (330.16,84.16) -- (330.27,78.91) .. controls (330.42,72.24) and (332.83,68.96) .. (337.5,69.07) .. controls (332.83,68.96) and (330.57,65.58) .. (330.72,58.91)(330.66,61.91) -- (330.84,53.66) .. controls (330.95,48.99) and (328.67,46.61) .. (324,46.5) ;
\draw   (628.5,97.5) .. controls (633.17,97.45) and (635.47,95.09) .. (635.42,90.42) -- (635.36,85.67) .. controls (635.29,79) and (637.58,75.64) .. (642.25,75.59) .. controls (637.58,75.64) and (635.21,72.34) .. (635.13,65.67)(635.17,68.67) -- (635.08,60.92) .. controls (635.03,56.25) and (632.67,53.95) .. (628,54) ;

\draw (157.82,68.95) node [anchor=north west][inner sep=0.75pt]    {$+$};
\draw (236.48,68.95) node [anchor=north west][inner sep=0.75pt]    {$+$};
\draw (458.02,73.56) node [anchor=north west][inner sep=0.75pt]    {$+$};
\draw (538.68,73.56) node [anchor=north west][inner sep=0.75pt]    {$+$};
\draw (344.5,69.4) node [anchor=north west][inner sep=0.75pt]    {$+$};
\draw (51.5,62.9) node [anchor=north west][inner sep=0.75pt]    {$\frac{1}{3}$};
\draw (362.5,64.9) node [anchor=north west][inner sep=0.75pt]    {$\frac{1}{3}$};
\draw (17.67,62.9) node [anchor=north west][inner sep=0.75pt]    {$\frac{1}{\sqrt{N}^{2}}$};

\end{tikzpicture}

\end{center}
This is:
\begin{equation}\label{2 b eq 1}
    \sqrt{N}\cdot 2 \cdot p_{1}\frac{\partial}{\partial p_{2}}
\end{equation}
For case 2-(b), we have:
\begin{center}

\tikzset{every picture/.style={line width=0.75pt}} 

\begin{tikzpicture}[x=0.75pt,y=0.75pt,yscale=-1,xscale=1]

\draw    (157.43,92.33) .. controls (177.4,94.16) and (159.17,115.81) .. (131.54,115.37) ;
\draw [shift={(153.38,110.8)}, rotate = 330.16] [color={rgb, 255:red, 0; green, 0; blue, 0 }  ][line width=0.75]    (6.56,-1.97) .. controls (4.17,-0.84) and (1.99,-0.18) .. (0,0) .. controls (1.99,0.18) and (4.17,0.84) .. (6.56,1.97)   ;
\draw    (117.07,91.31) .. controls (92.97,91.61) and (110.97,116.21) .. (131.54,115.37) ;
\draw  [fill={rgb, 255:red, 0; green, 0; blue, 0 }  ,fill opacity=1 ] (119.07,91.31) .. controls (119.07,89.92) and (119.96,88.8) .. (121.05,88.8) .. controls (122.15,88.8) and (123.03,89.92) .. (123.03,91.31) .. controls (123.03,92.7) and (122.15,93.82) .. (121.05,93.82) .. controls (119.96,93.82) and (119.07,92.7) .. (119.07,91.31) -- cycle ;
\draw    (124.77,91.41) -- (131.97,91.41) ;
\draw    (136,91.45) -- (152.91,91.82) ;
\draw  [fill={rgb, 255:red, 0; green, 0; blue, 0 }  ,fill opacity=1 ] (314.44,94.25) .. controls (314.44,92.86) and (315.33,91.74) .. (316.42,91.74) .. controls (317.52,91.74) and (318.41,92.86) .. (318.41,94.25) .. controls (318.41,95.64) and (317.52,96.76) .. (316.42,96.76) .. controls (315.33,96.76) and (314.44,95.64) .. (314.44,94.25) -- cycle ;
\draw    (321.16,94.6) .. controls (341.13,96.43) and (323.62,118.03) .. (295.98,117.59) ;
\draw [shift={(317.67,113.03)}, rotate = 329.75] [color={rgb, 255:red, 0; green, 0; blue, 0 }  ][line width=0.75]    (6.56,-1.97) .. controls (4.17,-0.84) and (1.99,-0.18) .. (0,0) .. controls (1.99,0.18) and (4.17,0.84) .. (6.56,1.97)   ;
\draw    (281.52,93.53) .. controls (257.42,93.83) and (275.42,118.43) .. (295.98,117.59) ;
\draw    (285.89,93.56) -- (299,93.56) ;
\draw    (303.82,93.83) -- (312.02,94.03) ;
\draw    (139,54.67) .. controls (144.67,34.67) and (193.67,50.33) .. (170,67) ;
\draw [shift={(167.54,47.88)}, rotate = 198.11] [color={rgb, 255:red, 0; green, 0; blue, 0 }  ][line width=0.75]    (6.56,-1.97) .. controls (4.17,-0.84) and (1.99,-0.18) .. (0,0) .. controls (1.99,0.18) and (4.17,0.84) .. (6.56,1.97)   ;
\draw    (137,65.33) .. controls (135.33,61.33) and (138,56.67) .. (139,54.67) ;
\draw    (140.18,68.18) .. controls (141.7,75.7) and (157.09,73.09) .. (165.27,69.82) ;
\draw    (131.97,91.41) .. controls (137.27,83.09) and (134,78.73) .. (137,65.33) ;
\draw    (136,91.45) .. controls (141.3,83.14) and (137.18,81.58) .. (140.18,68.18) ;
\draw    (152.91,91.82) .. controls (163.82,82.73) and (159.27,78.55) .. (165.27,69.82) ;
\draw    (157.43,92.33) .. controls (165.97,83.61) and (162.91,80) .. (170,67) ;
\draw  [fill={rgb, 255:red, 0; green, 0; blue, 0 }  ,fill opacity=1 ] (219.95,92.94) .. controls (219.95,91.55) and (220.84,90.42) .. (221.94,90.42) .. controls (223.03,90.42) and (223.92,91.55) .. (223.92,92.94) .. controls (223.92,94.33) and (223.03,95.45) .. (221.94,95.45) .. controls (220.84,95.45) and (219.95,94.33) .. (219.95,92.94) -- cycle ;
\draw    (239.09,93.36) .. controls (259.06,95.19) and (245.72,117.14) .. (218.08,116.7) ;
\draw [shift={(238.84,112.06)}, rotate = 325.73] [color={rgb, 255:red, 0; green, 0; blue, 0 }  ][line width=0.75]    (6.56,-1.97) .. controls (4.17,-0.84) and (1.99,-0.18) .. (0,0) .. controls (1.99,0.18) and (4.17,0.84) .. (6.56,1.97)   ;
\draw    (203.62,92.64) .. controls (179.52,92.94) and (197.52,117.54) .. (218.08,116.7) ;
\draw    (208.36,92.64) -- (218.52,92.74) ;
\draw    (225.92,92.94) -- (234.12,93.14) ;
\draw    (218.09,58.03) .. controls (223.76,38.03) and (271.3,55.06) .. (247.64,71.73) ;
\draw [shift={(246.37,52.27)}, rotate = 201.44] [color={rgb, 255:red, 0; green, 0; blue, 0 }  ][line width=0.75]    (6.56,-1.97) .. controls (4.17,-0.84) and (1.99,-0.18) .. (0,0) .. controls (1.99,0.18) and (4.17,0.84) .. (6.56,1.97)   ;
\draw    (216.55,71.18) .. controls (214.88,67.18) and (217.09,60.03) .. (218.09,58.03) ;
\draw    (219.27,71.55) .. controls (220.79,79.06) and (236.18,76.45) .. (244.36,73.18) ;
\draw    (203.62,92.64) .. controls (208.92,84.32) and (210.55,84.09) .. (216.55,71.18) ;
\draw    (208.36,92.64) .. controls (213.66,84.32) and (213.45,86.09) .. (219.27,72.27) ;
\draw    (234.12,93.14) .. controls (241.45,87) and (238.55,81.18) .. (244.55,72.45) ;
\draw    (239.09,93.36) .. controls (243.27,87.55) and (242,84.45) .. (247.64,71.73) ;
\draw    (292.98,61.59) .. controls (298.65,41.59) and (346.19,58.62) .. (322.53,75.28) ;
\draw [shift={(321.25,55.83)}, rotate = 201.44] [color={rgb, 255:red, 0; green, 0; blue, 0 }  ][line width=0.75]    (6.56,-1.97) .. controls (4.17,-0.84) and (1.99,-0.18) .. (0,0) .. controls (1.99,0.18) and (4.17,0.84) .. (6.56,1.97)   ;
\draw    (291.43,74.74) .. controls (289.77,70.74) and (291.98,63.59) .. (292.98,61.59) ;
\draw    (294.16,75.1) .. controls (295.68,82.62) and (311.07,80.01) .. (319.25,76.74) ;
\draw    (281.52,93.53) .. controls (286.82,85.21) and (285.43,87.65) .. (291.43,74.74) ;
\draw    (285.89,93.56) .. controls (291.19,85.24) and (288.34,90.09) .. (294.16,76.27) ;
\draw    (299,93.56) .. controls (306.34,87.42) and (313.25,85.46) .. (319.25,76.74) ;
\draw    (303.82,93.83) .. controls (308,88.01) and (316.89,88.01) .. (322.53,75.28) ;
\draw    (459.63,96.94) .. controls (479.6,98.77) and (461.37,120.42) .. (433.73,119.98) ;
\draw [shift={(455.58,115.41)}, rotate = 330.16] [color={rgb, 255:red, 0; green, 0; blue, 0 }  ][line width=0.75]    (6.56,-1.97) .. controls (4.17,-0.84) and (1.99,-0.18) .. (0,0) .. controls (1.99,0.18) and (4.17,0.84) .. (6.56,1.97)   ;
\draw    (419.27,95.92) .. controls (395.17,96.22) and (413.17,120.82) .. (433.73,119.98) ;
\draw  [fill={rgb, 255:red, 0; green, 0; blue, 0 }  ,fill opacity=1 ] (420.85,96.07) .. controls (420.85,94.68) and (421.74,93.55) .. (422.83,93.55) .. controls (423.93,93.55) and (424.81,94.68) .. (424.81,96.07) .. controls (424.81,97.46) and (423.93,98.58) .. (422.83,98.58) .. controls (421.74,98.58) and (420.85,97.46) .. (420.85,96.07) -- cycle ;
\draw    (426.97,96.02) -- (434.17,96.02) ;
\draw    (438.2,96.07) -- (455.11,96.43) ;
\draw  [fill={rgb, 255:red, 0; green, 0; blue, 0 }  ,fill opacity=1 ] (616.64,95.86) .. controls (616.64,94.47) and (617.53,93.35) .. (618.62,93.35) .. controls (619.71,93.35) and (620.6,94.47) .. (620.6,95.86) .. controls (620.6,97.25) and (619.71,98.38) .. (618.62,98.38) .. controls (617.53,98.38) and (616.64,97.25) .. (616.64,95.86) -- cycle ;
\draw    (623.36,96.21) .. controls (643.33,98.04) and (625.81,119.64) .. (598.18,119.2) ;
\draw [shift={(619.86,114.64)}, rotate = 329.75] [color={rgb, 255:red, 0; green, 0; blue, 0 }  ][line width=0.75]    (6.56,-1.97) .. controls (4.17,-0.84) and (1.99,-0.18) .. (0,0) .. controls (1.99,0.18) and (4.17,0.84) .. (6.56,1.97)   ;
\draw    (583.71,95.14) .. controls (559.61,95.44) and (577.61,120.04) .. (598.18,119.2) ;
\draw    (588.09,95.17) -- (601.2,95.17) ;
\draw    (606.01,95.44) -- (614.21,95.64) ;
\draw    (441.2,59.28) .. controls (446.86,39.28) and (495.86,54.94) .. (472.2,71.61) ;
\draw [shift={(469.74,52.5)}, rotate = 198.11] [color={rgb, 255:red, 0; green, 0; blue, 0 }  ][line width=0.75]    (6.56,-1.97) .. controls (4.17,-0.84) and (1.99,-0.18) .. (0,0) .. controls (1.99,0.18) and (4.17,0.84) .. (6.56,1.97)   ;
\draw    (439.2,69.94) .. controls (437.53,65.94) and (440.2,61.28) .. (441.2,59.28) ;
\draw    (442.38,72.79) .. controls (443.89,80.31) and (459.29,77.7) .. (467.47,74.43) ;
\draw    (434.17,96.02) .. controls (439.47,87.7) and (464.47,87.82) .. (467.47,74.43) ;
\draw    (438.2,96.07) .. controls (444.71,91.43) and (469.43,89.29) .. (472.2,71.61) ;
\draw    (449.86,86.43) .. controls (448.14,84) and (438.57,85) .. (439.2,69.94) ;
\draw  [fill={rgb, 255:red, 0; green, 0; blue, 0 }  ,fill opacity=1 ] (518.45,95.76) .. controls (518.45,94.37) and (519.34,93.25) .. (520.44,93.25) .. controls (521.53,93.25) and (522.42,94.37) .. (522.42,95.76) .. controls (522.42,97.15) and (521.53,98.28) .. (520.44,98.28) .. controls (519.34,98.28) and (518.45,97.15) .. (518.45,95.76) -- cycle ;
\draw    (537.29,95.97) .. controls (557.26,97.8) and (543.91,119.75) .. (516.28,119.31) ;
\draw [shift={(537.04,114.67)}, rotate = 325.73] [color={rgb, 255:red, 0; green, 0; blue, 0 }  ][line width=0.75]    (6.56,-1.97) .. controls (4.17,-0.84) and (1.99,-0.18) .. (0,0) .. controls (1.99,0.18) and (4.17,0.84) .. (6.56,1.97)   ;
\draw    (501.81,95.25) .. controls (477.71,95.55) and (495.71,120.15) .. (516.28,119.31) ;
\draw    (506.56,95.25) -- (516.71,95.35) ;
\draw    (524.11,95.55) -- (532.31,95.75) ;
\draw    (516.29,60.64) .. controls (521.95,40.64) and (569.5,57.67) .. (545.83,74.34) ;
\draw [shift={(544.56,54.88)}, rotate = 201.44] [color={rgb, 255:red, 0; green, 0; blue, 0 }  ][line width=0.75]    (6.56,-1.97) .. controls (4.17,-0.84) and (1.99,-0.18) .. (0,0) .. controls (1.99,0.18) and (4.17,0.84) .. (6.56,1.97)   ;
\draw    (517.14,75.43) .. controls (514.29,70.14) and (515,65.57) .. (516.29,60.64) ;
\draw    (521,76.43) .. controls (528.14,81) and (531.1,79.84) .. (539.29,76.57) ;
\draw    (501.81,95.25) .. controls (513.14,82.14) and (530.29,87.57) .. (539.29,76.57) ;
\draw    (506.56,95.25) .. controls (511.86,86.93) and (540.02,88.16) .. (545.83,74.34) ;
\draw    (532.31,95.75) .. controls (532,91.14) and (527.71,93.29) .. (526,88.43) ;
\draw    (537.29,95.97) .. controls (535.43,91.57) and (532.43,92.43) .. (528.43,87.29) ;
\draw    (595.18,63.2) .. controls (600.84,43.2) and (648.39,60.23) .. (624.72,76.89) ;
\draw [shift={(623.45,57.44)}, rotate = 201.44] [color={rgb, 255:red, 0; green, 0; blue, 0 }  ][line width=0.75]    (6.56,-1.97) .. controls (4.17,-0.84) and (1.99,-0.18) .. (0,0) .. controls (1.99,0.18) and (4.17,0.84) .. (6.56,1.97)   ;
\draw    (593.63,76.35) .. controls (591.96,72.35) and (594.18,65.2) .. (595.18,63.2) ;
\draw    (596.36,76.71) .. controls (604.14,82.29) and (613.27,81.62) .. (621.45,78.35) ;
\draw    (583.71,95.14) .. controls (588.57,84.14) and (623.14,86.86) .. (621.45,78.35) ;
\draw    (588.09,95.17) .. controls (593.39,86.85) and (631.29,90.57) .. (624.72,76.89) ;
\draw    (597.71,86) .. controls (596.57,83) and (592,85.86) .. (593.63,76.35) ;
\draw    (600.71,84.86) .. controls (599.57,80.86) and (595.43,84) .. (596.36,76.71) ;
\draw    (453.14,92.07) .. controls (453.57,94) and (454.43,94.29) .. (455.11,96.43) ;
\draw    (442.38,72.79) .. controls (441.43,80) and (450.86,83.43) .. (452.29,85.29) ;
\draw    (456.14,90.43) .. controls (456.86,93.43) and (458.57,91.57) .. (459.63,96.94) ;
\draw    (517.14,75.43) .. controls (518.29,78) and (522,82) .. (522.29,83.29) ;
\draw    (521,76.43) .. controls (521.71,76.71) and (524.14,82.14) .. (525.29,82.71) ;
\draw    (600.43,91.43) .. controls (601.43,93.29) and (601.14,92.43) .. (601.2,95.17) ;
\draw    (604,90.71) .. controls (605,92.57) and (606.14,92.43) .. (606.2,95.17) ;
\draw   (95,68.5) .. controls (90.33,68.5) and (88,70.83) .. (88,75.5) -- (88,80.5) .. controls (88,87.17) and (85.67,90.5) .. (81,90.5) .. controls (85.67,90.5) and (88,93.83) .. (88,100.5)(88,97.5) -- (88,105.5) .. controls (88,110.17) and (90.33,112.5) .. (95,112.5) ;
\draw   (405,72) .. controls (400.33,72) and (398,74.33) .. (398,79) -- (398,84) .. controls (398,90.67) and (395.67,94) .. (391,94) .. controls (395.67,94) and (398,97.33) .. (398,104)(398,101) -- (398,109) .. controls (398,113.67) and (400.33,116) .. (405,116) ;
\draw   (338,111) .. controls (342.67,111.11) and (345.05,108.83) .. (345.16,104.16) -- (345.27,98.91) .. controls (345.42,92.24) and (347.83,88.96) .. (352.5,89.07) .. controls (347.83,88.96) and (345.57,85.58) .. (345.72,78.91)(345.66,81.91) -- (345.84,73.66) .. controls (345.95,68.99) and (343.67,66.61) .. (339,66.5) ;
\draw   (643.5,117.5) .. controls (648.17,117.45) and (650.47,115.09) .. (650.42,110.42) -- (650.36,105.67) .. controls (650.29,99) and (652.58,95.64) .. (657.25,95.59) .. controls (652.58,95.64) and (650.21,92.34) .. (650.13,85.67)(650.17,88.67) -- (650.08,80.92) .. controls (650.03,76.25) and (647.67,73.95) .. (643,74) ;
\draw  [fill={rgb, 255:red, 0; green, 0; blue, 0 }  ,fill opacity=1 ] (149.85,45.46) .. controls (149.85,44.07) and (150.74,42.94) .. (151.84,42.94) .. controls (152.93,42.94) and (153.82,44.07) .. (153.82,45.46) .. controls (153.82,46.85) and (152.93,47.97) .. (151.84,47.97) .. controls (150.74,47.97) and (149.85,46.85) .. (149.85,45.46) -- cycle ;
\draw  [fill={rgb, 255:red, 0; green, 0; blue, 0 }  ,fill opacity=1 ] (229.45,49.46) .. controls (229.45,48.07) and (230.34,46.94) .. (231.44,46.94) .. controls (232.53,46.94) and (233.42,48.07) .. (233.42,49.46) .. controls (233.42,50.85) and (232.53,51.97) .. (231.44,51.97) .. controls (230.34,51.97) and (229.45,50.85) .. (229.45,49.46) -- cycle ;
\draw  [fill={rgb, 255:red, 0; green, 0; blue, 0 }  ,fill opacity=1 ] (303.45,53.08) .. controls (303.45,51.69) and (304.34,50.56) .. (305.44,50.56) .. controls (306.53,50.56) and (307.42,51.69) .. (307.42,53.08) .. controls (307.42,54.47) and (306.53,55.59) .. (305.44,55.59) .. controls (304.34,55.59) and (303.45,54.47) .. (303.45,53.08) -- cycle ;
\draw  [fill={rgb, 255:red, 0; green, 0; blue, 0 }  ,fill opacity=1 ] (452.8,49.46) .. controls (452.8,48.07) and (453.69,46.94) .. (454.78,46.94) .. controls (455.88,46.94) and (456.76,48.07) .. (456.76,49.46) .. controls (456.76,50.85) and (455.88,51.97) .. (454.78,51.97) .. controls (453.69,51.97) and (452.8,50.85) .. (452.8,49.46) -- cycle ;
\draw  [fill={rgb, 255:red, 0; green, 0; blue, 0 }  ,fill opacity=1 ] (525.2,52.03) .. controls (525.2,50.65) and (526.09,49.52) .. (527.18,49.52) .. controls (528.28,49.52) and (529.16,50.65) .. (529.16,52.03) .. controls (529.16,53.42) and (528.28,54.55) .. (527.18,54.55) .. controls (526.09,54.55) and (525.2,53.42) .. (525.2,52.03) -- cycle ;
\draw  [fill={rgb, 255:red, 0; green, 0; blue, 0 }  ,fill opacity=1 ] (603.2,55.06) .. controls (603.2,53.67) and (604.09,52.54) .. (605.18,52.54) .. controls (606.28,52.54) and (607.16,53.67) .. (607.16,55.06) .. controls (607.16,56.45) and (606.28,57.57) .. (605.18,57.57) .. controls (604.09,57.57) and (603.2,56.45) .. (603.2,55.06) -- cycle ;

\draw (172.82,88.95) node [anchor=north west][inner sep=0.75pt]    {$+$};
\draw (251.48,88.95) node [anchor=north west][inner sep=0.75pt]    {$+$};
\draw (473.02,93.56) node [anchor=north west][inner sep=0.75pt]    {$+$};
\draw (553.68,93.56) node [anchor=north west][inner sep=0.75pt]    {$+$};
\draw (359.5,89.4) node [anchor=north west][inner sep=0.75pt]    {$+$};
\draw (66.5,82.9) node [anchor=north west][inner sep=0.75pt]    {$\frac{1}{3}$};
\draw (377.5,84.9) node [anchor=north west][inner sep=0.75pt]    {$\frac{1}{3}$};
\draw (32.67,82.9) node [anchor=north west][inner sep=0.75pt]    {$\frac{1}{\sqrt{N}^{i}}$};
\draw (139.6,29.6) node [anchor=north west][inner sep=0.75pt]  [font=\small]  {$i-2$};
\draw (217.6,33.2) node [anchor=north west][inner sep=0.75pt]  [font=\small]  {$i-2$};
\draw (294.8,36) node [anchor=north west][inner sep=0.75pt]  [font=\small]  {$i-2$};
\draw (442,33.2) node [anchor=north west][inner sep=0.75pt]  [font=\small]  {$i-2$};
\draw (514.8,36) node [anchor=north west][inner sep=0.75pt]  [font=\small]  {$i-2$};
\draw (593.2,36.8) node [anchor=north west][inner sep=0.75pt]  [font=\small]  {$i-2$};

\end{tikzpicture}

\end{center}
The leading order terms are the first, third, fifth terms in the above graph. Putting some combinatorial factors, this contributes to:
\begin{equation}\label{2 b eq 2}
    \sqrt{N}\cdot i \cdot p_{i-1}\frac{\partial}{\partial p_{i}}+O(\frac{1}{\sqrt{N}})
\end{equation}
Notice the leading terms in (\ref{2 b eq 1}), (\ref{2 b eq 2}) has the same form, so case 2-(a) and 2-(b) together gives:
\begin{equation}\label{2 a+b eq}
    \sqrt{N}\sum_{i=2}^{E}i\cdot p_{i-1}\frac{\partial}{\partial p_{i}}+O(\frac{1}{\sqrt{N}})
\end{equation}
Now it's clear that for the case 2-(c), as well as case 3, we do not form loops without any black dots. Hence all these terms have order $O(\frac{1}{\sqrt{N}})$.

Finally, let's consider case 4. After one of the white dot is contracted with one of the black dot in the expression (\ref{t21 0}) of $t_{2,1}$, we get:
\begin{center}

\tikzset{every picture/.style={line width=0.75pt}} 

\begin{tikzpicture}[x=0.75pt,y=0.75pt,yscale=-1,xscale=1]

\draw   (289.5,31.83) .. controls (284.83,31.93) and (282.55,34.31) .. (282.65,38.98) -- (282.79,45.73) .. controls (282.93,52.4) and (280.67,55.78) .. (276,55.88) .. controls (280.67,55.78) and (283.07,59.06) .. (283.21,65.73)(283.15,62.73) -- (283.36,72.48) .. controls (283.45,77.15) and (285.83,79.43) .. (290.5,79.33) ;
\draw   (527.9,80.43) .. controls (532.57,80.43) and (534.9,78.1) .. (534.9,73.43) -- (534.9,66.93) .. controls (534.9,60.26) and (537.23,56.93) .. (541.9,56.93) .. controls (537.23,56.93) and (534.9,53.6) .. (534.9,46.93)(534.9,49.93) -- (534.9,40.43) .. controls (534.9,35.76) and (532.57,33.43) .. (527.9,33.43) ;
\draw   (321.97,45.08) .. controls (321.97,43.52) and (323.08,42.25) .. (324.45,42.25) .. controls (325.83,42.25) and (326.94,43.52) .. (326.94,45.08) .. controls (326.94,46.64) and (325.83,47.9) .. (324.45,47.9) .. controls (323.08,47.9) and (321.97,46.64) .. (321.97,45.08) -- cycle ;
\draw    (345.43,46) .. controls (365.4,47.83) and (347.17,69.48) .. (319.54,69.03) ;
\draw [shift={(341.38,64.46)}, rotate = 330.16] [color={rgb, 255:red, 0; green, 0; blue, 0 }  ][line width=0.75]    (6.56,-1.97) .. controls (4.17,-0.84) and (1.99,-0.18) .. (0,0) .. controls (1.99,0.18) and (4.17,0.84) .. (6.56,1.97)   ;
\draw    (305.07,44.98) .. controls (280.97,45.28) and (298.97,69.88) .. (319.54,69.03) ;
\draw  [fill={rgb, 255:red, 0; green, 0; blue, 0 }  ,fill opacity=1 ] (306.65,45.14) .. controls (306.65,43.74) and (307.66,42.61) .. (308.91,42.61) .. controls (310.16,42.61) and (311.17,43.74) .. (311.17,45.14) .. controls (311.17,46.53) and (310.16,47.67) .. (308.91,47.67) .. controls (307.66,47.67) and (306.65,46.53) .. (306.65,45.14) -- cycle ;
\draw    (312.77,45.08) -- (319.97,45.08) ;
\draw    (328.37,45.28) -- (336.57,45.48) ;
\draw    (421.29,46.62) .. controls (441.26,48.45) and (425.46,70.05) .. (397.82,69.61) ;
\draw [shift={(419.13,65.04)}, rotate = 328.31] [color={rgb, 255:red, 0; green, 0; blue, 0 }  ][line width=0.75]    (6.56,-1.97) .. controls (4.17,-0.84) and (1.99,-0.18) .. (0,0) .. controls (1.99,0.18) and (4.17,0.84) .. (6.56,1.97)   ;
\draw    (383.36,45.55) .. controls (359.26,45.85) and (377.26,70.45) .. (397.82,69.61) ;
\draw    (387.8,45.7) -- (401.4,45.7) ;
\draw    (405.66,45.85) -- (413.86,46.05) ;
\draw   (468.41,27.08) .. controls (468.41,25.52) and (469.52,24.25) .. (470.89,24.25) .. controls (472.26,24.25) and (473.37,25.52) .. (473.37,27.08) .. controls (473.37,28.64) and (472.26,29.9) .. (470.89,29.9) .. controls (469.52,29.9) and (468.41,28.64) .. (468.41,27.08) -- cycle ;
\draw    (506.31,28.05) .. controls (526.29,29.88) and (508.77,51.48) .. (481.14,51.03) ;
\draw [shift={(502.82,46.47)}, rotate = 329.75] [color={rgb, 255:red, 0; green, 0; blue, 0 }  ][line width=0.75]    (6.56,-1.97) .. controls (4.17,-0.84) and (1.99,-0.18) .. (0,0) .. controls (1.99,0.18) and (4.17,0.84) .. (6.56,1.97)   ;
\draw    (466.67,26.98) .. controls (442.57,27.28) and (460.57,51.88) .. (481.14,51.03) ;
\draw    (474.37,27.08) -- (481.57,27.08) ;
\draw    (487.2,27.3) -- (500.8,27.3) ;
\draw   (338.83,45.65) .. controls (338.83,44.09) and (339.94,42.82) .. (341.31,42.82) .. controls (342.68,42.82) and (343.79,44.09) .. (343.79,45.65) .. controls (343.79,47.21) and (342.68,48.47) .. (341.31,48.47) .. controls (339.94,48.47) and (338.83,47.21) .. (338.83,45.65) -- cycle ;
\draw   (414.86,46.05) .. controls (414.86,44.49) and (415.97,43.22) .. (417.34,43.22) .. controls (418.71,43.22) and (419.82,44.49) .. (419.82,46.05) .. controls (419.82,47.61) and (418.71,48.87) .. (417.34,48.87) .. controls (415.97,48.87) and (414.86,47.61) .. (414.86,46.05) -- cycle ;
\draw    (383.37,45.28) .. controls (392.2,23.5) and (403.8,34.5) .. (405.66,45.85) ;
\draw    (387.8,45.7) .. controls (393.4,30.1) and (401.2,39.5) .. (401.4,45.7) ;
\draw    (482.37,27.28) .. controls (491.2,5.5) and (504.46,16.7) .. (506.31,28.05) ;
\draw    (487.2,27.3) .. controls (487.42,26.69) and (487.64,26.11) .. (487.87,25.57) .. controls (493.4,12.44) and (500.61,21.34) .. (500.8,27.3) ;
\draw   (483.61,70.88) .. controls (483.61,69.32) and (484.72,68.05) .. (486.09,68.05) .. controls (487.46,68.05) and (488.57,69.32) .. (488.57,70.88) .. controls (488.57,72.44) and (487.46,73.7) .. (486.09,73.7) .. controls (484.72,73.7) and (483.61,72.44) .. (483.61,70.88) -- cycle ;
\draw    (507.11,71.85) .. controls (527.09,73.68) and (509.57,95.28) .. (481.94,94.83) ;
\draw [shift={(503.62,90.27)}, rotate = 329.75] [color={rgb, 255:red, 0; green, 0; blue, 0 }  ][line width=0.75]    (6.56,-1.97) .. controls (4.17,-0.84) and (1.99,-0.18) .. (0,0) .. controls (1.99,0.18) and (4.17,0.84) .. (6.56,1.97)   ;
\draw    (467.47,70.78) .. controls (443.37,71.08) and (461.37,95.68) .. (481.94,94.83) ;
\draw    (472.4,70.7) -- (482.37,70.88) ;
\draw    (489.6,70.9) -- (501.6,71.1) ;
\draw    (467.47,70.78) .. controls (476.3,49) and (505.4,57.1) .. (507.11,71.85) ;
\draw    (472.4,70.7) .. controls (484.6,50.1) and (502.8,67.7) .. (501.6,71.1) ;

\draw (261.17,44.73) node [anchor=north west][inner sep=0.75pt]    {$\frac{1}{3}$};
\draw (356.43,47.35) node [anchor=north west][inner sep=0.75pt]    {$+$};
\draw (433.29,46.78) node [anchor=north west][inner sep=0.75pt]    {$+$};

\end{tikzpicture}

\end{center}
For the first term in the above expression, the two white dots have to be contracted with black dots in $\ket{c_{1},c_{2},\cdots,c_{N}}$, which belongs to case 1-3. The remaining terms add up to $N\mathrm{Tr}(Z)$ which we have computed: (\ref{t1,0}). Hence case 4 gives rise to:
\begin{equation}\label{case 4}
{\sqrt{N}}^{3}\frac{\partial}{\partial p_{1}}+\sqrt{N}\sum_{i= 2}^{E}ip_{i-1}\frac{\partial}{\partial p_{i}}
\end{equation}

Summarizing all these, from (\ref{case 1-a}),(\ref{case 1b}),(\ref{2 a+b eq}), in the limit $N \rightarrow \infty$, we have:
\begin{align}
     t_{2,1}=&\frac{1}{\sqrt{N}}k\frac{N(N-1)}{2}\cdot 2 \cdot \frac{\partial}{\partial p_{1}}+ \sqrt{N}\cdot k \cdot 2 \cdot \sum_{i= 2}^{E} ip_{i-1}\frac{\partial}{\partial p_{i}}+\sqrt{N}\sum_{i=2}^{E}i\cdot p_{i-1}\frac{\partial}{\partial p_{i}}\\ \nonumber
    &+{\sqrt{N}}^{3}\frac{\partial}{\partial p_{1}}+\sqrt{N}\sum_{i= 2}^{E}i\cdot p_{i-1}\frac{\partial}{\partial p_{i}}+O(\frac{1}{\sqrt{N}})
\end{align}
Equivalently 
\begin{equation}\label{t2,1 eq1}
    t_{2,1}=((k+1){\sqrt{N}}^{3}-k\sqrt{N})\frac{\partial}{\partial p_{1}}+2(k+1)\sqrt{N}\sum_{i=2}^{E}i\cdot p_{i-1}\frac{\partial}{\partial p_{i}}+O(\frac{1}{\sqrt{N}})
\end{equation}
Apply (\ref{t1,0}), we have:
\begin{equation}\label{t21}
     {t_{2,1}}_{\leq E}=(N(k+1)-k){t_{1,0}}_{\leq E}+(k+1)\sqrt{N}\sum_{i= 2}^{E}i\cdot p_{i-1}\frac{\partial}{\partial p_{i}}+O(\frac{1}{\sqrt{N}})
\end{equation}

${t_{1,2}}_{\leq E}$ can be computed in a similar way. The result is:
\begin{align}\label{t12 eq1}
    {t_{1,2}}_{\leq E}=(k+1)\sqrt{N}^{3}p_{1}-k\sqrt{N}p_{1}+\sqrt{N}\sum_{i= 1}^{E}ip_{i+1}\frac{\partial}{\partial p_{i}}
\end{align}
Substitute $t_{0,1}=\sqrt{N}p_{1}$ into (\ref{t12 eq1}), we get another expression for ${t_{1,2}}_{\leq E}$:
\begin{equation}\label{t12 eq2}
    {t_{1,2}}_{\leq E}=(k+1)\sqrt{N}^{2}{t_{0,1}}_{\leq E}-k{t_{0,1}}_{\leq E}+\sqrt{N}\sum_{i= 1}^{E}ip_{i+1}\frac{\partial}{\partial p_{i}}
\end{equation}

\subsection{Large \texorpdfstring{$N$}{N} limit of \texorpdfstring{$t_{m,n}$}{t[m,n]}}
We first compute the leading terms of $t_{n,0}, n\geq 1$ in the energy cutoff large $N$ limit. We have the following result
\begin{proposition}
    In $N\rightarrow \infty$ limit, under the isomorphism $\mathcal{H}_N\cong \mathbb{C}[p_{1},p_{2},\cdots,p_{N}]$, we have
    \begin{equation}\label{tn0}
        t_{n,0}=(k+1)^{n-1}{\sqrt{N}}^{n}n\cdot\frac{\partial}{\partial p_{n}}+O({\sqrt{N}}^{n-2})
    \end{equation}
\end{proposition}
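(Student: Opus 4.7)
The strategy is to prove \eqref{tn0} by induction on $n$, using the exact operator recurrence
\begin{equation*}
t_{n+1,0} \;=\; -\frac{1}{n}[t_{2,1}, t_{n,0}].
\end{equation*}
This recurrence is obtained by specializing the basic commutation relation \eqref{[t21,tmn]} to $(m,n) \mapsto (n,0)$: the two correction double sums on the right-hand side are over the empty index set $n_1 + n_2 = -2$, and the closing term $\frac{mn(n-1)}{4}t_{m-1,n-2}$ vanishes identically. The base case $n=1$ is precisely \eqref{t1,0}.

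For the inductive step, I would combine the explicit formula \eqref{t21} for $t_{2,1}$, which after substituting \eqref{t1,0} takes the form $t_{2,1} = (k+1)\sqrt{N}^3\partial_{p_1} + 2(k+1)\sqrt{N}\sum_{i\ge 2} ip_{i-1}\partial_{p_i} - k\sqrt{N}\partial_{p_1} + O(\sqrt{N}^{-1})$, with the inductive hypothesis on $t_{n,0}$, and compute the bracket up to order $\sqrt{N}^{n+1}$. The identity $[p_{i-1}\partial_{p_i}, \partial_{p_n}] = -\delta_{i,n+1}\partial_{p_{n+1}}$ shows that the middle piece of $t_{2,1}$, bracketed against the leading term $(k+1)^{n-1}n\sqrt{N}^n\partial_{p_n}$ of $t_{n,0}$, produces the contribution $-2n(n+1)(k+1)^n\sqrt{N}^{n+1}\partial_{p_{n+1}}$.

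The principal obstacle is that this contribution alone, when divided by $-n$, gives \emph{twice} the claimed leading coefficient of $t_{n+1,0}$, so the naive inductive hypothesis (retaining only the leading asymptotics of $t_{n,0}$) does not close. The missing, partially-cancelling contribution comes from the top-order operator $(k+1)\sqrt{N}^3\partial_{p_1}$ of $t_{2,1}$ bracketed against a subleading $O(\sqrt{N}^{n-2})$ piece of $t_{n,0}$ proportional to $p_1\partial_{p_{n+1}}$; such a piece is already visible in \eqref{t1,0} with coefficient $2/\sqrt{N}$, and its $[\partial_{p_1},\cdot]$-commutator is amplified by the $\sqrt{N}^3$ prefactor back to the leading order $\sqrt{N}^{n+1}\partial_{p_{n+1}}$. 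To resolve this, I would strengthen the inductive hypothesis to the explicit ansatz
\begin{equation*}
t_{n,0} \;=\; A_n\sqrt{N}^n\partial_{p_n} + B_n\sqrt{N}^{n-2}p_1\partial_{p_{n+1}} + (\text{strictly lower order}),
\end{equation*}
derive coupled recursions for $A_n$ and $B_n$ by matching coefficients at the two orders $\sqrt{N}^{n+1}\partial_{p_{n+1}}$ and $\sqrt{N}^{n-1}p_1\partial_{p_{n+2}}$ respectively, and verify the closed-form solution $A_n = n(k+1)^{n-1}$, $B_n = n(n+1)(k+1)^{n-1}$ from the initial data $A_1 = 1$, $B_1 = 2$ read off from \eqref{t1,0}; the hardest part will be controlling the $B_n$ recursion, which a priori requires sub-subleading data of $t_{n,0}$ and of $t_{2,1}$, so one must either observe that these cascading contributions assemble into the same ansatz or extend the hypothesis by one further layer. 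An alternative, more combinatorial route is to mimic the Wick-contraction classification used for $t_{2,1}$ in Section \ref{subsec: Proof of the large N limit}: acting $t_{n,0} = \mathrm{Tr}(Z^n)$ on a basis vector, the leading $\sqrt{N}^n$ contribution comes from contraction patterns in which the $n$ white $Z$-vertices form $n-1$ ``bare loops'' (each supplying a factor of $N$) while consuming one $p_n$ (supplying $\partial_{p_n}$), and the prefactor $(k+1)^{n-1}$ emerges from the interaction of the contractions with the $k$ copies of $C(0,1,\dots,N-1)$ appearing in the ground state.
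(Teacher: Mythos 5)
Your proposal correctly sets up the induction on the recurrence $t_{n+1,0}=-\tfrac{1}{n}[t_{2,1},t_{n,0}]$ and correctly diagnoses the factor-of-two problem that arises if one plugs in the fully expanded form (\ref{t2,1 eq1}) of $t_{2,1}$ together with only the leading asymptotics of $t_{n,0}$. However, your proposed repair does not close, and you essentially concede this yourself: the recursion for the coefficient $B_n$ of the $\sqrt{N}^{\,n-2}p_1\partial_{p_{n+1}}$ term receives a contribution from the bracket of the $(k+1)\sqrt{N}^3\partial_{p_1}$ piece of $t_{2,1}$ against the \emph{next} subleading layer of $t_{n,0}$ (terms of order $\sqrt{N}^{\,n-4}$ containing $p_1^2$), and that layer in turn requires the one below it, so the cascade never terminates at any finite number of layers. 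Adding ``one further layer'' does not help; one would need a full all-orders ansatz in powers of $p_1$, together with control of the $O(1/\sqrt{N})$ tail of $t_{2,1}$, neither of which you supply. As written, the inductive step is therefore not established.

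The paper's proof avoids this entirely by one observation you missed: $t_{n,0}=\mathrm{Tr}(Z^n)$ and $t_{1,0}=\mathrm{Tr}(Z)$ commute exactly, since the matrix entries of $Z$ commute with one another. One therefore keeps $t_{2,1}$ in the form (\ref{t21}), namely $t_{2,1}=(N(k+1)-k)\,t_{1,0}+(k+1)\sqrt{N}\sum_{i\ge 2} i\,p_{i-1}\partial_{p_i}+O(1/\sqrt{N})$, with $t_{1,0}$ left unexpanded; inside $[t_{n,0},t_{2,1}]$ the entire first term drops out, and the surviving sum carries the coefficient $(k+1)$ rather than the $2(k+1)$ of (\ref{t2,1 eq1}). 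The leading-order inductive hypothesis then yields $\tfrac{1}{n}\cdot n(k+1)^{n-1}\sqrt{N}^n\cdot(k+1)\sqrt{N}\,(n+1)\partial_{p_{n+1}}=(n+1)(k+1)^n\sqrt{N}^{\,n+1}\partial_{p_{n+1}}$ with error $O(\sqrt{N}^{\,n-1})$, and the induction closes with no need to track subleading structure. The factor-of-two discrepancy you found is precisely the shadow of the discarded $[t_{n,0},(N(k+1)-k)t_{1,0}]=0$; recognizing that identity is the missing idea.
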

\begin{proof}
    We show this by induction over $n$. For fix energy cutoff $E>1$, (\ref{t1,0}) implies
    \begin{align*}
    {t_{1,0}}_{\leq E}=\sqrt{N}\frac{\partial}{\partial p_{1}}+\frac{1}{\sqrt{N}}\sum_{i= 2}^{E}ip_{i-1}\frac{\partial}{\partial p_{i}}=\sqrt{N}\frac{\partial}{\partial p_{1}}+O(\frac{1}{\sqrt{N}})
    \end{align*}
    Hence, for $n=1$ the statement is true.
    Assume for $n$, (\ref{tn0}) is true. We show this is also true for $n+1$. Now we fix any arbitrary energy cutoff $E\geq n+1$. The commutation relation
    \begin{equation*}
        t_{n+1,0}=\frac{1}{n}[t_{n,0},t_{2,1}]
    \end{equation*}
    implies
    \begin{align*}
        {t_{n+1,0}}_{\leq E}=\frac{1}{n}[{t_{n,0}}_{\leq E},{t_{2,1}}_{\leq E}]
    \end{align*}
    By inductive assumption (\ref{tn0}) for $n$, we have:
    \begin{align*}
        {t_{n,0}}_{\leq E}=(k+1)^{n-1}{\sqrt{N}}^{n}n\cdot\frac{\partial}{\partial p_{n}}+O({\sqrt{N}}^{n-2})
    \end{align*}
    Using (\ref{t21}), we compute:
    \begin{align*}
        {t_{n+1,0}}_{\leq E}=&\frac{1}{n}[{t_{n,0}}_{\leq E},(N(k+1)-k){t_{1,0}}_{\leq E}+(k+1)\sqrt{N}\sum_{i= 2}^{E}i\cdot p_{i-1}\frac{\partial}{\partial p_{i}}+O(\frac{1}{\sqrt{N}})]\\
        =&\frac{1}{n}[{t_{n,0}}_{\leq E},(k+1)\sqrt{N}\sum_{i= 2}^{E}i\cdot p_{i-1}\frac{\partial}{\partial p_{i}}+O(\frac{1}{\sqrt{N}})]\\
        =&\frac{1}{n}[(k+1)^{n-1}{\sqrt{N}}^{n}n\cdot\frac{\partial}{\partial p_{n}}+O({\sqrt{N}}^{n-2}),(k+1)\sqrt{N}\sum_{i= 2}^{E}i\cdot p_{i-1}\frac{\partial}{\partial p_{i}}+O(\frac{1}{\sqrt{N}})]\\
        =&(k+1)^{n}{\sqrt{N}}^{n+1}(n+1)\frac{\partial}{\partial p_{n+1}}+O({\sqrt{N}}^{n-1})
    \end{align*}
\end{proof}

From (\ref{t2,1 eq1}), and $t_{0,1}=\sqrt{N}p_{1}$ we also have:
\begin{equation}\label{t11}
    t_{1,1}=\frac{1}{2}[t_{2,1},t_{0,1}]=\frac{1}{2}(k+1)\sqrt{N}^{4}+O(\sqrt{N}^{2})
\end{equation}
in the large $N$ limit.

\leftline{\textbf{Main result}}
Fix $m,n$, and fix any energy cutoff $E>m+n$. In the limit $N \gg E$, $N \rightarrow \infty$, under the isomorphism $\mathcal{H}_N\cong \mathbb{C}[p_{1},p_{2},\cdots,p_{N}]$, we have:
\begin{align}
   \label{tmn} &{t_{m,n}}_{\leq E}=\sqrt{N}^{m+n}(k+1)^{m}p_{n-m}+O(\sqrt{N}^{m+n-2})  \quad if \quad m < n  \\
   \label{tnn} &{t_{n,n}}_{\leq E}=\frac{1}{n+1}(k+1)^{n}\sqrt{N}^{2n+2}+
  O(\sqrt{N}^{2n})   \quad if \quad m=n\\
  \label{tnm}  &{t_{n,m}}_{\leq E}=\sqrt{N}^{n+m}(k+1)^{n-1}(n-m)\frac{\partial}{\partial p_{n-m}}+O(\sqrt{N}^{m+n-2}) \quad if \quad m < n
\end{align}

\begin{equation}\label{t21tmn}
    [{t_{2,1}}_{\leq E},{t_{m,n}}_{\leq E}]=(2n-m){t_{m+1,n}}_{\leq E}+\begin{cases}
    O(\sqrt{N}^{m+n-1}), & \text{if $m+1\neq n$}\\
    O(\sqrt{N}^{m+n+1}), & \text{if $m+1 = n$}
    \end{cases}
\end{equation}
\begin{equation}\label{t12tmn}
    [{t_{1,2}}_{\leq E},{t_{m,n}}_{\leq E}]=(n-2m){t_{m,n+1}}_{\leq E}+\begin{cases}
    O(\sqrt{N}^{m+n-1}), & \text{if $m \neq n+1 $}\\
    O(\sqrt{N}^{m+n+1}), & \text{if $m = n+1 $}
    \end{cases}
\end{equation}

\begin{proof}
     We prove (\ref{tmn})-(\ref{tnm}), (\ref{t21tmn}) and (\ref{t12tmn}) by induction over $s=m+n$. We have already shown the statement is true for $s=0,1,2$, see (\ref{t2,1 eq1})(\ref{t12 eq1})(\ref{tn0}) and (\ref{t11}). Assume now for $m+n\leq s$, (\ref{tmn})-(\ref{tnm}), (\ref{t21tmn}) and (\ref{t12tmn}) are true, we'll show all these are true for $m+n=s+1$.\\ 
     We first show (\ref{tmn})-(\ref{tnm}) are true for $m+n=s+1$. Consider ${t_{m,n}}_{\leq E}$ for $m<n , m+n=s+1$, from the induction assumption, (\ref{t21tmn}) is true for $m-1+n=s$, we have:
     \begin{equation}
         {t_{m,n}}_{\leq E}=\frac{1}{2n-m+1}[{t_{2,1}}_{\leq E},{t_{m-1,n}}_{\leq E}]+O(\sqrt{N}^{m+n-2})
     \end{equation}
    The right hand side of the above equation can be computed using the induction assumption (\ref{tmn}) for ${t_{m-1,n}}_{\leq E}$:
    \begin{equation}
        {t_{m-1,n}}_{\leq E}=\sqrt{N}^{m+n-1}(k+1)^{m-1}p_{n-m+1}+O(\sqrt{N}^{m+n-3})
    \end{equation}
    and also the expression (\ref{t21}) for ${t_{2,1}}_{\leq E}$, we have:
    \begin{align} \label{tmn in the proof}
        &{t_{m,n}}_{\leq E}=\frac{1}{2n-m+1}\{\sqrt{N}^{2}(k+1)[{t_{1,0}}_{\leq E},{t_{m-1,n}}_{\leq E}]+(k+1)^{m}\sqrt{N}^{m+n}[\sum_{i=2}^{E} ip_{i-1}\frac{\partial}{\partial p_{i}},p_{n-m+1}]\}+O(\sqrt{N}^{m+n-2})\\ \nonumber
      &=\frac{1}{2n-m+1}\{\sqrt{N}^{2}(k+1)n{t_{m-1,n-1}}_{\leq E}+(k+1)^{m}\sqrt{N}^{m+n}(n-m+1)p_{n-m})\}+O(\sqrt{N}^{m+n-2})
    \end{align}
    From the induction assumption, $${t_{m-1,n-1}}_{\leq E}=\sqrt{N}^{m+n-2}(k+1)^{m-1}p_{n-m}+O(\sqrt{N}^{m+n-4})$$. Substitute this into the right hand side of (\ref{tmn in the proof}), we get
\begin{align}
    {t_{m,n}}_{\leq E}&=\frac{1}{2n-m+1}\{(n+n-m+1)\sqrt{N}^{m+n}(k+1)^{m}p_{n-m}\}+O(\sqrt{N}^{m+n-2})\\ \nonumber
    &=\sqrt{N}^{m+n}(k+1)^{m}p_{n-m}+O(\sqrt{N}^{m+n-2})
\end{align}
    hence (\ref{tmn}) is true for $m+n=s+1$. 
Next, in the case $s+1$ is even, we compute ${t_{n,n}}_{\leq E}$ for $n=\frac{s+1}{2}$ similarly by using the expression (\ref{t21}) for ${t_{2,1}}_{\leq E}$, and the induction assumption (\ref{tmn}) for ${t_{n-1,n}}_{\leq E}$:
\begin{align}
     {t_{n,n}}_{\leq E}&=\frac{1}{n+1}[{t_{2,1}}_{\leq E},{t_{n-1,n}}_{\leq E}]+O(\sqrt{N}^{2n})\\ \nonumber
     &=\frac{1}{n+1}\sqrt{N}^{2}(k+1)[{t_{1,0}}_{\leq E},\sqrt{N}^{2n-1}(k+1)^{n-1}p_{1}]+O(\sqrt{N}^{2n})\\ \nonumber
     &=\frac{1}{n+1}\sqrt{N}^{2n+2}(k+1)^{n}+O(\sqrt{N}^{2n})
\end{align}
Hence (\ref{tnn}) is true for $2n=s+1$. Then we compute ${t_{n,m}}_{\leq E}$ for the case $m < n$. For this, we use similar argument. By the induction assumption (\ref{tmn}), and the expression (\ref{t12 eq2}), we have:
\begin{align}
    {t_{n,m}}_{\leq E}&=\frac{1}{2n-m+1}[{t_{n,m-1}}_{\leq E},{t_{1,2}}_{\leq E}]+O(\sqrt{N}^{n+m-2})\\ \nonumber
    &=\frac{1}{2n-m+1}\{(k+1)\sqrt{N}^{2}[{t_{n,m-1}}_{\leq E},{t_{0,1}}_{\leq E}]+\sqrt{N}^{n+m}(k+1)^{n-1}(n-m+1)(n-m)\frac{\partial}{\partial p_{n-m}}\}\\
    \nonumber&+O(\sqrt{N}^{n+m-2})\\ \nonumber
    &=\frac{1}{2n-m+1}{\sqrt{N}^{n+m}(k+1)^{n-1}(n+n-m+1)(n-m)\frac{\partial}{\partial p_{n-m}}}+O(\sqrt{N}^{n+m-2})\\ \nonumber
    &=\sqrt{N}^{n+m}(k+1)^{n-1}(n-m)\frac{\partial}{\partial p_{n-m}}+O(\sqrt{N}^{n+m-2})
\end{align}
Hence, (\ref{tnm}) is also true for $n+m=s+1$.\\
Finally, (\ref{t21tmn}) and (\ref{t12tmn}) are true for $m+n=s+1$. This follows from the induction assumption and commutation relations (\ref{[t21,tmn]}) and (\ref{[t12,tmn]}).
\end{proof}

\section{Summary and outlook}\label{sec: Summary and outlook}
In this paper, we investigate the algebra of observables for the Chern-Simons matrix model. A set of basic commutation relations between generators $e^a_{b;m,n}$ and $t_{m,n}$ are found in (\ref{trace of e})-(\ref{[t03,tmn]}), which leads to our definition of large $N$ limit algebra $\mathscr{O}_{\infty}^{(p)}$ (Definition \ref{def: large N limit algebra}). In $\mathscr{O}_{\infty}^{(p)}$, the commutation relations between generators $e^a_{b;m,n}$ and $t_{m,n}$ can be found inductively as described in the proof of Proposition \ref{prop: filtration}. Although we do not provide explicit formula for all commutation relations except for the basic ones, we expect that such explicit formula would be useful for further exploration of the matrix model. We anticipate that the generating functions (\ref{3.10}) and (\ref{3.11}) are applicable in deriving such a formula.

We also study the action of $\mathscr{O}_{N}^{(p)}$ on the Hilbert space $\mathcal H_N$. We construct the large $N$ limit Hilbert space in the $p=1$ case and show that $\mathscr{O}_{\infty}^{(1)}$ acts on it, and then derive the leading terms of a set of generators $\{t_{m,n}\}_{m,n \geq 0}$ (Theorem \ref{thm: large N limit}). Finally, we demonstrate in Appendix \ref{App C} that a normalized semicircle law is satisfied in the ground state wave function. The physical implication of this is that the central charge of the large $N$ limit algebra is equal to the filling factor of the corresponding quantum Hall droplet, which is $\frac{1}{1+k}$. To extend these results to the case $p>2$, we believe that it is crucial to perform computations using an appropriate basis, which we leave for future research.

The edge excitations of the Chern-Simons matrix model have been investigated in \cite{Poly, Rodriguez_2009, Frenkel:2021yql} using semiclassical approach. In contrast, this paper presents a full quantum approach to analyzing the excitation modes of the matrix model in the large $N$ limit. However, it remains unclear how to establish a connection between the excitation modes of the matrix model in the large $N$ limit and the edge excitation modes from semiclassical methods. Presumably, one could address this issue by studying the random matrix ensembles defined in \cite{Karabali-Sakita}. (Further discussions on this topic are given in Appendix \ref{App C}.) 
Furthermore, it is interesting to understand the bulk-boundary correspondence within the framework of the Chern-Simons matrix model. We anticipate that the large $N$ limit of the Chern-Simons matrix model have deep connections with ordinary Chern-Simons theory and conformal field theory \cite{Witten}.


We have also developed another approach to the quantization of the Chern-Simons matrix model based on geometric quantization. Using geometric methods we can also obtain the Large $N$ limit of the algebra of observables as well as its representation. We will present the method elsewhere \cite{Hu-Li-Ye-Zhou}.
         
The method developed in this paper should be in principal applicable to the supergroup generalization of the Chern-Simons matrix model \cite{okazaki2018matrix}. The obvious generalization would be studying the algebra of observables $\mathscr O^{(p|q)}_N(\epsilon_1,\epsilon_2)$ in a $U(N)$ Chern-Simons matrix model with $U(p|q)$ internal symmetry. We expect that there exists a large $N$ limit algebra $\mathscr O^{(p|q)}_{\infty}(\epsilon_1,\epsilon_2)$ defined similarly to the $q=0$ case considered in this paper, and that it should be isomorphic to Costello's uniform in $N$ algebra $\mathscr O^{(p|q)}_{\bullet}(\epsilon_1,\epsilon_2)$ defined in \cite[Section 13]{Costello}. We also expect that $\mathscr O^{(p|q)}_{\infty}(\epsilon_1,\epsilon_2)$ is isomorphic to the deformed double current algebra of type $\mathfrak{gl}(p|q)$. It is conjectured in \cite{okazaki2018matrix} that $\mathscr O^{(p|q)}_N(\epsilon_1,\epsilon_2)$ in the large $N$ limit admits a scaling limit which truncates to the affine Kac-Moody algebra of $\mathfrak u(p|q)$, and the latter acts on the large $N$ Hilbert space of the matrix model. It will be interesting to prove their conjecture using the technique developed in this paper. A more challenging task is to study the $U(N|M)$ Chern-Simons matrix model with $U(p|q)$ internal symmetry, where both $N$ and $M$ can be large.

It will also be interesting to generalize our method to matrix models in higher dimensions, for example 4+1d non-commutative Chern-Simons theory arising from describing 4d quantum Hall effect \cite{barns2018adhm}, and matrix models from string theory and M theory, in particular the BFSS model \cite{banks1999m} and the IKKT model \cite{ishibashi1997large}. It may shed new light on the non-perturbative construction of string/M theory.

\section*{Acknowledgements}
We would like to thank Andrey Losev and Shing-Tung Yau for discussions and encouragements concerning this work. Special thank goes to D. Gaiotto for communicating his conjecture about the commutation relations \eqref{[eab10,ecdmn]} and \eqref{[t30,eabmn]} in the case $m=0$ to us. Part of the work were done at the Yau Mathematical Science Center at Tsinghua University and we are indebted to its support. S. Hu and D. Ye is partially supported by the Wu Wen-Tsun Key Lab of Mathematics of Chinese Academy of Sciences at USTC. S. Li is supported by the National Key R and D Program of China (No. 2020YFA0713000). Y. Zhou is supported by Kavli IPMU through World Premier International Research Center Initiative (WPI), MEXT, Japan. 
 
\appendix
\section{Diagrammatic notations}\label{sec: Diagrammatic notations}
We introduce some diagrammatic notations used in the computations of commutation relations of the algebra $\mathscr{O}_{N}^{(p)}\left(\epsilon_{1}, \epsilon_{2}\right)$. Let $V$ be the $N$-dimensional vector space, which is the fundamental representation space of $\mathrm{U}(N)$. We use the following diagrams to represent a tensor $T^{i_{1}i_{2}\cdots i_{m}}_{j_{1}j_{2}\cdots j_{n}}$ in 
${V^{*}}^{\otimes m}\otimes V^{\otimes n}$:
\begin{center}
\tikzset{every picture/.style={line width=0.75pt}} 
\begin{tikzpicture}[x=0.75pt,y=0.75pt,yscale=-1,xscale=1]

\draw   (307.85,56.19) -- (358.04,56.19) -- (358.04,73.81) -- (307.85,73.81) -- cycle ;
\draw    (301.5,103.16) -- (310.15,73.22) ;
\draw [shift={(306.83,84.73)}, rotate = 106.12] [color={rgb, 255:red, 0; green, 0; blue, 0 }  ][line width=0.75]    (6.56,-1.97) .. controls (4.17,-0.84) and (1.99,-0.18) .. (0,0) .. controls (1.99,0.18) and (4.17,0.84) .. (6.56,1.97)   ;
\draw    (320.25,73.81) -- (320.54,103.75) ;
\draw [shift={(320.35,84.18)}, rotate = 89.45] [color={rgb, 255:red, 0; green, 0; blue, 0 }  ][line width=0.75]    (6.56,-1.97) .. controls (4.17,-0.84) and (1.99,-0.18) .. (0,0) .. controls (1.99,0.18) and (4.17,0.84) .. (6.56,1.97)   ;
\draw    (354.87,73.81) -- (361.5,104.33) ;
\draw [shift={(357.21,84.57)}, rotate = 77.74] [color={rgb, 255:red, 0; green, 0; blue, 0 }  ][line width=0.75]    (6.56,-1.97) .. controls (4.17,-0.84) and (1.99,-0.18) .. (0,0) .. controls (1.99,0.18) and (4.17,0.84) .. (6.56,1.97)   ;
\draw    (354,56.46) -- (358.04,27.11) ;
\draw [shift={(356.51,38.22)}, rotate = 97.83] [color={rgb, 255:red, 0; green, 0; blue, 0 }  ][line width=0.75]    (6.56,-1.97) .. controls (4.17,-0.84) and (1.99,-0.18) .. (0,0) .. controls (1.99,0.18) and (4.17,0.84) .. (6.56,1.97)   ;
\draw    (319.67,25.67) -- (319.96,55.61) ;
\draw [shift={(319.77,36.04)}, rotate = 89.45] [color={rgb, 255:red, 0; green, 0; blue, 0 }  ][line width=0.75]    (6.56,-1.97) .. controls (4.17,-0.84) and (1.99,-0.18) .. (0,0) .. controls (1.99,0.18) and (4.17,0.84) .. (6.56,1.97)   ;
\draw    (304.38,26.25) -- (311.31,56.19) ;
\draw [shift={(306.81,36.74)}, rotate = 76.98] [color={rgb, 255:red, 0; green, 0; blue, 0 }  ][line width=0.75]    (6.56,-1.97) .. controls (4.17,-0.84) and (1.99,-0.18) .. (0,0) .. controls (1.99,0.18) and (4.17,0.84) .. (6.56,1.97)   ;

\draw (324.7,57.75) node [anchor=north west][inner sep=0.75pt]    {$T$};
\draw (322.4,81.24) node [anchor=north west][inner sep=0.75pt]    {$\cdots $};
\draw (322.98,34.86) node [anchor=north west][inner sep=0.75pt]    {$\cdots $};

\end{tikzpicture}

\end{center}
In this diagram, we have $m$ incoming lines and $n$ outgoing lines. Contracting upper and lower indices are represented by connecting incoming and outgoing lines. We denote the generators of the algebra as $Z^i_j$, ${Z^{\dagger}}^i_j$, $\lambda_a$, and ${\lambda^{\dagger}}^a$ by the following diagrams:
\begin{equation*}
\begin{tikzpicture}[decoration={markings, 
    mark= at position 0.30 with {\arrow{stealth}},
    mark= at position 0.88 with {\arrow{stealth}}}
] 
\draw [postaction={decorate}] (-0.5,0) -- (0.5,0);
\draw[fill=white,line width=1pt](0,0)circle(0.5ex);
\end{tikzpicture}, \quad
\begin{tikzpicture}[decoration={markings, 
    mark= at position 0.30 with {\arrow{stealth}},
    mark= at position 0.88 with {\arrow{stealth}}}
] 
\draw[fill=black,line width=1pt](0,0)circle(0.5ex);
\draw [postaction={decorate}] (-0.5,0) -- (0.5,0);
\end{tikzpicture}, \quad
\begin{tikzpicture}[decoration={markings, 
    mark= at position 0.50 with {\arrow{stealth}}}
] 
\draw [postaction={decorate}] (-0.3,0) -- (0.5,0);
\draw (0.8,0) node {$\lambda_a$};
\end{tikzpicture}, \quad \text{and} \quad
\begin{tikzpicture}[decoration={markings, 
    mark= at position 0.50 with {\arrow{stealth}}}
] 
\draw [postaction={decorate}] (-0.5,0) -- (0.3,0);
\draw (-0.8,0) node {${\lambda^{\dagger}}^a$};
\end{tikzpicture}.
\end{equation*}
All elements appearing in this paper can be expressed using diagrams, which consist of usual tensors (that commute with everything) decorated with white and black points on the oriented lines, along with \begin{tikzpicture}[decoration={markings, 
    mark= at position 0.50 with {\arrow{stealth}}}
] \draw [postaction={decorate}] (-0.3,0) -- (0.5,0); \draw (0.8,0) node {$\lambda_a$};\end{tikzpicture}, and \begin{tikzpicture}[decoration={markings, 
    mark= at position 0.50 with {\arrow{stealth}}}
] \draw [postaction={decorate}] (-0.5,0) -- (0.3,0); \draw (-0.8,0) node {${\lambda^{\dagger}}^a$};\end{tikzpicture}
at the endpoints of the oriented lines. 

We envision every element in the diagram that consists of $Z^i_j$, ${Z^{\dagger}}^i_j$, $\lambda_a$, ${\lambda^{\dagger}}^a$ as having a vertical dashed line (serve as auxiliary lines) passing through it. Whenever we continuously move an element $A$ around and cross such a vertical line going through another element $B$, there may be some quantum correction term proportional to $\epsilon_{1}$. This phenomenon arises from the non-commutativity of the matrix components (\ref{CQ1}). Diagrammatically, this phenomenon is illustrated in the following diagram, where we use a wavy line to denote quantum contractions:
\begin{equation}
\hbox{   
\tikzset{every picture/.style={line width=0.75pt}} 



\end{example}

\section{Connection to Murnaghan–Nakayama rule}\label{App B}
In this appendix, we establish connections between the terms:
\begin{align}
&C(n_{1},n_{2},\cdots,n_{N}):=\epsilon^{i_{1}i_{2}\cdots i_{N}}(\lambda^{\dagger}{Z^{\dagger}}^{n_{1}})_{i_{1}}(\lambda^{\dagger}{Z^{\dagger}}^{n_{2}})_{i_{2}}\cdots(\lambda^{\dagger}{Z^{\dagger}}^{n_{N}})_{i_{N}}\\
&t_{0,n}:=\mathrm{Tr}({Z^{\dagger}}^{n})
\end{align}
to the Schur polynomials and power sum polynomials in $N$ variables. We demonstrate that equation (\ref{ti C}) is equivalent to the Murnaghan–Nakayama rule, which is a combinatorial rule governing the transformation between the bases of Schur polynomials and power sum polynomials within the ring of symmetric polynomials. Since equation (\ref{ti C}) is derived using the diagrammatic method in this paper, this can be viewed as a new diagrammatic proof of the classical Murnaghan–Nakayama rule. For further details on the Murnaghan–Nakayama rule, we refer to \cite{stanley2023enumerative} \cite{mendes2019combinatorics}.

Following \cite{Hellerman-Raamsdonk}, one can view $C(n_1,n_2,\cdots,n_N)$ and $\mathrm{Tr}({Z^{\dagger}}^n)$ as polynomials in the variables $\{{Z^{\dagger}}^i_j, 1\leq i,j \leq N;{\lambda^{\dagger}}_i, 1 \leq i \leq N\}$. The connections between $C(n_1,n_2,\cdots,n_N)$ and $\mathrm{Tr}({Z^{\dagger}}^n)$ to the Shur polynomials and power sum polynomials are established by evaluating them on diagonal matrices $Z^{\dagger}=diag(z_1,z_2,\cdots,z_N)$.  Consequently, $C(n_1,n_2,\cdots,n_N)$ and $\mathrm{Tr}({Z^{\dagger}}^n)$ becomes

\begin{align}\label{evl}
    \det[z^{n_{i}}_{j}]_{1\leq i,j\leq N}\prod_{i=1}^{N}{\lambda^{\dagger}}_{i} \quad \text{and}  \quad
\sum_{i=1}^{N}z_{i}^{n} 
\end{align}
We assume $0 \leq n_1 < n_2 < n_3 < \cdots <n_N$. Let $\mu_{i}:=n_{N-i+1}-(N-i)$, then 
 $\mu_1\geq \mu_2\geq \mu_3\geq \cdots \mu_N\geq 0$. Hence (\ref{evl}) becomes (up to an overall sign)
    
\begin{align}\label{evl2}
    \det[z^{\mu_{i}+N-i}_{j}]_{1\leq i,j\leq N}\prod_{i=1}^{N}{\lambda^{\dagger}}_{i} \quad \text{and}  \quad
P_{n}=\sum_{i=1}^{N}z_{i}^{n} 
\end{align}    
where $P_{n}$ is the power sum polynomial.
    The classical definition of Schur polynomials states:
    $$s_{\mu}(z_1,z_2,\cdots,z_N)=\frac{\det[z^{\mu_{i}+N-i}_{j}]_{1\leq i,j\leq N}}{\det[z^{N-i}_{j}]_{1\leq i,j\leq N}}$$
Now, let us evaluate equation (\ref{ti C}) at $Z^{\dagger}=diag(z_1,z_2,\cdots,z_N)$, and divide both sides by $C(0,1,2,\cdots,N-1)$. This yields an equation of the following form:
\begin{equation}\label{ps}
    P_{n}(z_1,z_2\cdots,z_N)s_{\mu}(z_1,z_2,\cdots,z_N)=\sum_{\nu \in \Lambda}\pm s_{\nu}(z_1,z_2\cdots,z_N)
\end{equation}

Here, $\Lambda$ represents a set of specific integer partitions. Let us identify the integer partitions $\nu \in \Lambda$ that appear in the summation on the right-hand side of (\ref{ps}), along with the sign of each term. We illustrate this in terms of Young diagrams. We will consider two types of Young diagrams associated with  $C(n_1,n_2,\cdots,n_N)$, where the condition $0 \leq n_1 < n_2 < n_3 < \cdots <n_N$ do not need to be satisfied. The first kind of diagrams are the non-standard one: we simply place $n_{N-i+1}$ boxes on the $ith$ row. We allow the number of boxes to be exchanged between two different rows, and after exchanging, we assign a negative sign. This reflects the totally antisymmetric property of $C(n_1,n_2,\cdots,n_N)$. 

For example for $N=5$, and $C(1,3,8,5,7)$, we have the following (non-standard) Young diagram:
\begin{center}
\ytableausetup{smalltableaux}
\ydiagram{7,5,8,3,1}
\end{center}

Let us illustrate how to obtain the second type of Young diagrams associated with $C(n_1,n_2,\cdots,n_N)$. These are the standard diagrams, each carrying a sign. We begin with the first type of Young diagram (typically non-standard) associated with $C(n_1,n_2,\cdots,n_N)$, which carries a sign. We rearrange the rows such that the number of boxes does not increase, and then we mark the first $N-i$ boxes in each 
$i$th row with a cross. Each row exchange results in a minus sign. After removing all the crosses and shifting all the boxes to the leftmost position, we obtain the second tpye of Young diagram (a standard one) corresponding to a partition $\nu$ and carrying a sign. By the correspondence described previously, $C(n_1,n_2,\cdots,n_N)$ corresponds to the Schur polynomial $\pm s_{\nu}$. 

An example will best illustrate this. Consider the above example $C(1,3,8,5,7)$: 
\begin{center}
\ytableausetup{centertableaux}
\[
+\ydiagram{7,5,8,3,1}
~\longrightarrow~
-\ydiagram{7,8,5,3,1}
~\longrightarrow~ +
\begin{ytableau}
\cross & \cross & \cross & \cross &  &  &  &   \\
\cross & \cross & \cross &  &  &  & \\
\cross & \cross &  &  &\\
\cross &  &\\
\\
\end{ytableau}
~\longrightarrow~
+\ydiagram{4,4,3,2,1}
\]

\end{center}

In the following, let us identify the integer partitions corresponding to the terms on the right-hand side of equation (\ref{ti C}) in terms of (standard) Young diagrams. It's better to proceed by considering an example. Let us take $n=5$ and $C(n_1=1,n_2=3,n_3=5,n_4=7,n_5=8)$ in (\ref{ti C}). There are 5 terms on the right hand side of equation (\ref{ti C}). The first term is $C(1+5,3,5,7,8)$, which corresponds to the following Young diagram of the second type:
\begin{center}
    \ytableausetup{centertableaux}
\begin{ytableau}
\cross & \cross & \cross & \cross &  &  &  &   \\
\cross & \cross & \cross &  &  &  & \\
\cross & \cross &  &  &\\
\cross &  &\\
& \bullet& \bullet& \bullet& \bullet& \bullet\\
\end{ytableau}
\end{center}
In the above diagram, we decorate the newly added boxes with black dots. Now, we permute the number of boxes in each row until we arrive at a diagram with a non-increasing number of boxes in each row from top to bottom. This permutation is achieved by first exchanging the number of boxes in the 4th and the 5th rows, and then exchanging the 3rd and the 4th rows, and so on. These exchanges can be visualized by moving the boxes containing black dots:
\begin{center}
    \ytableausetup{centertableaux}
\[+ \begin{ytableau}
\cross & \cross & \cross & \cross &  &  &  &   \\
\cross & \cross & \cross &  &  &  & \\
\cross & \cross &  &  &\\
\cross &  &\\
& \bullet& \bullet& \bullet& \bullet& \bullet\\
\end{ytableau}
~\longrightarrow~
- \begin{ytableau}
\cross & \cross & \cross & \cross &  &  &  &   \\
\cross & \cross & \cross &  &  &  & \\
\cross & \cross &  &  &\\
\cross &  & & \bullet& \bullet& \bullet\\
& \bullet& \bullet\\
\end{ytableau}
~\longrightarrow~
+ \begin{ytableau}
\cross & \cross & \cross & \cross &  &  &  &   \\
\cross & \cross & \cross &  &  &  & \\
\cross & \cross &  &  & & \bullet\\
\cross &  & & \bullet& \bullet\\
& \bullet& \bullet\\
\end{ytableau}
\]
\end{center}
Finally, by removing all the crosses and shifting all the boxes to the leftmost position, we obtain the standard diagram corresponding to the partition labeling the Schur polynomial corresponding to this pariticular term $C(1+5,3,8,5,7)$ in $\Tr ({Z^{\dagger}}^5)C(1,3,5,7,8)$:    
\begin{center}
\ydiagram[*(white) \bullet]
{4+0,4+0,3+1,2+2,1+2}
*[*(white)]{4,4,4,4,3}
\end{center}
The sign of this term is $+$.

By examining several additional examples, it becomes evident that the newly introduced boxes (depicted as boxes containing black dots) typically take on the following shape:

\begin{center}
\ytableaushort{\none\none\none\none\none\bullet\bullet,\none\none\none\none\none\bullet,
\none\none\none\bullet\bullet\bullet,\none\none\bullet\bullet,\none\none\bullet,\bullet\bullet\bullet}
\end{center}

This shape is commonly referred to as a border-strip diagram. More precisely:

\begin{definition}
Let $\nu$ be a standard Young diagram and $\mu$ another standard Young diagram contained within $\nu$. We denote $\nu \backslash \mu$ as the set of boxes in the Young diagram of $\nu$ that do not appear in $\mu$. We say that $\nu \backslash \mu$ forms a skew shape diagram. A skew shape diagram is considered a border-strip diagram if it is connected and does not contain a $2 \times 2$ arrangement of boxes. The height of a border-strip diagram $\nu \backslash \mu$ is defined as $ht(\nu \backslash \mu) = \text{number of rows of } \nu \backslash \mu - 1$. The size of $\nu \backslash \mu$ is defined as the number of boxes it contains.
\end{definition}

The preceding demonstration by example indicates that the partitions $\nu \in \Lambda$ appearing on the right-hand side of (\ref{ps}) correspond to Young diagrams $\nu$ such that $\nu \backslash \mu$ forms a border-strip diagram. The sign associated with each term $s_{\nu}(z_1,z_2,\cdots,z_N)$ on the right-hand side of (\ref{ps}) is $(-1)^{ht(\nu \backslash \mu)}$, where $ht(\nu \backslash \mu)$ represents the height of the border-strip diagram $\nu \backslash \mu$ as defined above. In summary, we state the following proposition:

\begin{proposition}
The following identity holds:
\begin{equation}\label{MN rule}
     P_{n}(z_1,z_2,\cdots,z_N)s_{\mu}(z_1,z_2,\cdots,z_N) = \sum_{\nu \in \Lambda} (-1)^{ht(\nu \backslash \mu)} s_{\nu}(z_1,z_2,\cdots,z_N)
\end{equation}
where $\Lambda=\{\nu \supseteq \mu |\quad \nu \backslash \mu \text{ is a border-strip diagram with } n \text{ boxes}\}$.
\end{proposition}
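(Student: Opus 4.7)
The plan is to derive the Murnaghan--Nakayama identity \eqref{MN rule} as a direct specialization of the algebraic relation \eqref{ti C} proved diagrammatically earlier in the paper. I would first specialize $Z^{\dagger}$ to the diagonal matrix $\mathrm{diag}(z_1,\ldots,z_N)$. Under this specialization, $\mathrm{Tr}({Z^{\dagger}}^n)$ becomes $P_n(z_1,\ldots,z_N) = \sum_i z_i^n$, while $C(n_1,\ldots,n_N)$ becomes $\det[z_j^{n_i}]_{1\le i,j\le N}\prod_i \lambda^{\dagger}_i$, as is recorded in \eqref{evl}. Starting from a strictly increasing sequence $0\le n_1<n_2<\cdots<n_N$, setting $\mu_i := n_{N-i+1}-(N-i)$ yields a partition $\mu$, and the Vandermonde factorization $\det[z_j^{\mu_i+N-i}] = s_\mu\cdot \det[z_j^{N-i}]$ identifies $C(n_1,\ldots,n_N)$ with $s_\mu$ times the common factor $\det[z_j^{N-i}]\prod_i\lambda^{\dagger}_i$.

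Next, I would interpret each summand $C(n_1,\ldots,n_i+n,\ldots,n_N)$ on the right-hand side of \eqref{ti C}. After specialization, such a term equals $\det[z_j^{n'_i}]\prod_k\lambda^{\dagger}_k$ where $n'$ differs from $n$ in one entry. If two entries of $n'$ coincide, the determinant vanishes; otherwise, sort $n'$ into strictly increasing order by adjacent transpositions, picking up a sign $(-1)^r$ where $r$ is the number of transpositions. The sorted sequence defines a partition $\nu$ via $\nu_i := n'_{N-i+1}-(N-i)$, and the summand equals $(-1)^r s_\nu$ times the same common factor. Dividing through by $\det[z_j^{N-i}]\prod_k\lambda^{\dagger}_k$ then reduces \eqref{ti C} to an identity of the form $P_n s_\mu = \sum_\nu (-1)^{r(\nu)} s_\nu$, and the claim reduces to showing that (i) the $\nu$ appearing are exactly those for which $\nu\setminus\mu$ is a border strip of size $n$, and (ii) the exponent $r(\nu)$ equals $\mathrm{ht}(\nu\setminus\mu)$.

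For step (i)-(ii), I would follow the pictorial bookkeeping illustrated in the excerpt. Adding $n$ to the $i$th entry appends $n$ boxes to a single row of the non-standard Young diagram corresponding to $C$; reordering the rows into non-increasing order, each adjacent row-swap drags this block of $n$ boxes one row upward. A swap is \emph{non-vanishing} precisely when the appended block, placed in its new row, does not fully overlap the existing row above — otherwise two entries of $n'$ coincide and the determinant is zero. Tracking the shape box-by-box, the union of the appended cells (after removing the staircase of crosses and left-justifying) forms a connected skew shape containing no $2\times 2$ square: the no-overlap condition at each swap forces exactly one cell of the new strip to sit in each row it occupies, and consecutive rows share exactly one column — this is the definition of a border strip. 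Moreover, the number $r$ of non-vanishing swaps equals the number of distinct rows occupied by $\nu\setminus\mu$ minus one, which is precisely $\mathrm{ht}(\nu\setminus\mu)$.

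The main obstacle I anticipate is a careful combinatorial verification that the correspondence $i\mapsto \nu(i)$ is a bijection from the non-vanishing indices in the sum $\sum_i C(\ldots,n_i+n,\ldots)$ to the set $\Lambda$ of border-strip extensions of $\mu$, with the sign matching on the nose. The subtlety is that several different choices of $i$ can a priori produce the same $\nu$ only through different insertion patterns, but the constraint that the original sequence $n$ is strictly increasing rigidifies which row the $n$ new boxes must originate from given $\nu\setminus\mu$: namely, the topmost row of the border strip determines $i$ uniquely. Once this bijection and sign match are in place, the proposition follows by comparing coefficients, giving a genuinely new, diagrammatic proof of the classical Murnaghan--Nakayama rule.
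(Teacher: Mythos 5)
Your proof follows essentially the same route as the paper's Appendix~\ref{App B}: specialize to diagonal $Z^{\dagger}$, identify $C(n_1,\ldots,n_N)$ with $\pm s_{\mu}$ times the Vandermonde factor via $\mu_i=n_{N-i+1}-(N-i)$, and convert each summand of \eqref{ti C} into a signed Schur function by sorting the shifted exponent sequence, with the adjacent transpositions accounting exactly for $(-1)^{ht(\nu\backslash\mu)}$ and vanishing determinants accounting for the missing terms. The only quibble is that the row from which the $n$ added boxes originate is the \emph{bottommost} row of the resulting border strip (the enlarged row migrates upward during sorting), not the topmost; uniqueness of $i$ holds in any case because $n_i$ is the unique element of $\{n_j\}\setminus\{n'_j\}$, so your bijection and sign match go through as claimed.
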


This represents one version of the Murnaghan–Nakayama rule.

Here's a refined version:

Let $\nu \vdash n$ and $\mu$ be a composition of $n$, i.e., an ordered tuple $\mu=(\mu_1, \mu_2, \ldots, \mu_k)$ of positive integers such that $\sum_{i=1}^{k} \mu_i = n$. We define a border-strip diagram $T$ of shape $\nu$ and content $\mu$ as a filling of the cells of the Young diagram of $\nu$ with border-strips of sizes $\mu_1,\cdots,\mu_{l}$ labeled with $1,\cdots,l$, such that the removal of the last $i$ border-strips results in a smaller (standard) Young diagram for all $i$. The sign of $T$ is determined by the product of the signs of these border-strips. The set $BST(\mu,\nu)$ is defined as the collection of border-strip diagrams of shape $\nu$ and content $\mu$.

Utilizing the rule (\ref{MN rule}), one can expand the power sum symmetric function $P_{\mu}:=P_{\mu_1}P_{\mu_2}\cdots P_{\mu_l}$ inductively in terms of the base of Schur symmetric functions $\{s_{\mu}:\mu\vdash n\}$. Consequently, we obtain the following combinatorial rule for the change of bases:
\begin{align}
    P_{\mu} = \sum_{\nu \vdash n} \left\{\sum_{T\in BST(\mu,\nu)}(-1)^{ht(T)}\right\}s_{\nu}
\end{align}
Through the Frobenius characteristic map, the ring of symmetric functions is related to the ring of characters of symmetric groups. This yields:
\begin{align}
\nonumber P_{\mu} = \sum_{\nu \vdash n}{\chi}^{\nu}_{\mu}s_{\nu} \quad \text{and} \quad s_{\nu} = \sum_{\mu \vdash n}{\chi}^{\nu}_{\mu}\frac{P_{\mu}}{z_{\mu}}
\end{align}
Here, $\chi^{\nu}_{\mu}$ denotes the value of the character of the irreducible representation of the symmetric group $S_{n}$ labeled by the partition $\mu \vdash n$ on the conjugacy class $C_{\mu}$ of cycle type $\mu$, and $z_{\mu} = \frac{n!}{|C_{\mu}|}$.
 Thus, the Murnaghan-Nakayama rule provides a combinatorial method for computing $\chi^{\nu}_{\mu}$.

\begin{remark}\label{Remark O(1)}
Using these correspondences, the coefficients of (\ref{1-b eq 2}) in terms of the basis (\ref{Basis non re}) are approximately equal to $i$ times the number of ways of decomposing a Young diagram of $i$ boxes into border-strips. For a fixed energy cutoff $E$ and $i \leq E$, these coefficients are finite fixed numbers independent of $N$. Consequently, (\ref{1-b eq 2}) has order $O(1)$ in the limit $N \gg E$, $N\rightarrow \infty$.
\end{remark}

\section{Renormalized semicircle distribution and filling factor}\label{App C}

In this appendix, we provide a physical interpretation of our assertion that the central charge of the large $N$ limit algebra $\widehat{\mathfrak{u}(1)}_{\frac{1}{k+1}}$ equals the filling factor of the corresponding quantum Hall fluid.

To establish connections between the matrix model and the quantum Hall effect, Karabali and Sakita introduced the notion of wave functions in the Chern-Simons matrix model\cite{Karabali-Sakita}. Specifically, they introduced states $\ket{X,\lambda}$, parametrized by $N \cross N$ Hermitian matrices $X$ and an $N$-vector $\lambda$. These states are defined by the following properties:
\begin{align}
\label{X eigen}&\hat{X}\ket{X,\lambda}=X\ket{X,\lambda},\\
\label{lambda eigen}&\hat{\lambda}_i\ket{X,\lambda}=\lambda_{i}\ket{X,\lambda},\\
\label{complete}& \int [dX] d\lambda d\lambda^{\dagger} \ket{X,\lambda}\bra{X,\lambda}=\mathrm{Id}.
\end{align}
\textbf{Notation}:
In this section, we use  $\hat{Z},\hat{Z}^{\dagger},\hat{X},\hat{Y}, \hat{\lambda}, \hat{\lambda}^{\dagger}$ to denote operators in the quantum matrix model. $X$ is a classical $N \cross N$ matrix, $\lambda$ is a classical $N$-vector. The operators $\hat{Z},\hat{Z}^{\dagger},\hat{X},\hat{Y}$ are related by
\begin{align}
    &\hat{Z}=\sqrt{\frac{B}{2}}(\hat{X}+i\hat{Y})\\
    &\hat{Z}^{\dagger}=\sqrt{\frac{B}{2}}(\hat{X}-i\hat{Y})
\end{align}
Condition \eqref{X eigen} states that the states $\{\ket{X,\lambda}\}$ can be viewed as a "coordinate" basis for the canonical variables $\hat{X}$ and $\hat{Y}$. Conditions \eqref{lambda eigen} and \eqref{complete} state that $\ket{X,\lambda}$ are coherent states for the Heisenberg algebra generated by $\hat{\lambda}^{i}, \hat{\lambda}^{\dagger}_{i}$.

In the following, we examine the ground state wave function of the matrix model. The ground state wave function is expressed as follows\cite{Karabali-Sakita}:
\begin{align}
    \Psi_{gr}(X,\Bar{\lambda})=\bra{X,\lambda}\ket{ground}=(\sqrt{2B})^{lN(N-1)/2}[\epsilon^{i_{1}i_{2}\cdots i_{N}}\Bar{\lambda}_{i_{1}}(\Bar{\lambda}X)_{i_{2}}(\Bar{\lambda}X^2)_{i_{3}}\cdots (\Bar{\lambda}X^{N-1})_{i_{N}}]^{k}e^{-\frac{B}{2}\mathrm{Tr}(X^2)}
\end{align}

Using the completeness condition (\ref{complete}), we express the norm square of the normalized ground state as:
\begin{align}\label{gensemble}
    1=\widetilde{\bra{ground}}\widetilde{\ket{ground}}=\int [dX]d\lambda d\lambda^{\dagger}\widetilde{\bra{ground}}\ket{X,\lambda}\bra{X,\lambda}\widetilde{\ket{ground}}
\end{align}
where $[dX]=b_{N}\prod_{i=1}^{N}dX_{ii}\prod_{i<j}d\: \mathrm{Re}(X_{ij})\prod_{i<j}d \: \mathrm{Im}(X_{ij})$. $b_{N}$ is a normalization constant. By integrating out  $\lambda, \lambda^{\dagger}$ in \eqref{gensemble}, we obtain a probability measure $Q_{g}(X)[dX]$ in the space of $N\cross N$ Hermitian matrices:
\begin{align}\label{def of QdX}
    \int [dX]d\lambda d\lambda^{\dagger}\widetilde{\bra{ground}}\ket{X,\lambda}\bra{X,\lambda}\widetilde{\ket{ground}}=\int [dX] Q_{g}(X)
\end{align}
We then apply the standard moment method in random matrix theory\cite{anderson2010introduction, mingo2017free} along with the large $N$ limit of $t_{n,n}=\mathrm{Tr}(\mathrm{Sym}(Z^n{Z^{\dagger}}^n))$ in our main theorem \ref{thm: large N limit}:
\begin{align}
    t_{n,n}=\frac{1}{n+1}(k+1)^{n}\sqrt{N}^{2n+2}+
  O(\sqrt{N}^{2n})
\end{align}
to derive a rescaled semicircle law for the random matrix $X$ in the probability measure $Q_{g}(X)[dX]$.

We introduce a rescaled variable:
\begin{align}
    \tilde{X}=\sqrt{\frac{2B}{(k+1)N}}X.
\end{align}
Let $\tilde{x}_{1}\leq \tilde{x}_{2} \leq \cdots \leq \tilde{x}_{N}$ be the eigenvalues of the rescaled Hermitian matrix $\tilde{X}$. We define the spectral density of the eigenvalues $\{\tilde{x}_{i}, 1\leq i \leq N\}$ of $\tilde{X}$ on a real line $\mathbb{R}$:
$$\mu_{N}(\tilde{x})=\frac{1}{N}\sum_{i=1}^{N}\delta(\tilde{x}-\tilde{x}_{i}).$$
The resolvent $\frac{1}{N}\mathrm{tr}(\frac{1}{w-\tilde{X}})$ of the random matrix $\tilde{X}$ is equal to the Stieltjes transformation $S_{\mu_{N}}(w)$ of the density $\mu_{N}$:
    $$S_{\mu_{N}}(w)=\int_{\mathbb{R}}\frac{1}{w-\tilde{x}}d\mu_{N}(\tilde{x})=\frac{1}{N}\sum_{i=1}^{N}\frac{1}{w-\tilde{x}_{i}}=\frac{1}{N}\Tr(\frac{1}{w-\tilde{X}}).$$
We then compute the large $N$ limit of the expectation value of the resolvent with respect to the probability measure $Q_{g}(X)[dX]$ defined earlier:
\begin{align*}
    &E_{g}(\frac{1}{N}\Tr (\frac{1}{w-\tilde{X}}))=\int [dX] \frac{1}{N}\Tr (\frac{1}{w-\sqrt{\frac{2B}{(k+1)N}}X})Q_{g}(X)\\
    &=\int [dX]d\lambda d\lambda^{\dagger}\widetilde{\bra{ground}}\frac{1}{N}\Tr (\frac{1}{w-\sqrt{\frac{2B}{(k+1)N}}\hat{X}})\ket{X,\lambda}\bra{X,\lambda}\widetilde{\ket{ground}}\\
    &=\widetilde{\bra{ground}}\frac{1}{N}\Tr (\frac{1}{w-\sqrt{\frac{2B}{(k+1)N}}\hat{X}})\widetilde{\ket{ground}}.
\end{align*}
Here, we denote expectation values with respect to the probability measure $Q(X)_{g}[dX]$ as $E_{g}(\cdots)$.
Recalling $\hat{X}=\frac{1}{\sqrt{2B}}(\hat{Z}+\hat{Z^{\dagger})}$ and $F(u,v)=\frac{1}{1-(u\hat{Z}+v\hat{Z^{\dagger}})}=\sum_{m,n\geq 0}\binom{m+n}{m}t_{m,n}u^mv^n$, we apply the result $t_{n,n}=\frac{1}{n+1}(k+1)^{n}\sqrt{N}^{2n+2}+
  O(\sqrt{N}^{2n})$. This yields:
\begin{align*}
    &\widetilde{\bra{ground}}\frac{1}{N}\Tr\left(\frac{1}{w-\sqrt{\frac{2B}{(k+1)N}}\hat{X}}\right)\widetilde{\ket{ground}}=\frac{1}{Nw}\widetilde{\bra{ground}}F\left(\frac{1}{w\sqrt{(k+1)N}},\frac{1}{w\sqrt{(k+1)N}}\right)\widetilde{\ket{ground}}\\
    &=\frac{1}{Nw} \sum_{n=0}^{\infty}\binom{2n}{n}\widetilde{\bra{ground}}t_{n,n}\widetilde{\ket{ground}}(w\sqrt{(k+1)N})^{-2n}= \sum_{n=0}^{\infty}C_{n}w^{-2n-1}+O(N^{-1}).
\end{align*}
Here, $\{C_{n}=\frac{1}{n+1}\binom{2n}{n}, n=0,1,\cdots\}$ are the Catalan numbers. From these calculations, we obtain the large $N$ limit of the expectation value of $S_{\mu_{N}}(w)$:
\begin{align*}
    \lim_{N \to \infty}E_{g}(S_{\mu_{N}}(w))=\sum_{n=0}^{\infty}C_{n}w^{-2n-1}=\frac{w-\sqrt{w^{2}-4}}{2}  \quad w \in \mathbb{C}^{+}=\{z\in \mathbb{C}| \mathrm{Im}(z)>0\}.
\end{align*}
Here, $w$ is constrained to the upper half of the complex plane. The large $N$ limit of the expectation value of the density $\mu_{N}(\tilde{x}), \tilde{x}\in \mathbb{R}$ is obtained by the inverse Stieltjes transformation:
\begin{align*}
    \lim_{N \to \infty} E_{g}(\mu_{N}(\tilde{x}))=-\lim_{\epsilon \to 0^{+}}\mathrm{Im}\{E_{g}(S_{\mu_{N}}(\tilde{x}+i\epsilon))\}=\frac{1}{2\pi}\sqrt{4-\tilde{x}^2} \mathbb{1}_{[-2,2]}(\tilde{x}).
\end{align*}
Here, $\mathbb{1}_{[-2,2]}(\tilde{x})$ is the characteristic function on the interval $[-2,2]$. This result is the renowned Wigner semicircle law. Converting back to the original matrix $X=\sqrt{\frac{(k+1)N}{2B}}\tilde{X}$, we obtain the following renormalized Wigner semicircle law:
\begin{proposition}
    Let $Q_{g}(X)[dX]$ be the probability measure on the space of $N \cross N$ Hermitian matrices as defined by (\ref{gensemble}). Let ${x_{i}, 1\leq i \leq N}$ be the set of eigenvalues of $X$. The density of eigenvalues is given by $\rho_{N}(x)=\frac{1}{N}\sum_{i=1}^{N}\delta(x-x_{i})$. Then, in the large $N$ limit, the leading term of the expectation value of $\rho_{N}(x)$ satisfies a renormalized (rescaled) semicircle distribution:
    \begin{align}\label{Semi Circle law}
        E_{g}(\rho_{N}(x))=\frac{2}{\pi R^2}\sqrt{R^2-x^2}\mathbb{1}_{[-R,R]}(x)+\text{lower order terms}   \quad \text{as} \quad N \to \infty
    \end{align}
    where the radius is given by $R=\frac{2(k+1)N}{B}$.
\end{proposition}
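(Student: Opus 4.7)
The plan is to reduce the statement to a moment computation that can be attacked using Theorem \ref{thm: large N limit}, and then to invoke the standard Stieltjes/moment uniqueness for compactly supported measures.

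First I would exploit the coherent-state completeness identity \eqref{complete}, together with the defining property \eqref{def of QdX}, to rewrite the eigenvalue moments as matrix elements in the ground state. Concretely, for any $k \geq 0$
\begin{equation*}
E_g\!\left(\tfrac{1}{N}\mathrm{Tr}(X^k)\right) = \tfrac{1}{N}\,\widetilde{\bra{\mathrm{ground}}}\,\mathrm{Tr}(\hat X^k)\,\widetilde{\ket{\mathrm{ground}}}.
\end{equation*}
Since $\hat X = (2B)^{-1/2}(\hat Z + \hat Z^{\dagger})$, the binomial expansion gives $(\hat Z+\hat Z^{\dagger})^k = \sum_{m+n=k}\binom{k}{m}\mathrm{Sym}(\hat Z^m \hat Z^{\dagger\,n})$, so after taking the trace I get the exact identity
\begin{equation*}
E_g\!\left(\tfrac{1}{N}\mathrm{Tr}(X^k)\right) \;=\; \tfrac{1}{(2B)^{k/2} N}\sum_{m+n=k}\binom{k}{m}\,\widetilde{\bra{\mathrm{ground}}}\,t_{m,n}\,\widetilde{\ket{\mathrm{ground}}} .
\end{equation*}

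Next I would use the observation that the Hamiltonian $H = \omega\,\mathrm{Tr}(\hat Z^{\dagger}\hat Z)$ assigns to $t_{m,n}$ a definite energy shift $n-m$, hence $\widetilde{\bra{\mathrm{ground}}}\,t_{m,n}\,\widetilde{\ket{\mathrm{ground}}}$ vanishes unless $m=n$. Therefore odd moments vanish identically and, for $k = 2\ell$, the only surviving term is $m=n=\ell$:
\begin{equation*}
E_g\!\left(\tfrac{1}{N}\mathrm{Tr}(X^{2\ell})\right) \;=\; \tfrac{1}{(2B)^{\ell} N}\binom{2\ell}{\ell}\,\widetilde{\bra{\mathrm{ground}}}\,t_{\ell,\ell}\,\widetilde{\ket{\mathrm{ground}}}.
\end{equation*}
Applying Theorem \ref{thm: large N limit}, specifically the leading asymptotics $t_{\ell,\ell} = \tfrac{1}{\ell+1}(k+1)^{\ell}N^{\ell+1} + O(N^{\ell})$, and rescaling to $\tilde X = \sqrt{2B/((k+1)N)}\,X$, I obtain
\begin{equation*}
\lim_{N\to\infty} E_g\!\left(\tfrac{1}{N}\mathrm{Tr}(\tilde X^{2\ell})\right) \;=\; \tfrac{1}{\ell+1}\binom{2\ell}{\ell} \;=\; C_{\ell},
\end{equation*}
which are precisely the even moments of the Wigner semicircle law on $[-2,2]$; odd moments vanish.

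The final step is standard: the semicircle distribution has compact support and hence is uniquely determined by its moments, so moment convergence implies weak convergence $E_g[\mu_N] \to \mu_{\mathrm{sc}}$. Equivalently, via the Stieltjes transform computation sketched in the excerpt, $\lim_{N\to\infty}E_g(S_{\mu_N}(w)) = \tfrac{1}{2}(w - \sqrt{w^2-4})$ on the upper half plane, and the Stieltjes inversion formula yields the semicircular density. Undoing the rescaling $x = \sqrt{(k+1)N/(2B)}\,\tilde x$ transplants the support to $[-R,R]$ with $R = \sqrt{2(k+1)N/B}$ and produces the stated density $\tfrac{2}{\pi R^2}\sqrt{R^2-x^2}$.

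The main technical obstacle is the passage from Theorem \ref{thm: large N limit} (which controls operators on energy-truncated subspaces ${\mathcal H}^{(\leq E)}_N$) to a statement about a single matrix element on the ground state. I would argue that the ground state lies in ${\mathcal H}^{(\leq E)}_N$ for any $E$ exceeding its fixed energy, so for each fixed $\ell$ the asymptotic $t_{\ell,\ell}|_{\mathrm{ground}} = \tfrac{1}{\ell+1}(k+1)^{\ell}N^{\ell+1} + O(N^{\ell})$ follows directly from \eqref{tnn N}; a secondary, more delicate point is ensuring that the $O(N^{\ell})$ error in Theorem \ref{thm: large N limit} is uniform enough in $\ell$ (for fixed $\ell$ it is automatic, and moment convergence only requires pointwise-in-$\ell$ limits). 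The vanishing of off-diagonal contributions via the energy grading is what makes the argument clean and avoids having to control the more intricate asymptotics \eqref{tmn N} and \eqref{tnm N} in the ground-state matrix element.
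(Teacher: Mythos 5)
Your proposal is correct and follows essentially the same route as the paper: both reduce $E_g(\cdot)$ to a ground-state expectation via the coherent-state completeness relation, use the energy grading to keep only the $t_{\ell,\ell}$ terms, apply the asymptotics \eqref{tnn N} to recover the Catalan numbers, and then pass to the semicircle density (the paper packages the moments into the resolvent and applies Stieltjes inversion, which is equivalent to your moment-uniqueness argument). Your explicit justification that $\widetilde{\bra{\mathrm{ground}}}\,t_{m,n}\,\widetilde{\ket{\mathrm{ground}}}=0$ for $m\neq n$ is a point the paper leaves implicit, but otherwise the two arguments coincide.
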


\leftline{\textbf{Physical interpretation: $\frac{1}{k+1}$ is the filling factor of the quantum hall fluid}}

In the matrix model, the matrices $\hat{X}$ and $\hat{Y}$ can be interpreted as non-commutative coordinates of electrons moving on a plane subjected to an external constant magnetic field $B$ \cite{Susskind}\cite{Poly}. The harmonic potential $Tr(\hat{X}^2+\hat{Y}^2)$ confines the electrons within a disc centered around the origin of the plane. By diagonalizing the matrix $X=UxU^{-1}$, where $U \in U(N)$ and $x=diag(x_1,x_{2},\cdots,x_{N})$, the wave functions of the matrix model can be seen as functions of the variables $U$, $x=diag(x_1,x_{2},\cdots,x_{N})$, and $\Bar{\lambda}$:

\begin{align}
\Psi(x,U,\lambda)=\bra{X=UxU^{-1},\lambda}\ket{phys}
\end{align}

Integrating out the extra variables $U$ and $\Bar{\lambda}$ yields a resulting theory that describes a many-particle system, known as the Calogero many-particle system\cite{Karabali_2002}. The eigenvalues $(x_{1},x_{2},\cdots,x_{N})$ are considered as the positions of the Calogero particles. In the context of the quantum Hall effect, the $y$ coordinates of the particles correspond to the momenta of the eigenvalues $(x_{1},x_{2},\cdots,x_{N})$. This implies that the $y$ coordinates are spread out across the plane. The emergence of the semi-circle law distribution of the ground state wave function (\ref{Semi Circle law}) can be understood as arising from a circle law distribution from the perspective of the quantum Hall fluid. The following picture\footnote{This picture is inspired by a picture in \cite{Frenkel:2021yql}.} illustrates our physical interpretation:
\begin{equation*}
    \hbox{

\tikzset{every picture/.style={line width=0.75pt}} 

\begin{tikzpicture}[x=0.75pt,y=0.75pt,yscale=-1,xscale=1]

\draw  [dash pattern={on 4.5pt off 4.5pt}] (109.26,131.22) .. controls (109.26,99.56) and (135.6,73.9) .. (168.1,73.9) .. controls (200.6,73.9) and (226.94,99.56) .. (226.94,131.22) .. controls (226.94,162.89) and (200.6,188.55) .. (168.1,188.55) .. controls (135.6,188.55) and (109.26,162.89) .. (109.26,131.22) -- cycle ;
\draw    (43,131.07) -- (304.77,131.07) ;
\draw [shift={(306.77,131.07)}, rotate = 180] [color={rgb, 255:red, 0; green, 0; blue, 0 }  ][line width=0.75]    (10.93,-3.29) .. controls (6.95,-1.4) and (3.31,-0.3) .. (0,0) .. controls (3.31,0.3) and (6.95,1.4) .. (10.93,3.29)   ;
\draw    (168.1,242.5) -- (168.1,20.1) ;
\draw [shift={(168.1,18.1)}, rotate = 90] [color={rgb, 255:red, 0; green, 0; blue, 0 }  ][line width=0.75]    (10.93,-3.29) .. controls (6.95,-1.4) and (3.31,-0.3) .. (0,0) .. controls (3.31,0.3) and (6.95,1.4) .. (10.93,3.29)   ;
\draw  [color={rgb, 255:red, 74; green, 144; blue, 226 }  ,draw opacity=1 ][fill={rgb, 255:red, 74; green, 144; blue, 226 }  ,fill opacity=1 ] (156.11,73.36) -- (158.48,73.36) -- (158.48,188.48) -- (156.11,188.48) -- cycle ;
\draw  [color={rgb, 255:red, 74; green, 144; blue, 226 }  ,draw opacity=1 ][fill={rgb, 255:red, 74; green, 144; blue, 226 }  ,fill opacity=1 ] (115.41,110.94) -- (113.52,110.94) -- (113.52,154.59) -- (115.41,154.59) -- cycle ;
\draw  [color={rgb, 255:red, 74; green, 144; blue, 226 }  ,draw opacity=1 ][fill={rgb, 255:red, 74; green, 144; blue, 226 }  ,fill opacity=1 ] (120.93,97.57) -- (119.04,97.57) -- (119.04,165.19) -- (120.93,165.19) -- cycle ;
\draw  [color={rgb, 255:red, 74; green, 144; blue, 226 }  ,draw opacity=1 ][fill={rgb, 255:red, 74; green, 144; blue, 226 }  ,fill opacity=1 ] (125.35,90.96) -- (127.24,90.96) -- (127.24,172.26) -- (125.35,172.26) -- cycle ;
\draw  [color={rgb, 255:red, 74; green, 144; blue, 226 }  ,draw opacity=1 ][fill={rgb, 255:red, 74; green, 144; blue, 226 }  ,fill opacity=1 ] (131.34,84.96) -- (133.24,84.96) -- (133.24,177.79) -- (131.34,177.79) -- cycle ;
\draw  [color={rgb, 255:red, 74; green, 144; blue, 226 }  ,draw opacity=1 ][fill={rgb, 255:red, 74; green, 144; blue, 226 }  ,fill opacity=1 ] (143.33,77.89) -- (145.7,77.89) -- (145.7,182.25) -- (143.33,182.25) -- cycle ;
\draw  [color={rgb, 255:red, 74; green, 144; blue, 226 }  ,draw opacity=1 ][fill={rgb, 255:red, 74; green, 144; blue, 226 }  ,fill opacity=1 ] (200.6,84.35) -- (202.96,84.35) -- (202.96,178.1) -- (200.6,178.1) -- cycle ;
\draw  [color={rgb, 255:red, 74; green, 144; blue, 226 }  ,draw opacity=1 ][fill={rgb, 255:red, 74; green, 144; blue, 226 }  ,fill opacity=1 ] (208.17,89.42) -- (211.48,89.42) -- (211.48,173.03) -- (208.17,173.03) -- cycle ;
\draw  [color={rgb, 255:red, 74; green, 144; blue, 226 }  ,draw opacity=1 ][fill={rgb, 255:red, 74; green, 144; blue, 226 }  ,fill opacity=1 ] (219.06,98.64) -- (216.69,98.64) -- (216.69,161.04) -- (219.06,161.04) -- cycle ;
\draw  [color={rgb, 255:red, 74; green, 144; blue, 226 }  ,draw opacity=1 ][fill={rgb, 255:red, 74; green, 144; blue, 226 }  ,fill opacity=1 ] (222.84,117.08) -- (225.21,117.08) -- (225.21,148.13) -- (222.84,148.13) -- cycle ;
\draw  [color={rgb, 255:red, 74; green, 144; blue, 226 }  ,draw opacity=1 ][fill={rgb, 255:red, 74; green, 144; blue, 226 }  ,fill opacity=1 ] (168.1,74.66) -- (169.99,74.66) -- (169.99,186.86) -- (168.1,186.86) -- cycle ;
\draw  [color={rgb, 255:red, 74; green, 144; blue, 226 }  ,draw opacity=1 ][fill={rgb, 255:red, 74; green, 144; blue, 226 }  ,fill opacity=1 ] (149.96,74.2) -- (152.33,74.2) -- (152.33,185.94) -- (149.96,185.94) -- cycle ;
\draw  [color={rgb, 255:red, 74; green, 144; blue, 226 }  ,draw opacity=1 ][fill={rgb, 255:red, 74; green, 144; blue, 226 }  ,fill opacity=1 ] (174.57,73.74) -- (176.46,73.74) -- (176.46,185.94) -- (174.57,185.94) -- cycle ;
\draw  [color={rgb, 255:red, 74; green, 144; blue, 226 }  ,draw opacity=1 ][fill={rgb, 255:red, 74; green, 144; blue, 226 }  ,fill opacity=1 ] (181.19,75.13) -- (183.56,75.13) -- (183.56,184.56) -- (181.19,184.56) -- cycle ;
\draw  [color={rgb, 255:red, 74; green, 144; blue, 226 }  ,draw opacity=1 ][fill={rgb, 255:red, 74; green, 144; blue, 226 }  ,fill opacity=1 ] (187.82,76.51) -- (189.71,76.51) -- (189.71,185.02) -- (187.82,185.02) -- cycle ;
\draw  [color={rgb, 255:red, 74; green, 144; blue, 226 }  ,draw opacity=1 ][fill={rgb, 255:red, 74; green, 144; blue, 226 }  ,fill opacity=1 ] (193.97,78.35) -- (195.87,78.35) -- (195.87,180.87) -- (193.97,180.87) -- cycle ;
\draw    (259.28,50.84) .. controls (237.95,69.82) and (216.16,96.33) .. (200,119.95) ;
\draw [shift={(199.02,121.39)}, rotate = 304.13] [color={rgb, 255:red, 0; green, 0; blue, 0 }  ][line width=0.75]    (10.93,-3.29) .. controls (6.95,-1.4) and (3.31,-0.3) .. (0,0) .. controls (3.31,0.3) and (6.95,1.4) .. (10.93,3.29)   ;
\draw [line width=2.25]    (168.1,131.22) -- (226.94,131.07) ;
\draw  [color={rgb, 255:red, 74; green, 144; blue, 226 }  ,draw opacity=1 ][fill={rgb, 255:red, 74; green, 144; blue, 226 }  ,fill opacity=1 ] (137.34,81.5) -- (139.39,81.5) -- (139.39,181.18) -- (137.34,181.18) -- cycle ;
\draw  [color={rgb, 255:red, 74; green, 144; blue, 226 }  ,draw opacity=1 ][fill={rgb, 255:red, 74; green, 144; blue, 226 }  ,fill opacity=1 ] (161.95,73.36) -- (164.31,73.36) -- (164.31,188.48) -- (161.95,188.48) -- cycle ;

\draw (299.22,105.68) node [anchor=north west][inner sep=0.75pt]    {$x$};
\draw (177.43,9.16) node [anchor=north west][inner sep=0.75pt]    {$y$};
\draw (255.99,26.91) node [anchor=north west][inner sep=0.75pt]    {$R=\sqrt{2( k+1) N/B}$};
\draw (253.36,60.14) node [anchor=north west][inner sep=0.75pt]    {$Area=2\pi ( k+1) N/B\ $};

\end{tikzpicture}

    }
\end{equation*}

Let us compute the filling factor. From the rescaled semicircle law (\ref{Semi Circle law}) distribution, the radius of the circle is $R=\sqrt{2(k+1)N/B}$. The area of the circle is $Area=2\pi (k+1)N/B$. The number of magnetic flux that passes through the droplet is
\begin{align}
    N_{B}=Area \cross \frac{B}{2\pi}=(k+1)N.
\end{align}
Here, the charge of elecrons $e$ and the planck constant $\hbar$ are set to one. From these we obtain the filling factor as the ratio of the number of electrons and the number of magnetic flux that passes through the droplet:
\begin{align}
    \nu=\frac{N}{N_{B}}=\frac{1}{k+1}.
\end{align}
Let us calculate the filling factor. Using the rescaled semicircle law (C.11), we find that the radius of the circle is \( R = \sqrt{\frac{2(k + 1)N}{B}} \). The area of the circle is \( \text{Area} = \frac{2\pi(k + 1)N}{B} \). The number of magnetic flux quanta passing through the droplet is given by:
\begin{align*}
    N_{B}=Area \cross \frac{B}{2\pi}=(k+1)N.
\end{align*}
Here, we have set the charge of electrons $e$ and the Planck constant $\hbar$ to one. With this information, we can compute the filling factor as the ratio of the number of electrons to the number of magnetic flux quanta passing through the droplet:
\begin{align}
     \nu = \frac{N}{N_{B}} = \frac{1}{k + 1}.
\end{align}
Recall in in Section \ref{subsec: large N limit representation}, we show that the central charge of the large $N$ limit algebra $\widehat{\mathfrak{u}(1)}_{\frac{1}{k+1}}$ is expressed as:
\begin{align*}
    c=\lim_{N\rightarrow \infty}(n+1)\tilde{t}_{n,n}=\frac{1}{k+1}.
\end{align*}
Here, $\tilde{t}_{n,n}=\sqrt{(k+1)N}^{-(2n+2)} t_{n,n} $.
Additionally, recall that our derivation of the rescaled semicircle law (\ref{Semi Circle law}) relies on the result $\lim_{N\rightarrow \infty}(n+1)\tilde{t}_{n,n}=\frac{1}{k+1}$ . Therefore, we justify our claim that the central charge of the large $N$ limit algebra is equal to the filling factor of the corresponding quantum Hall droplet.

This result is consistent with previous works on the matrix model concerning the quantum Hall effect \cite{Susskind}\cite{Poly}\cite{Karabali_2002}, providing validation for the scaling factor chosen in this paper (See (\ref{rescaling})(\ref{Basis_rescaled})). Additionally, we note the work \cite{Frenkel:2021yql}, where the authors also derived a semicircle law using saddle point approximation, while our derivation of this semicircle law relies on the moment method and the large $N$ limit of the operators $t_{n,n}, n\geq0$. The radius of the semicircle law (\ref{Semi Circle law}) agrees with the one in \cite{Frenkel:2021yql}.\footnote{The parameter $k$ in this paper corresponds to $k-1$ in \cite{Frenkel:2021yql}.}.\\ 

\leftline{\textbf{The case $p>1$: $\frac{p}{k+p}$ is the filling factor of the quantum Hall fluid }}
In the case $p>1$, we consider a special situation: $N$ is divided by $p$. Let $N=pM$. In this situation  (see \ref{3.12}), the ground state is unique (i.e., there is no degeneracy):
\begin{align}  \ket{\mathrm{ground};N=pM}=\prod_{r=1}^{k}[\epsilon^{i_{1}i_{2}\cdots i_{N}}B(0)_{i_{1}\cdots i_{p}}B(1)_{i_{p+1}\cdots i_{2p}}\cdots B(M-1)_{i_{N-p+1}\cdots i_{N}}]^{k}\ket{0}
\end{align}
where $$B(n)_{i_{1}i_{2}\cdots i_{p}}=\sum_{1\leq a_{1},a_{2},\cdots a_{p} \leq p} \epsilon_{a_{1}a_{2}\cdots a_{p}}({\lambda^{\dagger}}^{a_{1}}{Z^{\dagger}}^n)_{i_{1}} ({\lambda^{\dagger}}^{a_{2}}{Z^{\dagger}}^n)_{i_{2}} \cdots ({\lambda^{\dagger}}^{a_{p}}{Z^{\dagger}}^n)_{i_{p}}$$

Let $\widetilde{\ket{\mathrm{ground};N=pM}}$ be a normalized ground state:
$$\widetilde{\bra{\mathrm{ground};N=pM}}\widetilde{\ket{\mathrm{ground};N=pM}}=1$$

Before we show the main proposition \ref{Pro C2}, we need a lemma:
\begin{lemma}\label{lemma C}
    \begin{align}\label{t12 ground}
t_{1,2}\ket{\mathrm{ground};N=pM}=(k(M-1)+N)\Tr (Z^{\dagger})\ket{\mathrm{ground};N=pM}.
    \end{align}
\end{lemma}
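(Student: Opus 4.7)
The plan is to compute $t_{1,2}\ket{\mathrm{ground};N=pM}$ by direct action on the explicit form \ref{3.12} of the ground state, and then identify the result with $(k(M-1)+N)\mathrm{Tr}(Z^{\dagger})\ket{\mathrm{ground};N=pM}$ via a Pauli-exclusion argument on the resulting antisymmetric tensors.

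First I would normal-order $t_{1,2}$ using $[Z^i_j, Z^{\dagger k}_l]=\delta^i_l\delta^k_j$ (with $\epsilon_1=1$). A short calculation gives $\mathrm{Tr}(ZZ^{\dagger 2})=\mathrm{Tr}(Z^{\dagger 2}Z)+2N\,\mathrm{Tr}(Z^{\dagger})$ and $\mathrm{Tr}(Z^{\dagger} Z Z^{\dagger})=\mathrm{Tr}(Z^{\dagger 2}Z)+N\,\mathrm{Tr}(Z^{\dagger})$, so averaging the three orderings appearing in $\mathrm{Sym}(ZZ^{\dagger 2})$ yields $t_{1,2}=\mathrm{Tr}(Z^{\dagger 2}Z)+N\mathrm{Tr}(Z^{\dagger})$, reducing the task to evaluating $\mathrm{Tr}(Z^{\dagger 2}Z)\ket{\mathrm{ground};N=pM}$.

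Next I would use the commutator formula $[Z^c_a,(Z^{\dagger n})^j_i]=\sum_{s=0}^{n-1}(Z^{\dagger s})^j_a(Z^{\dagger n-1-s})^c_i$ to act with $Z^c_a$ on each factor $(\lambda^{\dagger a_l}Z^{\dagger n})_{i_l}$ of a block $B(n)$, and then contract with the outer $Z^{\dagger a}_bZ^{\dagger b}_c$ from $\mathrm{Tr}(Z^{\dagger 2}Z)$. A key simplification is that the split index $s$ drops out of the double contraction, so the sum over $s$ produces a factor of $n$ times the promoted factor $(\lambda^{\dagger a_l}Z^{\dagger n+1})_{i_l}$. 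Summing over positions $l$ within $B(n)$, then over the blocks $n=1,\dots,M-1$, and using commutativity of the $k$ identical copies of the antisymmetric tensor $C$ in $\ket{\mathrm{ground};N=pM}=C^k\ket{0}$, this should give
\[
\mathrm{Tr}(Z^{\dagger 2}Z)\ket{\mathrm{ground};N=pM}=k\sum_{n=1}^{M-1}n\,C_n\,C^{k-1}\ket{0},
\]
where $C_n:=\epsilon^{i_1\cdots i_N}B(0)\cdots B(n,n+1)\cdots B(M-1)$ and $B(n,n+1)$ denotes $B(n)$ with one of its $p$ factors promoted from $Z^{\dagger n}$ to $Z^{\dagger n+1}$, summed over the position of the promotion. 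An entirely parallel application of the identity \ref{ti C} to $\mathrm{Tr}(Z^{\dagger})\cdot C\ket{0}$ gives $\mathrm{Tr}(Z^{\dagger})\ket{\mathrm{ground};N=pM}=C^{k-1}\sum_{n=0}^{M-1}C_n\ket{0}$.

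The crux is the Pauli-exclusion claim that $C_n=0$ for every $0\le n\le M-2$. In such $C_n$ the promoted block contributes one factor $(\lambda^{\dagger a}Z^{\dagger n+1})_{i}$ whose $SU(p)$ flavor $a$ is free, while the adjacent block $B(n+1)$ contributes $p$ factors $(\lambda^{\dagger b_{l'}}Z^{\dagger n+1})_{i'}$ whose flavors $(b_1,\ldots,b_p)$ are antisymmetrized by $\epsilon_{b_1\cdots b_p}$ and therefore exhaust all $p$ values of the $SU(p)$ index. Hence $a$ must coincide with some $b_{l'}$, producing two factors at distinct $N$-slots carrying identical $(\lambda^{\dagger},Z^{\dagger n+1})$ data; the global $\epsilon^{i_1\cdots i_N}$ then annihilates the contribution. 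Only $C_{M-1}$ survives, since promoting the top block creates power $M$ for which no matching block exists. Consequently $\mathrm{Tr}(Z^{\dagger})\ket{\mathrm{ground};N=pM}=C^{k-1}C_{M-1}\ket{0}$ and $\mathrm{Tr}(Z^{\dagger 2}Z)\ket{\mathrm{ground};N=pM}=k(M-1)\mathrm{Tr}(Z^{\dagger})\ket{\mathrm{ground};N=pM}$, and adding the $N\mathrm{Tr}(Z^{\dagger})$ term from Step 1 yields the lemma.

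The hard part will be writing out the Pauli-exclusion cancellation of the third step carefully: the $\epsilon$-antisymmetrizations inside each block interact with the global $\epsilon^{i_1\cdots i_N}$ and with the sum over the position of the promoted factor, so one must verify term-by-term that the flavor coincidence always produces a zero contribution and that no subtle residual survives.
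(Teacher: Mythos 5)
Your proposal is correct and follows essentially the same route as the paper's proof: the self-contractions of $Z$ with the $Z^{\dagger}$'s inside $t_{1,2}$ produce the $N\,\mathrm{Tr}(Z^{\dagger})$ term, the contractions into blocks $B(0),\dots,B(M-2)$ vanish because promoting a factor to power $n+1$ creates $p+1$ coincident powers (the Pauli-exclusion fact the paper cites for $C(\{a\},\{n\})$), and the surviving $B(M-1)$ contribution is converted to $k(M-1)\,\mathrm{Tr}(Z^{\dagger})\ket{\mathrm{ground}}$ via the $p>1$ generalization of \eqref{ti C}. Your explicit normal-ordering of $\mathrm{Sym}(ZZ^{\dagger 2})$ and the flavor-$\epsilon$ rederivation of the exclusion principle are just more detailed versions of the paper's steps, and your combinatorial factor $k(M-1)$ checks out.
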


\begin{proof}
    Recall $t_{1,2}=\frac{1}{3}(\Tr (Z^{\dagger}ZZ^{\dagger})+\Tr (Z^{\dagger}Z^{\dagger}Z)+\Tr (ZZ^{\dagger}Z^{\dagger}))$. When computing $t_{1,2}\ket{\mathrm{ground};N=pM}$, the element $Z$ in $t_{1,2}$ will contract with:
    \begin{enumerate}
        \item $Z^{\dagger}$'s in $t_{1,2}$. These contractions yield the term $N\Tr (Z^{\dagger})\ket{\mathrm{ground};N=pM}$ on the right-hand side of \eqref{t12 ground}.
        \item $Z^{\dagger}$'s in $\prod_{r=1}^{k}[\epsilon^{i_{1}i_{2}\cdots i_{N}}B(0)_{i_{1}\cdots i_{p}}B(1)_{i_{p+1}\cdots i_{2p}}\cdots B(M-1)_{i_{N-p+1}\cdots i_{N}}]^{k}$. Using the elementary fact that \begin{align*}
    C(\{a\},\{n\})=\epsilon^{i_{1}i_{2}\cdots i_{N}}({\lambda^{\dagger}}^{a_{1}}{Z^{\dagger}}^{n_{1}})_{i_{1}}({\lambda^{\dagger}}^{a_{2}}{Z^{\dagger}}^{n_{2}})_{i_{2}}\cdots ({\lambda^{\dagger}}^{a_{N}}{Z^{\dagger}}^{n_{N}})_{i_{N}}=0
\end{align*}
if there exist $1<l_{1}<l_{2}<\cdots<l_{p+1}<N$ such that $n_{l_{1}}=n_{l_{2}}=\cdots=n_{l_{p+1}}$. It follows that the Wick contractions of $Z$ in $t_{1,2}$ with $Z^{\dagger}$'s in $B(0)_{i_{1}\cdots i_{p}}B(1)_{i_{p+1}\cdots i_{2p}}\cdots B(M-2)_{i_{N-2p+1}\cdots i_{N-p}}$ will contribute zero. The Wick contractions of $Z$ with $Z^{\dagger}$'s in $B(M-1)_{i_{N-p+1}\cdots i_{N}}$ give:
\begin{align*}
   & k(M-1)\{\epsilon^{i_{1}i_{2}\cdots i_{N}}B(0)_{i_{1}\cdots i_{p}}B(1)_{i_{p+1}\cdots i_{2p}}\cdots B(M-2)_{i_{N-2p+1}\cdots i_{N-p}}B(M)_{i_{N-p+1}\cdots i_{N}}\cdot \\
   &\prod_{r=1}^{k-1}[\epsilon^{i_{1}i_{2}\cdots i_{N}}B(0)_{i_{1}\cdots i_{p}}B(1)_{i_{p+1}\cdots i_{2p}}\cdots B(M-1)_{i_{N-p+1}\cdots i_{N}}]^{k-1}\}\ket{0}\\
   &=k(M-1)\Tr (Z^{\dagger})\prod_{r=1}^{k}[\epsilon^{i_{1}i_{2}\cdots i_{N}}B(0)_{i_{1}\cdots i_{p}}B(1)_{i_{p+1}\cdots i_{2p}}\cdots B(M-1)_{i_{N-p+1}\cdots i_{N}}]^{k}\ket{0}.
\end{align*}
Here, we have used:
\begin{align*}
    &\epsilon^{i_{1}i_{2}\cdots i_{N}}B(0)_{i_{1}\cdots i_{p}}B(1)_{i_{p+1}\cdots i_{2p}}\cdots B(M-2)_{i_{N-2p+1}\cdots i_{N-p}}B(M)_{i_{N-p+1}\cdots i_{N}}\\
    &=\Tr (Z^{\dagger})\epsilon^{i_{1}i_{2}\cdots i_{N}}B(0)_{i_{1}\cdots i_{p}}B(1)_{i_{p+1}\cdots i_{2p}}\cdots B(M-1)_{i_{N-p+1}\cdots i_{N}},
\end{align*}
which follows from a $p>1$ generalization of relation \eqref{ti C} (as a corollary of \eqref{bubble}).
\end{enumerate}

\end{proof}

The following proposition serves as evidence supporting Conjecture \ref{conjecture 4.1}.
\begin{proposition}\label{Pro C2}
    Assume in the limit: $N=pM$, $M \to \infty$, the operators $\{t_{n,n}, n=0,1,2\cdots \}$ have leading order $\sqrt{N}^{n+m+2\delta_{n,m}}$, i.e
    $$t_{n,n} \sim O(\sqrt{N}^{n+m+2\delta_{n,m}}).$$

Then we have: 
\begin{align}\label{A t21tnn+1}
\widetilde{\bra{\mathrm{ground};N=pM}}t_{n,n}\widetilde{\ket{\mathrm{ground};N=pM}}=\frac{1}{n+1}\left(\frac{k+p}{p}\right)^{n}N^{n+1}+O(N^{n})  ,\quad N \to \infty.
\end{align}
\end{proposition}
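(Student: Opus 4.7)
The proof will proceed by strong induction on $n$, relying on three inputs: (1) the annihilation property $t_{a,b}\widetilde{\ket{\mathrm{gr}}}=0$ whenever $a>b$, where $\widetilde{\ket{\mathrm{gr}}}:=\widetilde{\ket{\mathrm{ground};N=pM}}$; this follows because each $t_{a,b}$ is gauge-invariant and lowers the $Z^\dagger$-count by $a-b$, so its image would be a physical state with fewer $Z^\dagger$'s than the minimum realized by the ground state; (2) uniqueness of the ground state in the case $N=pM$, which forces $t_{n,n}\widetilde{\ket{\mathrm{gr}}}=c_n\widetilde{\ket{\mathrm{gr}}}$ with $c_n:=\widetilde{\bra{\mathrm{gr}}}t_{n,n}\widetilde{\ket{\mathrm{gr}}}$; and (3) the filtration commutation relations of Proposition~\ref{prop: filtration}, together with Lemma~\ref{lemma C} and its adjoint $\widetilde{\bra{\mathrm{gr}}}t_{2,1}=A_M\widetilde{\bra{\mathrm{gr}}}t_{1,0}$, where $A_M:=(k+p)M-k=\tfrac{(k+p)N}{p}+O(1)$.

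The base cases $n=0,1$ are direct: $c_0=N$ by definition, while taking the ground-state expectation of the identity $[t_{1,2},t_{1,0}]=-2t_{1,1}+\mathrm{lower}$, using $t_{1,0}\widetilde{\ket{\mathrm{gr}}}=0$, Lemma~\ref{lemma C}, and the exact relation $[t_{1,0},t_{0,1}]=N$, yields $c_1=\tfrac{1}{2}A_MN$, which matches the target formula. For the inductive step I invoke the leading-order commutators
\begin{align*}
[t_{n+1,1},t_{1,n+1}] &= n(n+2)\,t_{n+1,n+1}+\mathrm{lower}, \\
[t_{n,0},t_{0,n}] &= n^2\,t_{n-1,n-1}+\mathrm{lower}.
\end{align*}
Taking ground-state expectations, using (1) to drop the terms with $t_{n+1,1}\widetilde{\ket{\mathrm{gr}}}$ and $t_{n,0}\widetilde{\ket{\mathrm{gr}}}$, and writing $\widetilde{\bra{\mathrm{gr}}}t_{n+1,1}=(t_{1,n+1}\widetilde{\ket{\mathrm{gr}}})^\dagger$ and likewise for $t_{n,0}$, gives
\[
\|t_{1,n+1}\widetilde{\ket{\mathrm{gr}}}\|^2 = n(n+2)\,c_{n+1}+\mathrm{lower}, \qquad \|t_{0,n}\widetilde{\ket{\mathrm{gr}}}\|^2 = n^2\,c_{n-1}+\mathrm{lower}.
\]

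The crux is the intermediate claim that $t_{1,n+1}\widetilde{\ket{\mathrm{gr}}}=A_M\,t_{0,n}\widetilde{\ket{\mathrm{gr}}}+\eta_n$, where $\|\eta_n\|$ is of strictly smaller $N$-scaling than the leading term; squaring and invoking Cauchy--Schwarz then produces $\|t_{1,n+1}\widetilde{\ket{\mathrm{gr}}}\|^2 = A_M^2\|t_{0,n}\widetilde{\ket{\mathrm{gr}}}\|^2+\mathrm{lower}$. I prove this claim by induction on $n$. The base case $n=1$ is Lemma~\ref{lemma C} itself. For $n\ge 3$ I apply the commutator $[t_{1,2},t_{1,n}]=(n-2)t_{1,n+1}+\mathrm{lower}$ to $\widetilde{\ket{\mathrm{gr}}}$, substitute the inductive expression for $t_{1,n}\widetilde{\ket{\mathrm{gr}}}$, and rewrite $t_{1,2}\widetilde{\ket{\mathrm{gr}}}$ via Lemma~\ref{lemma C}; the key simplification is the exact identity $[t_{0,n-1},t_{0,1}]=0$, valid because both operators are polynomials in $Z^\dagger$ alone. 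The single exceptional index is $n=2$, where that commutator degenerates; there I instead determine the components of $t_{1,3}\widetilde{\ket{\mathrm{gr}}}$ in the basis $\{t_{0,1}^2\widetilde{\ket{\mathrm{gr}}},\,t_{0,2}\widetilde{\ket{\mathrm{gr}}}\}$ of the second excited level by solving the two linear equations obtained from applying $t_{1,0}$ and $t_{2,0}$ on the left (both annihilate $\widetilde{\ket{\mathrm{gr}}}$, so the constraints reduce to explicit commutator computations involving $N$, $c_1$, and $c_2$, all known by this stage of the induction).

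Combining the three displayed identities yields the recursion $c_{n+1}=\tfrac{n}{n+2}\,A_M^2\,c_{n-1}+\mathrm{lower}$; substituting $A_M^2=\bigl(\tfrac{(k+p)N}{p}\bigr)^2+O(N)$ and the inductive value $c_{n-1}=\tfrac{1}{n}\bigl(\tfrac{k+p}{p}\bigr)^{n-1}N^n+O(N^{n-1})$ produces exactly $c_{n+1}=\tfrac{1}{n+2}\bigl(\tfrac{k+p}{p}\bigr)^{n+1}N^{n+2}+O(N^{n+1})$, closing the induction. The main obstacle is the intermediate claim at the exceptional index $n=2$, together with the quantitative control of the $N$-scaling of the error states $\eta_n$ along the induction so that the Cauchy--Schwarz cross terms in $\|A_M t_{0,n}\widetilde{\ket{\mathrm{gr}}}+\eta_n\|^2$ are genuinely subleading; I expect both to require tracking an explicit inductive bound of the form $\|\eta_n\|^2=O\bigl(A_M^2\|t_{0,n}\widetilde{\ket{\mathrm{gr}}}\|^2/N\bigr)$ carried in parallel with the main recursion.
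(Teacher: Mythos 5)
Your route is genuinely different from the paper's and considerably more elaborate, and it contains a concrete gap at the exceptional index. The paper's proof is a direct one\nobreakdash-step recursion: it takes the ground-state expectation of $[t_{2,1},t_{n,n+1}]=(n+2)t_{n+1,n+1}+\mathrm{lower}$, kills the term $t_{2,1}\ket{\mathrm{gr}}$, moves $t_{2,1}$ onto the bra via the adjoint of Lemma~\ref{lemma C} to get $A_M\bra{\mathrm{gr}}t_{1,0}t_{n,n+1}\ket{\mathrm{gr}}=A_M\bra{\mathrm{gr}}[t_{1,0},t_{n,n+1}]\ket{\mathrm{gr}}=A_M(n+1)d(n)$, and so obtains $d(n)\to d(n+1)$ with the single input $d(0)=N$. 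This completely sidesteps your intermediate claim $t_{1,n+1}\ket{\mathrm{gr}}=A_M t_{0,n}\ket{\mathrm{gr}}+\eta_n$, the norm estimates on $\eta_n$, and the two-step recursion $c_{n-1}\to c_{n+1}$. (Your leading-order coefficients $n(n+2)$, $n^2$, $(n-2)$ and the base cases $c_0$, $c_1$, $c_2$ all check out against Proposition~\ref{prop: filtration}.)

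The gap is in your treatment of the exceptional case $n=2$ of the intermediate claim, which is needed for $c_3$ and hence for every subsequent odd index. You propose to pin down $t_{1,3}\ket{\mathrm{gr}}$ by declaring $\{t_{0,1}^2\ket{\mathrm{gr}},\,t_{0,2}\ket{\mathrm{gr}}\}$ a basis of the energy-$(E_{\mathrm{gr}}+2)$ subspace and solving two linear equations. That basis claim is only valid for $p=1$ (Proposition~\ref{Prop Basis}); Proposition~\ref{Pro C2} is aimed at general $p$ with $N=pM$, where the level-two subspace is larger. Even after restricting to the $\mathrm{SU}(p)$-singlet sector (which is where $t_{1,3}\ket{\mathrm{gr}}$ must lie, since $t_{1,3}$ is an $\mathrm{SU}(p)$ singlet and the $N=pM$ ground state is a singlet), the expected spectrum contains at least a third independent singlet at level two, namely the quadratic-Casimir combination of the $J^a_b$ excitations; and since the $t$-operators do not close among themselves (the relations \eqref{[t30,tmn]}--\eqref{[t03,tmn]} produce $e^a_e e^e_a$ terms), you cannot rule out a component of $t_{1,3}\ket{\mathrm{gr}}$ along that state. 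Two constraints from $t_{1,0}$ and $t_{2,0}$ then leave the decomposition underdetermined, so the recursion cannot be closed at $c_3$. A secondary issue, which you flag yourself, is that the Cauchy--Schwarz step needs bounds on the state norms $\|\eta_n\|$, not merely on ground-state expectation values, and deriving those from the stated hypothesis (which controls operator scalings) is additional work that the paper's expectation-value-only argument never requires.
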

\begin{proof}
Assume $$t_{n,n} \sim O(\sqrt{N}^{n+m+2\delta_{n,m}}),$$ then Proposition \ref{prop: filtration} implies that we have the following large $N$ commutation relations
   $$[t_{2,1},t_{n,n+1}]=(n+2)t_{n+1,n+1}+O(N^{n}).$$
Let us denote $$d(n):=\widetilde{\bra{\mathrm{ground};N=pM}}t_{n,n}\widetilde{\ket{\mathrm{ground};N=pM}}.$$
The ground state expectation value of \ref{A t21tnn+1} is  
\begin{align}\label{ground t21tnn+1}
    \widetilde{\bra{\mathrm{ground};N=pM}}[t_{2,1},t_{n,n+1}]\widetilde{\ket{\mathrm{ground};N=pM}}=(n+2)d(n+1)+O(N^{n}).
\end{align}
It follows from Lemma \ref{lemma C} that
\begin{align}\label{t12g}
t_{1,2}\widetilde{\ket{\mathrm{ground};N=pM}}=(k(M-1)+N)\Tr (Z^{\dagger})\widetilde{\ket{\mathrm{ground};N=pM}}.
\end{align}
Taking the adjoint of the above and we get
\begin{align}\label{ground t21}
\widetilde{\bra{\mathrm{ground};N=pM}}t_{2,1}=(k(M-1)+N) \widetilde{\bra{\mathrm{ground};N=pM}}\Tr (Z).
\end{align}
Using \eqref{ground t21} and $t_{2,1}\widetilde{\ket{\mathrm{ground};N=pM}}=0$, \eqref{ground t21tnn+1} implies that
\begin{align}
    (k(M-1)+N)\widetilde{\bra{\mathrm{ground};N=pM}}t_{1,0}t_{n,n+1}\widetilde{\ket{\mathrm{ground};N=pM}}=(n+1)d(n+1)+O(N^{n}).
\end{align}
The right-hand-side of the above equation equals to
\begin{align*}
    &(k(M-1)+N)\widetilde{\bra{\mathrm{ground};N=pM}}t_{1,0}t_{n,n+1}\widetilde{\ket{\mathrm{ground};N=pM}}\\
    &=(k(M-1)+N)\widetilde{\bra{\mathrm{ground};N=pM}}[t_{1,0},t_{n,n+1}]\widetilde{\ket{\mathrm{ground};N=pM}}\\
    &=(k(M-1)+N)(n+1)\widetilde{\bra{\mathrm{ground};N=pM}}t_{n,n}\widetilde{\ket{\mathrm{ground};N=pM}}\\
    &=(k(M-1)+N)(n+1)d(n).
\end{align*}
Altogether, we have 
\begin{align}
    (k(M-1)+N)(n+1)d(n)=(n+2)d(n+1)+O(N^{n}).
\end{align}
Recall $M=\frac{N}{p}$. We obtain a recursion relation 
\begin{align}
    \frac{k+p}{p}\frac{(n+1)d(n)}{N^{n+1}}=\frac{(n+2)d(n+1)}{N^{n+2}}+O(\frac{1}{N}).
\end{align}
Together with the initial condition $d(0)=N$, we have
\begin{align}
d(n)=\widetilde{\bra{\mathrm{ground};N=pM}}t_{n,n}\widetilde{\ket{\mathrm{ground};N=pM}}=\frac{1}{n+1}\left(\frac{k+p}{p}\right)^{n}N^{n+1}+O(N^{n}).
\end{align}
\end{proof}
Applying exactly the same arguments as in the case $p=1$, we see that the radius of the quantum Hall droplet is
$$R=\sqrt{2\frac{k+p}{p}\frac{N}{B}},$$ hence the filling factor is
$$\nu=\frac{p}{k+p}.$$
On the other hand, the same factor is the $\mathfrak{u}(1)$ central charge of the large $N$ current algebra:
$$c=\lim_{N \to \infty}(n+1)\sqrt{\frac{k+p}{p}N}^{-(2n+2)} t_{n,n}=\frac{p}{k+p}.$$




\bibliographystyle{unsrt}
\bibliography{NCCS-Bib}

\end{document}